\newtheorem{theorem}{Theorem}[section]
\newtheorem{lemma}[theorem]{Lemma}
\newtheorem{proposition}[theorem]{Proposition}
\newtheorem{definition}[theorem]{Definition}
\theoremstyle{remark}
\newtheorem{remark}[theorem]{\it \bf{Remark}\/}
\numberwithin{equation}{section}
\def\section{\@startsection{section}{1}%
  \z@{1.5\linespacing\@plus\linespacing}{.5\linespacing}%
  {\normalfont\bfseries\large\centering}}
\newcommand{\be}{\begin{equation}}
\newcommand{\ee}{\end{equation}}
\newcommand{\bea}{\begin{eqnarray}}
\newcommand{\eea}{\end{eqnarray}}
\newcommand{\bee}{\begin{eqnarray*}}
\newcommand{\eee}{\end{eqnarray*}}
\def\pa{\partial}
\def\RR{\mathbb{R}}
\def\fref#1{{\rm (\ref{#1})}}
\def\G{{\Gamma}}
\def\supess{\mathop{\operator@font Sup\,ess}}
\def\bt{\tilde{b}}
\def\RR{\mathbb{R}}
\def\e{\varepsilon}
\def\g#1{{\bf #1}}
\def\fref#1{{\rm (\ref{#1})}}
\def\R2+{\RR ^2_+}
\def\lsl{\frac{\lambda_s}{\lambda}}
\def\pa{\partial}
\def\lim{\mathop{\rm lim}}
\def\sup{\mathop{\rm sup}}
\def\l{\lambda}
\def\log{{\rm log}}
\def\et{\tilde{\e}}
\def\lsl{\frac{\lambda_s}{\lambda}}
\def\qbt{\tilde{Q}_b}
\def\tt{\tilde{T}}
\def\Sigmat{\tilde{\Sigma}}
\def\pa{\partial}
\def\et{\tilde{\e}}
\def\alphah{\hat{\alpha}}
\def\pa{\partial}
\def\alphat{\tilde{\alpha}}
\def\Psit{\tilde{\Psi}}
\def\tT{\tilde{T}}
\def\tS{\tilde{S}}
\def\matchal{\mathcal}
\title[]{Quantized slow blow up dynamics for the corotational energy critical harmonic heat flow}
\author[P. Rapha\"el]{Pierre Rapha\"el}
\address{Laboratoire J.A. Dieudonn\'e, Universit\'e de Nice Sophia Antipolis\\
 et Institut Universitaire de France}
\email{praphael@unice.fr}
\author[R. Schweyer]{Remi Schweyer}
\address{Institut de Math\'ematiques de Toulouse, Universit\'e Paul  Sabatier, Toulouse, France}
\email{remi.schweyer@math.univ-toulouse.fr}
\begin{document}
\maketitle

\begin{abstract}
We consider the energy critical harmonic heat flow from $\Bbb R^2$ into a smooth compact revolution surface  of $\Bbb R^3$. For initial data with corotational symmetry, the evolution reduces to the semilinear radially symmetric parabolic problem $$\partial_t u -\pa^2_{r} u-\frac{\pa_r u}{r} + \frac{f(u)}{r^2}=0$$ for a suitable class of functions $f$ . Given an integer $L\in \Bbb N^*$, we exhibit a set of initial data arbitrarily close to the least energy harmonic map $Q$ in the energy critical topology such that the corresponding solution blows up in finite time by concentrating its energy $$\nabla u(t,r)-\nabla Q\left(\frac{r}{\l(t)}\right)\to u^*\ \ \mbox{in}\ \  L^2$$ at a speed given by the {\it quantized} rates: $$\l(t)=c(u_0)(1+o(1))\frac{(T-t)^L}{|\log (T-t)|^{\frac{2L}{2L-1}}},$$ in accordance with the formal predictions \cite{heatflow}. The case $L=1$ corresponds to the stable regime exhibited in \cite{RSc1}, and the data for $L\ge 2$ leave on a manifold of codimension $(L-1)$ in some weak sense. Our analysis lies in the continuation of \cite{RaphRod}, \cite{MRR}, \cite{RSc1} by further exhibiting the mechanism for the existence of the excited slow blow up rates and the associated instability of these threshold dynamics.\end{abstract}


\section{Introduction}



\subsection{The parabolic heat flow}


The harmonic heat flow between two embedded Riemanian manifolds $(N,g_N),(M,g_M)$ is the gradient flow associated to the Dirichlet energy of maps from $N\to M$: 
\be
\label{hfgeneral}
\left\{\begin{array}{ll}\pa_tv={\Bbb P}_{T_vM}(\Delta_{g_N}v)\\
v_{|t=0}=v_0\end{array}\right . \ \ (t,x)\in \Bbb R\times N, \ \ v(t,x)\in M
\ee where ${\Bbb P}_{T_vM}$ is the projection onto the tangent space to $M$ at v. The special case $N=\Bbb R^2$, $M=\Bbb S^2$ corresponds to the harmonic heat flow to the 2-sphere
\be
\label{harmonicheatflow}
\pa_tv=\Delta v+|\nabla v|^2v, \ \ (t,x)\in \Bbb R\times \Bbb R^2,\ \ v(t,x)\in \Bbb S^2
\ee
and is related to the Landau Lifschitz equation of ferromagnetism, we refer to \cite{heatflow}, \cite{matano}, \cite{NT1}, \cite{NT2} and references therein for a complete introduction to this class of problems. We shall from now on restrict our discussion to the case: $$N=\Bbb R^2.$$
Smooth initial data yield unique local in time smooth solutions which dissipate the Dirichlet energy $$\frac{d}{dt}\left\{\int_{\Bbb R^2}|\nabla v|^2\right\}=-2\int_{\Bbb R^2}|\pa_tv|^2.$$ An essential feature of the problem is that the scaling symmetry $$u\mapsto u_\l(t,x)=u(\lambda^2t, \lambda x), \ \ \l>0$$ leaves the Dirichlet energy unchanged, and hence the problem is {\it energy critical}.


\subsection{Corotational flows}


We restrict our attention in this paper to flows with so called co-rotational symmetry. More precisely, let a smooth closed curve in the plane parametrized by arclength $$u\in[-\pi,\pi]\mapsto \left|\begin{array}{ll}g(u)\\z(u)\end{array}\right., \ \ (g')^2+(z')^2=1,$$ where 
\be
\label{assumtiong}
(H)\ \ \left\{\begin{array}{lll} g\in \mathcal C^{\infty}(\Bbb R) \ \ \mbox{is odd and $2\pi$ periodic},\\
g(0)=g(\pi)=0, \ \ g(u)>0\ \ \mbox{for}\ \ 0<u<\pi,\\
g'(0)=1, \ \ g'(\pi)=-1,\end{array}\right.
\ee
then the revolution surface $M$ with parametrization $$(\theta,u)\in[0,2\pi]\times[0,\pi]\mapsto \left|\begin{array}{lll} g(u)\cos\theta\\g(u)\sin\theta\\z(u)\end{array}\right.$$ is a smooth\footnote{see eg  \cite{GHL}} compact revolution surface of $\Bbb R^3$ with metric $(du)^2+g^2(u)(d\theta)^2$. Given a homotopy degree $k\in \Bbb Z^*$, the k-corotational reduction to \fref{hfgeneral} corresponds to solutions of the form 
\be
\label{corotshpoere} 
v(t,r)= \left|\begin{array}{lll} g(u(t,r))\cos(k\theta)\\g(u(t,r))\sin(k\theta)\\z(u(t,r))\end{array}\right.
\ee which leads to the semilinear parabolic equation: 
\be
\label{mapg}
\left\{\begin{array}{ll}\pa_tu-\pa^2_ru-\frac{\pa_ru}{r}+k^2\frac{f(u)}{r^2}=0,\\
u_{t=0}=u_0\end{array}\right.\ \ f=gg'.
\ee
The k-corotational Dirichlet energy becomes 
\be
\label{diricheltenergy}
E(u)=\int_0^{+\infty}\left[|\pa_ru|^2+k^2\frac{(g(u))^2}{r^2}\right] rdr
\ee
and is minimized among maps with boundary conditions 
\be
\label{boundary}
u(0)=0, \ \ \lim_{r\to+\infty}u(r)=\pi
\ee
 onto the least energy harmonic map $Q_k$ which is the unique -up to scaling- solution to
 \be
 \label{eqgroudnsttae}
 r\pa_rQ_k=kg(Q_k)
 \ee satisfying \fref{boundary}, see for example \cite{cote}. In the case of $\Bbb S^2$ target $g(u)=\sin u$, the harmonic map is explicitly given by 
\be
\label{deywgvuew}
Q_k(r)=2\tan^{-1}(r^k).
\ee


\subsection{The blow up problem}


The question of the existence of blow up solutions and the description of the associated concentration of energy scenario has attracted a considerable attention for the past thirty years. In the pioneering works of Struwe \cite{Struwe}, Ding and Tian \cite{dingtian}, Qing and Tian \cite{qingtian} (see Topping \cite{Topping} for a complete history of the problem), it is shown that if occurring, the concentration of energy implies the bubbling off of a non trivial harmonic map at a finite number of blow up points 
\be
\label{cnknneooe}
v(t_i,a_i+\lambda(t_i)x)\to  Q_i, \ \ \l(t_i)\to 0
\ee
locally in space. In particular, this shows the existence of a global in time flow on negatively curved targets where no nontrivial harmonic map exists.\\
For corotational data and homotopy number $k\geq 2$, Guan, Gustaffson,  Tsai \cite{NT1}, Gustaffson, Nakanishi, Tsai \cite{NT2} prove that the flow is globally defined near the ground state harmonic map. In fact, $Q_k$ is asymptotically stable for $k\geq 3$, and in particular no blow up will occur, and eternally oscillating solutions and infinite time grow up solutions are exhibited for $k=2$.\\
A contrario, for $k=1$, the existence of finite time blow up solutions  has been proved in various different geometrical settings using strongly the maximum principle, see in particular Chang, Ding, Ye \cite{CDY}, Coron and Ghidaglia \cite{CD}, Qing and Tian \cite{qingtian}, Topping \cite{Topping}. Despite some serious efforts and the use of the maximum principle, see in particular \cite{matano}, very little was known until recently on the description of the blow up bubble and the derivation of the blow up speed, in particular due to the critical nature of the problem.\\

We shall from now on and for the rest of the paper focus onto the degree $$k=1$$ case which generates the least energy no trivial  harmonic map $Q\equiv Q_1$. For $\Bbb D^2$ initial manifold and $\Bbb S^2$ target, Van den Berg, Hulshof and King \cite{heatflow} implement in the continuation of \cite{Velas} a formal analysis based on the matched asymptotics techniques and predict the existence of blow up solutions of the form
\be
\label{cbebnebeib}
u(t,r)\sim Q\left(\frac{r}{\lambda(t)}\right)
\ee
with blow up speed governed by the quantized rates $$\l(t)\sim \frac{(T-t)^L}{|\log (T-t)|^{\frac{2L}{2L-1}}}, \ \ L\in \Bbb N^*.$$ We will further discuss the presence of quantized rates which is reminiscent to the classification of type II blow up for the supercritical nonlinear heat equation \cite{Mizo}.\\ 

We revisited completely the blow up analysis in \cite{RSc1} by adapting the strategy developed in \cite{RaphRod}, \cite{MRR} for the study of wave and Schr\"odinger maps, with two main new input:
\begin{itemize}
\item we completely avoid the formal matched asymptotics approach and replace it by an elementary derivation of an explicit and universal system of ODE's which drives the blow up speed. A similar simplification further occurred in related critical settings, see in particular \cite{RSc2};
\item we designed a robust universal {\it energy method} to control the solution in the blow up regime, and which applies both to parabolic and dispersive problems. In particular, we aim in purpose at making no use of the maximum principle.
\end{itemize}
These techniques led in \cite{RSc1} to the construction of an {\it open set} of corotational initial data arbitrarily close to the ground state harmonic map in the energy critical topology such that the corresponding solution to \fref{mapg} bubbles off an harmonic map according to \fref{cbebnebeib} at the speed $$\l(t)\sim \frac{T-t}{|\log (T-t)|^{2}} \ \ \mbox{i.e.}\  \  L=1.$$ This is the {\it stable}\footnote{in the presence of corotational symmetry, blow up dynamics are expected to be unstable by rotation under general perturbations, see \cite{MRR}.} blow up regime.


\subsection{Statement of the result}


Our main claim in this paper is that the analysis in \cite{RSc1} can be further extended to exhibit the unstable modes which are responsible for {\it a discrete sequence of  quantized slow blow up rates}.

\begin{theorem}[Excited slow blow up dynamics for the 1-corotational heat flow]
\label{thmmain}
Let $k=1$ and $g$ satisfy \fref{assumtiong}. Let $Q$ be the least energy harmonic map. Let $L\in \Bbb N^*$. Then there exists a smooth corotational initial data $u_0(r)$ such that the corresponding solution to \fref{mapg} blows up in finite time $T=T(u_0)>0$ by bubbling off a harmonic map:
\be
\label{concnenergy}
\nabla u(t,r)-\nabla Q\left(\frac{r}{\l(t)}\right)\to \nabla u^*\ \ \mbox{in}\ \ L^2\ \ \mbox{as}\ \ t\to T
\ee
at the excited rate:
\be
\label{Pexciitedlaw}
\l(t)=c(u_0)(1+o_{t\to T}(1))\frac{(T-t)^L}{|\log (T-t)|^{\frac{2L}{2L-1}}}, \ \ c(u_0)>0.
\ee
Moreover, $u_0$ can be taken arbitrarily close to $Q$ in the energy critical topology.
\end{theorem}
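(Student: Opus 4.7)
The plan is to follow the scheme developed in \cite{RSc1}, itself adapted from \cite{RaphRod, MRR}, and extend it by introducing an $L$-parameter approximate profile that generates the excited slow blow-up rates. I seek the solution in the form
\begin{equation*}
u(t,r)=\bigl(Q_{b(t)}+\varepsilon(t,\cdot)\bigr)(r/\lambda(t)),
\end{equation*}
where $b(t)=(b_1(t),\ldots,b_L(t))$ is a vector of modulation parameters. One imposes $L+1$ orthogonality conditions on $\varepsilon$ to determine $(\lambda,b_1,\ldots,b_L)$ uniquely, derives a closed system of modulation ODEs by projecting \fref{mapg} onto the directions obstructed by the orthogonality, and controls $\varepsilon$ via a family of high-order energies adapted to the linearized operator $H$ around $Q$. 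For $L=1$ the system is stable, as in \cite{RSc1}; for $L\ge 2$, a topological argument selects the codimension $(L-1)$ family of initial data actually leading to the excited rate \fref{Pexciitedlaw}.

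\textbf{Profile and formal ODE system.} I would build the profile as $Q_b=Q+\sum_{k=1}^L b_k T_k+(\text{quadratic and higher in }b)$, where the $T_k$ are defined recursively by $HT_k=-\Lambda T_{k-1}$ with $T_0=\Lambda Q$, $\Lambda=y\partial_y$ being the scaling generator whose range contains the kernel of $H$. The factorization $H=A^{\ast}A$ inherited from the harmonic map structure makes the iterative inversion explicit and gives $T_k$ with slowly growing tail $\sim y^{2k-1}$ at infinity, so that a cut-off at the spatial scale $B_1\sim 1/\sqrt{b_1}$ regularizes $Q_b$ while producing an admissible modulation error. Plugging $Q_b$ into \fref{mapg} and isolating the resonant terms along the $b_k$ directions produces a closed Jordan-block type system of ODEs in the rescaled time $ds=dt/\lambda^2$, admitting an explicit solution with $b_k(s)\sim c_k s^{-k}$ as $s\to +\infty$. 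Reintegrating $\lambda\, ds=dt$ recovers exactly the quantized law \fref{Pexciitedlaw}, the fractional exponent $2L/(2L-1)$ reflecting the logarithmic loss coming from the non-$L^2$ behaviour of $\Lambda Q$ at infinity.

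\textbf{Energy estimates and topological selection.} The main analytic work is to control $\varepsilon$ through the sequence of energies
\begin{equation*}
\mathcal{E}_{2k}(\varepsilon)=\int \bigl|H^k\varepsilon\bigr|^2\,y\,dy,\qquad 1\le k\le L,
\end{equation*}
suitably weighted, which are coercive modulo the $(L+1)$-dimensional obstructed subspace. Using Hardy-type weighted inequalities and the orthogonality conditions, one establishes a monotonicity formula yielding a bootstrap estimate $\mathcal{E}_{2k}(\varepsilon)\lesssim b_1^{4k}\,|\log b_1|^{C}$, sharp enough to justify the formal ODE system up to admissible errors. For $L=1$ the ODE is stable and one concludes directly as in \cite{RSc1}. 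For $L\ge 2$, the linearization of the ODE around the exact solution $b_k\sim c_k/s^k$ admits exactly $(L-1)$ unstable eigendirections, and a standard Brouwer-type exit argument on an $(L-1)$-parameter family of initial perturbations of $Q_{b(0)}$ selects one datum staying in the bootstrap regime up to the blow-up time $T$. Passing $t\to T$ by compactness then yields the strong convergence \fref{concnenergy}.

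\textbf{Main obstacle.} The principal difficulty is the simultaneous control of all the energies $\mathcal{E}_{2k}$, $1\le k\le L$: the iterated action of $H$ generates commutator terms with the time-dependent potential induced by $Q_b$, boundary contributions at the cut-off scale $B_1$, and nonlinear errors whose quasilinear structure must be absorbed in a sharp weighted algebra. Matching the logarithmic losses of the energy identity with the fractional exponent $2L/(2L-1)$ appearing in \fref{Pexciitedlaw} demands a very tight arithmetic between the profile construction and the energy estimates. Moreover, the topological step operates on modulation parameters whose smallness is known only through the energy, so the compatibility of the Brouwer exit argument with the high-order coercivity bounds is the most delicate point and the place where the structural improvements over \cite{RSc1} are essential.
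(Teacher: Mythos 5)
Your outline follows the same global strategy as the paper (modulated profile $Q_b$, $L+1$ orthogonality conditions, high-order energies adapted to $H$, Brouwer selection of the $(L-1)$ unstable directions), but there is a genuine gap in the analytic core: the energy hierarchy you propose cannot close the modulation system. You stop at the energies $\mathcal E_{2k}=\int|H^k\varepsilon|^2$, $1\le k\le L$, and claim the bound $\mathcal E_{2k}\lesssim b_1^{4k}|\log b_1|^C$. This bound is inconsistent with scaling: in the regime $\lambda(s)\sim s^{-L/(2L-1)}$, $b_1\sim 1/s$, the renormalized energy $\mathcal E_{2k}$ can at best be of size $\lambda^{4k-2}\sim b_1^{(2k-1)\frac{2L}{2L-1}}$ (this is exactly the bootstrap bound \fref{init3h}), which is far larger than $b_1^{4k}$. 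More importantly, the projection of the flow on the last direction gives \fref{parameterspresicely}: the error in the law of $b_L$ is of size $\sqrt{\mathcal E_{2L+2}}$, i.e. it is controlled by the energy of order exactly $L+1$, not $L$. Since $(b_L)_s\sim b_1^{L+1}$ and the whole quantized exponent $\frac{2L}{2L-1}$ in \fref{Pexciitedlaw} is produced by the $O(1/|\log b_1|)$ corrections in the system \fref{systdynfundintro}, one must prove the sharp bound $\mathcal E_{2L+2}\lesssim b_1^{2L+2}/|\log b_1|^2$ — a logarithmic \emph{gain}, obtained from the flux cancellation $\int|H^{L+1}\tilde\Psi_b|^2\lesssim b_1^{2L+4}/|\log b_1|^2$ and the divergent borderline case $k=L$ of the algebra \fref{comprsion}. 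With only $H^L$-level control, the error in the $b_L$ equation is of size $\sqrt{\mathcal E_{2L}}\sim b_1^{L-\frac{L-1}{2L-1}}$, larger by essentially a full power of $b_1$ than the terms you need to track, so the dynamical system — and hence the blow-up law — cannot be justified. The introduction of the exactly order-$(L+1)$ Sobolev energy with its no-loss logarithmic control, together with the improved (time-averaged) equation for $b_L$, is the essential new ingredient and is absent from your proposal.

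A secondary point: your recursion $HT_k=-\Lambda T_{k-1}$ is not the paper's scheme $HT_{k+1}=-T_k$, $T_0=\Lambda Q$. In the paper, the scaling operator does not enter the definition of the $T_k$; it acts on their tails and produces $\Lambda T_k-(2k-1)T_k$, which after subtracting the radiation $\Sigma_{b_1}$ is a $b_1$-admissible correction. It is precisely this structure that generates the universal coefficients $2k-1+c_{b_1}$, with $c_{b_1}\simeq 2/|\log b_1|$, in \fref{defmodtun}, and these logarithmic corrections are responsible for the exponent $\frac{2L}{2L-1}$. With your definition the resonant bookkeeping changes and the claimed ODE coefficients (hence the rate) would have to be rederived; as written, the step from your profile to the system $b_k(s)\sim c_k s^{-k}$ with the correct logarithmic refinement is not justified.
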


{\it Comments on the result:}\\

{\it 1. Regularity of the asymptotic profile}: Arguing as in \cite{RSc1} and using the estimates of Proposition \ref{bootstrap}, one can directly relate the rate of blow up \fref{Pexciitedlaw} to the regularity of the remaining excess of energy, in the sense that $u^*$ exhibits an $H^{L+1}$ regularity is some suitable Sobolev sense, see Remark \ref{remarkreg}. See also \cite{MR5} for a related phenomenon in the dispersive setting.\\

{\it 2. Stable and excited blow up rates}: The case $L=1$ is treated in \cite{RSc1} and corresponds to stable blow up. For $L\geq 2$, the set of initial data leading to \fref{Pexciitedlaw} is of codimension $(L-1)$ in the following sense: there exist fixed directions $(\psi_i)_{2\leq i\leq L}$ such that for any suitable perturbation $\e_0$ of $Q$, there exist $(a_i(\e_0))_{2\leq i\leq L}\in \Bbb R^{L-1}$ such that the solution to \fref{mapg} with data $$Q+\e_0+\Sigma_{i=2}^La_i(\e_0)\psi_i$$ blows up in finite time with the blow up speed \fref{Pexciitedlaw}. Building a smooth manifold would require proving local uniqueness and smoothness of the flow $\e_0\mapsto a_i(\e_0))_{2\leq i\leq L}$ which is a separate problem, we refer for example to \cite{KSNLS} for an introduction to this kind of issues. The control of the unstable modes relies on a classical soft and powerful Brouwer type topological argument in the continuation of \cite{CMM}, \cite{CZ}, \cite{HR}.\\

{\it 3. On quantized blow up rates}. There is an important formal and rigorous literature on the existence of quantized blow up rates for parabolic problems. In the pioneering formal works \cite{Velas}, \cite{FHV}, Filippas, Herrero and Velasquez predicted the existence of a sequence of quantized blow up rates for the supercritical power nonlinearity heat equation $$\pa_tu=\Delta u+u^p, \ \ x\in \Bbb R^d, \ \ p>p(d), \ \ d<7,$$ and this sequence is in one to one correspondence with the spectrum of the linearized operator close the explicit singular self similar solution. After this formal work, and using the a priori bounds on radial type II blow up solutions of Matano and Merle \cite{MaM1}, \cite{MaM2}, Mizogushi completely classified the radial data type II blow up according to these quantized rates. Note that Mizogushi finishes the classification using the Matano-Merle a priori estimates on threshold dynamics which heavily rely on the maximum principle, but the argument is not constructive. One of the main input of our work is to revisit the formal derivation of the sequence of blow up rates and to relate it not to a spectral problem, but to the structure of the resonances of the linearized operator $H$ close to $Q$ {\it and of its iterates}, i.e. the growing solutions to $$H^kT_k=0,  \ \ k\in \Bbb N^*.$$ In particular, we show how the {\it dynamics of tails} as initiated in \cite{RaphRod}, \cite{MRR} leads to a universal dynamical system driving the blow up speed which admits unstable solutions \fref{Pexciitedlaw} corresponding to a codimension $(L-1)$ set of initial data. Another by product of this analysis is the first explicit construction of type II blow up for the energy critical nonlinear heat equation, \cite{Sc}\\

{\it 4. Classification of the flow near $Q$}. The question of the classification of the flow near the harmonic map, and more generally near the ground state solitary wave in nonlinear evolution problems, has attracted a considerable attention recently, see for example \cite{textzurich}. This program has been concluded for the mass critical (gKdV) equation in the series of papers \cite{MMR1}, \cite{MMR2}, \cite{MMR3} where it is shown that provided the data is taken close enough to the ground state {\it in a suitable topology} which is strictly smaller than the energy norm, then the blow up dynamics are completely classified.  A contrario, arbitrarily slow blow up can be achieved for large deformations of the ground state in this restricted sense. The existence of such slow blow up regimes remains however open in many important instances, in particular for the mass critical (NLS), see \cite{MRS2010} for a further introduction to this delicate problem. For energy critical problems like wave or Schr\"odinger maps, the recent construction \cite{KST} shows that arbitrarily slow blow up can be achieved, but the known examples so far are never $\mathcal C^{\infty}$ smooth. The structure of the flow near $Q$ is also somewhat mysterious and various new kind of global dynamics have been constructed, see \cite{DK}, \cite{BT}. One of the new input of our analysis in this paper is to show the essential role played by he control of higher order Sobolev norms which provide a new topology to measure the distance to the solitary wave which is sharp enough to see all the blow up regimes \fref{Pexciitedlaw}. The control of these norms acts in the energy method as a replacement of the counting of the number of intersections of the solution with the ground state  which in the parabolic setting plays an essential role for the classification of the blow up dynamics, \cite{Mizo}, but relies in an essential way on maximum principle techniques. We believe that the blow up solutions we construct in this paper are the building blocks to classify the blow up dynamics near the ground state in a suitable topology.\\

{\it 5. Extension to dispersive problems}. We treat in this paper the parabolic problem, but the works \cite{RaphRod}, \cite{MRR} for the dispersive wave and Schr\"odinger maps with $\Bbb S^2$ target show the robustness of the approach. We expect that similar constructions can be performed there as well to produce arbitrarily slow $\mathcal C^{\infty}$ blow up solutions with quantized rate, hence completing the analysis of these excited regimes started in the seminal work \cite{KST}.\\

\noindent {\bf Aknowledgments:} P.R would like to thank Frank Merle and Igor Rodnianski for stimulating discussions on this problem. P.R and R.S. are supported by the junior ERC/ANR project SWAP. P.R is also party supported by the senior ERC grant BLOWDISOL.\\

\noindent{\bf Notations:} We introduce the differential operator $$\Lambda f=y\cdot \nabla f\ \ (\mbox{energy critical scaling}).$$ Given a parameter $\lambda>0$, we let $$u_\lambda(r)=u(y)\ \ \mbox{with} \ \ y=\frac{r}{\lambda}.$$ Given a positive number $b_1>0$, we let 
\be
\label{defbnot}
B_0=\frac{1}{\sqrt{b_1}}, \ \ B_1=\frac{|\log b_1|}{\sqrt{b_1}}.
\ee
 We let $\chi$ be a positive nonincreasing smooth cut off function with $$\chi(y)=\left\{\begin{array}{ll}1\ \ \mbox{for}\ \ y\leq 1,\\ 0\ \ \mbox{for} \ \ y\geq 2.\end{array}\right.$$ Given a parameter $B>0$, we will denote: 
\be
\label{defchib}
\chi_B(y)=\chi\left(\frac{y}{B}\right).
\ee
We shall systematically omit the measure in all radial two dimensional integrals and note: $$\int f=\int_0^{+\infty}f(r)rdr.$$ Given a $p$-uplet $J=(j_1,\dots,j_p)\in \Bbb N^p$, we introduce the norms: 
\be
\label{normsonpupelts}
|J|_1=\Sigma_{k=1}^pj_k, \ \ |J|_2=\Sigma_{k=1}^p kj_k.
\ee
We note $$\mathcal B_d(R)=\{x\in \Bbb R^d, \ \ |x|=\left(\Sigma_{i=1}^dx_i^2\right)^{\frac 12}\leq R\}.$$

\subsection{Strategy of the proof}


Let us give a brief insight into the strategy of the proof of Theorem \ref{thmmain}.\\

\noindent \emph{(i) Renormalized flow and iterated resonnances}. 

\noindent Let us look for a modulated solution $u(t,r)$ of \fref{mapg} in renormalized form 
\be
\label{renofineo}
u(t,r)=v(s,y), \ \ y=\frac{r}{\lambda(t)},\ \ \frac{ds}{dt}=\frac{1}{\lambda^2(t)}
\ee which leads to the self similar equation: 
\be
\label{eqselfsimilar}
\pa_sv-\Delta v+b_1\Lambda v+\frac{f(v)}{y^2}=0, \ \ b_1=-\lsl.
\ee
We know from theoretical ground that if blow up occurs, $v(s,y)=Q(y)+\e(s,y)$ for some small $\e(s,y)$, and hence the linear part of the $\e$ flow is governed by the Schr\"odinger operator $$H=-\Delta+\frac{f'(Q)}{y^2}.$$ The energy critical structure of the problem induces an explicit resonance: $$H(\Lambda Q)=0$$ where from explicit computation:
\be
\label{slowedecay}
 \Lambda Q\sim \frac{2}{y}\ \ \mbox{as}\ \ y\to \infty.
 \ee
 More generally, the iterates of the kernel of $H$ computed iteratively through the scheme 
 \be
 \label{deft}
 HT_{k+1}=-T_k, \ \ T_0=\Lambda Q,
 \ee display a non trivial tail at infinity:
 \be
 \label{tailtk}
 T_k(y)\sim y^{2k-1}(c_k \log y+d_k)\ \ \mbox{for}\ \ y\gg 1.
 \ee 
 
\noindent \emph{(ii) Tail dynamics}. 
 We now generalize the approach developed in \cite{RaphRod}, \cite{MRR} and claim that $(T_k)_{k\geq 1}$ correspond to unstable directions which can be excited in a universal way. To see this, let us look for a slowly modulated solution to \fref{eqselfsimilar} of the form $v(s,y)=Q_{b(s)}(y)$ with 
 \be
 \label{defqbintro}
 b=(b_1,\dots,b_L),\ \ Q_b=Q(y)+\Sigma_{i=1}^Lb_iT_i(y)+\Sigma_{i=2}^{L+2}S_i(y)
 \ee
 and with a priori bounds $$b_i\sim b_1^i, \ \ |S_i(y)|\lesssim b_1^iy^{C_i},$$ so that $S_i$ is in some sense homogeneous of degree $i$ in $b_1$. Our strategy is the following: choose the universal dynamical system driving the modes $(b_i)_{1\leq i\leq L}$ which generates the {\it least growing in space solution $S_i$}. Let us illustrate the procedure.\\
 
 \noindent \underline{$O(b_1)$}. We do not adjust the law of $b_1$ for the first term\footnote{if $(b_1)_s=-c_1b_1$, then $-\lsl\sim b_1\sim e^{-c_1s}$ and hence after integration in time $|\log \l|\lesssim 1$ and there is no blow up.}. We therefore obtain from \fref{eqselfsimilar} the equation $$b_1(HT_1+\Lambda Q)=0.$$
 \noindent \underline{$O(b_1^2,b_2)$}. We obtain: $$(b_1)_sT_1+b^2_1\Lambda  T_1+b_2HT_2+HS_2=b_1^2NL(T_1,Q)$$  where $NL(T_1,Q)$ corresponds to nonlinear interaction terms. When considering the far away tail \fref{tailtk}, we have for $y$ large, $$\Lambda T_1\sim T_1, \ \ HT_2=-T_1$$ and thus $$(b_1)_sT_1+b^2_1\Lambda T_1+b_2HT_2\sim ((b_1)_s+b_1^2-b_2)T_1,$$ and hence the leading order growth is cancelled by the choice 
 \be
 \label{chocioh}
 (b_1)_s+b_1^2-b_2=0.
 \ee
  We then solve for $$HS_2=-b^2_1(\Lambda T_1-T_1)+ NL(T_1,Q)$$ and check that $S_2\ll b_1^2T_1$ for $y$ large.\\
  \noindent \underline{$O(b_1^{k+1},b_{k+1})$}. At the $k$-th iteration, we obtain an elliptic equation of the form: $$(b_k)_sT_k+b_1b_k\Lambda  T_k+b_{k+1}HT_{k+1}+HS_1=b_1^{k+1}NL_k(T_1,\dots, T_{k},Q).$$ We have from \fref{tailtk} for tails: $$\Lambda T_k\sim (2k-1)T_k$$ and therefore:
  $$(b_{k})_sT_k+b_1b_{k}\Lambda T_k+b_{k+1}HT_{k+1}\sim ((b_k)_s+(2k-1)b_1b_k-b_{k+1})T_k.$$ The cancellation of the leading order growth occurs for $$
  (b_{k})_s+(2k-1)b_1b_{k}-b_{k+1}=0.$$ We then solve for the remaining $S_{k+1}$ term and check that $S_{k+1}\ll b_1^{k+1}T_{k+1}$ for y large.\\
  
 \noindent \emph{(iii) The universal system of ODE's}. The above approach leads to the universal system of ODE's which we stop after the $L$-th iterate:
 \be
\label{systdynfundone}
(b_k)_s+\left(2k-1\right)b_1b_k-b_{k+1}=0,  \ \ 1\leq k\leq L, \ \ b_{L+1}\equiv 0, \ \ -\lsl=b_1.
\ee
It turns out, and this is classical for critical problems, that an additional logarithmic gain related to the growth \fref{tailtk} can be captured, and this turns out to be essential for the analysis\footnote{see for example \cite{RaphRod} for a further discussion.}. This leads to the sharp dynamical system:
\be
\label{systdynfundintro}
\left\{\begin{array}{lll}(b_k)_s+\left(2k-1+\frac{2}{|\log b_1|}\right)b_1b_k-b_{k+1}=0,  \ \ 1\leq k\leq L, \ \ b_{L+1}\equiv 0,\\
-\lsl=b_1,\\
\frac{ds}{dt}=\frac1{\l^2}.
\end{array}\right.
\ee
It is easily seen -Lemma \ref{lemmaexplicitsol}- that \fref{systdynfundintro} rewritten in the original $t$ time variable admits solutions such that $\l(t)$ touches 0 in finite time $T$ with the asymptotic \fref{Pexciitedlaw}, equivalently in renormalized variables:
\be
\label{odelrejropw}
\l(s)\sim \frac{(\log s)^{|d_1|}}{s^{c_1}}, \ \ b(s)\sim \frac{c_1}{s} \ \ \mbox{with}\ \ c_1=\frac{L}{2L-1},  \ \ d_1=\frac{-2L}{(2L-1)^2}.
\ee
 Moreoever -Lemma \ref{lemmalinear}-, the corresponding solution is stable for $L=1$, this is the stable blow up regime, and unstable with $(L-1)$ directions of instabilities for $L\geq 2$.\\
 
\noindent \emph{(iv). Decomposition of the flow and modulation equations}.

\noindent  Let then the approximate solution $Q_b$ be given by \fref{defqbintro} which by construction generates an approximate solution to the renomalized flow \fref{eqselfsimilar}:
$$\Psi_b = \pa_sQ_b -\Delta Q_b+ b \Lambda Q_b+ \frac{f(Q_b)}{y^2}=\rm{Mod}(t)+O(b^{2L+2})$$ where roughly 
$$\rm{Mod}(t) = \Sigma_{i=1}^{L}\left[(b_i)_s+(2i-1+\frac{2}{|\log b_1|})b_1b_i-b_{i+1}\right]T_i.$$ We localize $Q_b$ in the zone $y\leq B_1$ to avoid the irrelevant growing tails for $y\gg \frac{1}{\sqrt{b_1}}$. We then pick an initial data of the form $$u_0(y)=Q_{b}(y)+\e_0(y), \ \ |\e_0(y)|\ll 1$$ in some suitable sense where $b(0)$ is chosen initially close to the exact excited solution to \fref{systdynfundone}. From standard modulation argument, we dynamically introduce a modulated decomposition of the flow 
\be
\label{decomntoro}
u(t,r)=(Q_{b(t)}+\e)\left(t,\frac{r}{\lambda(t)}\right)=(Q_{b(t)})\left(t,\frac{r}{\lambda(t)}\right)+w(t,r)
\ee
where the $L+1$ modulation parameters $(b(t),\l(t))$ are chosen in order to manufacture the orthogonality conditions: 
\be
\label{vnekoenono}
(\e(t),H^k\Phi_M)=0, \ \ 0\leq k\leq M.
\ee
Here $\Phi_M(y)$ is some fixed direction depending on some large constant $M$ which generates an approximation of the kernel of the iterates of $H$, see \fref{defdirection}. This orthogonal decomposition, which for each fixed time t directly follows from the implicit function theorem, now allows us to compute the modulation equations governing the parameters $(b(t),\l(t))$. The $Q_b$ construction is precisely manufactured to produce the expected ODE's\footnote{see Lemma \ref{modulationequations}.}:
\be
\label{cnbecbnoenoe}
\left|\lsl+b_1\right|+\Sigma_{i=1}^L\left|(b_i)_s+(2i-1+\frac{2}{|\log b_1|})b_1b_i-b_{i+1}\right|\lesssim \|\e\|_{loc}+b_1^{L+\frac32}
\ee where $\|\e\|_{loc}$ measures a {\it local in space} interaction with the harmonic map.\\

 \noindent \emph{(iii). Control of the radiation and monotonicity formula.}
 
 According to \fref{cnbecbnoenoe}, the core of our analysis is now to show that local norms of $\e$ are under control and do not perturb the dynamical system \fref{systdynfundone}. This is achieved using high order Sobolev norms adapted to the linear flow, and in particular we claim that the orthogonality conditions \fref{vnekoenono} ensure the Hardy type coercivity of the iterated operator:
 $$\mathcal E_{2k+2}=\int |H^{k+1}\e|^2\gtrsim \int \frac{|\e|^2}{(1+y^{4k+4})(1+|\log y|^2)}, \ \ 0\leq k\leq L.$$
 We now claim the we can control theses norms thanks to an energy estimate {\it seen on the linearized equation in original variables} that is by working with $w$ in \fref{decomntoro} and not $\e$,  as initiated in \cite{RaphRod}, \cite{MRR}. Here the parabolic structure of the problem simplifies a bit the analysis to display some repulsively property of the renormalized linearized operator, see the proof of Proposition \ref{monoenoiencle}. The outcome is an estimate of the form 
 \be
 \label{vnknvornror}
 \frac{d}{ds}\left\{\frac{\matchal E_{2k+2}}{\l^{4k+2}}\right\}\lesssim \frac{b_1^{2k+3}}{\l^{4k+2}}|\log b_1|^{c_k}
 \ee where the right hand side is controlled by the size of the error in the construction of the approximate blow up profile. Integrating this in time yields two contributions, one from data and one from the error:
 $$\matchal E_{2k+2}(s)\lesssim \l^{4k+2}(s)\mathcal E_{2k+2}(0)+\l^{4k+2}(s)\int_{s_0}^s\frac{b_1^{2k+3}}{\l^{4k+2}}|\log b_1|^{c_k}d\sigma.$$ The second contribution is estimated in the regime \fref{odelrejropw} using the fundamental algebra: 
 \be
\label{comprsion}
(2k+3)-c_1(4k+2)=1+\frac{2(L-k-1)}{2L-1}\left\{\begin{array}{ll}\geq 1 \ \ \mbox{for}\ \ k\leq L-1,\\ <1\ \ \mbox{for}\ \ k=L.\end{array}\right.
\ee
Hence data dominates for $k\leq L-1$ up to a logarithmic error: 
$$\l^{4k+2}(s)\int_{s_0}^s\frac{b_1^{2k+3}}{\l^{4k+2}}|\log b_1|^{c_k}d\sigma\sim  \l^{4k+2}(\log s)^C\int_{s_0}^s\frac{d\sigma}{\sigma^{2k+3-c_1(4k+2)}}\sim \l^{4k+2}(\log s)^C$$ which yields the bound
\be
\label{lossylog}
\mathcal E_{2k+2}\lesssim \l^{4k+2} |\log s|^C, \ \ 0\leq k\leq L-1
\ee
which simply expresses the boundedness up to a log of $w$ in some Sobolev type $H^{k+1}$ norm. On the contrary, for $k=L$, first of all we can derive a {\it sharp logarithmic gain} in \fref{vnknvornror}:
\be
\label{neneneone}
 \frac{d}{ds}\left\{\frac{\matchal E_{2L+2}}{\l^{4k+2}}\right\}\lesssim \frac{b_1^{2L+3}}{\l^{4L+2}|\log b_1|^2}
 \ee and then the integral diverges from \fref{comprsion} and:
 $$\l^{4k+2}(s)\int_{s_0}^s\frac{b_1^{2L+3}}{\l^{4L+2}|\log b_1|^2}d\sigma\sim \l^{4k+2}(s)\int_{s_0}^s\frac{1}{\sigma}\frac{b_1^{2L+2}}{\l^{4L+2}|\log b_1|^2}d\sigma\sim \frac{b_1^{2L+2}}{|\log b_1|^2}\gg \l^{4k+2}.$$ We therefore obtain 
 \be
 \label{cnekoncnenoe}
 \mathcal E_{2L+2}\lesssim \frac{b_1^{2L+2}}{|\log b_1|^2}.
 \ee 
 The difference between the controls \fref{lossylog} for $0\le k\leq L-1$ and the sharp control \fref{cnekoncnenoe} is an essential feature of the analysis and explains the introduction of an exactly order $L+1$ Sobolev energy.\\ We can now reinject this bound into \fref{cnbecbnoenoe} and show thanks to the logarithmic gain in \fref{neneneone} that $\e$ does not perturb the system \fref{systdynfundintro}, modulo the control of the associated unstable $L-1$ modes by a further adjusted choice of the initial data. This concludes the proof of Theorem \ref{thmmain}.\\

This paper is organized as follows. In section \ref{sectiontwo}, we construct the aprroximate self similar solutions $Q_b$ and obtain sharp estimates on the error term $\Psi_b$. We also exhibit an explicit solution to the dynamical system \fref{systdynfundintro} and show that it displays $(L-1)$ directions of instability. In section \ref{sectionthree}, we set up the bootstrap argument, Proposition \ref{bootstrap}, and derive the fundamental monotonicity of the Sobolev type norm $\|H^{L+1}\e\|_{L^2}^2$, Proposition \ref{AEI2}, which is the heart of the analysis. In section \ref{sectionfour}, we close the bootstrap bounds which easily imply the blow up statement of Theorem \ref{thmmain}.


\section{Construction of the approximate profile}
\label{sectiontwo}

This section is devoted to the construction of the approximate $Q_b$ blow up profile and the study of the associated dynamical system for $b=(b_1,\dots,b_L)$.


\subsection{The linearized Hamiltonian}


Let us start with recalling the structure of the harmonic map $Q$ which is the unique -up to scaling- solution to 
\be
\label{harmonicmapequation} 
\Lambda Q=g(Q), \ \ Q(0)=0, \ \ \lim_{r\to +\infty}Q(r)=\pi.
\ee
This equation can be integrated explicitely\footnote{see \cite{RSc1} for more details}. $Q$ is smooth $Q\in \mathcal C^{\infty}([0,+\infty),[0,\pi))$ and admits using \fref{assumtiong} a Taylor expansion\footnote{up to scaling} to all order at the origin:
\be
\label{origin}
Q(y)=\Sigma_{i=0}^pc_{i}y^{2i+1}+O(y^{2p+3})\ \ \mbox{as}\ \ y\to 0,
\ee 
and at infinity:
\be
\label{infinity}
Q(y)=\pi-\frac{2}{y}-\Sigma_{i=1}^p\frac{d_i}{y^{2i+1}}+O\left(\frac{1}{y^{2p+3}}\right)\ \ \mbox{as}\ \ y\to +\infty.
\ee
The linearized operator close to $Q$ displays a remarkable structure. Indeed, let the potentials 
\be
\label{defpotential}
Z=g'(Q), \ \ V=Z^2+\Lambda Z=f'(Q), \ \ \widetilde{V}=(1+Z)^2-\Lambda Z,
\ee
which satisfy from \fref{origin},\fref{infinity} the following behavior at $0,+\infty$: 
\be
\label{comportementz} 
Z(y)=\left\{\begin{array}{ll}1+\Sigma_{i=1}^pc_iy^{2i}+O(y^{2p+2})\ \ \mbox{as}\ \ y\to 0,\\
 				-1+\Sigma_{i=1}^p\frac{c_i}{y^{2i}}+O\left(\frac{1}{y^{2p+2}}\right) \ \ \mbox{as}\ \  y\to+\infty,\end{array}\right.
\ee
\be
\label{comportementv} 
V(y)=\left\{\begin{array}{ll}1+\Sigma_{i=1}^pc_iy^{2i}+O(y^{2p+2})\ \ \mbox{as}\ \ y\to 0,\\
 				1+\Sigma_{i=1}^p\frac{c_i}{y^{2i}}+O\left(\frac{1}{y^{2p+2}}\right) \ \ \mbox{as}\ \  y\to+\infty,\end{array}\right.
\ee
\be
\label{comportementvtilde} 
\widetilde{V}(y)=\left\{\begin{array}{ll}4+\Sigma_{i=1}^pc_iy^{2i}+O(y^{2p+2})\ \ \mbox{as}\ \ y\to 0,\\
 				\Sigma_{i=1}^p\frac{c_i}{y^{2i}}+O\left(\frac{1}{y^{2p+2}}\right) \ \ \mbox{as}\ \  y\to+\infty,\end{array}\right.
\ee
where $(c_i)_{i\geq 1}$ stands for some generic sequence of constants which depend on the Taylor expansion of $g$ at $(0,\pi)$. The linearized operator close to $Q$ is the Schr\"odinger operator:
\be
\label{defh}
H=-\Delta +\frac{V}{y^2}.
\ee and admits the factorization:
\be
\label{facotrization}
H=A^*A
\ee with $$A= -\partial_y + \frac{Z}{y}, \ \ \ A^*= \partial_y + \frac{1+Z}{y},\ \ Z(y)=g'(Q).$$ Observe that equivalently:
\be
\label{otherformula}
Au=-\Lambda Q\frac{\partial}{\partial y}\left(\frac{u}{\Lambda Q}\right), \ \ A^*u=\frac{1}{y\Lambda Q}\frac{\partial}{\pa y}\left(uy\Lambda Q\right)
\ee
and thus the kernels of $A$ and $A^*$ on $\Bbb R^*_+$ are explicit: 
\be
\label{efinitei}
Au=0\ \ \mbox{iff}\ \  u\in \mbox{Span}(\Lambda Q),\ \ A^*u=0\ \ \mbox{iff}\ \ u\in \mbox{Span}\left(\frac{1}{y\Lambda Q}\right).
\ee
Hence the kernel of $H$ on $\Bbb R^*_+$ is: 
\be
\label{kernelh}
Hu=0\ \ \mbox{iff}\ \ u\in \mbox{Span}(\Lambda Q,\Gamma)
\ee 
with 
\be
\label{Gamma} \Gamma(y)=\Lambda \phi\int_1^y\frac{dx}{x(\Lambda\phi(x))^2}=\left\{\begin{array}{ll} O(\frac1y) \ \ \mbox{as} \ \ y\to 0,\\ \frac{y}{4}+O\left(\frac{\log y}{y}\right)\ \  \mbox{as} \ \ y\to +\infty.\end{array}\right . 
\ee
In particular, $H$ is a positive operator on $\dot{H}^1_{rad}$ with a {\it resonnance} $\Lambda Q$ at the origin induced by the energy critical scaling invariance. We also introduce the conjuguate Hamiltonian
\be
\label{computationhtilde}
\tilde{H}=AA^*=-\Delta +\frac{\widetilde{V}}{y^2}
\ee
which is definite positive by construction and \fref{efinitei}, see Lemma \ref{coerchtilde}.


\subsection{Admissible functions}


The explicit knowledge of the Green's functions allows us to introduce the formal inverse
\be
\label{definvesr}
H^{-1}f=-\G(y)\int_0^y f\Lambda Qxdx +\Lambda Q(y)\int_{0}^y f\G xdx.
\ee 
Given a function $f$, we introduce the suitable derivatives of $f$ by considering the sequence:
\be
\label{suitablederivatives}
 f_0=f, \ \ f_{k+1}=\left\{\begin{array}{ll} A^*f_k\ \ \mbox{for k odd}\\ A f_k\ \ \mbox{for k even}\end{array}\right ., \ \ k\geq 0.
 \ee 
 We shall introduce the formal notation $$f_k=\matchal A^kf.$$ We define a first class of admissible functions which display a suitable behavior both at the origin and infinity:
 
\begin{definition}[Admissible functions]
\label{defadmissible}
We say a smooth function $f\in \mathcal C^{\infty}(\Bbb R_+,\Bbb R)$ is admissible of degree $(p_1,p_2)\in \Bbb N \times \Bbb Z$  if:\\
(i) $f$ admits a Taylor expansion at the origin to all order:
\be
\label{assumptoinfsbis}
f(y)=\Sigma_{k=p_1}^pc_ky^{2k+1}+O(y^{2p+3}).
\ee
(ii) $f$ and its suitable derivatives admit a bound for $y\geq 2$:
\be
\label{taylorexpansionoriginbis}
\forall k\geq 0, \ \ \left|f_k(y) \right|\lesssim   \left\{\begin{array}{ll}y^{2p_2-k-1}(1+|\log y|) \ \ \mbox{for}\ \ 2p_2-k\geq 1,\\ y^{2p_2-k-1} \ \ \mbox{for}\ \ 2p_2-k\leq 0\end{array}\right..
\ee

\end{definition}

$H$ naturally acts on the class of admissible functions in the following way:

\begin{lemma}[Action of $H$ and $H^{-1}$ on admissible functions]
\label{lemmapropinverse}
Let $f$ be an admissible function of degree $(p_1,p_2)$, then:\\
(i) $\forall l\geq 1$, $H^lf$ is admissible of degree:
\be
\label{dergeeiterate}
(\max(p_1-l,0),p_2-l).
\ee
(ii) $\forall l,p_2\geq 0$, $H^{-l}f$ is admissible of degree:
\be
\label{knehatof}
(p_1+l,p_2+l).
\ee
\end{lemma}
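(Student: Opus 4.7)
The plan is to prove both parts by induction on $l$, with the base case $l=1$ containing all the real content.

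\textbf{Part (i).} The decisive observation is that, by the very definition of the admissible-derivative sequence and the identity $f_{2j} = H^j f$, setting $g := H^l f$ gives $\mathcal{A}^k g = f_{k+2l}$ for every $k \geq 0$. The infinity bound \eqref{taylorexpansionoriginbis} with shifted parameter $p_2 - l$ then becomes simply the admissibility of $f$ applied at the shifted index $k + 2l$, since $2(p_2 - l) - k - 1 = 2p_2 - (k+2l) - 1$. For the behavior at the origin, I would Taylor-expand using \eqref{comportementv} to obtain
\[
H(y^{2k+1}) = \bigl(V(y) - (2k+1)^2\bigr)\, y^{2k-1} = -4k(k+1)\, y^{2k-1} + O(y^{2k+1}).
\]
For $k \geq 1$ this shows $H$ sends leading order $y^{2k+1}$ to leading order $y^{2k-1}$, while for $k=0$ the $y$-coefficient is annihilated and the remainder is a smooth odd function of order at least $y$. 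Iterating $l$ times preserves the smooth odd Taylor expansion and produces origin degree $\max(p_1-l,0)$, as claimed in \eqref{dergeeiterate}.

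\textbf{Part (ii).} Set $g := H^{-1} f$ as in \eqref{definvesr}. Smoothness and an odd Taylor expansion of $g$ at the origin follow from the integral formula together with the Taylor expansions of $\Lambda Q$ and $\Gamma$ implied by \eqref{origin} and \eqref{Gamma}: the leading monomial $c_{p_1}\, y^{2p_1+1}$ of $f$ is sent to $-\frac{c_{p_1}}{4(p_1+1)(p_1+2)}\, y^{2p_1+3}$, consistent with the origin-degree shift $p_1 \mapsto p_1 + 1$. At infinity, the identity $Hg = f$ translates into $\mathcal{A}^k g = f_{k-2}$ for every $k \geq 2$, so it remains only to estimate $g_0 = g$ and $g_1 = Ag$ directly. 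Plugging the asymptotics $\Lambda Q \sim 2/y$ and $\Gamma \sim y/4$ (from \eqref{infinity} and \eqref{Gamma}) into \eqref{definvesr} and integrating yields $|g(y)| \lesssim y^{2p_2+1}(1+|\log y|)$. For $Ag$ I would exploit the factorized form from \eqref{otherformula}: a short computation using $\partial_y(\Gamma/\Lambda Q) = \bigl(y(\Lambda Q)^2\bigr)^{-1}$ causes the boundary terms to collapse, leaving
\[
Ag(y) = \frac{1}{y\, \Lambda Q(y)} \int_0^y f(x)\, \Lambda Q(x)\, x\, dx,
\]
from which $|Ag(y)| \lesssim y^{2p_2}(1+|\log y|)$ follows at once using $1/(y \Lambda Q) \sim 1/2$ at infinity. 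Iteration on $l$, which preserves the hypothesis $p_2 \geq 0$ throughout the recursion, concludes \eqref{knehatof}.

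The delicate step is the infinity estimate for $Ag$: bounding $\partial_y g$ by brute force from the expression for $g$ alone would lose a power of $y$ relative to the sharp exponent. The cancellation that saves us is the collapse of boundary terms in the computation of $Ag$, which is a direct manifestation of the factorization $H = A^*A$ and of the Wronskian identity for the pair $(\Lambda Q, \Gamma)$; this is the structural reason the infinity exponent in \eqref{knehatof} can be taken as sharp and, more importantly, why the admissibility class is stable under $H$ and $H^{-1}$ in a way compatible with the iterative construction of $Q_b$.
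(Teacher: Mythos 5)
Your proof is correct and is essentially the paper's: part (i) rests on the origin cancellation $H(y)=cy+O(y^3)$ (equivalently your computation $H(y^{2k+1})=(V-(2k+1)^2)y^{2k-1}$) together with the tautological shift $\mathcal{A}^k(H^l f)=f_{k+2l}$ at infinity, and part (ii) reduces to $k=0,1$ and uses the Wronskian-collapsed identity $Au=\frac{1}{y\Lambda Q}\int_0^y f\,\Lambda Q\,x\,dx$, exactly as in the paper (the only cosmetic difference being that the paper bounds $u$ at infinity by re-integrating $Au$ through $u=-\Lambda Q\int_0^y \frac{Au}{\Lambda Q}\,dx$, while you bound $u$ directly from \fref{definvesr}). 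One small caveat: your concluding remark that a brute-force bound on $\partial_y g$ from \fref{definvesr} would lose a power of $y$ is not accurate, since the boundary terms cancel identically upon differentiation and one still gets $O\big(y^{2p_2}(1+|\log y|)\big)$, but this aside does not affect the validity of the argument you actually give.
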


\begin{proof}[Proof of Lemma \ref{lemmapropinverse}]  This a simple consequence of the expansions \fref{origin}, \fref{infinity}.\\
Let us  first show that $Hf$ is admissible of degree at least $(\max(p_1-1,1),p_2-1)$ which yields \fref{dergeeiterate} by induction. We inject the Taylor expansions \fref{assumptoinfsbis}, \fref{taylorexpansionoriginbis} into \fref{defh}. Near the origin, the claim directly follows from the Taylor expansion \fref{comportementv}  and the cancellation $H(y)=cy+O(y^3)$ at the origin. The claim at infinity directly follows from the relation $u_k=f_{k+2}$ by definition.\\
Let now $p_2\geq 0$ and $u=H^{-1}f$ be given by \fref{definvesr}, and let us show that $u$ is admissible of degree at least $(p_1+1,p_2+1)$ which yields \fref{knehatof} by induction. From the relation $u_k=f_{k-2}$ for $k\geq 2$, we need only consider $k=0,1$. We first observe from the Wronskian relation $\Gamma'(\Lambda Q)-(\Lambda Q)'\Gamma=\frac1y$ that $$A\Gamma=-\Gamma'+\frac{Z}{y}\Gamma=-\Gamma'+\frac{(\Lambda Q)'}{\Lambda Q}\Gamma=-\frac{1}{y\Lambda Q}.$$ We thus compute using the cancellation $A\Lambda Q=0$: 
\be
\label{formulaau}
Au=-A\Gamma\int_0^yf\Lambda Qxdx=\frac{1}{y\Lambda Q}\int_0^yf\Lambda Qxdx.
\ee
Moreover, we may invert $A$ using \fref{otherformula} and the boundary condition $u=O(y^3)$ from \fref{definvesr} which yields: 
\be
\label{inveriosnu}
u=-\Lambda Q\int_0^y\frac{Au}{\Lambda Q}dx=-\Lambda Q(y)\int_0^y\frac{dx}{x(\Lambda Q(x))^2}\int_0^xf(z)\Lambda Q(z)zdz.
\ee
This yields using \fref{origin} the Taylor expansion near the origin:
$$Au=\Sigma_{k=p_1}^pc_k^{(1)}y^{2k+2}+O(y^{2p+4}), \ \ u=-\Lambda Q\int_0^y\frac{Au}{\Lambda Q}dx=\Sigma_{k=p_1}^pc^{(2)}_ky^{2k+3}+O(y^{2p+5})$$ and hence $u$ is of degree at least $p_1+1$ near the origin. For $y\geq 1$, we estimate in brute force from \fref{formulaau}, \fref{inveriosnu}, \fref{taylorexpansionoriginbis} for $p_2\geq 1$:
$$|Au|=|u_1|\lesssim \int_0^y\tau^{2p_2-1}(1+|\log \tau|)d\tau\lesssim y^{2p_2}(1+|\log y|),$$
$$|u|\lesssim \frac{1}{y}\int_0^y \tau^{2p_2}(1+|\log \tau|)\tau d\tau\lesssim y^{2p_2+1}(1+|\log y|),$$
and for $p_2=0$:
$$|Au|=|u_1|\lesssim \int_1^y\tau^{-1}d\tau\lesssim 1+|\log y|,$$
$$|u|\lesssim \frac{1}{y}\int_0^y (1+|\log \tau|)\tau d\tau\lesssim y(1+|\log y|).$$
Hence $u$ satisfies \fref{taylorexpansionoriginbis} with $p_2\to p_2+1$ and $k=0,1$.
\end{proof}

Let us give an explicit example of admissible functions which will be essential for the analysis. From \fref{origin} and the cancellation $A\Lambda Q=0$, $\Lambda Q$ is admissible of degree $(0,0)$, and hence Lemma \ref{lemmapropinverse} ensures:

\begin{lemma}[Generators of the kernel of $H^i$]
\label{lemmaradiation}
Let the sequence of profiles for $i\geq 1$
\be
\label{deftk}
T_i=(-1)^{i}H^{-i}\Lambda Q,
\ee
then $T_i$ is admissible of degree $(i,i)$. 
\end{lemma}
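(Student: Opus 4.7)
The plan is to verify the base case and then invoke Lemma \ref{lemmapropinverse}(ii) iteratively. First I would check that $\Lambda Q$ itself is admissible of degree $(0,0)$. At the origin, differentiating \fref{origin} term by term gives
$$\Lambda Q(y)=\sum_{i=0}^{p}(2i+1)c_i\, y^{2i+1}+O(y^{2p+3}),$$
which is exactly the form \fref{assumptoinfsbis} with $p_1=0$. At infinity, differentiating \fref{infinity} yields $\Lambda Q(y)=\frac{2}{y}+O(1/y^{3})$, hence $|\Lambda Q(y)|\lesssim y^{-1}=y^{2\cdot 0-0-1}$, matching \fref{taylorexpansionoriginbis} for $k=0$ with $p_2=0$ (no log, since $2p_2-k=0\leq 0$). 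For the higher derivatives, the key observation is the algebraic cancellation $A(\Lambda Q)=0$ from \fref{efinitei}, which forces $(\Lambda Q)_k\equiv 0$ for all $k\geq 1$ and hence trivially \fref{taylorexpansionoriginbis} holds for $k\geq 1$.

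Next I would feed this into Lemma \ref{lemmapropinverse}(ii). By definition $T_i=(-1)^i H^{-i}\Lambda Q$; the sign plays no role in admissibility, so it suffices to track how iterated application of the formal inverse \fref{definvesr} raises the degree. Starting from $\Lambda Q$ of degree $(0,0)$ with $p_2=0\geq 0$, Lemma \ref{lemmapropinverse}(ii) with $l=1$ yields that $T_1=-H^{-1}\Lambda Q$ is admissible of degree $(1,1)$. Assuming inductively that $T_k$ is admissible of degree $(k,k)$, the second component $p_2=k$ is nonnegative, so the hypothesis of Lemma \ref{lemmapropinverse}(ii) is satisfied, and one more application yields $T_{k+1}=-H^{-1}T_k$ admissible of degree $(k+1,k+1)$. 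An immediate induction gives the claimed degree $(i,i)$ for every $i\geq 1$.

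The proof is thus a direct corollary of Lemma \ref{lemmapropinverse}(ii); the substantive work (the careful tracking of the Taylor expansion at the origin and the logarithmic tail at infinity through $H^{-1}$) has already been carried out in the preceding lemma. The only points requiring care are the verification of the base case $\Lambda Q$ (trivial thanks to $A\Lambda Q=0$) and checking that the condition $p_2\geq 0$ required in part (ii) of that lemma is preserved at each step of the induction, which is automatic since $p_2$ strictly increases by one at each application of $H^{-1}$.
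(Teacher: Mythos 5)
Your proof is correct and follows the same route as the paper: the paper likewise observes that $\Lambda Q$ is admissible of degree $(0,0)$ (via \fref{origin} and the cancellation $A\Lambda Q=0$, which kills all suitable derivatives) and then cites Lemma \ref{lemmapropinverse}(ii). You merely spell out the base-case verification that the paper leaves implicit.
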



\subsection{$b_1$ admissible functions}


We will need an extended notion of admissible function for the construction of the blow up profile. In the sequel, we consider a small enough $0<b_1\ll1$ and let $B_0, \chi_{B_0}$ be given by \fref{defbnot}, \fref{defchib}. Given $l\in \Bbb Z$, we let:
\be
\label{defgby}
g_l(b_1,y)=\left\{\begin{array}{ll}\frac{1+|\log (\sqrt{b_1}y)|}{|\log b_1|}{\bf 1}_{y\leq 3B_0}\ \ \mbox{for}\ \ l\geq 1\\ \frac{{\bf 1}_{y\leq 3B_0}}{|\log b_1|}\ \ \mbox{for}\ \ l\leq 0\end{array}\right.
\ee
and similarily:
\be
\label{defgbybis}
\tilde{g}_l(b_1,y)=\left\{\begin{array}{ll}\frac{1+|\log y|}{|\log b_1|}{\bf 1}_{y\leq 3B_0}\ \ \mbox{for}\ \ l\geq 1\\ \frac{{\bf 1}_{y\leq 3B_0}}{|\log b_1|}\ \ \mbox{for}\ \ l\leq 0\end{array}\right..
\ee

We then define the extended class of $b_1$ admissible functions:

\begin{definition}[$b_1$ admissible functions]
\label{defadmissiblebone}
We say a smooth function $f\in \mathcal C^{\infty}(\Bbb R^*_+\times \Bbb R_+,\Bbb R)$ is  $b_1$-admissible of degree $(p_1,p_2)\in \Bbb N \times \Bbb Z$  if:\\
(i) For $y\leq 1$, $f$ admits a representation 
\be
\label{separationvariable}
f(b_1,y)=\Sigma_{j=1}^Jh_j(b_1)\tilde{f}_j(y)
\ee 
for some finite order $J\in \Bbb N^*$, some smooth functions $\tilde{f}_j(y)$ with a Taylor expansion at the origin to all order: $\forall y\le 1$,
\be
\label{assumptoinfs}
\tilde{f}_j(y)=\Sigma_{k=p_1}^pc_{k,j}y^{2k+1}+O(y^{2p+3})
\ee
and some smooth functions $h_j(b_1)$ away from the origin with 
\be
\label{boudnorigin}
\forall l\geq 0, \ \ \left|\frac{\partial^l h_j}{\partial b_1^l}\right|\lesssim \frac{1}{b_1^l}.
\ee
(ii) $f$ and its suitable derivatives \fref{suitablederivatives} satisfy a uniform bound for some constant $c_{p_2}>0$: $\forall y\geq 2$, $\forall k\geq0$:
\bea
\label{boundinfitybis}
\nonumber |f_k(b_1,y)|& \lesssim &  y^{2p_2-k-1}g_{2p_2-k}(b_1,y)+y^{2p_2-k-3}|\log y|^{c_{p_2}}\\
& + & F_{p_2,k,0}(b_1)y^{2p_2-k-3}{\bf 1}_{y\geq 3B_0},
\eea
and $\forall l\geq 1$:
\bea
\label{boundinfity}
\nonumber \left|\frac{\pa^{l}}{\pa b_1^l}f_k(b_1,y)\right|&\lesssim &\frac{1}{b^l_1|\log b_1|}\left\{  y^{2p_2-k-1}\tilde{g}_{2p_2-k}(b_1,y)+y^{2p_2-k-3}|\log y|^{c_{p_2}}\right\}\\
 & + & F_{p_2,k,l}(b_1) y^{2p_2-k-3}{\bf 1}_{y\geq 3B_0}
\eea
with
 \be
\label{defffbone}
\forall l\geq 0, \ \  F_{p_2,k,l}(b_1)=\left\{\begin{array}{ll}0\ \mbox{for}\ \ 2p_2-k-3\leq -1,\\ \frac{1}{b_1^{l+1}|\log b_1|}\ \ \mbox{for}\ \ 2p_2-k-3\geq 0\end{array}\right.
\ee
\end{definition}

\begin{remark} Let us consider the solution $T_1$ to $$HT_1=-\Lambda Q,$$ then an explicit computation reveals the growth for $y$ large $$\Lambda Q\sim \frac 1y, \ \ T_1(y)\sim y \log y.$$ The $b_1$ admissibility corresponds to a $\log b_1$ gain on the growth at $\infty$ which is an essential feature of the slowly growing tails in the construction of the modulated blow up profile in Proposition \ref{consprofapproch}. Observe for example that \fref{boundinfitybis}, \fref{defffbone} imply the rough bound:
\be
\label{roughbound}
|f_k|\lesssim (1+y)^{2p_2-1-k}, \ \ \left|\frac{\pa^l f_k}{\pa b_1^l}\right|\lesssim \frac{(1+y)^{2p_2-1-k}}{|\log b_1|}, \ \ k\geq 0,\ \ l\geq 1, 
\ee 
and hence a logarithmic improvement with respect to \fref{taylorexpansionoriginbis}. This gain will be measured in a sharp way through the computation of suitable weighted Sobolev bounds, see  Lemma \ref{lemmaestimate}. 
\end{remark}

We claim that $H,H^{-1}$ and the scaling operators naturally act on the class of $b_1$ admissible functions in the following way:

\begin{lemma}[Action of $H$,$H^{-1}$ and scaling operators on $b_1$-admissible functions]
\label{lemmapropinversebis}
Let $f$ be a $b_1$-admissible function of degree $(p_1,p_2)$, then:\\
(i) $\forall l\geq 1$, $H^lf$ is $b_1$-admissible of degree:
\be
\label{dergeeiteratebis}
(\max(p_1-l,0),p_2-l).
\ee
(ii) $\forall l,p_2\geq 1$, $H^{-l}f$ is $b_1$-admissible of degree:
\be
\label{knehatofbis}
(p_1+l,p_2+l).
\ee
(iii) $\Lambda f=y\pa_y f$ is admissible of degree $(p_1,p_2)$.\\
(iv) $b_1\frac{\pa f}{\pa b_1}$ is admissible of degree $(p_1,p_2)$.
\end{lemma}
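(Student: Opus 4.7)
The plan is to mimic the proof of Lemma \ref{lemmapropinverse} while carefully tracking the dependence on $b_1$, the logarithmic weights $g_l,\tilde g_l$, and the tail terms $F_{p_2,k,l}$. By induction on $l$ it suffices to treat $l=1$ in (i) and (ii). Throughout, the key structural observation is that the operators $H$, $H^{-1}$, $\Lambda$ all act only in $y$ and hence preserve the separation-of-variables representation \fref{separationvariable} near the origin: if $f=\sum_j h_j(b_1)\tilde f_j(y)$ then $Hf=\sum_j h_j(b_1)\,H\tilde f_j$ and similarly for $H^{-1}$ and $\Lambda$, so the $b_1$-coefficients $h_j(b_1)$ are unchanged and their derivative bounds \fref{boudnorigin} are automatic.

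For part (i) with $l=1$, the origin analysis follows from Lemma \ref{lemmapropinverse} applied to each $\tilde f_j$: the identity $H(\Lambda Q)=0$ together with \fref{comportementv} forces the shift $p_1\mapsto \max(p_1-1,0)$. At infinity the fundamental identity $(Hf)_k=f_{k+2}$ (immediate from \fref{suitablederivatives} and $H=A^*A$) turns the bounds \fref{boundinfitybis}--\fref{boundinfity} for $f$ at degree $(p_1,p_2)$ into exactly the same bounds at degree $(p_1-1,p_2-1)$, since $2(p_2-1)-k=2p_2-(k+2)$ and $F_{p_2-1,k,l}=F_{p_2,k+2,l}$. For part (ii) with $l=1$, the origin analysis proceeds by plugging the Taylor expansions of each $\tilde f_j$ into the explicit inversion formula \fref{inveriosnu} and integrating term by term, yielding an expansion of degree $p_1+1$; the $b_1$-coefficients are inherited unchanged. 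At infinity the identity $(H^{-1}f)_k=f_{k-2}$ for $k\ge 2$ transfers \fref{boundinfitybis}--\fref{boundinfity} to degree $(p_1+1,p_2+1)$ immediately, so only the cases $k=0,1$ require work.

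The main obstacle is to propagate the logarithmic weights through the two nested integrals of \fref{formulaau} and \fref{inveriosnu} for $k=0,1$. Using $\Lambda Q(\tau)\sim 2/\tau$ and $\Gamma(\tau)\sim \tau/4$ for large $\tau$, the outer factors $\tfrac{1}{y\Lambda Q}$ and $\Lambda Q$ behave like constants and $1/y$ respectively, so one is left with computing, for $y\le 3B_0$,
\[
\int_0^y \tau^{2p_2-1}\,g_{2p_2}(b_1,\tau)\,\tau\, d\tau\quad\text{and}\quad \int_0^y\tau^{-1}\cdot\tau^{2p_2}\,g_{2p_2}(b_1,\tau)\,d\tau,
\]
plus tail pieces coming from the $F_{p_2,k,0}(b_1){\bf 1}_{\tau\ge 3B_0}$ contribution. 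A direct computation using \fref{defgby}--\fref{defgbybis} shows that these integrals gain exactly one factor of $\tau$ in size and one logarithmic step in the weight, producing bounds of the form $y^{2p_2+1}g_{2p_2+2}(b_1,y)$ (resp.\ $y^{2p_2}g_{2p_2+1}$), which is precisely \fref{boundinfitybis} at degree $(p_1+1,p_2+1)$. For $y\ge 3B_0$, the tail portion of $f$ contributes an additional term matching the form \fref{defffbone} with the new degree, and the constant $F_{p_2+1,k,0}(b_1)$ is controlled by $1/(b_1|\log b_1|)\cdot B_0^{\text{(gain)}}$, which is the prescribed size. The $b_1$-derivative bounds \fref{boundinfity} for $H^{-1}f$ follow by differentiating under the integral sign in \fref{formulaau}--\fref{inveriosnu}; since $\Lambda Q$ and $\Gamma$ do not depend on $b_1$, the derivative only hits $f$, and one reads off the improved prefactor $\frac{1}{b_1^l|\log b_1|}$ directly from the hypothesis on $f$.

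Finally, parts (iii) and (iv) are comparatively soft. For (iii), smoothness and the Taylor expansion of $f$ near zero are preserved by $\Lambda=y\partial_y$ since $\Lambda(y^{2k+1})=(2k+1)y^{2k+1}$, so the separation of variables form \fref{separationvariable}--\fref{assumptoinfs} passes through with degree $p_1$ unchanged. At infinity one writes $\Lambda$ in terms of $A,A^*$ (using $\Lambda=-yA+Z$ from $A=-\partial_y+Z/y$) and commutes through $\mathcal A^k$; the commutators $[A,\Lambda]$ and $[A^*,\Lambda]$ are zero-order operators with coefficients controlled by the expansions \fref{comportementz} of $Z$, so $(\Lambda f)_k$ is bounded by a finite linear combination of $f_j$'s with $j\le k$, yielding the same infinity bounds at unchanged degree $(p_1,p_2)$. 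For (iv), near the origin $b_1\partial_{b_1}f=\sum_j b_1 h_j'(b_1)\tilde f_j(y)$ and $|b_1 h_j'|\lesssim 1$ by \fref{boudnorigin}, so the form is preserved. At infinity one applies \fref{boundinfity} for $f$ with $l=1$ (and with $l+1$ for higher derivatives), which after multiplication by $b_1$ yields exactly the bounds \fref{boundinfitybis}--\fref{boundinfity} required to declare $b_1\partial_{b_1}f$ admissible of degree $(p_1,p_2)$; note in particular that the $1/|\log b_1|$ gain on $b_1\partial_{b_1}f$ is compatible with the admissibility definition and simply unused here.
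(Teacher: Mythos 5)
Your treatment of (i), (ii) and (iv) is essentially the paper's own argument: $(Hf)_k=f_{k+2}$ with $F_{p_2-1,k,l}=F_{p_2,k+2,l}$, the explicit inversion formulas \fref{formulaau}, \fref{inveriosnu} for the two missing derivatives $u,Au$ of $H^{-1}f$ with differentiation under the integral sign (legitimate since $\Lambda Q,\Gamma$ are $b_1$-independent), and for (iv) the bound $\tilde g_l/|\log b_1|\lesssim g_l$ together with $b_1F_{p_2,k,l+1}=F_{p_2,k,l}$. The only point you gloss over in (ii) is the borderline case $p_2=1$: there $2p_2-3=-1$, and it is precisely the convention $F_{p_2,0,l}=0$ in \fref{defffbone} that kills the tail integral $\int_{3B_0}^yF_{p_2,0,l}\tau^{-1}d\tau$ which would otherwise cost a logarithm; this should be said explicitly, but it is consistent with your "matching the form" claim.

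Part (iii), however, does not survive as written. The commutators of $\Lambda$ with $A,A^*$ are \emph{not} zero-order: a direct computation gives $[A,\Lambda]=A-\frac{\Lambda Z}{y}$, so commuting $\Lambda$ through $\mathcal A^k$ produces, besides decaying zero-order corrections, a genuine top-order contribution. Concretely, writing $\Lambda f=-f-yf_1+(1+Z)f$ and running the induction of Lemma \ref{leibnizrule}, one finds $(yf_1)_k=c_{k+1}yf_{k+1}+c_{k+2}f_k+\Sigma_{i=1}^{k}P_{k,i}f_i$ with $|\pa_y^lP_{k,i}|\lesssim (1+y^{2+l+k-i})^{-1}$, so $(\Lambda f)_k$ contains the term $yf_{k+1}$ and is not controlled by the $f_j$, $j\le k$, alone. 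The stated degree $(p_1,p_2)$ is still correct, but only because the extra factor of $y$ exactly compensates the extra suitable derivative, and because the weights are monotone in their index, $g_{2p_2-(k+1)}\lesssim g_{2p_2-k}$ and $F_{p_2,k+1,l}\lesssim F_{p_2,k,l}$; invoking these monotonicity properties on the $yf_{k+1}$ term is the actual content of (iii), and your "zero-order commutator" shortcut bypasses rather than proves it.
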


\begin{proof}[Proof of Lemma \ref{lemmapropinversebis}]

{\bf step 1} Proof of (i).  Let us show that $u=Hf$ is $b_1$-admissible of degree $(\max(p_1-1,0),p_2-1)$ which yields \fref{dergeeiteratebis} by induction. Near the origin, the claim directly follows from the Taylor expansion \fref{assumptoinfs} with \fref{separationvariable} and the cancellation $H(y)=cy+O(y^3)$ at the origin. For $y\geq 1$, $H$ is independent of $b_1$ so that by definition 
$$\forall l\geq 0, \ \ \frac{\pa^l u_k}{\pa b_1^l} = \frac{\pa^lf_{k+2}}{\pa b_1^l}$$
which satisfies \fref{boundinfitybis}, \fref{boundinfity}, \fref{defffbone} with $p_2\to p_2-1$ and $F_{p_2-1,k,l}(b_1)=F_{p_2,k+2,l}(b_1)$, and \fref{dergeeiteratebis} follows.\\

{\bf step 2} Proof of (ii). Let now $p_2\geq 1$ and let us show that $u=H^{-1}f$ is admissible of degree $(p_1+1,p_2+1)$ which yields \fref{knehatofbis} by induction. Observe that for $k\geq 2$, $$\forall l\geq 0, \ \ \frac{\pa^l u_k}{\pa b_1^l} = \frac{\pa^lf_{k-2}}{\pa b_1^l}$$ which satisfies \fref{boundinfitybis}, \fref{boundinfity}, \fref{defffbone} with $p_2\to p_2+1$ and $F_{p_2+1,k,l}(b_1)=F_{p_2,k-2,l}(b_1)$. It thus only remains to estimate $u,Au$ and their derivatives in $b_1$. \\
{\it Estimate for $u$ near the origin}: The inversion formulas \fref{formulaau}, \fref{inveriosnu}  ensure the decomposition of variables near the origin $$u(b_1,y)=\Sigma_{j=1}^Jh_j(b_1)\tilde{u}_j(y)$$ with using \fref{origin} the Taylor expansion near the origin:
$$A\tilde{u}_j=\Sigma_{k=p_1}^pc_{k,j}^{(1)}y^{2k+2}+O(y^{2p+4}), \ \ \tilde{u}_j=-\Lambda Q\int_0^y\frac{A\tilde{u}_j}{\Lambda Q}dx=\Sigma_{k=p_1}^pc^{(2)}_{k,j}y^{2k+3}+O(y^{2p+5})$$ and hence $u$ is of degree at least $p_1+1$ near the origin.\\
{\it Estimate for $u_1=Au$ for $y\geq 1$}: We use the formula \fref{formulaau} and the assumption $p_2\geq 1$ to estimate for $1\leq y\leq 3B_0$:
\bee
|Au|& \lesssim & \int_0^y |f|d\tau\lesssim \int_0^y  \left[\tau^{2p_2-1}g_{2p_2-1}(b_1,\tau)+\tau^{2p_2-3}|\log \tau|^{c_{p_2}}\right]d\tau\\
& \lesssim & \frac{1}{b_1^{p_2}|\log b_1|}\int_{0}^{\sqrt{b_1}y}\sigma^{2p_2-1}(1+|\log \sigma|)d\sigma+O(y^{2p_2-2}|\log y|^{1+c_{p_2}})\\
& \lesssim & y^{2p_2}\frac{1+|\log(\sqrt{b_1 y})|}{|\log b_1|}+y^{2p_2-2}|\log y|^{1+c_{p_2}}\\
& = & y^{2(p_2+1)-2}g_{2(p_2+1)-1}(b_1,y)+y^{2(p_2+1)-4}|\log y|^{1+c_{p_2}}
\eee
and for $y\geq 3B_0$:
\bee
|Au|& \lesssim & \int_0^y |f|d\tau\lesssim \int_0^{3B_0}\tau^{2p_2-1}g_1(b_1,\tau)d\tau+\int_{3B_0}^yF_{p_2,0,0}(b_1)\tau^{2p_2-3}d\tau+O(y^{2p_2-2}|\log y|^{1+c_{p_2}})\\
& \lesssim & \frac{1}{b_1^{p_2}|\log b_1|}+\int_{3B_0}^yF_{p_2,0,0}(b_1)\tau^{2p_2-3}d\tau+O(y^{2p_2-2}|\log y|^{1+c_{p_2}}).
\eee
If $p_2=1$ which is the borderline case $2p_2-3=-1$, then $F_{p_2,0,0}=0$, and we thus get the bound for all $p_2\geq 1$, $y\geq 3B_0$:
\bee
|Au| &\lesssim &y^{2p_2-2}\left(\frac{1}{b_1|\log b_1|}+F_{p_2,0,0}(b_1)\right)+y^{2p_2-2}|\log y|^{1+c_{p_2}}\\
& \lesssim& \frac 1{b_1|\log b_1|}y^{2p_2-2} +y^{2(p_2+1)-4}|\log y|^{1+c_{p_2}}
\eee
 and \fref{defffbone} is satisfied for $(p_2\to p_2+1, k=1)$  thanks to $2(p_2+1)-1-3\geq 0.$\\
We now pick $l\geq 1$. $H$ is independent of $b_1$ so $$H\left(\frac{\pa^lu}{\pa b_1^l}\right)=\frac{\pa^lf}{\pa b_1^l}$$ and we therefore compute from \fref{formulaau}$$\frac{\pa^lu_1}{\pa b_1^l} =   \frac{1}{y\Lambda Q}\int_0^y\Lambda Q \frac{\pa^lf}{\pa b_1^l}xdx.$$
This yields the bound for $|y|\leq 3B_0$:
\bee
\left|\frac{\pa^lu_1}{\pa b_1^l}\right|& \lesssim & \int_0^y \frac{1}{b^l_1|\log b_1|}\left\{  y^{2p_2-1}\tilde{g}_{2p_2-1}(b_1,y)+y^{2p_2-3}|\log y|^{c_{p_2}}\right\}dy\\
& \lesssim & \frac{1}{b^l_1|\log b_1|}\left[y^{2p_2}\tilde{g}_1(b_1,y)+y^{2p_2-2}|\log y|^{c_{p_2}+1}\right]\\
& = &  \frac{1}{b^l_1|\log b_1|}\left[y^{2(p_2+1)-2}\tilde{g}_{2(p_2+1)-1}(b_1,y)+y^{2(p_2+1)-4}|\log y|^{c_{p_2}+1}\right]
\eee
and for $|y|\geq 3B_0$:
$$
\left|\frac{\pa^lu_1}{\pa b_1^l}\right| \lesssim  \frac{1}{b_1^l|\log b_1|}\left[\frac{1}{b_1^{p_2}}+y^{2p_2-2}|\log y|^{c_{p_2}+1}\right]+\int_{3B_0}^y  F_{p_2,0,l}(b_1)y^{2p_2-3}dy.
$$
Again, if $p_2=1$, $F_{p_2,0,l}=0$ and we therefore obtain the bound for all $p_2\geq 1$:
\bee
\left|\frac{\pa^lu_1}{\pa b_1^l}\right|&\lesssim & y^{2p_2-2}\left[\frac{1}{b_1^{l+1}|\log b_1|}+F_{p_2,0,l}(b_1)\right]+\frac{y^{2p_2-2}|\log y|^{c_{p_2}+1}}{b_1^{l}|\log b_1|}\\
& \lesssim & \frac{y^{2(p_2+1)-1-3}}{b_1^{l+1}|\log b_1|}+\frac{1}{b_1^l|\log b_1|}y^{2(p_2+1)-1-3}|\log y|^{c_{p_2}+1}
\eee
and \fref{defffbone} is satisfied for $(p_2\to p_2+1, k=1)$  thanks to $2(p_2+1)-1-3\geq 0.$\\
{\it Estimate for $u$}: We now estimate from the above bounds and \fref{inveriosnu}: for $1\leq y\leq 3B_0$,
\bee
|u|&\lesssim& \frac{1}{y}\int_0^y|Au|\tau d\tau\lesssim \frac{1}{y}\int_0^y\left[\tau^{2p_2+1}g_{2p_2+1}(b_1,\tau)+\tau^{2p_2-1}|\log \tau|^{1+c_{p_2}}\right]d\tau\\
& \lesssim &  y^{2p_2+1}\frac{1+|\log(\sqrt{b_1 y})|}{|\log b_1|}+y^{2p_2-1}|\log y|^{2+c_{p_2}}\\
& = & y^{2(p_2+1)-1}g_{2(p_2+1)}(b_1,y)+y^{2(p_2+1)-3}|\log y|^{2+c_{p_2}}
\eee
and for $y\geq 3B_0$:
\bee
|u|&\lesssim & \frac{1}{y}\left[\int_0^{3B_0}\tau^{2p_2+1}g(b_1,\tau)d\tau+\int_{3B_0}^yF_{p_2+1,1,0}(b_1)\tau^{2p_2-1}d\tau\right]+y^{2(p_2+1)-3}|\log y|^{2+c_{p_2}}\\
& \lesssim & y^{2p_2-1}\left[\frac{1}{b_1|\log b_1|}\right]+y^{2(p_2+1)-3}|\log y|^{2+c_{p_2}}
\eee
which satisfies \fref{boundinfitybis} for $(p_2\to p_2+1, k=0)$  thanks to $2(p_2+1)-3-1\geq 0$. Finally, for $l\geq 1$, $1\leq y\leq 3B_0$: 
\bee
\left|\frac{\pa^lu}{\pa b_1^l}\right|&\lesssim& \frac{1}{y}\int_0^y\left|\frac{\pa^l u_1}{\pa b_1^l}\right|\tau d\tau\lesssim \frac{1}{yb_1^l|\log b_1|}\int_0^y\left[\tau^{2p_2+1}\tilde{g}_{2p_2+1}(b_1,\tau)+\tau^{2p_2-1}|\log \tau|^{1+c_{p_2}}\right]d\tau\\
& \lesssim & \frac{1}{b^l_1|\log b_1|}\left[y^{2(p_2+1)-1}\tilde{g}_{2(p_2+1)}(b_1,y)+y^{2(p_2+1)-3}|\log y|^{c_{p_2}+1}\right]
\eee
and for $y\geq 3B_0$:
\bee
\left|\frac{\pa^lu}{\pa b_1^l}\right|&\lesssim & \frac{1}{y}\left[\int_0^{3B_0}\frac{\tau^{2p_2+1}\tilde{g}_1(b_1,\tau)}{b_1^l|\log b_1|}d\tau+\int_{3B_0}^yF_{p_2+1,1,l}(b_1)\tau^{2p_2-1}d\tau\right]+\frac{y^{2(p_2+1)-3}|\log y|^{2+c_{p_2}}}{b_1^l|\log b_1|}\\
& \lesssim & \frac{y^{2(p_2+1)-3}}{b_1^{l+1}|\log b_1|}+\frac{y^{2(p_2+1)-3}|\log y|^{2+c_{p_2}}}{b_1^l|\log b_1|}
\eee
and hence $u$ is $b_1$ admissible of degree $(p_1+1,p_2+1)$.\\

{\bf step 3} Proof of (iii), (iv). The property (iv) is a direct consequence of the Definition \ref{defadmissiblebone} of $b_1$ admisssible functions and the trivial bound $$\frac{\tilde{g}_l(b_1,y)}{|\log b_1|}|\lesssim g_l(b_1,y).$$. We now turn to the proof of (iii) and first rewrite the scaling operator as $$\Lambda =y\pa_y=-Id-yA+(1+Z).$$ Near the origin, the existence of the decompositon \fref{separationvariable} follows directly from the even parity of the Taylor expansion of $Z$ at the origin \fref{comportementz}. Far out, let $$\Lambda f=-f-yf_1+(1+Z)f.$$ 
A simple induction argument similar to Lemma \ref{leibnizrule} yields the expansion for $k\geq 1$: 
\be
\label{induction}
(yf_1)_k=c_{k+1}yf_{k+1}+c_{k+2}f_k+\Sigma_{i=1}^{k}P_{k,i}(y)f_i
\ee
with the improved decay:
\be\label{improvedecay}
|\pa_y^lP_{k,i}(y)|\lesssim \frac{1}{1+y^{2+l+k-i}}, \ \ \forall l\geq 0, \ \ y\geq 1.
\ee
We therefore obtain from \fref{roughbound}, \fref{induction}, \fref{improvedecay}, \fref{comportementz} the bound:
\bee
 |(\Lambda f)_k|& \lesssim & |yf_{k+1}|+|f_k|+\Sigma_{i=0}^{k}\frac{1}{y^{2+k-i}} y^{2p_2-i-1}\\
 & \lesssim & y^{2p_2-k-1}(g_{2p_2-(k+1)}+g_{2p_2-k})+y^{2p_2-k-3}|\log y|^{c_{p_2}}\\
 & + & (F_{p_2,k,0}+F_{p_2,k+1,0})y^{2p_2-k-3}{\bf 1}_{y\geq 3B_0}
\eee 
We now observe the monotonicity $g_{2p_2-k-1}\lesssim g_{2p_2-k}$ from \fref{defgby} and $F_{p_2,k+1,0}\lesssim F_{p_2,k,0}$ from \fref{defffbone}, and thus  $(\Lambda f)_k$ satisfies \fref{boundinfity}, \fref{defffbone} for $l=0$. Similarily, for $k\geq0$, $l\geq 1$, we use the bound for $y\gtrsim B_0$:
$$y^{2p_2-k-5}|F_{p_2,i,l}(b_1)|\lesssim \frac{y^{2p_2-k-5}}{b^{l+1}_1|\log b_1|}\lesssim \frac{y^{2p_2-k-3}}{b_1^l|\log b_1|}$$ to estimate:
 \bee
  && \left|\frac{\pa^l(\Lambda f)_k}{\pa b_1^l}\right|\lesssim \left|y\frac{\pa^l f_{k+1}}{\pa b_1^l}\right|+\left|\frac{\pa^l f_k}{\pa b_1^l}\right|\\
  & + &  \Sigma_{i=0}^k\frac{1}{y^{k-i+2}}\left\{\frac{1}{b_1^l|\log b_1|}\left[y^{2p_2-i-1}+y^{2p_2-i-3}|\log y|^{c_{p_2}}\right]+ F_{p_2,i,l}(b_1)y^{2p_2-i-3}{\bf 1}_{y\geq 3B_0}\right\}\\
   & \lesssim & \frac{1}{b_1^l|\log b_1|}\left[y^{2p_2-k-1}\tilde{g}_{2p_2-(k+1)}+\tilde{g}_{2p_2-k})+y^{2p_2-k-3}|\log y|^{c_{p_2}}\right]\\
   & +&  (F_{p_2,k,l}+F_{p_2,k+1,l})y^{2p_2-k-3}{\bf 1}_{y\geq 3B_0}
\eee
and the bounds $\tilde{g}_{2p_2-k-1}\lesssim \tilde{g}_{2p_2-k}$, $F_{p_2,k+1,l}\lesssim F_{p_2,k,l}$ now ensure \fref{boundinfity}, \fref{defffbone} for $l\geq 1$.
\end{proof}


\subsection{Slowly growing tails}


Let us give an example of admissible profiles wich will be central in the construction of the leading order slowly modulated blow up profile. Given $b_1>0$ small enough, we let the radiation be: 
\bea
\label{defsigmab}
\Sigma_{b_1} = H^{-1}\bigg\{-c_{b_1}\chi_{\frac{B_0}{4}}\Lambda Q +d_{b_1} H[(1 - \chi_{B_0}) \Lambda Q]\bigg\}
\eea
with:
\be
\label{cb}
c_{b_1} = \frac{4}{\int \chi_{\frac{B_0}{4}} (\Lambda Q)^2}, \ \ d_{b_1} = c_{b_1}\int_0^{B_0} \chi_{\frac{B_0}{4}} \Lambda Q \G ydy.
\ee

We claim:

\begin{lemma}[Slowly growing tails]
\label{lemmaradiationbis}
Let $(T_i)_{i\geq 1}$ be given by \fref{deftk}, then the sequence of profiles for $i\geq 1$
\be
\label{defthetak}
 \Theta_{i}=\Lambda T_i-(2i-1)T_i-(-1)^{i+1}H^{-i+1}\Sigma_{b_1},
\ee
 is $b_1$-admissible of degree $(i,i)$. 
\end{lemma}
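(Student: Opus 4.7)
The plan is to induct on $i$. The key observation is a recursion for $\Theta_i$ under $H$: using the commutator identity $[H, \Lambda] = 2H - (\Lambda V)/y^2$ (which follows from \fref{defh} by a direct calculation with $\Lambda = y\pa_y$), the relations $HT_i = -T_{i-1}$ built into \fref{deftk}, and the cancellation $(-1)^{i+1} + (-1)^i = 0$ when telescoping the $H^{-i+1}\Sigma_{b_1}$ contributions, a straightforward computation gives
\be
\label{planrec}
H\Theta_i = -\Theta_{i-1} - \frac{\Lambda V}{y^2}T_i,\qquad i\geq 2.
\ee
Since $\Theta_i$ must vanish at the origin to order $y^{2i+1}$ (required by \fref{separationvariable} at degree $(i,i)$), no $\Lambda Q$- or $\Gamma$-kernel element of $H$ can enter the inversion, and \fref{planrec} determines uniquely
\be
\label{planinv}
\Theta_i = -H^{-1}\Theta_{i-1} - H^{-1}\!\left[\frac{\Lambda V}{y^2}T_i\right].
\ee

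For the inductive step, suppose $\Theta_{i-1}$ is $b_1$-admissible of degree $(i-1, i-1)$. Lemma \ref{lemmapropinversebis}(ii) applied to the first term of \fref{planinv} gives that $-H^{-1}\Theta_{i-1}$ is $b_1$-admissible of degree $(i, i)$. For the forcing term, the expansions \fref{comportementv} give $\Lambda V = O(y^2)$ near the origin and $\Lambda V = O(1/y^2)$ at infinity, hence $(\Lambda V/y^2)T_i$ is admissible in the sense of Definition \ref{defadmissible} of degree $(i, i-2)$. By Lemma \ref{lemmapropinverse}(ii), $H^{-1}[(\Lambda V/y^2)T_i]$ is then admissible of degree $(i+1, i-1)$, which embeds trivially into $b_1$-admissibility of degree $(i, i)$ through the lower-order slot $y^{2p_2-k-3}|\log y|^{c_{p_2}}$ in \fref{boundinfitybis} and contributes nothing to \fref{boundinfity} (being $b_1$-independent). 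Summing yields $\Theta_i$ $b_1$-admissible of degree $(i,i)$.

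The base case $i = 1$, namely $\Theta_1 = \Lambda T_1 - T_1 - \Sigma_{b_1}$, is handled by direct analysis. Using the self-adjointness of $H$ together with $H\Lambda Q = 0$ and the compact support of $H[(1-\chi_{B_0})\Lambda Q]$, one computes $\int F_{b_1}\Lambda Q \cdot x\,dx = -c_{b_1}\int \chi_{B_0/4}(\Lambda Q)^2 = -4$ from \fref{cb}. Consequently, on $\{y\geq 3B_0\}$ where $F_{b_1}$ vanishes, \fref{definvesr} gives $\Sigma_{b_1}(y) = 4\Gamma(y) + B_{b_1}\Lambda Q(y)$ with $B_{b_1}$ bounded. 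On the other hand, $T_1 = -H^{-1}\Lambda Q$ combined with $\Lambda Q \sim 2/y$ and $\Gamma \sim y/4$ yields $T_1(y) = y\log y - y/2 + O(\log y/y)$, hence $\Lambda T_1 - T_1 = y + O(\log y /y)$, matching exactly the leading $4\Gamma \sim y$ tail of $\Sigma_{b_1}$ and leaving only the residue $B_{b_1}\Lambda Q = O(1/y)$ absorbed by \fref{boundinfitybis} at degree $(1,1)$. The slow decay \fref{slowedecay} gives $c_{b_1} \sim 2/|\log b_1|$ via $\int \chi_{B_0/4}(\Lambda Q)^2 \sim 4\log B_0 = 2|\log b_1|$, the source of the characteristic $1/|\log b_1|$ gain in $g_l(b_1, y)$. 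In the transition zone $1 \leq y \leq 3B_0$, a direct estimate of the integrals in \fref{definvesr} produces bounds of the form $y \cdot g_2(b_1, y)$; near the origin, \fref{separationvariable} holds with $h_j(b_1) \in \{c_{b_1}, d_{b_1}\}$; and the $b_1$-derivative bounds \fref{boundinfity} follow by differentiating $c_{b_1}$, $d_{b_1}$, and the cutoff scale $B_0 = 1/\sqrt{b_1}$, each producing a factor $1/b_1$.

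The main obstacle is the base case: the precise cancellation of the $y$-tail between $\Lambda T_1 - T_1$ and $\Sigma_{b_1}$ requires careful bookkeeping of the constants \fref{cb}, the role of $d_{b_1}$ being to eliminate a spurious $\Lambda Q$-contribution at the appropriate order so that the residue remains subleading. This explicit computation is essentially that underlying the construction of the stable blow-up profile in \cite{RSc1}. Once the base case is in hand, the inductive step reduces to the routine application of Lemmas \ref{lemmapropinverse} and \ref{lemmapropinversebis}.
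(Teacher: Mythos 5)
Your proposal takes essentially the same route as the paper: the commutator identity $H(\Lambda u)=2Hu+\Lambda Hu-\frac{\Lambda V}{y^2}u$ yields the recursion $H\Theta_i=-\Theta_{i-1}-\frac{\Lambda V}{y^2}T_i$, which is closed by the mapping properties of $H^{-1}$ on admissible and $b_1$-admissible functions (Lemmas \ref{lemmapropinverse}, \ref{lemmapropinversebis}), while the base case $i=1$ rests on the same explicit analysis of $\Sigma_{b_1}$ (the flux $\int F_{b_1}\Lambda Q\,xdx=-4$, the exact role of $d_{b_1}$ in removing the $\Lambda Q$ tail for $y\geq 3B_0$, the tail cancellation $\Lambda T_1-T_1=y+O(|\log y|^2/y)$ against $4\Gamma$, and $c_{b_1}\sim 2/|\log b_1|$). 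Only cosmetic differences remain (inverting the two source terms separately, and justifying the absence of kernel elements by the origin behavior of $\Theta_i$, which should be read off directly from \fref{defthetak} rather than from the conclusion), so the argument is correct and essentially identical to the paper's.
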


\begin{proof}[Proof of Lemma \ref{lemmaradiationbis}]
{\bf step 1} Structure of $T_1$. Let us consider $T_1=-H^{-1}\Lambda Q$ which is admissible of degree $(1,1)$ from Lemma \ref{lemmaradiation}. For $y\geq 1$, and explicit computation using the expansion \fref{infinity} into  \fref{definvesr} yields:
\be
\label{esttone}
T_1(y)=y\log y+e_0y+O\left(\frac{|\log y|^2}{y}\right), \ \ \Lambda T_1=y\log y+(1+e_0)y+O\left(\frac{|\log y|^2}{y}\right)
\ee
for some universal constant $e_0$, and hence the essential cancellation: 
\be
\label{cancellation}
\Lambda T_1-T_1=y+O\left(\frac{|\log y|^2}{y}\right).
\ee
We now prove that $\Theta_i$ is of order $(i,i)$ by induction on $i$.\\

{\bf step 2} $i=1$. By definition: 
\bea
\label{formulasigmbone}
\nonumber \Sigma_{b_1} & = &  \G(y)\int_0^y c_{b_1}\chi_{\frac{B_0}{4}}(\Lambda Q)^2xdx -\Lambda Q(y)\int_{0}^y c_{b_1}\chi_{\frac{B_0}{4}}\G\Lambda Qxdx \\
& + & d_{b_1} (1 - \chi_{B_0}) \Lambda Q(y)
\eea
and thus by definiton of $c_{b_1},d_{b_1}$ given by \fref{cb}:
\bea
\label{sigmab0}
\Sigma_{b_1} = \left\{\begin{array}{ll} c_{b_1}T_1\ \ \mbox{for} \ \ y \leq \frac{B_0}{4}\\ \\
4\G \ \ \mbox{for} \ \ y \geq 3B_0
\end{array} \right. 
\eea
In particular, $\Sigma_{b_1}$ admits a representation \fref{separationvariable} near the origin with $J=1$, $h_1(b_1)=c_{b_1}$ and $\tilde{f}_1(y)=T_1(y)$, and thus an expansion \fref{assumptoinfs} of order $p_1=1$ from the first step. A direct computation on the formula \fref{cb} yields the bounds: 
\be
\label{cbdb}
c_{b_1}=\frac{2}{|\log b_1|}\left[1+O\left(\frac{1}{|\log b_1|}\right)\right], \ \ |d_{b_1}|\lesssim \frac{1}{b_1|\log b_1|},
\ee
\be
\label{boudnderivtaovecb}
\forall l\geq 1,  \ \ \left|\frac{\pa^l c_{b_1}}{\pa b^l_1}\right|\lesssim \frac{1}{b_1^l|\log b_1|^{2}}, \ \ \left|\frac{\pa^l d_{b_1}}{\pa b^l_1}\right|\lesssim \frac{1}{b_1^{l+1}|\log b_1|}
\ee
which imply \fref{boudnorigin}.\\
 For $y \geq 3B_0$, we estimate from \fref{Gamma}, \fref{sigmab0}:
\be
\label{sigmabinfty}
\Sigma_{b_1}(y) = y + O\left(\frac{\log y}{y}\right)
\ee
and for $2\leq y \leq 3B_0$:
\bea
\label{sigmabmilieu}
\nonumber \Sigma_{b_1}(y) & = &c_{b_1}\left(\frac{y}{4}+O\left(\frac{\log y}{y}\right)\right)\left[\int_0^y \chi_{\frac{B_0}{4}}(\Lambda Q)^2xdx \right]-c_{b_1}\Lambda Q(y)\int_1^{y}O(1)xdx\\
& = & y\frac{\int_0^y\chi_{\frac{B_0}{4}}(\Lambda Q)^2 }{\int \chi_{\frac{B_0}{4}}(\Lambda Q)^2}+O\left(\frac{1+y}{|\log b_1|}\right).
\eea
We thus conclude from \fref{cancellation}, \fref{sigmabmilieu} that for $y\leq 3B_0$: 
\bee
\Theta_1(y)& =& y- y\frac{\int_0^y\chi_{\frac{B_0}{4}}(\Lambda Q)^2 }{\int \chi_{\frac{B_0}{4}}(\Lambda Q)^2}+O\left(\frac{1+y}{|\log b_1|}\right)+O\left(\frac{|\log y|^2}{1+y}\right)\\
& = & O\left(\frac{1+y}{|\log b_1|}(1+|\log (y\sqrt{b_1})|\right)
\eee
which together with the bounds \fref{esttone}, \fref{sigmabinfty} for $y\geq 3B_0$ yields the bound for $y\geq 2$:
\be
\label{estzerothetaone}
|\Theta_1(y)|\lesssim yg_2(b_1,y)+O\left(\frac{|\log y|^2}{y}\right).
\ee
We now compute from \fref{formulaau}, \fref{defsigmab}:
$$A\Sigma_{b_1}=\frac{1}{y\Lambda Q}\int_0^y\Lambda Q\left[-c_{b_1}\chi_{\frac{B_0}{4}}\Lambda Q +d_{b_1} H[(1 - \chi_{B_0}) \Lambda Q]\right]xdx$$ and estimate from \fref{cbdb} for $y\leq 3B_0$:
\bea
\label{estonefjo}
\nonumber A\Sigma_{b_1} & = & -\frac{4}{y\Lambda Q}+\frac{c_{b_1}}{y\Lambda Q}\int_y^{B_0} (\Lambda Q)^2xdx+O\left(\frac{d_{b_1}}{B_0^2}{\bf 1}_{B_0\leq y\leq 3B_0}\right)\\
& = & -2+O\left(g_1(b_1,y)\right)
\eea
and for $y\geq 3B_0$: 
\be
\label{estonefjotwo}
A\Sigma_{b_1}=-\frac{4}{y\Lambda Q}=-2+O\left(\frac{1}{y^2}\right).
\ee
Moreover, a simple rescaling argument yields the formula: $$A(\Lambda u)=Au+\Lambda Au-\frac{\Lambda Z}{y}u$$ and thus using \fref{esttone}, \fref{comportementz} : $$A(\Lambda T_1-T_1)=\Lambda AT_1-\frac{\Lambda Z}{y}T_1=\Lambda AT_1+O\left(\frac{\log y}{y^2}\right).$$ We now estimate from \fref{formulaau}, \fref{infinity}:
\bee
AT_1= -\left[\frac{1}{y\Lambda Q}\int_0^y(\Lambda Q)^2xdx\right]=-2\log y+O\left(\frac{\log y}{y^2}\right)
\eee
and similarily: 
$$
\Lambda AT_1=-2+O\left(\frac{\log y}{y^2}\right)
$$
from which 
\be
\label{estonefjothree} 
A(\Lambda T_1-T_1)=-2+O\left(\frac{\log y}{y^2}\right).
\ee
We thus conclude from \fref{estzerothetaone}, \fref{estonefjo}, \fref{estonefjotwo}, \fref{estonefjothree} that $$|A\Theta_1|\lesssim g_1(b_1,y)+O\left(\frac{\log y}{y^2}\right).$$ We now turn to the control of $H\Theta_{1}$. We first compute from a simple rescaling argument:
\be
\label{hresacledlambda}
H(\Lambda u)=2Hu+\Lambda Hu-\frac{\Lambda V}{y^2}u
\ee
which implies:
$$H(\Lambda T_1-T_1)=-\Lambda Q-\Lambda ^2Q+O\left(\frac{\log y}{y^3}\right)=O\left(\frac{\log y}{y^3}\right).$$ Hence the desired cancellation according to \fref{defgby}:
$$|H\Theta_1|\lesssim |H(\Lambda T_1-T_1)|+|H\Sigma_{b_1}|\lesssim \frac{1}{(1+y)|\log b_1|}{\bf 1}_{y\leq 3B_0}+O\left(\frac{\log y}{y^3}\right).$$ The control of higher order suitable derivatives in $y$ now follows by iteration using \fref{infinity}, \fref{comportementv}. Hence $\Theta_1$ satisfies the bound \fref{boundinfitybis} with $p_2=1$, $l=0$.\\
We now take derivatives in $b_1$ in which case from \fref{formulasigmbone} for $l\geq 1$:
\bee
\frac{\pa^l \Theta_1}{\pa^l b_1} & = & -\frac{\partial^l \Sigma_{b_1}}{\partial b^l_1}=  \G(y)\int_0^y \frac{\pa^l}{\pa b^l_1}\left\{c_{b_1}\chi_{\frac{B_0}{4}}\right\}(\Lambda Q)^2xdx \\
& - & \Lambda Q(y)\int_{0}^y \frac{\pa^l}{\pa b^l_1}\left\{c_{b_1}\chi_{\frac{B_0}{4}}\right\}\G\Lambda Qxdx+  \frac{\pa^l}{\pa b^l_1}\left\{d_{b_1} (1 - \chi_{B_0})\right\} \Lambda Q(y)
\eee
and from \fref{sigmab0}: $$\frac{\pa^l \Theta_1}{\pa^l b_1} =-\frac{\pa^l \Sigma_{b_1}}{\pa b^l_1}(y)=0 \ \ \mbox{for}\ \ y\geq 3B_0.$$
We estimate in brute force from \fref{defbnot}: $$ \left|\frac{\pa^l \chi_{B_0}}{\pa b^l_1}\right|\lesssim\frac{{\bf 1}_{B_0\leq y\leq 2B_0}}{b_1^l}$$ and thus obtain from the Leibniz rule and \fref{boudnderivtaovecb} for $y\leq 3B_0$:
\bee
\left|\frac{\pa^l \Theta_1}{\pa b^l_1}\right|& \lesssim & \frac{y}{b_1^l|\log b_1|^2}\left(1+|\log y|\right) +  \left[\Sigma_{k=1}^l\frac{1}{b_1^{l-k}b_1^k|\log b_1|^2}\right]y{\bf 1}_{\frac{B_0}{2}\leq y\leq 3B_0}\\
& + & \left[\Sigma_{k=0}^l\frac{1}{b_1^{l-k}b_1^{k+1}|\log b_1|}\right]\frac{{\bf 1}_{\frac{B_0}{2}\leq y\leq 3B_0}}{y}\\
& \lesssim & \frac{y(1+|\log y|)}{b_1^l|\log b_1|^2}\lesssim \frac{y\tilde{g}_1}{b^l_1|\log b_1|}.
\eee
The control of higher suitable derivatives  $\left(\frac{\pa^l \mathcal A^k\Theta_1}{\pa b^l_1}\right)_{l,k\geq 1}$ follows similarly using the explicit formula \fref{defsigmab}.
This concludes the proof of the estimate \fref{boundinfity} with $p_2=1$, and thus $\Theta_1$ is $b_1$-admissible of degree $(1,1)$.\\

{\bf step 3} $i\to i+1$. We assume the claim for $\Theta_i$ and prove it for $\Theta_{i+1}$. From \fref{deftk}, \fref{defthetak}, \fref{hresacledlambda}:
\bee
H\Theta_{i+1} & = & H(\Lambda T_{i+1})-(2i+1)HT_{i+1}-(-1)^{i}H^{-i+1}\Sigma_{b_1}\\
& = & \Lambda HT_{i+1}-(2i-1)HT_{i+1}+(-1)^{i+1}H^{-i+1}\Sigma_{b_1}-\frac{\Lambda V}{y^2}T_{i+1}\\
& = & -\left[(\Lambda T_i-(2i-1)T_i-(-1)^{i+1}H^{-i+1}\Sigma_{b_1}\right]-\frac{\Lambda V}{y^2}T_{i+1}\\
& = & -\Theta_i-\frac{\Lambda V}{y^2}T_{i+1}.
\eee
The induction hypothesis ensures that $\Theta_i$ is $b_1$-admissible of order $(i,i)$. Moreoever, near the origin, $T_{i+1}$ is from Lemma \ref{lemmaradiation} of degree $i+1$ and hence the development  \fref{comportementv}  ensures that  $\frac{\Lambda V}{y^2}T_{i+1}$ is of degree $i+1$ near the origin. For $y\geq 1$, \fref{comportementv} ensures the improved bound $$\left|\frac{\pa^p}{\pa y^p}\left(\frac{\Lambda V}{y^2}\right)\right|\lesssim \frac{1}{y^{p+4}},\ \ p\geq 0$$  and since $T_{i+1}$ is of degree $i+1$, we obtain from Leibniz rule the rough bound: $\forall k\geq 0$, $$\left|\mathcal A^k\left[\frac{\Lambda V}{y^2}T_{i+1}\right]\right|\lesssim \Sigma_{p=0}^k\frac{1}{y^{k-p+4}}y^{2(i+1)-p-1}|\log y|^{c_i}\lesssim y^{2i-k-3}|\log y|^{c_i},$$ and hence $\frac{\Lambda V}{y^2}T_{i+1}$, which is independent of $b_1$, satisfies \fref{boundinfitybis} and is $b_1$ admissible of degree $(i,i)$.  We conclude from Lemma \ref{lemmapropinversebis} that $\Theta_{i+1}$ is admissible of order $(i+1,i+1)$.
 \end{proof}
 

\subsection{Sobolev bounds on $b_1$ admissible functions}


The property of $b_1$-admissibility leads to simple Sobolev bounds with sharp logartihmic gains. We let $B_1$ be given by \fref{defbnot}.

\begin{lemma}[Estimate of $b_1$ admissible function]
\label{lemmaestimate}
Let $i\geq 1$ and $f$ be a $b_1$-admissible function of degree $(i,i)$. Then:
\be
\label{estunhk}
\int_{y\leq 2B_1}|H^k f|^2\lesssim\frac{|\log b_1|^{4(i-k-1)}}{b_1^{2(i-k)}|\log b_1|^2}\ \ \mbox{for} \ \ 0\leq k\leq i-1,
\ee 
\be
\label{esthooeh}
\int_{y\leq 2B_1}|H^k f|^2\lesssim \ \ 1\ \ \mbox{for} \ \ k\geq i.
\ee
\be
\label{estwitghte}
\int_{y\leq 2B_1}\frac{1+|\log y|^2}{1+y^4}|H^{k}f|^2+\int_{y\leq 2B_1}\frac{1+|\log y|^2}{1+y^2}|AH^{k}f|^2\lesssim |\log b_1|^3\ \ \mbox{for}\ \ k\geq i-1.
\ee
\end{lemma}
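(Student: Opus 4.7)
The plan is to reduce the three estimates directly to the pointwise bounds \fref{boundinfitybis}, \fref{boundinfity} supplied by Definition \ref{defadmissiblebone}, and then compute by splitting the integration domain. Since $f$ is $b_1$-admissible of degree $(i,i)$, Lemma \ref{lemmapropinversebis}(i) gives that $H^k f$ is $b_1$-admissible of degree $(\max(i-k,0),p)$ where $p := i-k$. Writing \fref{boundinfitybis} for $g := H^k f$ with internal derivative index zero yields, for $y\geq 2$,
\begin{equation*}
|H^k f(b_1,y)| \lesssim y^{2p-1}\,g_{2p}(b_1,y) + y^{2p-3}|\log y|^{c_p} + F_{p,0,0}(b_1)\,y^{2p-3}\mathbf{1}_{y\geq 3B_0},
\end{equation*}
while the Taylor expansion near $0$ gives $H^k f = O(y^{2\max(p_1,0)+1})$. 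Hence the contribution of $\{y\leq 2\}$ to every integral is $O(1)$, and for $y\geq 2$ we split $\{2\leq y\leq 2B_1\} = \{2\leq y\leq 3B_0\}\cup\{3B_0\leq y\leq 2B_1\}$ and estimate each piece separately.

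For \fref{estunhk} we have $p\geq 1$, so $g_{2p}(b_1,y) = \frac{1+|\log(\sqrt{b_1}y)|}{|\log b_1|}\mathbf{1}_{y\leq 3B_0}$. The change of variable $u=\sqrt{b_1}y$ gives
\begin{equation*}
\int_2^{3B_0} y^{4p-2}\,\frac{(1+|\log(\sqrt{b_1}y)|)^2}{|\log b_1|^2}\,y\,dy \lesssim \frac{1}{b_1^{2p}|\log b_1|^2}\int_0^{3} u^{4p-1}(1+|\log u|)^2\,du \lesssim \frac{1}{b_1^{2p}|\log b_1|^2}.
\end{equation*}
On $\{3B_0\leq y\leq 2B_1\}$, the $F$-term (which vanishes for $p=1$ and equals $\frac{1}{b_1|\log b_1|}$ for $p\geq 2$) contributes
\begin{equation*}
F_{p,0,0}^2\int_{3B_0}^{2B_1} y^{4p-5}\,dy \lesssim \frac{1}{b_1^2|\log b_1|^2}\cdot\frac{|\log b_1|^{4p-4}}{b_1^{2p-2}} = \frac{|\log b_1|^{4p-6}}{b_1^{2p}},
\end{equation*}
and the $y^{2p-3}|\log y|^{c_p}$ piece is subdominant by a factor $b_1^2|\log b_1|^{O(1)}$. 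The two regions combine to give exactly \fref{estunhk}.

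For \fref{esthooeh}, $p\leq 0$ forces $g_{2p} = \frac{\mathbf{1}_{y\leq 3B_0}}{|\log b_1|}$ and $F_{p,0,0}=0$, so
\begin{equation*}
\int_{2\leq y\leq 2B_1}|H^k f|^2\,y\,dy \lesssim \frac{1}{|\log b_1|^2}\int_2^{3B_0}y^{4p-1}\,dy + \int_2^{\infty}\frac{|\log y|^{2c}}{y^{5-4p}}\,dy \lesssim 1.
\end{equation*}
For \fref{estwitghte} the critical case is $k=i-1$, i.e.\ $p=1$, where both $F_{1,0,0}$ and $F_{1,1,0}$ vanish (since $2p-3=-1$ and $2p-4=-2$). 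Applying \fref{boundinfitybis} with internal index $k'=1$ to $AH^{i-1}f = (H^{i-1}f)_1$ gives $|AH^{i-1}f|\lesssim g_1(b_1,y) + y^{-2}|\log y|^{c_1}$, and both weighted integrals reduce on $\{2\leq y\leq 3B_0\}$ to
\begin{equation*}
\frac{1}{|\log b_1|^2}\int_2^{3B_0}\frac{|\log y|^2\,(1+|\log(\sqrt{b_1}y)|)^2}{y}\,dy \lesssim \int_2^{3B_0}\frac{|\log y|^2}{y}\,dy \lesssim |\log b_1|^3,
\end{equation*}
using the trivial bound $1+|\log(\sqrt{b_1}y)|\lesssim |\log b_1|$; the subdominant $|\log y|^{c_1}/y^{\text{something}}$ terms and the tail $\{3B_0\leq y\leq 2B_1\}$ contribute $O(1)$. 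For $k > i-1$ the decay only improves and one obtains $O(1)\lesssim |\log b_1|^3$.

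The main obstacle is purely combinatorial bookkeeping: one must track three distinct thresholds (the logarithmic profile of $g_l$ around $\sqrt{b_1}y\sim 1$, the vanishing of $F_{p,k,l}$ when $2p-k-3\leq -1$, and the outer cutoff $y=2B_1$), and verify that the sharp exponent in \fref{estunhk} comes precisely from the change-of-variable region for $p=1$ and from the $F$-term on $\{3B_0\leq y\leq 2B_1\}$ for $p\geq 2$. No new ingredient beyond the pointwise bounds of Definition \ref{defadmissiblebone} and the iteration of Lemma \ref{lemmapropinversebis} is required.
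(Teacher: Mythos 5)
Your proof is correct and follows essentially the same route as the paper: read off the pointwise bounds of Definition \ref{defadmissiblebone} for $H^kf=f_{2k}$ (your detour through Lemma \ref{lemmapropinversebis} (i) is an equivalent restatement), treat the origin by the Taylor structure, and split $\{y\leq 3B_0\}$ from $\{3B_0\leq y\leq 2B_1\}$, with the vanishing of $F_{p,0,0}$ at $p=1$ giving the sharp case $k=i-1$ in \fref{estunhk} and the $\frac{1}{|\log b_1|}$ factor in $g_l$, $l\le 0$, giving the borderline case $k=i$ in \fref{esthooeh}. The only cosmetic difference is that for \fref{estwitghte} the paper simply invokes the rough bound \fref{roughbound}, whereas you use the sharper $g_1$-structure; both yield $|\log b_1|^3$.
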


\begin{remark} The boundedness of the Sobolev norm \fref{esthooeh} in the borderline case $k=i$ is a consequence of the definition \fref{defgby}. Indeed, $$\int_{y\leq 3B_0}\left|\frac{1+|\log\sqrt{b_1 y}|}{(1+y)|\log b_1|}\right|^2\sim |\log b_1|$$ but
\be
\label{eoieoheoh}
 \int_{y\leq 3B_0}\left|\frac{1}{(1+y)|\log b_1|}\right|^2\lesssim 1.
\ee
\end{remark}

\begin{proof}[Proof of Lemma \ref{lemmaestimate}]. Let $k\geq 0$. Near the origin, the cancelation $A(y)=y^2+O(y)$ and the Taylor expansion \fref{assumptoinfs} ensure that $H^kf$ is bounded uniformly in $y\leq 1$, $|b_1|\le \frac12$. For $y\geq 1$, we estimate from \fref{boundinfitybis}:
\bee
\int_{y\leq 2B_1}|H^k f|^2 & =&  \int|f_{2k}|^2\lesssim \int_{3B_0\leq y\leq 2B_1}|F_{i,2k,0}(b_1)y^{2i-2k-3}{\bf 1}_{y\geq 3B_0}|^2\\
& + & \int_{1\leq y\leq 2B_1}  \left|y^{2i-2k-1}g_{2i-2k}(b_,y)+y^{2i-2k-3}|\log y|^{c_{i}}\right|^2
\eee
For $k\geq i$, $F_{i,2k,0}=0$ and we estimate from \fref{defgby}, \fref{eoieoheoh}:
$$
\int_{y\leq 2B_1}|H^k f|^2 \lesssim 1+ \int_{1\leq y\leq 2B_1}\left|y^{2(i-k)-1}\frac{{\bf 1}_{y\leq 3B_0}}{|\log b_1|}+y^{2i-2k-3}|\log y|^{c_i}\right|^2\lesssim 1.$$
 For $k\leq i-1$, the growth can be controlled in a sharp way. Indeed, we estimate -using $F_{i,2k,0}=0$ for $k=i-1$ precisely to avoid an additional logarithmic error-:
\bee
\int_{y\leq 2B_1}|H^k f|^2& \lesssim& \frac{B_1^{4i-4k-4}}{b_1^2|\log b_1|^2}\\
& + &\frac{1}{|\log b_1|^2}\int_{y\leq 3B_0}  y^{4(i-k)-2}(1+|\log \sqrt{b_1}y|^2)+B_1^{4(i-k)-4}|\log b_1|^{2c_{p_2}+1}\\
& \lesssim & 1+\frac{B_0^{4(i-k)}}{|\log b_1|^2}+\frac{|\log b_1|^{4(i-k-1)}}{b_1^{2(i-k)}|\log b_1|^2}\lesssim \frac{|\log b_1|^{4(i-k-1)}}{b_1^{2(i-k)}|\log b_1|^2}.
\eee
Finally, for $k\geq i-1$, we estimate using the rough bound \fref{roughbound}:
\bee
 &&\int_{y\leq 2B_1}\frac{1+|\log y|^2}{1+y^4}|H^{k}f|^2+\int_{y\leq 2B_1}\frac{1+|\log y|^2}{1+y^2}|AH^{k}f|^2\\
 & \lesssim &\int_{y\leq 2B_1}\frac{1+|\log y|^2}{1+y^4}\left|(1+y)^{2i-2k-1}\right|^2+\int_{y\leq 2B_1}\frac{1+|\log y|^2}{1+y^2}\left|(1+y)^{2i-2(k+1)-1}\right|^2\\
& \lesssim &|\log b_1|^3.
\eee
\end{proof}


\subsection{Slowly modulated blow up profiles}


We proceed in this section to the construction of the approximate modulated blow up profile. Let us start with introducing the notion of homogeneous admissible functions:

\begin{definition}[Homogeneous functions]
\label{defadmissiblehomog}
Given parameters $b=(b_k)_{1\leq k\leq L}$ and $(p_1,p_2,p_3)\in \Bbb N\times\Bbb Z\times \Bbb N$, we say a function $S(b,y)$ is homogeneous of degree $(p_1,p_2,p_3)$ if of the form $$S(b,y)=\Sigma_{J=(j_1,\dots, j_L),\ \ |J|_2=p_3}\left[c_J \left(\Pi_{k=1}^Lb^{j_k}_{k}\right)\tilde{S}_J(b_1,y)\right]$$ where $$J=(j_1,\dots,j_L)\in \Bbb Z\times \Bbb N^{L-1}, \ \ |J|_2=\Sigma_{k=1}^Lkj_k,$$ and for some $b_1$-admissible profiles $\tilde{S}_J$ of degree $(p_1,p_2)$ in the sense of Definition \ref{defadmissiblebone}. We note: $$deg(S)=(p_1,p_2,p_3).$$
\end{definition}

\begin{remark}
\label{remarkfun}
We allow for negative powers of $b_1$ only in the above definition. This ensures from Lemma \ref{lemmapropinversebis} that the space of homogeneous functions of a given degree is stable by application of the operator $ b_1\frac{\pa }{\pa b_1}$. It is also stable by multiplication by $c_{b_1}$ from \fref{cbdb}, \fref{boudnderivtaovecb}.
\end{remark}

We may now proceed to the construction of the slowly modulated blow up profiles.

\begin{proposition}[Construction of the approximate profile]
\label{consprofapproch}
Let $M>0$ be a large enough universal constant, then there exists a small enough universal constant $b^*(M)>0$ such that the following holds true. Let a $\mathcal C^1$ map $$b=(b_k)_{1\leq k\leq L}:[s_0,s_1]\mapsto (-b^*(M),b^*(M))^L $$ with a priori bounds on $[s_0,s_1]$: 
\be
\label{aprioirbound}
0<b_1<b^*(M), \ \ |b_k|\lesssim b_1^k\ \ \mbox{for}\ \ 2\leq k\leq L.
\ee
Let $B_1$ be given by \fref{defbnot} and $(T_i)_{1\leq i\leq L}$ be given by \fref{deftk}. Then there exist homogeneous profiles $$\left\{\begin{array}{l}S_i(b,y), \ \ 2\leq i\leq L+2\\ S_1=0
\end{array}\right .$$ with 
\be
\label{propprofiles}
\left\{\begin{array}{ll}deg(S_i)= (i,i,i)\\ \frac{\partial S_i}{\partial b_j}=0\ \ \mbox{for}\ \ 2\leq i\leq j\leq L\end{array}\right .
\ee  
such that
\be
\label{decompqb}
Q_{b(s)}(y)= Q(y) + \alpha_{b(s)}(y), \ \ \alpha_b(y)=\Sigma_{i=1}^Lb_iT_i(y)+\Sigma_{i=2}^{L+2}S_i(y)
\ee
generates an approximate solution to the renormalized flow:
\be
\label{deferreur}
\pa_sQ_b  -\Delta Q_b+b_1 \Lambda Q_b+ \frac{f(Q_b)}{y^2}=\Psi_b+Mod(t)
\ee
with 
\be
\label{defmodtun}
Mod(t) =    \Sigma_{i=1}^{L}\left[(b_i)_s+(2i-1+c_{b_1})b_1b_i-b_{i+1}\right]\left[T_i+\Sigma_{j=i+1}^{L+2}\frac{\partial S_j}{\partial b_i}\right].
\ee
Here we used the convention $$b_{L+1}=0, \ \ T_0=\Lambda Q,$$ and $\Psi_b$ satisfies the bounds:\\
(i) Global weighted bounds:
\be
\label{controleh4erreur}
\forall 1\leq k\leq L, \ \ \int_{y\leq 2B_1}|H^{k}\Psi_b|^2 \lesssim b_1^{2k+2}|\log b_1|^C,
\ee
\be
\label{estimathehtwo}
\int_{y\leq 2B_1}\frac{1+|\log y|^2}{1+y^4}|H^L\Psi_b|^2+\int_{y\leq 2B_1}\frac{1+|\log y|^2}{1+y^2}|AH^L\Psi_b|^2\lesssim \frac{b_1^{2L+3}}{|\log b_1|^2},
\ee
\be
\label{controleh4erreursharp}
\int_{y\leq 2B_1}|H^{L+1}\Psi_b|^2 \lesssim \frac{b_1^{2L+4}}{|\log b_1|^2}.
\ee

(ii) Improved local control: there holds for some universal constant $C=C(L)>0$:
\be
\label{fluxcomputationone}
\forall 0\leq k\leq L+1, \ \ \int_{y\leq 2M}|H^k\Psi_b|^2\lesssim M^Cb_1^{2L+6}.
\ee
\end{proposition}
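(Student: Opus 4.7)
\emph{Plan.} I would proceed by induction on $i = 2,\dots, L+2$, constructing $S_i$ as the $H^{-1}$-image of the accumulated order-$i$ error in the expansion of the left hand side of \eqref{deferreur}, with the modulation equations appearing in \eqref{defmodtun} absorbing exactly the $T_j$-resonant contributions. Substituting \eqref{decompqb} into \eqref{deferreur} and isolating the linear part, one may rewrite
\begin{equation*}
\Psi_b + \mathrm{Mod}(t) = \partial_s \alpha_b + H\alpha_b + b_1 \Lambda Q + b_1 \Lambda \alpha_b + N(\alpha_b),
\end{equation*}
where $N(\alpha_b) = y^{-2}\bigl(f(Q+\alpha_b)-f(Q)-f'(Q)\alpha_b\bigr)$ is at least quadratic in $\alpha_b$. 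The defining identities $HT_i = -T_{i-1}$ with $T_0=\Lambda Q$, together with the tail decomposition $\Lambda T_i = (2i-1)T_i + (-1)^{i+1}H^{-i+1}\Sigma_{b_1} + \Theta_i$ from Lemma \ref{lemmaradiationbis}, reorganize the $T_j$-contributions so that their coefficient becomes $(b_i)_s + (2i-1)b_1 b_i - b_{i+1}$. Moreover the $H^{-i+1}\Sigma_{b_1}$ piece, because $\Sigma_{b_1} \sim c_{b_1} T_1$ near the origin with $c_{b_1}\sim 2/|\log b_1|$, produces the additional $c_{b_1} b_1 b_i$ correction that appears in $\mathrm{Mod}(t)$.

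\emph{Recursion.} Everything at total $b$-homogeneity $i$ that is not of the above resonant form (namely the $\Theta_j$ pieces for $j\le i$, the $b_1\Lambda S_j$ and $\partial_s S_j$ for $j<i$, the $H^{-i+1}\Sigma_{b_1}$ tail minus its leading $T_1$ piece, and the Taylor contribution of $N(\alpha_b)$ at weight $i$) collects into a function $R_i$. By the closure properties of $b_1$-admissibility under $H$, $\Lambda$ and $b_1\partial_{b_1}$ (Lemma \ref{lemmapropinversebis}, Remark \ref{remarkfun}), and using that products of admissible functions remain admissible of added degree, $R_i$ is homogeneous of degree $(i-1,i-1,i)$. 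Setting $S_i := -H^{-1}R_i$ via \eqref{definvesr} and invoking Lemma \ref{lemmapropinversebis}(ii), one obtains $S_i$ homogeneous of degree $(i,i,i)$ as required by \eqref{propprofiles}; the property $\partial_{b_j}S_i=0$ for $j\ge i$ holds since $R_i$ depends only on $(b_1,\dots,b_{i-1})$ at this stage of the induction.

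\emph{Error bounds and main obstacle.} After $L+2$ iterations, $\Psi_b$ is a sum of homogeneous profiles of $b$-weight at least $L+3$ (arising from $b_1\Lambda S_{L+2}$, $\partial_s S_{L+2}$, and the remaining high-order piece of $N(\alpha_b)$); applying the weighted Sobolev bounds of Lemma \ref{lemmaestimate} at degree $(L+2,L+2)$ or higher directly yields \eqref{controleh4erreur}. For the local bound \eqref{fluxcomputationone} on $y\le 2M$, the Taylor expansion \eqref{assumptoinfs} at the origin shows that each factor in the cascade is smooth and yields an $M$-dependent polynomial together with an extra power of $b_1$, producing the $M^C b_1^{2L+6}$ bound. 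The real obstacle is the pair of sharp estimates \eqref{estimathehtwo}--\eqref{controleh4erreursharp} with the $|\log b_1|^{-2}$ gain, since a naive admissibility count only gives $b_1^{2L+4}$ without the logarithmic factor. To extract the two missing powers of $1/|\log b_1|$, one must check that the $c_{b_1}$ adjustment in $\mathrm{Mod}(t)$ precisely cancels the leading $|\log b_1|^{-1}$ contribution from the $H^{-i+1}\Sigma_{b_1}$ tails at the borderline order $i=L$, so that the residual carries a genuine factor of $c_{b_1}^2 \sim |\log b_1|^{-2}$; and that the refined $g_l$ weights built into Definition \ref{defadmissiblebone} are propagated through the recursion, which is precisely what the sharp tracking in Lemma \ref{lemmapropinversebis} and the borderline cases of Lemma \ref{lemmaestimate} are designed to produce.
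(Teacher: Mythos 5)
Your overall scheme --- the iterative definition $S_i=-H^{-1}R_i$, the absorption of the resonant $T_i$ directions into the modulation equations with the $c_{b_1}$ correction, the admissibility bookkeeping through Lemma \ref{lemmapropinversebis}, and the final appeal to Lemma \ref{lemmaestimate} --- is the paper's scheme. The genuine gap is your treatment of the radiation terms $H^{-i+1}(\Sigma_{b_1}-c_{b_1}T_1)$. You place them inside $R_i$ and invert them, asserting that $R_i$ is homogeneous of degree $(i-1,i-1,i)$; this fails, because $\Sigma_{b_1}-c_{b_1}T_1$ is not $b_1$-admissible: for $y\geq 3B_0$ one has $\Sigma_{b_1}=4\Gamma\sim y$ while $c_{b_1}T_1\sim \frac{2}{|\log b_1|}\,y\log y$, so the difference is of size $y\,O\big(\frac{1+|\log (\sqrt{b_1}y)|}{|\log b_1|}\big)$ in a region where \fref{boundinfitybis} (through the cutoff ${\bf 1}_{y\leq 3B_0}$ in \fref{defgby}) only tolerates $y^{2p_2-k-3}$--type growth. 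Absorbing these terms into the $S_i$ would therefore destroy \fref{propprofiles} and everything downstream. In the paper they are deliberately left in the error, $\Psi_b^{(0)}=\Sigma_{i=1}^L(-1)^{i+1}b_1b_i\,H^{-i+1}(\Sigma_{b_1}-c_{b_1}T_1)$, and are estimated via their support ($H$ applied to $\Sigma_{b_1}-c_{b_1}T_1$ is supported in $\{y\gtrsim B_0\}$, using $H\Lambda Q=0$) together with the smallness $c_{b_1}\sim 2/|\log b_1|$, $|d_{b_1}|\lesssim (b_1|\log b_1|)^{-1}$, see \fref{firstesitmate}--\fref{firstesitmatebis}. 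In particular your claim that $\Psi_b$ consists only of profiles of $b$-weight at least $L+3$ is not correct; \fref{controleh4erreur} must also accommodate these weight-$(i{+}1)\leq L{+}1$ contributions.

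Relatedly, the mechanism you propose for the sharp gains \fref{estimathehtwo}--\fref{controleh4erreursharp} --- a cancellation between the $c_{b_1}$ adjustment in $Mod(t)$ and the leading $|\log b_1|^{-1}$ part of the tails at order $i=L$, leaving a residual of size $c_{b_1}^2$ --- is not what happens, and you only flag it as something "to check". The actual gain is more elementary: each $H^k(\Sigma_{b_1}-c_{b_1}T_1)$, $k\geq 1$, carries a single factor $1/|\log b_1|$ and is localized at $y\sim B_0$, so every further application of $H$ gains $B_0^{-2}=b_1$, and squaring in $L^2$ produces the $|\log b_1|^{-2}$; for the remaining piece of the error (e.g. $b_1\Lambda S_{L+2}$) the $|\log b_1|^{-2}$ is exactly the borderline case of \fref{estunhk}, i.e. the logarithmic gain built into $b_1$-admissibility, which you do mention. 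As written, however, the crucial estimates \fref{estimathehtwo}--\fref{controleh4erreursharp} are not established by your argument, and fixing this requires reorganizing the proof so that the $\Sigma_{b_1}-c_{b_1}T_1$ contributions stay in $\Psi_b$ rather than in the profiles $S_i$.
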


{\bf Proof of Proposition \ref{consprofapproch}}\\

{\bf step 1} Computation of the error.\\

We compute from \fref{decompqb}, \fref{deferreur}:
$$
\pa_sQ_b  -\Delta Q_b+b_1\Lambda Q_b+ \frac{f(Q_b)}{y^2} =  A_1+A_2$$ with $$A_1=b_1\Lambda Q+\Sigma_{i=1}^L\left[(b_i)_sT_i+b_iHT_i+b_1b_i\Lambda T_i\right]+\Sigma_{i=2}^{L+2}\left[\pa_sS_i+HS_i+b_1\Lambda S_i\right],$$ $$ A_2=\frac{1}{y^2}\left[f(Q+\alpha_b)-f(Q)-f'(Q)\alpha_b\right].
$$
Let us rearrange the first sum using the definition \fref{deftk}:
\bee
A_1 & = & b_1\Lambda Q+\pa_sS_{L+2}+b_1\Lambda S_{L+2}\\
& + & \Sigma_{i=1}^L\left[(b_i)_sT_i-b_iT_{i-1}+b_1b_i\Lambda T_i\right]+\Sigma_{i=2}^{L+1}\left[\pa_sS_i+b_1\Lambda S_i\right]+\Sigma_{i=1}^{L+1}HS_{i+1}\\
& = & \left[\pa_sS_{L+2}+b_1\Lambda S_{L+2}\right]+\left[HS_{L+2}+\pa_sS_{L+1}+b_1\Lambda S_{L+1}\right]\\
& + &  \Sigma_{i=1}^{L}\left[(b_i)_s+(2i-1+c_{b_1})b_1b_i-b_{i+1}\right]T_i\\
& + & \Sigma_{i=1}^{L}\left[HS_{i+1}+\pa_s S_i+b_1b_i(\Lambda T_i-(2i-1+c_{b_1})T_i)+b_1\Lambda S_i\right]
\eee
where $c_{b_1}$ is given by \fref{cb}. We now treat the time dependence using the anticipated approximate modulation equation:
\bee
\pa_sS_i & = & \Sigma_{j=1}^L(b_j)_s\frac{\partial S_i}{\partial b_j}\\
& = & \Sigma_{j=1}^L((b_j)_s+(2j-1+c_{b_1})b_1b_j-b_{j+1})\frac{\partial S_i}{\partial b_j}-\Sigma_{j=1}^L((2j-1+c_{b_1})b_1b_j-b_{j+1})\frac{\partial S_i}{\partial b_j}
\eee and thus using \fref{propprofiles}:
\bee
A_1 & = & \left\{ b_1\Lambda S_{L+2}-\Sigma_{i=1}^L((2i-1+c_{b_1})b_1b_i-b_{i+1})\frac{\partial S_{L+2}}{\partial b_i}\right\}\\
& + & \left\{HS_{L+2}+b_1\Lambda S_{L+1}-\Sigma_{i=1}^L((2i-1+c_{b_1})b_1b_i-b_{i+1})\frac{\partial S_{L+1}}{\partial b_i}\right\}\\
& + & \Sigma_{i=1}^{L}\left[HS_{i+1}+b_1b_i(\Lambda T_i-(2i-1+c_{b_1})T_i)+b_1\Lambda S_i-\Sigma_{j=1}^{i-1}((2j-1+c_{b_1})b_1b_j-b_{j+1})\frac{\partial S_i}{\partial b_j}\right]\\
& + &  \Sigma_{i=1}^{L}\left[(b_i)_s+(2i-1+c_{b_1})b_1b_i-b_{i+1}\right]\left[T_i+\Sigma_{j={i+1}}^{L+2}\frac{\partial S_j}{\partial b_i}\right].
\eee
We now expand $A_2$ using a Taylor expansion: 
$$
A_2  =  \frac{1}{y^2}\left\{\Sigma_{j=2}^{L+2}\frac{f^{(j)}(Q)}{j!}\alpha_b^j+R_2\right\}
$$
with
\be
\label{defrtwo}
R_2=\frac{\alpha_b^{L+3}}{(L+2)!}\int_0^1(1-\tau)^{L+2}f^{(L+3)}(Q+\tau \alpha_b)d\tau.
\ee
We sort the Taylor polynomial\footnote{remember that $b_i$ is or order $b_1^i$ from \fref{aprioirbound}} using the notation \fref{normsonpupelts} for the $2L+1$ uplet $J=(i_1,\dots,i_L,j_2,\dots,j_{L+2})\in \Bbb N^{2L+1}$:
$$|J|_1=\Sigma_{k=1}^Li_k+\Sigma_{k=2}^{L+2}j_k,  \ \ |J|_2=\Sigma_{k=1}^Lki_k+\Sigma_{k=2}^{L+2}kj_k$$
and thus:
\bee
\Sigma_{j=2}^{L+2}\frac{f^{(j)}(Q)}{j!}\alpha_b^j & = & \Sigma_{j=2}^{L+2}\frac{f^{(j)}(Q)}{j!}\Sigma_{|J|_1=j}c_{J}\Pi_{k=1}^Lb^{i_k}_kT_k^{i_k}\Pi_{k=2}^{L+2}S_k^{j_k}\\
& = & \Sigma_{i=2}^{L+2}P_i+R_1
\eee
where 
\be
\label{defpi}
P_i= \Sigma_{j=2}^{L+2}\frac{f^{(j)}(Q)}{j!}\Sigma_{|J|_1=j,|J|_2=i}c_{J}\Pi_{k=1}^Lb^{i_k}_kT_k^{i_k}\Pi_{k=2}^{L+2}S_k^{j_k},
\ee
\be
\label{defrun}
R_1=\Sigma_{j=2}^{L+2}\frac{f^{(j)}(Q)}{j!}\Sigma_{|J|_1=j, |J|_2\geq L+3}c_{J}\Pi_{k=1}^Lb^{i_k}_kT_k^{i_k}\Pi_{k=2}^{L+2}S_k^{j_k}.
\ee
We finally use the definitions \fref{deftk}, \fref{defthetak}, \fref{defmodtun} to rewrite: $$\Lambda T_i-(2i-1+c_{b_1})T_i=\Theta_i+(-1)^{i+1}H^{-i+1}\Sigma_{b_1}-c_{b_1}T_i=\Theta_i+(-1)^{i+1}H^{-i+1}(\Sigma_{b_1}-c_{b_1}T_1)$$ which together with \fref{deferreur} yields the following expression for the error:
\bea
\label{expressionpsibone}
&& \Psi_b=\Sigma_{i=1}^L(-1)^{i+1}b_1b_iH^{-i+1}(\Sigma_{b_1}-c_{b_1}T_1)\\
\nonumber & + &  \left\{ b_1\Lambda S_{L+2}-\Sigma_{i=1}^L((2i-1+c_{b_1})b_1b_i-b_{i+1})\frac{\partial S_{L+2}}{\partial b_i}+\frac{1}{y^2}[R_1+R_2]\right\}\\
\nonumber & + & \left\{HS_{L+2}+b_1\Lambda S_{L+1}+\frac{P_{L+2}}{y^2}-\Sigma_{i=1}^L((2i-1+c_{b_1})b_1b_i-b_{i+1})\frac{\partial S_{L+1}}{\partial b_i}\right\}\\
\nonumber & + & \Sigma_{i=1}^{L}\left[HS_{i+1}+b_1b_i\Theta_i+b_1\Lambda S_i+\frac{P_{i+1}}{y^2}-\Sigma_{j=1}^{i-1}((2j-1+c_{b_1})b_1b_j-b_{j+1})\frac{\partial S_i}{\partial b_j}\right].
\eea
We now construct iteratively the sequence of profiles $(S_i)_{1\leq i\leq L+2}$ through the scheme: 
\be
\label{defsi}
\left\{\begin{array}{ll}S_1=0,\\
S_{i}=-H^{-1}\Phi_{i}, \ \ 2\leq i\leq L+2
\end{array}\right .
\ee
with for $1\leq i\leq L$:
\be
\label{defphii}
\Phi_{i+1} = b_1b_i\Theta_i+b_1\Lambda S_i+\frac{P_{i+1}}{y^2}-\Sigma_{j=1}^{i-1}((2j-1+c_{b_1})b_1b_j-b_{j+1})\frac{\partial S_i}{\partial b_j},
\ee
\be
\label{defphilplusun}
\Phi_{L+2}=b_1\Lambda S_{L+1}+\frac{P_{L+2}}{y^2}-\Sigma_{i=1}^L((2i-1+c_{b_1})b_1b_i-b_{i+1})\frac{\partial S_{L+1}}{\partial b_i}.
\ee

{\bf step 2} Control of $\Phi_i,S_i$.\\

We claim by induction on $i$ that $\Phi_i$ is homogeneous with
\be
\label{homoegephii}
deg(\Phi_i)= (i-1,i-1,i) \ \ \mbox{for} \ \ 2\leq i\leq L+2
\ee
and 
\be
\label{dependanceparameters}
 \frac{\partial \Phi_i}{\partial b_j}=0\ \ \mbox{for}\ \ 2\leq i\leq j\leq L+2.
 \ee
This implies from Lemma \ref{lemmapropinversebis} that $S_i$ given by \fref{defsi} is homogeneous and satisfies \fref{propprofiles} for $2\leq i\leq L+2$.\\
\underline{{\em $i=1$}}: We compute explicitely: $$\Phi_2=b_1^2\Theta_1+b_1^2\frac{f''(Q)}{2y^2}T^2_1$$ which satisfies \fref{dependanceparameters}. Recall from \fref{assumtiong} that $f=gg'$ is odd and $\pi$ periodic so that the expansions \fref{origin}, \fref{infinity} yield at the origin: 
\be
\label{estforigin}
\frac{f^{(j)}(Q)}{y^2}=\left\{\begin{array}{ll}\Sigma_{k= -1}^py^{2k+1}+O(y^{2p+3})\ \ \mbox{for j even},\\ \Sigma_{k= -1}^py^{2k}+O(y^{2p+2})\ \ \mbox{for j odd}\end{array}\right.
\ee
and at infinity:
\be
\label{estfinfinity}
\frac{f^{(j)}(Q)}{y^2}=\left\{\begin{array}{ll}\Sigma_{k= 1}^py^{-2k-1}+O(y^{-2p-3})\ \ \mbox{for j even},\\ \Sigma_{k= 1}^py^{-2k}+O(y^{-2p-2})\ \ \mbox{for j odd}\end{array}\right.
\ee 
From Lemma \ref{lemmaradiation} and Lemma \ref{lemmaradiationbis}, $T_1$ and $\Theta_1$ are respectiviely admissible and $b_1$-admissible of order $(1,1)$. In particular, we have the Taylor expansion near the origin $$\frac{f''(Q)}{2y^2}T^2_1=\Sigma_{k=1}^pc_ky^{2k+1}+O(y^{2p+3}), \ \ p\geq 1$$ and the bound at infinity $$\left|\mathcal A^k\left(\frac{f''(Q)}{2y^2}T^2_1\right)\right|\lesssim \frac{1}{y^{3+k}}y^2 |\log y|^2\lesssim y^{2-k-3}|\log y|^2,\ \ k\geq 0$$ and hence $\frac{f''(Q)}{2y^2}T^2_1$ is $b_1$-admissible of degre $(1,1)$. We conclude that $\Phi_2$ is homogeneous with $$deg(\Phi_2)=(1,1,2).$$
\underline{{\em $i\to i+1$}}: We estimate all terms in \fref{defphii}. \fref{dependanceparameters} holds by direct inspection. From Lemma \ref{lemmaradiationbis}, $b_1b_i\Theta_i$ is homogeneous of degree $(i,i,i+1)$. From Lemma \ref{lemmapropinversebis}, $b_1\Lambda S_i$ is by induction homogeneous of degree $(i,i,i+1)$. For $j\geq 2$, we have by definition and induction that $$((2j-1+c_{b_1})b_1b_j-b_{j+1})\frac{\partial S_i}{\partial b_j}$$ is homogeneous of degree $(i,i,i+1)$. For $j=1$, we rewrite this term $$((1+c_{b_1})b_1^2-b_{2})\frac{\partial S_i}{\partial b_1}=\left((1+c_{b_1})b_1-\frac{b_2}{b_1}\right)\left(b_1\frac{\partial S_i}{\partial b_1}\right)$$ and conclude recalling Remark \ref{remarkfun} that this term is also homogeneous of degree $(i,i,i+1)$. It thus remains to estimate the nonlinear term $\frac{P_{i+1}}{y^2}$ in \fref{defphii} which from \fref{defpi} is a linear combination of monomials of the form\footnote{Observe that terms involving $k\geq i+1$ are indeed forbidden in the last product are forbidden from the constraint $|J|_1\geq 2$, $|J|_2=i+1$.}: 
$$M_J(y)=\frac{f^{(j)}(Q)}{y^2}\Pi_{k=1}^ib^{i_k}_kT_k^{i_k}\Pi_{k=2}^{i}S_k^{j_k}, \ \ |J|_1=j,\ \ |J|_2=i+1, \ \ 2\leq j\leq i+1.
$$ We conclude using \fref{estforigin}, \fref{estfinfinity} that $M_J$ is admissible with at the origin the development: for $j=2l$,
\bee
M_J(y) & = & y^{-1}y^{\Sigma_{k\geq 1}i_k(2k+1)+j_k(2k+1)}(c_0+c_2y^2+\dots+c_py^{2p}+o(y^{2p+1}))\\
& = & y^{2|J|_2+j-1}(c_0+c_2y^2+\dots+c_py^{2p}+o(y^{2p+1}))\\
& = & y^{2(i+l)+1}(c_0+c_2y^2+\dots+c_py^{2p}+o(y^{2p+1}))
\eee
and for $j=2l+1$:
\bee 
M_J(y)& = &  y^{-2}y^{\Sigma_{k\geq 1}i_k(2k+1)+j_k(2k+1)}(c_0+c_2y^2+\dots+c_py^{2p}+o(y^{2p+1}))\\
& = & y^{2|J|_2+j-2}(c_0+c_2y^2+\dots+c_py^{2p}+o(y^{2p+1}))\\
& = & y^{2(i+l)+1}(c_0+c_2y^2+\dots+c_py^{2p}+o(y^{2p+1})).
\eee
Now $j\ge 2$ ensures $l\ge 1$ and hence $M_J$ admits a Taylor expansion \fref{assumptoinfs} at the origin with $p_1=i+1$. For $y\geq 1$, the rough bound \fref{roughbound} and \fref{taylorexpansionoriginbis} imply $$|S_j|\lesssim b_1^jy^{2j-1}, \ \ |T_j|\lesssim y^{2j-1}|\log y|^C$$ which together with \fref{estfinfinity} yields the control:
\bea
\label{estmjbis}
 M_J(y)&\lesssim&|\log y|^C \left\{\begin{array}{ll} y^{2|J|_2-j-3}=y^{2(i-l)-1}\ \ \mbox{for}\ \ j=2l\geq 2\\
y^{2|J|_2-j-2}=y^{2(i-l)-1}\ \ \mbox{for}\ \ j=2l+1\geq 3\\
\end{array}\right.\\
\nonumber & \lesssim & y^{2i-3}|\log y|^C.
\eea
which is compatible with the degree $i$ control at infinity \fref{boundinfitybis}. The control of further derivatives in $(y,b_1)$ follows from \fref{roughbound} and Leibniz rule.  This concludes the proof of \fref{homoegephii}.\\
 
 {\bf step 3} Estimate on the error.\\
 
 We compute from \fref{expressionpsibone}:
 \be
 \label{formulareste}
\Psi_b=\Psi_b^{(0)}+\Psi^{(1)}_b, 
\ee
\be
\label{formulapisbzero}
\Psi_{b}^{(0)}=\Sigma_{i=1}^L(-1)^{i+1}b_1b_iH^{-i+1}\Sigmat_{b_1}\ \ \mbox{with}\ \ \Sigmat_{b_1}=\Sigma_{b_1}-c_{b_1}T_1,
\ee
\be
\label{defpsibnot}
\Psi^{(1)}_b=  b_1\Lambda S_{L+2}-\Sigma_{i=1}^L((2i-1+c_{b_1})b_1b_i-b_{i+1})\frac{\partial S_{L+2}}{\partial b_i}+\frac{1}{y^2}[R_1+R_2].
\ee
{\it Estimates for $\Psi_b^{(0)}$}: First observe from \fref{sigmab0}, \fref{formulapisbzero} that 
\be
\label{supporttilde}
\mbox{Supp}\Sigmat_{b_1}\subset\{y\geq \frac{B_0}{4}\}.
\ee
We extract from \fref{formulasigmbone} the rough bound for $k\geq 0$ and $\frac{B_0}{4}\leq y\leq 2B_1$: $$|H^{-k}\Sigmat_{b_1}|\lesssim 1+y^{2k+1}$$ and thus $$\int_{y\leq 2B_1}|H^{-k}\Sigmat_{b_1}|^2\lesssim b_1^{-2k-2}|\log b_1|^C,  \ \ 0\leq k\leq L.$$
On the other hand, from \fref{defsigmab} and the cancellation $H\Lambda Q=0$:
\be
\label{firstesitmate}
|H\Sigmat_{b_1}|\lesssim \frac{1}{|\log b_1|}\left(\frac{1}{1+y}\right){\bf 1}_{y\ge \frac{B_0}{4}}, 
\ee
\be
\label{firstesitmatebis}
 |H^k\Sigmat_{b_1}|\lesssim \frac{1}{B_0^{2(k-1)}|\log b_1|}\left(\frac{1}{1+y}\right){\bf 1}_{B_0\leq y\leq 3B_0}\ \ \mbox{for}\ \ k\geq 2.
 \ee
 This leads to the bound: $$\int_{y\leq 2B_1}|H\Sigmat_{b_1}|^2\lesssim \frac{1}{|\log b_1|}, \ \ \int |H^{k}\Sigmat_{b_1}|^2\lesssim \frac{b_1^{2k-2}}{|\log b_1|^2}\ \ \mbox{for}\ \ k\geq 2.$$ We thus estimate from \fref{aprioirbound}: for $0\leq k\leq L$, $$\int_{y\leq 2B_1}|H^k\Psi_b^{(0)}|^2\lesssim |\log b_1|^C\Sigma_{i=1}^Lb_1^{2+2i}b_1^{2(k-i+1)-2}\lesssim b_1^{2k+2}|\log b_1|^C$$ and the sharp logarithmic gain:
\bee
\int|H^{L+1}\Psi_b^{(0)}|^2 & \lesssim&  \Sigma_{i=1}^{L}b_1^{2+2i}\|H^{L+2-i}\Sigmat_{b_1}\|_{L^2}^2\lesssim \frac{1}{|\log b_1|^2}\Sigma_{i=1}^Lb_1^{2+2i}b_1^{2(L+1-i+1)-2}\\
& \lesssim & \frac{b_1^{2L+4}}{|\log b_1|^2}.
\eee 
Similarily, using \fref{firstesitmate}, \fref{firstesitmatebis}:
\bee
&&\int_{y\leq 2B_1}\frac{1+|\log y|^2}{1+y^4}|H^L\Psi^{(0)}_b|^2 \lesssim  \Sigma_{i=1}^{L}b_1^{2+2i}\int_{y\leq 2B_1}\frac{1+|\log y|^2}{1+y^4}|H^{L-i+1}\Sigmat_{b_1}|^2\\
& \lesssim &\Sigma_{i=1}^{L}b_1^{2+2i}\int_{y\geq \frac{B_0}{4}}\frac{1+|\log y|^2}{1+y^4}\frac{b_1^{2(L-i+1-1)}}{|\log b_1|^2(1+y^2)}\lesssim b_1^{2L+4},
 \eee
\bee
\int_{y\leq 2B_1}\frac{1+|\log y|^2}{1+y^2}|AH^L\Psi^{(0)}_b|^2& \lesssim  & \Sigma_{i=1}^{L}b_1^{2+2i}\int_{y\leq 2B_1}\frac{1+|\log y|^2}{1+y^2}|AH^{L-i+1}\Sigmat_{b_1}|^2\\
& \lesssim &\Sigma_{i=1}^{L}b_1^{2+2i}b_1^{2(L-i)}\int_{y\geq \frac{B_0}{4}} \frac{1+|\log y|^2}{y^4(1+y^2)}\\
& \lesssim & b_1^{2L+4}|\log b_1|^C.
\eee

{\it Estimates for $\Psi_b^{(1)}$}: By construction, $S_{L+2}$ is homogeneous of degree $(L+2,L+2,L+2)$ and thus so is $\Lambda S_{L+2}$. We therefore estimate from \fref{estunhk}, \fref{aprioirbound}: $\forall 0\leq k\leq L+1$,
$$\int_{y\leq 2B_1}|b_1H^{k}\Lambda S_{L+2}|^2  \lesssim \frac{b_1^2b_1^{2L+4}|\log b_1|^{4(L+2-k-1)}}{b_1^{2(L+2-k)}|\log b_1|^2}=\frac{b_1^{2k+2}}{|\log b_1|^2}|\log b_1|^{4(L+1-k)},$$
and using the rough bound \fref{roughbound}:
\bee
&&\int_{y\leq 2B_1}(1+|\log y|^2)\left[\frac{|b_1H^{L}\Lambda S_{L+2}|^2}{1+y^4}+\frac{|b_1AH^{L}\Lambda S_{L+2}|^2}{1+y^2}\right]\\
& \lesssim &  b^2_1b_1^{2L+4}\int_{y\leq 2B_1}\frac{1+|\log y|^2}{1+y^4}(1+y^2)^{2(L+2)-1-2L}\lesssim b_1^{2L+4}|\log b_1|^C.
 \eee

We now turn to the control of $R_1$ which from \fref{defrun} is a linear combination of terms of the form:
$$\tilde{M}_J=\frac{f^{(j)}(Q)}{y^2}\Pi_{k=1}^Lb^{i_k}_kT_k^{i_k}\Pi_{k=2}^{L+2}S_k^{j_k}, \ \ |J|_1=j,\ \ |J|_2\geq L+3, \ \ 2\leq j\leq L+2. $$ At the origin, the homogeneity of $S_i$ and the admissibility of $T_i$ ensure the bound for $y\le 1$: $$|\tilde{M}_J(y)|\lesssim b_1^{L+3}\left\{\begin{array}{ll} y^{2|J|_2+j-1}=y^{2(|J|_2+l)-1}\ \ \mbox{for}\ \ j=2l\\
y^{2|J|_2+j-2}=y^{2(|J|_2+l)-1} \ \mbox{for}\ \ j=2l+1\\
\end{array}\right.\lesssim b_1^{L+3}y^{2L+6},
$$
and similarily like for \fref{estmjbis} for $1\leq y\leq 2B_1$:
$$|\tilde{M}_J(y)|\lesssim b_1^{|J|_2}|\log b_1|^C\left\{\begin{array}{ll} y^{2|J|_2-j-3}=y^{2(|J|_2-l)-3}\ \ \mbox{for}\ \ j=2l\\
y^{2|J|_2-j-2}=y^{2(|J|_2-l)-3}\ \ \mbox{for}\ \ j=2l+1\\
\end{array}\right.\lesssim b_1^{|J|_2}y^{2|J|_2-5}|\log b_1|^C
$$
where we used $j\geq 2$, and similarily for higher derivatives. This ensures the control at the origin: $$|H^{k}\tilde{M}_J(y)|\lesssim b_1^{L+3}\ \ \mbox{for}\ \ 0\leq k\leq L+1, \ \ y\leq 1$$ and for $y\geq 1$: $$|H^k\tilde{M}_J(y)|\lesssim b_1^{|J|_2}y^{2(|J|_2-k)-5}, \ \ 0\leq k\leq L+1$$ and thus: $\forall 0\leq k\leq L+1$, 
\bee
&&\int_{y\leq 2B_1}|H^k\tilde{M}_J|^2\\
 & \lesssim & b_1^{2L+6}+b_1^{2|J|_2}|\log b_1|^C\int_{y\leq 2B_1}y^{4(|J|_2-k)-10}\lesssim b_1^{2L+6}+b_1^{2|J|_2}B_1^{4(|J|_2-k)-8}|\log b_1|^C\\
& \lesssim & b_1^{2L+6}+b_1^{2k+4}|\log b_1|^C\lesssim b_1^{2k+3}.
\eee
Similarily,
\bee
&&\int_{y\leq 2B_1}(1+|\log y|^2)\left[\frac{|H^{L}\tilde{M}_J|^2}{1+y^4}+\frac{|AH^{L}\tilde{M}_J|^2}{1+y^2}\right]\\
& \lesssim & b_1^{2L+6}+|\log b_1|^C\int_{1\leq y\leq 2B_1}\frac{1+|\log y|^2}{1+y^4}\left|b_1^{|J|_2}y^{2(|J|_2-L)-5}\right|^2\\
& \lesssim & b_1^{2L+6}+b_1^{2|J|_2}B_1^{4(|J|_2-L)-12}|\log b_1|^C\lesssim  b_1^{2L+6}|\log b_1|^C.
\eee
It remains to estimate the $R_2$ term given by \fref{defrtwo}. Near the origin $y\leq 1$, we have by construction $|\alpha_b|\lesssim b_1y^3$ and thus for $0\leq k\leq L+1$, $y\leq 1$: $$\left|H^k\left(\frac{R_2}{y^2}\right)\right|\lesssim b_1^{L+3}y^{3(L+3)-2-2k}\lesssim b_1^{L+3}.$$ For $y\geq 1$, we use the rough bound by construction for $1\leq y\leq 2B_1$: $$|\alpha_b|\lesssim b_1y|\log y|^C$$ which yields the bound for $0\leq k\leq L+1$, $1\leq y\leq 2B_1$: $$\left|H^k\left(\frac{R_2}{y^2}\right)\right|\lesssim b_1^{L+3}|\log b_1|^Cy^{L+3-2-2k}$$ from which for $0\leq k\leq L+1$: 
\bee
\int_{y\leq 2B_1}|H^k\left(\frac{R_2}{y^2}\right)|^2&\lesssim & b_1^{2L+6} +b_1^{2L+6}|\log b_1|^C\int_{1\leq y\leq 2B_1} y^{2L+2-4k}\\
& \lesssim & \left\{\begin{array}{ll} b_1^{2L+6} \ \ \mbox{for}\ \ 2L+2-4k<-1\\
b_1^{2L+6}B_1^{2L+4-4k}|\log b_1|^C=b_1^{2k+L+4}|\log b_1|^C
\end{array}\right.\\
& \lesssim & b_1^{2k+5}|\log b_1|^C.
\eee
Similarily, $$\int_{y\leq 2B_1}(1+|\log y|^2)\left[\frac{1}{1+y^4}|H^{L}(\frac{R_2}{y^2})|^2+\frac{1}{1+y^2}|AH^{L}(\frac{R_2}{y^2})|^2\right]\lesssim  b_1^{2L+5}.$$
The collection of above estimates yields \fref{controleh4erreur}, \fref{controleh4erreursharp}.\\
Finally, the local control \fref{fluxcomputationone} is a simple consequence of the support localization \fref{supporttilde} and the fact that $\Psi_b^{(1)}$ given by \fref{defpsibnot} satisfies by construction a bound on compact sets: $$\forall y\leq 2M\ll B_0, \ \ \forall 0\leq k\leq L+1, \ \ |H^k\Psi_b^{(1)}(y)|\lesssim M^Cb_1^{L+3}.$$
This concludes the proof of Proposition \ref{consprofapproch}.


\subsection{Localization of the profile}

 
 We now proceed to a simple localization procedure of the profile $Q_b$ to avoid some irrelevant growth in the region $y\geq 2B_1$.
 
 \begin{proposition}[Localization]
\label{consprofapprochloc}
Under the assumptions of Proposition \ref{consprofapproch}, assume moreover the a priori bound 
\be
\label{aprioribound}
|(b_1)_s|\lesssim b_1^{2}
\ee
Let the localized profile 
\be
\label{decompqbt}
\tilde{Q}_{b(s)}(y)= Q(y) + \alphat_{b(s)}(y), \ \ \alphat_b(y)=\Sigma_{i=1}^Lb_i\tT_i(y)+\Sigma_{i=2}^{L+2}\tS_i(y)
\ee
with 
\be
\label{defts}
\tilde{T}_i=\chi_{B_1}T_i, \ \ \tS_i=\chi_{B_1}S_i.
\ee
Then
 \bea
\label{deferreutilder}
&&\pa_s\qbt  -\Delta \qbt+b_1\Lambda \qbt+ \frac{f(\qbt)}{y^2}\\
\nonumber & = & \Psit_b+ \Sigma_{i=1}^{L}\left[(b_i)_s+(2i-1+c_{b_1})b_1b_i-b_{i+1}\right]\left[\tT_i+\chi_{B_1}\Sigma_{j=i+1}^{L+2}\frac{\partial S_j}{\partial b_i}\right]
\eea
where $\Psit_b$ satisfies the bounds:\\
(i) Weighted bounds: 
\be
\label{controleh4erreurtilde}
\forall 1\leq k\leq L, \ \ \int |H^{k}\Psit_b|^2 \lesssim b_1^{2k+2}|\log b_1|^C
\ee
\be
\label{estimathehtwobis}
\int_{y\leq 2B_1}\frac{1+|\log y|^2}{1+y^4}|H^L\Psi_b|^2+\int\frac{1+|\log y|^2}{1+y^2}|AH^L\Psit_b|^2\lesssim \frac{b_1^{2L+3}}{|\log b_1|^2},
\ee
\be
\label{controleh4erreursharptildebis}
\int|H^{L+1}\Psit_b|^2 \lesssim \frac{b_1^{2L+4}}{|\log b_1|^2}.
\ee
(ii) Improved local control: there holds for some universal constant $C=C(L)>0$:
\be
\label{fluxcomputationonebis}
\forall 0\leq k\leq L+1, \ \ \int_{y\leq 2M}|H^k\Psit_b|^2\lesssim M^Cb_1^{2L+6}.
\ee

\end{proposition}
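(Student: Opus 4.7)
The natural strategy is to compare the localized and non-localized flows head-on. Subtracting the equation satisfied by $Q_b$ from the one I want for $\tilde{Q}_b$, and using $\tilde{Q}_b - Q_b = (\chi_{B_1}-1)\alpha_b$ together with the compatibility $\widetilde{\mathrm{Mod}} = \chi_{B_1}\,\mathrm{Mod}$, I obtain an \emph{exact} identity
\[
\tilde{\Psi}_b = \chi_{B_1}\Psi_b + (1-\chi_{B_1})\bigl[b_1 \Lambda Q\bigr] + \mathcal{C}_{\mathrm{space}} + \mathcal{C}_{\mathrm{time}} + \mathcal{N},
\]
where $\mathcal{C}_{\mathrm{space}}$ collects the commutators $[\Delta,\chi_{B_1}]\alpha_b$ and $b_1[\Lambda,\chi_{B_1}]\alpha_b$, $\mathcal{C}_{\mathrm{time}} = (\partial_s\chi_{B_1})\alpha_b$ is the time-derivative commutator, and $\mathcal{N} = y^{-2}[f(\tilde Q_b)-\chi_{B_1}f(Q_b) - (1-\chi_{B_1})f(Q)]$ is the nonlinear correction. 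Each of $\mathcal{C}_{\mathrm{space}}, \mathcal{C}_{\mathrm{time}}, \mathcal{N}$ is supported in the annulus $\mathcal{A} = \{B_1 \le y \le 2B_1\}$, because on $\{y\le B_1\}$ the cutoff is unity and on $\{y\ge 2B_1\}$ the localized profile is exactly $Q$ (which solves $-\Delta Q + f(Q)/y^2 = 0$). I also record the key observation that the residual tail $(1-\chi_{B_1})b_1\Lambda Q$ on $\{y\ge 2B_1\}$ is \emph{annihilated by $H$} since $H\Lambda Q = 0$; this completely disposes of the far-field contribution for every bound in the statement, all of which involve $H^k$ with $k\ge 1$.

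With this decomposition in hand, the local bound \eqref{fluxcomputationonebis} is immediate: on $\{y\le 2M\}$ we have $\chi_{B_1}=1$ and every commutator term vanishes, so $\tilde\Psi_b = \Psi_b$ and \eqref{fluxcomputationone} applies. For the global bound \eqref{controleh4erreurtilde}, I split the integral into $\{y\le B_1\}$, $\mathcal{A}$, and $\{y\ge 2B_1\}$. The first piece is controlled by \eqref{controleh4erreur} via the identity $H^k(\chi_{B_1}\Psi_b) = \chi_{B_1}H^k\Psi_b + [H^k,\chi_{B_1}]\Psi_b$. The third vanishes after one application of $H$. In $\mathcal{A}$ I estimate every contribution by direct pointwise bounds: derivatives of $\chi_{B_1}$ contribute factors $B_1^{-j}\mathbf{1}_{\mathcal{A}}$; the $b_1$-admissibility of $T_i$, $S_i$ and Lemma \ref{lemmaestimate} give bounds $|H^p(b_i T_i)|,|H^p \tilde S_i|\lesssim b_1^i \,y^{2i-2p-1}|\log y|^C$, which at $y\sim B_1$ read $b_1^{p+1/2}|\log b_1|^C$; and the time-commutator $\mathcal{C}_{\mathrm{time}}$ is controlled thanks to the a priori bound \eqref{aprioribound}: $|(b_1)_s|\lesssim b_1^2$ gives $|\partial_s B_1|\lesssim b_1 B_1$ and hence $|\partial_s\chi_{B_1}|\lesssim b_1\mathbf{1}_{\mathcal{A}}$. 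A careful Leibniz expansion of $H^k$ applied to each annulus-supported term, combined with the area estimate $|\mathcal{A}|\lesssim B_1^2$, then yields the desired budget $b_1^{2k+2}|\log b_1|^C$.

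The hard part will be the sharp bounds \eqref{estimathehtwobis} and \eqref{controleh4erreursharptildebis}, which carry the crucial logarithmic gain $|\log b_1|^{-2}$ that cannot be wasted in the commutator estimate. Here the saving comes from three coordinated facts: (i) the $b_1$-admissibility of $\Theta_i$ and $S_i$ (Lemmas \ref{lemmaradiationbis} and \ref{lemmapropinversebis}) already provides the factor $|\log b_1|^{-1}$ in the $b_1$-derivatives; (ii) in $\mathcal{A}$ one has the algebraic relation $b_1 B_1^2 \sim |\log b_1|^2$, which converts area factors into logarithmic weights; and (iii) each spatial derivative of $\chi_{B_1}$ gives a gain of $B_1^{-1}\sim b_1^{1/2}/|\log b_1|$. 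Combining these and using the sharp estimate for $H^{L+1}T_i$ in the annulus (where the highest-order term is forced by \eqref{defffbone} to carry an explicit $|\log b_1|^{-1}$ factor), every annulus contribution to $\int|H^{L+1}\tilde\Psi_b|^2$ is controlled by $b_1^{2L+4}/|\log b_1|^2$, matching the budget. The bound \eqref{estimathehtwobis} follows from the same bookkeeping with the additional weights $(1+|\log y|^2)/(1+y^{4})$ and $(1+|\log y|^2)/(1+y^2)$, which at $y\sim B_1$ contribute only harmless powers of $|\log b_1|$ that are absorbed by the sharp gain.
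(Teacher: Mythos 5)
Your decomposition is exactly the one the paper uses: $\Psit_b=\chi_{B_1}\Psi_b+\Psit_b^{(0)}$, where $\Psit_b^{(0)}$ collects the space commutators $-2\pa_y\chi_{B_1}\pa_y\alpha_b-\alpha_b\Delta\chi_{B_1}+b_1\alpha_b\Lambda\chi_{B_1}$, the time term $(\pa_s\chi_{B_1})\alpha_b$, the nonlinear correction and the tail $b_1(1-\chi_{B_1})\Lambda Q$; your identity $\widetilde{\mbox{Mod}}=\chi_{B_1}\mbox{Mod}$, the support observations, and your estimates of the annulus-supported localization errors (pointwise admissibility bounds for $T_i,S_i,\Theta_i$, the relation $b_1B_1^2=|\log b_1|^2$, the gain $B_1^{-1}\sim \sqrt{b_1}/|\log b_1|$ per derivative of the cutoff, and $|\pa_s\chi_{B_1}|\lesssim b_1{\bf 1}_{B_1\leq y\leq 2B_1}$ from \fref{aprioribound}) reproduce the paper's computation, and \fref{fluxcomputationonebis} is indeed immediate from the supports (note only that it also includes $k=0$, which is harmless since all extra terms live at $y\geq B_1\gg 2M$).

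The genuine gap is in your treatment of the piece $\chi_{B_1}\Psi_b$ at the sharp level. You propose $H^k(\chi_{B_1}\Psi_b)=\chi_{B_1}H^k\Psi_b+[H^k,\chi_{B_1}]\Psi_b$ and control of the commutator using the \emph{conclusions} of Proposition \ref{consprofapproch} as a black box. This is fine for \fref{controleh4erreurtilde}, where logarithmic losses are allowed, but it cannot give \fref{controleh4erreursharptildebis}. The worst commutator term is of the form $\chi_{B_1}'\,A H^{L}\Psi_b$, supported in $B_1\leq y\leq 2B_1$, and the only information the statement of Proposition \ref{consprofapproch} gives on $(2L+1)$-st order suitable derivatives of $\Psi_b$ there is \fref{estimathehtwo}; since the weight $\frac{1+|\log y|^2}{1+y^2}\sim b_1$ at $y\sim B_1$, this yields $\int_{B_1\leq y\leq 2B_1}|AH^L\Psi_b|^2\lesssim b_1^{2L+2}|\log b_1|^{-2}$, hence a commutator contribution $\lesssim B_1^{-2}\,b_1^{2L+2}|\log b_1|^{-2}=b_1^{2L+3}|\log b_1|^{-4}$, which overshoots the budget $b_1^{2L+4}|\log b_1|^{-2}$ by the unbounded factor $(b_1|\log b_1|^2)^{-1}$ (the $AH^L$ part of \fref{estimathehtwobis} is at best borderline by the same route). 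Since this sharp bound is precisely what feeds the $|\log b_1|^{-2}$ gain in the Lyapounov functional, the loss is not acceptable. To close you must use the structure of $\Psi_b$ near $y\sim B_1$ rather than its quoted bounds: for the dominant piece $\Psi_b^{(0)}$, the high suitable derivatives $H^{j}\tilde{\Sigma}_{b_1}$, $j\geq 2$, are supported in $\{y\leq 3B_0\}\ll\{y\sim B_1\}$, and $H\tilde{\Sigma}_{b_1}=c_{b_1}\Lambda Q$ for $y\geq 3B_0$ is annihilated by a further $A$ or $H$, so the cutoff commutators acting on $\Psi_b^{(0)}$ essentially vanish; the remaining $\Psi_b^{(1)}$ terms obey pointwise bounds that survive multiplication by derivatives of $\chi_{B_1}$ and fit the sharp budget. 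This is exactly what the paper means when it says the bounds for $\chi_{B_1}\Psi_b$ "follow verbatim" from the proof (not the statement) of Proposition \ref{consprofapproch}; as written, your argument for \fref{controleh4erreursharptildebis} is incomplete.
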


\begin{proof}[Proof of Proposition \ref{consprofapprochloc}]

We compute from localization:
\bee
&&\pa_s\qbt  -\Delta \qbt+b_1 \Lambda \qbt+ \frac{f(\qbt)}{y^2}=\chi_{B_1}\left\{\pa_sQ_b-\Delta Q_b+b_1\Lambda Q_b+\frac{f(Q_b)}{y^2}\right\}\\
& + & (\pa_s\chi_{B_1})\alpha_b-2\pa_y\chi_{B_1}\pa_y\alpha_b-\alpha_b\Delta \chi_{B_1}+b_1\alpha_b\Lambda \chi_{B_1}+b_1 (1-\chi_{B_1})\Lambda Q\\
&+ & \frac{1}{y^2}\left\{f(\qbt)-f(Q)-\chi_{B_1}(f(Q_b)-f(Q))\right\}
\eee
so that:
$$
\Psit_b=\chi_{B_1}\Psi_b+\Psit_b^{(0)}$$ with 
\bea
\label{defpdtigwzero}
\Psit_b^{(0)} & = & \frac{1}{y^2}\left\{f(\qbt)-f(Q)-\chi_{B_1}(f(Q_b)-f(Q))\right\}\\
\nonumber & + & (\pa_s\chi_{B_1})\alpha_b-2\pa_y\chi_{B_1}\pa_y\alpha_b-\alpha_b\Delta \chi_{B_1}+b_1\alpha_b\Lambda \chi_{B_1}+b_1 (1-\chi_{B_1})\Lambda Q.
\eea
 Note that all terms in the above RHS are localized in $B_1\leq y\leq 2B_1$ except the last one for which $\mbox{Supp}((1-\chi_{B_1})\Lambda Q)\subset\{y\geq B_1\}.$ Hence \fref{fluxcomputationone} implies \fref{fluxcomputationonebis}. The bounds \fref{controleh4erreurtilde}, \fref{estimathehtwobis}, \fref{controleh4erreursharptildebis} for $\chi_{B_1}\Psi_b$ follow verbatim like for the proof of \fref{controleh4erreur}, \fref{estimathehtwo}, \fref{controleh4erreursharp}.\\
To estimate the second error induced by localization in \fref{defpdtigwzero}, first observe from \fref{aprioribound} the bound: $$|\pa_s\chi_{B_1}|\lesssim \frac{|(b_1)_s|}{b_1}|y\chi'_{B_1}|\lesssim b_1{\bf 1}_{B_1\leq y\leq 2B_1}.$$ Moreover, from the admissibilty of $T_i$ and the $b_1$ admissibility of $S_i$, $T_i$ terms dominate for $y\sim B_1$ in $\alpha_b$, and we estimate from \fref{taylorexpansionoriginbis}: $\forall k\geq 0$ and $B_1\leq y\leq 2B_1$,
 \be
 \label{estaklphaba}
 \left|\frac{\pa^k}{\pa y^k}\alpha_b\right|\lesssim \Sigma_{i=1}^Lb_1^iy^{2i-k-1}(1+|\log b_1|)\lesssim \frac{|\log b_1|}{B_1^{k+1}}.
 \ee
 This yields: $\forall 1\leq k\leq L$, 
 \bea
 \label{covohohoe}
 \nonumber &&\int\left|H^k\left((\pa_s\chi_{B_1})\alpha_b-2\pa_y\chi_{B_1}\pa_y\alpha_b-\alpha_b\Delta \chi_{B_1}+b_1\alpha_b\Lambda \chi_{B_1}\right)\right|^2\\
\nonumber  & \lesssim & \int_{B_1\leq y\leq 2B_1}\left|\frac{b_1|\log b_1|}{B_1^{2k+1}}+\frac{|\log b_1|}{B_1^{2k+1+2}}\right|^2\lesssim  \frac{b_1^2}{B_1^{4k}}|\log b_1|^C\\
&  \lesssim & b_1^{2k+2}|\log b_1|^C
 \eea
 and 
\bea
\label{covohohoebis}
\nonumber  &&\int\left|H^{L+1}\left((\pa_s\chi_{B_1})\alpha_b-2\pa_y\chi_{B_1}\pa_y\alpha_b-\alpha_b\Delta \chi_{B_1}+b_1\alpha_b\Lambda \chi_{B_1}\right)\right|^2\\
 & \lesssim &  \int_{B_1\leq y\leq 2B_1}\left|\frac{b_1|\log b_1|}{B_1^{2(L+1)+1}}+\frac{|\log b_1|}{B_1^{2(L+1)+1+2}}\right|^2\lesssim \frac{b_1^{2L+4}}{|\log b_1|^2}.
 \eea
 We next estimate in brute force: $$\left|\frac{d^k}{dy^k}\left[(1-\chi_{B_1})\Lambda Q\right]\right|\lesssim \frac{1}{y^{k+1}}{\bf 1}_{y\geq B_1}$$ from which: $\forall 1\leq k\leq L+1$: $$\int\left||H^k\left(b_1 (1-\chi_{B_1})\Lambda Q\right)\right|^2\lesssim b_1^2\int_{B_1\leq y\leq 2B_1}\frac{1}{y^{4k+2}}\lesssim \frac{b_1^{2k+2}}{|\log b_1|^{4k}}.$$ It remains to estimate the nonlinear term for which we estimate using \fref{estaklphaba} and $|f'|\lesssim 1$:
 $$\left|\frac{\pa^k}{\pa y^k}\left\{\frac{1}{y^2}[f(\qbt)-f(Q)-\chi_{B_1}(f(Q_b)-f(Q))]\right\}\right|\lesssim \frac{|\log b_1|}{B_1^{k+1}}
 $$
  and the corresponding terms are estimated like for \fref{covohohoe}, \fref{covohohoebis}.
  \end{proof} 
  

\subsection{Study of the dynamical system for $b=(b_1,\dots,b_L)$}


The essence of the construction of the $Q_b$ profile is to generate according to \fref{defmodtun} the finite dimensional dynamical system \fref{systdynfundintro} for $b=(b_1,\dots,b_{L})$: 
\be
\label{systdynfund}
 (b_k)_s+\left(2k-1+\frac{2}{\log s}\right)b_1b_k-b_{k+1}=0,  \ \ 1\leq k\leq L, \ \ b_{L+1}\equiv 0.
 \ee 
 We show in this section that \fref{systdynfund} admits exceptional solutions, and that the linearized operator close to these solutions is explicit.

\begin{lemma}[Approximate solution for the $b$  system]
\label{lemmaexplicitsol}
Let $L\geq 2$ and $s_0\gg 1$ be a large enough universal constant. Let the sequences 
\bea
\label{defalphai}
&&\left\{\begin{array}{ll}c_1=\frac{L}{2L-1},\\
c_{k+1}=-\frac{L-k}{2L-1}c_k, \ \ 1\leq k\leq L-1,
\end{array}\right.\\
\label{defdk}
&&\left\{\begin{array}{ll}d_1=-\frac{2L}{(2L-1)^2},\\
d_{k+1}=-\frac{L-k}{2L-1}d_{k}+\frac{4L(L-k)}{(2L-1)^2}c_k, \ \ 1\leq k\leq L-1,
\end{array}\right.,
\eea
then the explicit choice 
\be
\label{approzimatesolution}
b_k^{e}(s)=\frac{c_k}{s^k}+\frac{d_k}{s^k\log s}, \ \ 1\leq k\leq L, \ \ b_{L+1}^{e}\equiv 0
\ee
generates an approximate solution to \fref{systdynfund} in the sense that:
\be
\label{approximatesolution}
(b_k^{e})_s+\left(2k-1+\frac{2}{\log s}\right)b^e_1b^e_k-b^e_{k+1}=O\left(\frac{1}{s^{k+1}(\log s)^2}\right), \ \ 1\leq k\leq L.
\ee
\end{lemma}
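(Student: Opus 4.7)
The plan is a direct asymptotic verification: substitute the ansatz \fref{approzimatesolution} into the left-hand side of the system \fref{systdynfund} and track the coefficients of $s^{-(k+1)}$ and $s^{-(k+1)}(\log s)^{-1}$ separately, showing that both are killed by the very recursions \fref{defalphai}, \fref{defdk} that define $c_k$ and $d_k$.

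First I compute the time derivative
\[
(b_k^e)_s = -\frac{k c_k}{s^{k+1}} - \frac{k d_k}{s^{k+1}\log s} - \frac{d_k}{s^{k+1}(\log s)^2},
\]
and expand the product
\[
b_1^e b_k^e = \frac{c_1 c_k}{s^{k+1}} + \frac{c_1 d_k + c_k d_1}{s^{k+1}\log s} + O\!\left(\frac{1}{s^{k+1}(\log s)^2}\right),
\]
so that multiplication by $2/\log s$ contributes only at the $(s^{k+1}\log s)^{-1}$ order and below. Collecting everything and using $b_{k+1}^e = c_{k+1}/s^{k+1} + d_{k+1}/(s^{k+1}\log s)$ (with the convention $c_{L+1}=d_{L+1}=0$), the $s^{-(k+1)}$ coefficient of the LHS of \fref{systdynfund} equals $-kc_k + (2k-1)c_1 c_k - c_{k+1}$, while the $s^{-(k+1)}(\log s)^{-1}$ coefficient equals $-kd_k + (2k-1)(c_1 d_k + c_k d_1) + 2c_1 c_k - d_{k+1}$.

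Next I check that the recursions \fref{defalphai}, \fref{defdk} are precisely the ones that annihilate these two coefficients. For the leading one, $[(2k-1)c_1 - k]c_k = c_{k+1}$ rewrites, using $c_1 = L/(2L-1)$, as
\[
c_{k+1} = \frac{(2k-1)L - k(2L-1)}{2L-1}\, c_k = -\frac{L-k}{2L-1}\, c_k,
\]
which is \fref{defalphai}, and automatically gives $c_{L+1}=0$. For the subleading one, after substituting $(2k-1)c_1 - k = -(L-k)/(2L-1)$ and then $d_1 = -2L/(2L-1)^2$, the scalar in front of $c_k$ becomes
\[
(2k-1)d_1 + 2c_1 = \frac{2L}{(2L-1)^2}\bigl[-(2k-1) + (2L-1)\bigr] = \frac{4L(L-k)}{(2L-1)^2},
\]
so the recursion reduces exactly to \fref{defdk}, and vanishes at $k=L$.

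Finally, the remaining terms in the expansion — namely $-d_k/(s^{k+1}(\log s)^2)$ from the derivative, the tail of $b_1^e b_k^e$, and the $(\log s)^{-1}$ correction to the $(2/\log s)b_1^e b_k^e$ term — are all $O(s^{-(k+1)}(\log s)^{-2})$ by inspection, which gives \fref{approximatesolution}. There is no real obstacle here: the proof is a bookkeeping exercise, and the only point worth emphasizing in the write-up is that the initial data $c_1 = L/(2L-1)$ and $d_1 = -2L/(2L-1)^2$ are the unique choices making the terminal conditions $c_{L+1}=d_{L+1}=0$ compatible with $b_{L+1}^e \equiv 0$, which is what selects this particular solution among the one-parameter families generated by \fref{defalphai}–\fref{defdk}.
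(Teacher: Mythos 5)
Your verification is correct: the coefficient identities you isolate at orders $s^{-(k+1)}$ and $s^{-(k+1)}(\log s)^{-1}$ are exactly the recursions \fref{defalphai}, \fref{defdk}, the $k=L$ case closes because $(2L-1)c_1-L=0$, and the leftover terms are indeed $O(s^{-(k+1)}(\log s)^{-2})$. This is precisely the explicit computation the paper leaves to the reader, so nothing further is needed (the closing uniqueness remark is a harmless aside not required by the lemma).
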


The proof of Lemma \ref{lemmaexplicitsol} is an explicit computation which is left to the reader. We now claim that this solution corresponds to a codimension $(L-1)$ exceptional manifold:

\begin{lemma}[Linearization]
\label{lemmalinear}
{\em 1. Computation of the linearized system}: Let 
\be
\label{defuk}
b_k(s)=b_k^e(s)+\frac{U_k(s)}{s^k(\log s)^{\frac 54}},\ \ 1\leq k\leq L, \ \ b_{L+1}=U_{L+1}\equiv 0, 
\ee
and note $U=(U_1,\dots,U_L)$.  Then:
\bea
\label{reformulaion}
\nonumber &&(b_k)_s+\left(2k-1+\frac{2}{\log s}\right)b_1b_k-b_{k+1}\\
& = & \frac{1}{s^{k+1}(\log s)^{\frac 54}}\left[s(U_k)_s-(A_LU)_k+O\left(\frac{1}{\sqrt{\log s}}+\frac{|U|+|U|^2}{\log s}\right)\right]
\eea
where 
\be
\label{defa}
A_L=(a_{i,j})_{1\leq i,j,\leq L} \ \ \mbox{with}\ \ \left\{\begin{array}{lllll}
a_{11}=-\frac{1}{2L-1}\\
a_{i,i+1}=1, \ \ 1\leq i\leq L-1\\
a_{1,i}=-(2i-1)c_{i}, \ \ 2\leq i\leq  L\\
a_{i,i}=\frac{L-i}{2L-1}, \ \ 2\leq i\leq L\\
a_{i,j}=0\ \ \mbox{otherwise}
\end{array}\right. .
\ee
{\em 2. Diagonalization of the linearized matrix}: $A_L$ is diagonalizable:
\be
\label{specta}
A_L=P_L^{-1}D_LP_L, \ \ D_L=\mbox{diag}\left\{-1,\frac{2}{2L-1},\frac{3}{2L-1}\dots..., \frac{L}{2L-1}\right\}.
\ee
\end{lemma}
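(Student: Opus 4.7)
For Part~1, I would substitute the ansatz $b_k = b_k^e(s) + U_k(s)/[s^k(\log s)^{5/4}]$ directly into the ODE system and expand term by term. Writing $V_k := U_k/[s^k(\log s)^{5/4}]$, one has $(V_k)_s = (U_k)_s/[s^k(\log s)^{5/4}] - kU_k/[s^{k+1}(\log s)^{5/4}] + O(|U|/[s^{k+1}(\log s)^{9/4}])$, and $b_1 b_k = b_1^e b_k^e + b_1^e V_k + V_1 b_k^e + V_1 V_k$. The pure $b^e$ contribution is absorbed into the approximate-solution residue $O(1/[s^{k+1}(\log s)^2])$ from Lemma~\ref{lemmaexplicitsol}, which, once rescaled by $s^{k+1}(\log s)^{5/4}$, produces $O((\log s)^{-3/4}) \subset O((\log s)^{-1/2})$. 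The bilinear $V_1 V_k$ supplies the $O(|U|^2/\log s)$ error, while the sub-leading $d_k/[s^k \log s]$ part of $b_k^e$ together with the $2/\log s$ prefactor contributes the remaining $O(|U|/\log s)$. Using $b_1^e \sim c_1/s$ and $b_k^e \sim c_k/s^k$, collecting linear-in-$U$ terms and multiplying through by $s^{k+1}(\log s)^{5/4}$, the algebraic identity $k - (2k-1)c_1 = (L-k)/(2L-1)$ (which at $k=1$ combines with the self-contribution from $V_1 b_1^e$ to give $-1/(2L-1)$) identifies exactly $s(U_k)_s = (A_L U)_k$. Note: as written, the prescription $a_{1,i} = -(2i-1)c_i$ for $2 \leq i \leq L$ appears to be a typo for $a_{i,1}$, since the derivation forces these off-diagonal entries into the first column of $A_L$.

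For Part~2, I would compute the characteristic polynomial $P_L(\lambda) := \det(\lambda I - A_L)$ directly by viewing $A_L = T + v\, e_1^{\top}$ as a rank-one perturbation of the upper-bidiagonal matrix $T$ with diagonal $(\mu_1, \dots, \mu_L) = (-\tfrac{1}{2L-1}, \tfrac{L-2}{2L-1}, \tfrac{L-3}{2L-1}, \dots, 0)$ and superdiagonal of $1$'s, where $v^{\top} = (0, -3c_2, -5c_3, \dots, -(2L-1)c_L)$. The Sherman--Morrison determinant identity $P_L(\lambda) = \det(\lambda I - T)\bigl(1 - e_1^{\top}(\lambda I - T)^{-1} v\bigr)$ combined with back-substitution through the bidiagonal $\lambda I - T$ yields
\[
P_L(\lambda) = \Bigl(\lambda + \tfrac{1}{2L-1}\Bigr) \prod_{m=0}^{L-2}\Bigl(\lambda - \tfrac{m}{2L-1}\Bigr) + \sum_{j=2}^L (2j-1) c_j \prod_{m=0}^{L-j-1}\Bigl(\lambda - \tfrac{m}{2L-1}\Bigr).
\]
Substituting $\lambda = x/(2L-1)$ and the closed form $c_j = (-1)^{j-1}(2L-1)^{-j} L!/(L-j)!$ from \fref{defalphai}, the claimed factorization $(\lambda+1)\prod_{j=2}^L(\lambda - j/(2L-1))$ translates into the polynomial identity $G(x) = F(x)$, where
\[
G(x) = (x+1)\prod_{m=0}^{L-2}(x-m) + \sum_{j=2}^L (-1)^{j-1}(2j-1)\tfrac{L!}{(L-j)!}\prod_{m=0}^{L-j-1}(x-m), \quad F(x) = (x+2L-1)\prod_{j=2}^L(x-j).
\]

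Both $G$ and $F$ are monic of degree $L$, and a short computation gives coefficient of $x^{L-1}$ equal to $L(3-L)/2$ in each; hence $\deg(G-F) \leq L-2$. It then suffices to verify $G(j^*) = 0$ for $j^* = 2, 3, \dots, L$. For $j^* \leq L-2$, the leading block of $G$ vanishes automatically (the factor $x - j^* = 0$ appears in $\prod_{m=0}^{L-2}(x-m)$); in the surviving sum only the terms with $j \geq L-j^*$ contribute, and reindexing by $n = j - (L-j^*)$ collapses the sum to a constant multiple of $\sum_{n=0}^{j^*} (-1)^n (a+2n) \binom{j^*}{n}$ with $a = 2L - 2j^* - 1$, which vanishes by the classical identities $\sum_n (-1)^n \binom{j^*}{n} = 0$ and $\sum_n n(-1)^n \binom{j^*}{n} = 0$, both valid for $j^* \geq 2$. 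The boundary cases $j^* = L-1$ and $j^* = L$ (where the leading block no longer vanishes automatically) are handled by adjoining a phantom $k=1$ summand $c_1 \prod_{m=0}^{L-2}(x-m)$ to the sum and using $c_1 = L/(2L-1)$ to merge it with the $(x+1)\prod_{m=0}^{L-2}(x-m)$ term, producing a uniform pseudo-sum to which the same binomial identities apply. This furnishes $L-1$ common roots and forces $G \equiv F$; distinctness of the eigenvalues $-1, \tfrac{2}{2L-1}, \dots, \tfrac{L}{2L-1}$ then immediately yields diagonalizability.

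The main technical obstacle is the combinatorial verification in Part~2, in particular the correct matching of signs, shifts and ranges in the reindexing, and the careful treatment of the boundary values $j^* = L-1, L$. Part~1 is essentially a routine expansion but requires diligent bookkeeping of three separate error channels --- the $O((\log s)^{-1/2})$ residue inherited from Lemma~\ref{lemmaexplicitsol}, the $O((\log s)^{-1})$ corrections coming from the $d_k$-terms in $b_k^e$ and from the $2/\log s$ prefactor, and the $O(|U|^2/\log s)$ contribution from the bilinear cross-term $V_1 V_k$.
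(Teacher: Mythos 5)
Your Part 1 is correct and essentially the same computation as the paper's: substitute the ansatz, keep the leading linear contributions $-kU_k$, $(2k-1)c_kU_1$, $(2k-1)c_1U_k$, $-U_{k+1}$, and use $(2k-1)c_1-k=-\frac{L-k}{2L-1}$; your three error channels are exactly what the paper absorbs into $O\bigl(\frac{1}{\sqrt{\log s}}+\frac{|U|+|U|^2}{\log s}\bigr)$. Your remark about \fref{defa} is also right: the derivation forces the entries $-(2i-1)c_i$ into the first \emph{column} ($a_{i,1}$), and this is consistent with the paper's own expansion of $\det(A_L-X\,{\rm Id})$ along the last row, whose first term $(-1)^{L+1}(-1)(2L-1)c_L$ multiplies a bidiagonal minor equal to $1$; so $a_{1,i}$ is indeed a typo.

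Your Part 2 is correct but takes a genuinely different route. The paper expands the determinant along the last row and telescopes: the recurrence $c_{k+1}=-\frac{L-k}{2L-1}c_k$ enters only through the identity \fref{recurelation}, which peels off the factors $\bigl(X-\frac{m}{2L-1}\bigr)$ one at a time by induction (with $L=2,3$ by inspection). You instead get a closed formula for the characteristic polynomial from the rank-one structure $A_L=T+ve_1^{\top}$, insert the closed form $c_j=(-1)^{j-1}L!/[(L-j)!\,(2L-1)^{j}]$ from \fref{defalphai}, and verify the factorization as a polynomial identity: both sides are monic of degree $L$ with the same $x^{L-1}$ coefficient $\frac{L(3-L)}{2}$, so it suffices to check the $L-1$ roots $x=2,\dots,L$, which after reindexing reduces to $\sum_{n}(-1)^n(a+2n)\binom{j^*}{n}=0$ for $j^*\ge 2$. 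I checked the Sherman--Morrison formula, the coefficient count, and the evaluations, including the boundary cases; the one step to spell out is $j^*=L$, where the residual $\prod_{m=0}^{L-1}(x-m)=L!$ must itself be absorbed as the $n=0$ term of your uniform pseudo-sum, i.e. $G(L)=L!\bigl(1+\sum_{j=1}^{L}(-1)^{j-1}(2j-1)\binom{L}{j}\bigr)=0$ -- which is what your phrasing intends and does work. Comparatively, the paper's telescoping is shorter and never needs the closed form of $c_j$, at the price of guessing the key identity; your argument is more systematic, treats all $L\ge 2$ uniformly, and exhibits the characteristic polynomial explicitly, at the price of the combinatorial bookkeeping you flag. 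Distinctness of the eigenvalues then gives diagonalizability in either approach.
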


\begin{proof}[Proof of Lemma \ref{lemmalinear}]
{\bf step 1} Linearization. A simple computation from \fref{approzimatesolution} ensures:
\bee
&&(b_k)_s+\left(2k-1+\frac{2}{\log s}\right)b_1b_k-b_{k+1}\\
& = & \frac{1}{s^{k+1}(\log s)^{\frac 54}}\left[s(U_k)_s-kU_k+O\left(\frac{|U|}{\log s}\right)\right]+O\left(\frac{1}{s^{k+1}(\log s)^2}\right)\\
& + & \frac{1}{s^{k+1}(\log s)^{\frac 54}}\left[(2k-1)c_kU_1+(2k-1)c_1U_k-U_{k+1}+O\left(\frac{|U|+|U|^2}{\log s}\right)\right],
\eee
and then the relation $$(2k-1)c_1-k=\frac{(2k-1)L}{2L-1}-k=-\frac{L-k}{2L-1}$$ ensures
\bee
&&(b_k)_s+\left(2k-1+\frac{2}{\log s}\right)b_1b_k-b_{k+1}\\
& = &   \frac{1}{s^{k+1}(\log s)^{\frac 54}}\left[s(U_k)_s+(2k-1)c_kU_1-\frac{L-k}{2L-1}U_k-U_{k+1}+O\left(\frac{1}{\sqrt{\log s}}+\frac{|U|+|U|^2}{\log s}\right)\right]
\eee
which is equivalent to \fref{reformulaion}, \fref{defa}.\\

{\bf step 2} Diagonalization. The proof follows by computing the characteristic polynomial. The cases $L=2,3$ are done by direct inspection. Let us assume $L\geq 4$, we compute $$P_L(X)=\det(A_L-X\mbox{Id})$$ by developing on the last row. This yields:
\bee
& & P_L(x) =  (-1)^{L+1}(-1)(2L-1)c_L+(-X)\bigg\{(-1)^{L}(-1)(2L-3)c_{L-1}\\
& + & \left(\frac{1}{2L-1}-X\right)\left[(-1)^{L-1}(-1)(2L-5)c_{L-2}+\left(\frac{2}{2L-1}-X\right)\left[\dots...\right]\right]\bigg\}.
\eee
We use the recurrence relation \fref{defalphai} to compute explicitly:
\bee
&&(-1)^{L+1}(-1)(2L-1)c_L\\
& + & (-X)\bigg\{(-1)^{L}(-1)(2L-3)c_{L-1}+ \left(\frac{1}{2L-1}-X\right)\left[(-1)^{L-1}(-1)(2L-5)c_{L-2}\right]\bigg\}\\
& = & (-1)^L\bigg\{(2L-3)c_{L-1}\left(X-\frac{1}{2L-3}\right)+(2L-5)c_{L-2}\left(X-\frac{1}{2L-1}\right)X\bigg\}.
\eee
We now compute from \fref{defalphai} for $1\leq k\leq L-2$:
\bea
\label{recurelation}
\nonumber &&(2L-(2k+1))c_{L-k}\left(X-\frac{1}{2L-(2k+1)}\right)+(2L-(2k+3))c_{L-(k+1)}X\left(X-\frac1{2L-1}\right)\\
\nonumber & = & (2L-(2k+3))c_{L-(k+1)}\left[X\left(X-\frac{1}{2L-1}\right)-\frac{2L-(2k+1)}{2L-(2k+3)}\frac{k+1}{2L-1}\left(X-\frac{1}{2L-(2k+1)}\right)\right]\\
& = &  (2L-(2k+3))c_{L-(k+1)}\left(X-\frac{k+1}{2L-1}\right)\left(X-\frac{1}{2L-(2k+3)}\right).
\eea
We therefore obtain inductively:
\bee
&&P_L(X)= (-1)^L\bigg\{(2L-3)c_{L-1}\left(X-\frac{1}{2L-3}\right)+(2L-5)c_{L-2}\left(X-\frac{1}{2L-1}\right)X\bigg\}\\
& + & (-X)\left(\frac{1}{2L-1}-X\right)\left(\frac{2}{2L-1}-X\right)\left[(-1)^{L-2}(-1)(2L-7)c_{L-3}+\left(\frac{3}{2L-1}-X\right)[\dots]\right]\\
&= & (-1)^L\left(X-\frac{2}{2L-1}\right)\bigg\{(2L-5)c_{L-2}\left(X-\frac{1}{2L-5}\right)+(2L-7)c_{L-3}X\left(X-\frac{1}{2L-1}\right)\bigg\}\\
& + & (-X)\left(\frac{1}{2L-1}-X\right)\left(\frac{2}{2L-1}-X\right)\left(\frac{3}{2L-1}-X\right)[(-1)^{L-3}(-1)(2L-9)c_{L-4}\dots]\\
& = & (-1)^L\left(X-\frac{2}{2L-1}\right)\dots \left(X-\frac{L-2}{2L-1}\right)\\
&& \times\bigg\{3c_2\left(X-\frac 13\right)+X\left(X-\frac{1}{2L-1}\right)\left(c_1+X-\frac{L-1}{2L-1}\right)\bigg\}.
\eee
We use \fref{recurelation} with $k=L-2$ to compute the last polynomial:
\bee
&& 3c_2\left(X-\frac 13\right)+X\left(X-\frac{1}{2L-1}\right)\left(c_1+X-\frac{L-1}{2L-1}\right)\\
& =& \bigg\{3c_2\left(X-\frac 13\right)+c_1X\left(X-\frac{1}{2L-1}\right)\bigg\}+X\left(X-\frac{1}{2L-1}\right)\left(X-\frac{L-1}{2L-1}\right)\\
& = & c_1\left(X-\frac{L-1}{2L-1}\right)(X-1)+X\left(X-\frac{1}{2L-1}\right)\left(X-\frac{L-1}{2L-1}\right)\\
& = & \left(X-\frac{L-1}{2L-1}\right)\left[\frac{L}{2L-1}(X-1)+X\left(X-\frac{1}{2L-1}\right)\right]\\
& = & \left(X-\frac{L-1}{2L-1}\right)\left(X-\frac{L}{2L-1}\right)\left(X+1\right).
\eee
We have therefore computed:
$$P_L(x)=(-1)^L\left(X-\frac{2}{2L-1}\right)\dots\left(X-\frac{L-2}{2L-1}\right)\left(X-\frac{L-1}{2L-1}\right)\left(X-\frac{L}{2L-1}\right)\left(X+1\right)$$ and \fref{specta} is proved.
\end{proof}
 

\section{The trapped regime}
\label{sectionthree}

In this section, we introduce the main dynamical tools at the heart of the proof of Theorem \ref{thmmain}. We start with describing the bootstrap regime in which the blow up solutions of Theorem \ref{thmmain} will be trapped. We then exhibit the Lyapounov type control of $H^k$ norms which is the heart of our analysis.


\subsection{Modulation}


We describe in this section the set of initial data leading to the blow up scenario of Theorem  \ref{thmmain}. Let a smooth 1-corotational initial data 
\be
\label{ineingonei}
v(0,x)=\left|\begin{array}{lll} g(u_0(r))\cos\theta\\g(u_0(r))\sin\theta\\z(u_0(r))\end{array}\right. \ \ \mbox{with}\ \ \|\nabla u_0-\nabla Q\|_{L^2}\ll 1,
\ee and let $v(t,x)$ be the corresponding smooth solution to \fref{hfgeneral} with life time $0<T<+\infty$. From Lemma \ref{corotv}, we may decompose on a small time interval 
\be
\label{fullmap}
v(t,x)=\left|\begin{array}{lll} g(u(t,r))\cos\theta\\g(u(t,r))\sin\theta\\z(u(t,r))\end{array}\right.
\ee where 
\be
\label{ehoineofn}
\tilde{\e}(t,r)=u(t,r)-Q(r)\ \ \mbox{satisfies}\ \ \fref{estlinfty}
\ee Moreover from standard argument:
\be
\label{regularitiyt}
T<+\infty\ \ \mbox{implies}\ \ \|\Delta v(t)\|_{L^2}\to +\infty\ \ \mbox{as} \ \ t\to T.
\ee 
 We now modulate the solution and introduce from standard argument\footnote{see for example \cite{MMJMPA}, \cite{MR1}, \cite{RaphRod} for a further introduction to modulation.} using the initial smallness \fref{ineingonei} the unique decomposition of the flow defined on a small time $t\in[0,t_1]$ :
\be
\label{decompu}
u(t,r)=(\tilde{Q}_{b(t)}+\e(t,r))_{\lambda(t)}, \ \ \lambda(t)>0, \ \ b=(b_1,\dots,b_L)
\ee where $\e(t)$ satisfies the $L+1$ orthogonality conditions: 
\be
\label{ortho}
 (\e,H^{k}\Phi_M)=0, \ \ 0\leq k\leq L
\ee
and the smallness $$ \|\nabla \e(t)\|_{L^2}+\|\frac{\e(t)}{y}\|_{L^2}+|b(t)|\ll 1.$$ Here given $M>0$ large enough, we defined
\be
  \label{defdirection}
  \Phi_M = \Sigma_{p=0}^Lc_{p,M} H^p(\chi_M\Lambda Q)
    \ee
     where 
    $$c_{0,M}=1, \ \  c_{k,M}=(-1)^{k+1}\frac{\Sigma_{p=0}^{k-1}c_{p,M}(\chi_MH^p(\chi_M\Lambda Q),T_k)}{(\chi_M\Lambda Q,\Lambda Q)}, \ \ 1\leq k\leq L,
    $$
    is manufactured to ensure the nondegeneracy 
    \be
    \label{nondgeener}
    (\Phi_M,\Lambda Q)=(\chi_M\Lambda Q,\Lambda Q)=4\log M (1+o(1))\ \ \mbox{as}\ \ M\to +\infty
    \ee
    and the cancellation: $\forall 1\leq k\leq L$,
  \be
  \label{orthohiM}
  (\Phi_M,T_k)=\Sigma_{p=0}^{k-1}c_{p,M}(H^p(\chi_M\Lambda Q),T_k)+c_{k,M}(-1)^k(\chi_M\Lambda Q,\Lambda Q)=0.
  \ee 
  In particular, 
  \be
  \label{vnkoeoueo}
  (H^iT_j,\Phi_M)=(-1)^j(\chi_M\Lambda Q,\Lambda Q)\delta_{i,j}, \ \ 0\leq i,j\leq L.
  \ee
  Observe also by induction
  \bea
  \label{sizephimltwo}
  \forall 1\leq p\leq L, \ \  |c_{p,M}|\lesssim M^{2p}
  \eea
  from which:
  \be
  \label{estphimlto}
 \int|\Phi_M|^2\lesssim \int|\chi_M\Lambda Q|^2+\Sigma_{p=1}^Lc_{p,M}^2\int|H^p(\chi_M\Lambda Q)|^2\lesssim \log M.
 \ee
 The existence of the decomposition \fref{decompu} is a standard consequence of the implicit function theorem and the explicit relations $$\left(\frac{\pa}{\pa\l}(\qbt)_\lambda,\frac{\pa}{\pa b_1}(\qbt)_\lambda, \dots, \frac{\pa}{\pa b_L}(\qbt)_\lambda\right)|_{\l=1,b=0}=(\Lambda Q,T_1, \dots,T_L)$$ which using \fref{orthohiM} imply the non degeneracy of the Jacobian:
 \bee
\left|\left(\frac{\pa}{\pa (\l,b_j)}({\qbt})_\l,H^i\Phi_M\right)_{1\leq j\leq L,0\leq i\leq L} \right|_{\l=1, b=0}=(\chi_M\Lambda Q,\Lambda Q)^{L+1}\neq 0.
\eee 
The decomposition \fref{decompu} exists as long as $t<T$ and $\e(t,r)$ remains small in the energy topology. Observe also from \fref{ehoineofn}, \fref{decompu} and the explicit structure of $\qbt$ that $\e$ satisfies \fref{estlinfty}, and in particular Lemma \ref{propcorc} applies. In other words, we may measure the regularity of the map through the following coercive norms of $\e$: the energy norm
\be
\label{defenergyspcar}
\|\e\|_{\mathcal H}^2=\int|\pa_y\e|^2+\int\frac{|\e|^2}{y^2},
\ee
and higher order Sobolev norms adapated to the linearized operator 
\be
\label{defnorme}
\mathcal E_{2k}=\int|H^k\e|^2, \ \ 1\leq k\leq L+1.
\ee


\subsection{Setting up the bootstrap}


We now choose our set of initial data in a more restricted way. More precisely, pick a large enough time $s_0\gg1 $ and rewrite the decomposition \fref{decompu}:
\be
\label{neneonneo}
u(t,r)=(\tilde{Q}_{b(s)}+\e)(s,y)
\ee
where we introduced the renormalized variables:
\be
\label{resacledtime}
y=\frac{r}{\l(t)}, \ \ s(t)=s_0+\int_0^{t}\frac{d\tau}{\l^2(\tau)}
\ee
and now measure time in $s$ which will be proved to be a global time. We introduce a decomposition \fref{defuk}:
\be
\label{defukbis}
b_k=b_k^e+\frac{U_k}{s^k(\log s)^{\frac 54}},\ \ 1\leq k\leq L, \ \ b_{k+1}=U_{k+1}\equiv 0.
\ee  
and consider the variable 
\be
\label{devjj}
V=P_L U
\ee
where $P_L$ refers to the diagonalization \fref{specta} of $A_L$. We assume that initially:
\be
\label{estintial}
|V_1(0)|\leq 1, \left(V_2(0),\dots,V_L(0)\right)\in \mathcal B_{L-1}(2).
\ee
We also assume the explicit initial smallness of the data:
\be
\label{init2energy}
\int | \nabla \varepsilon (0)|^2 + \int \left|\dfrac{ \varepsilon (0)}{y}\right|^2 \leq b_1^2(0),
\ee
\be
\label{init2}
|\mathcal E_{2k}(0)| \leq [b_1(0)]^{10L+4}, \ \ 1\leq k\leq L+1.
\ee
Note also that up to a fixed rescaling, we may always assume:
\be
\label{intilzero}
\l(0)=1.
\ee
We then claim the following:

\begin{proposition}[Bootstrap]
\label{bootstrap}
There exists $$\left(V_2(0),\dots,V_L(0)\right)\in \mathcal B_{L-1}(2)$$ such that the following bounds hold: for all $s\geq s_0$,
\begin{itemize}
\item Control of the radiation:
\be
\label{init2h}
\int | \nabla \varepsilon (s)|^2 + \int \left|\dfrac{ \varepsilon (s)}{y}\right|^2 \leq 10(b_1(0))^{\frac 14},
\ee
\be
\label{init3h}
|\mathcal E_{2k}(s)| \leq b_1^{(2k-1)\frac{2L}{2L-1}}(s)|\log b_1(s)|^K,\ \ 1\leq k\leq L,
\ee
\be
\label{init3hb}
|\mathcal E_{2L+2}(s)| \leq K \frac{b_1^{2L+2}(s)}{|\log b_1(s)|^2}.
\ee
\item Control of the unstable modes:
\be
\label{controlunstable}
|V_1(s)|\leq 2, \ \ \left(V_2(s),\dots,V_L(s)\right)\in \mathcal B_{L-1}(2).
\ee
\end{itemize} 
\end{proposition}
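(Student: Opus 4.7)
The plan is a bootstrap/topological argument exploiting the codimension $(L-1)$ instability identified in Lemma \ref{lemmalinear}. Fix $K$ large. For initial data parametrized by $(V_2(0),\dots,V_L(0))\in \mathcal B_{L-1}(2)$ (with the remaining data satisfying \fref{estintial}--\fref{init2}), let $s_*\in[s_0,+\infty]$ be the first time at which one of \fref{init2h}--\fref{controlunstable} is saturated. The goal is to exhibit a choice of $(V_2(0),\dots,V_L(0))$ giving $s_*=+\infty$. The first step is to derive modulation equations by differentiating the orthogonality conditions \fref{ortho} in $s$ and substituting \fref{deferreutilder}: using the nondegeneracy \fref{vnkoeoueo}, the error bounds of Proposition \ref{consprofapprochloc}, and the coercivity of $H^{L+1}$ on the orthogonal complement of $\{H^k\Phi_M\}_{0\le k\le L}$, one obtains
\[
\left|\frac{\lambda_s}{\lambda}+b_1\right|+\sum_{i=1}^{L}\left|(b_i)_s+(2i-1+c_{b_1})b_1 b_i-b_{i+1}\right|\lesssim \sqrt{\mathcal E_{2L+2}}+b_1^{L+3/2}.
\]

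The heart of the proof is then the sharp Lyapunov monotonicity for $\mathcal E_{2L+2}$ announced as Proposition \ref{AEI2}: an energy identity performed on $H^{L+1}\e$ in the \emph{original} (non-renormalized) space variable, exploiting the repulsivity of the conjugate Hamiltonian $\tilde H=AA^*$ cf.~\fref{computationhtilde} and the sharp remainder bound \fref{controleh4erreursharptildebis}, delivers
\[
\frac{d}{ds}\left\{\frac{\mathcal E_{2L+2}}{\lambda^{4L+2}}\right\}\lesssim \frac{b_1^{2L+3}}{\lambda^{4L+2}|\log b_1|^2}.
\]
Integrating this against the approximate law $\lambda(s)\sim (\log s)^{|d_1|}/s^{c_1}$, $b_1(s)\sim c_1/s$ with $c_1=L/(2L-1)$, and invoking the crucial algebraic identity \fref{comprsion} --- which for $k=L$ yields a divergent integral dominated by its terminal value --- gives $\mathcal E_{2L+2}(s)\le C\, b_1^{2L+2}(s)/|\log b_1(s)|^2$ with $C$ \emph{independent} of $K$, strictly improving \fref{init3hb} for $K\gg C$. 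The rougher analogue for $\mathcal E_{2k}$ with $1\le k\le L$ closes \fref{init3h} (with room to spare in the logarithm), and the bound \fref{init2h} is inherited from parabolic dissipation of the Dirichlet energy combined with the initial smallness \fref{init2energy}.

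It remains to close the finite-dimensional system for $V$. Reinjecting $\sqrt{\mathcal E_{2L+2}}\lesssim b_1^{L+1}/|\log b_1|$ into the modulation equations and using the diagonalization \fref{specta}, each coordinate $V_j$ satisfies $s(V_j)_s=\mu_j V_j+o(1)$ as $s_0\to+\infty$, with $\mu_1=-1$ and $\mu_j=j/(2L-1)$ for $2\le j\le L$. Since $\mu_1<0$, $|V_1|\le 2$ is automatically improved to $|V_1|\le 1$ by integration, regardless of the other modes. For $j\ge 2$, the positive eigenvalues yield strict outgoing transversality on $\partial \mathcal B_{L-1}(2)$: whenever $|(V_2,\dots,V_L)|(s)=2$,
\[
\tfrac{1}{2}\tfrac{d}{ds}|(V_2,\dots,V_L)|^2\ge \frac{2}{(2L-1)\,s}|(V_2,\dots,V_L)|^2-o(s^{-1})>0.
\]
Consequently $s_*$ depends continuously on $(V_2(0),\dots,V_L(0))$, and the map $\Phi:(V_2(0),\dots,V_L(0))\mapsto (V_2,\dots,V_L)(s_*)$ takes values in $\partial\mathcal B_{L-1}(2)$ and restricts to the identity on $\partial\mathcal B_{L-1}(2)$ (transversality forces $s_*=s_0$ on the boundary). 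If no initial data gave $s_*=+\infty$, then $\Phi$ would be a continuous retraction of $\mathcal B_{L-1}(2)$ onto its boundary, contradicting Brouwer's no-retraction theorem --- so trapped data exist. The main obstacle is obtaining the sharp monotonicity with the borderline $|\log b_1|^{-2}$ gain for $k=L$: it requires aligning the weighted Hardy coercivity afforded by the orthogonality conditions \fref{ortho} with the slow logarithmic decay of the radiation $\Sigma_{b_1}$ entering the construction of $Q_b$, and carefully tracking the nonlinear corrections at the critical $b_1^{L+1}/|\log b_1|$ level.
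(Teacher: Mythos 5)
Your overall architecture (improve the energy bounds through the Lyapunov monotonicity, then trap the unstable modes by a Brouwer no-retraction argument) is the paper's, and your treatment of \fref{init2h}, \fref{init3h}, \fref{init3hb} follows the same route: integrate \fref{monoenoiencle} along $\l(s)\sim (\log s)^{|d_1|}/s^{c_1}$, use the divergence of the integral for $k=L$ (the algebra \fref{comprsion}), and energy dissipation plus coercivity of $H$ for the $\dot H^1$ bound. The genuine gap is in the finite-dimensional step \fref{controlunstable}. You claim that reinjecting $\sqrt{\mathcal E_{2L+2}}\lesssim b_1^{L+1}/|\log b_1|$ into the modulation equations gives $s(V_j)_s=\mu_j V_j+o(1)$. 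For the $L$-th component this is false: by \fref{defukbis} the variables $U_k$ (hence $V=P_LU$) carry the weight $s^k(\log s)^{5/4}$, so the forcing in the $U_L$ equation is $s^{L+1}(\log s)^{5/4}\big|(b_L)_s+(2L-1+c_{b_1})b_1b_L\big|$, and the best bound available from \fref{parameterspresicely} together with \fref{init3hb} is $\lesssim s^{L+1}(\log s)^{5/4}\,b_1^{L+1}/|\log b_1|\sim(\log s)^{1/4}$, which \emph{diverges}. With a forcing of that size the stable mode $V_1$ cannot be kept in $[-2,2]$ by integration, and on the sphere $|(V_2,\dots,V_L)|=2$ the term $V\cdot f$ overwhelms $\frac{2}{2L-1}|(V_2,\dots,V_L)|^2$, so the strict outgoing transversality you invoke — and hence the continuity of the exit time and the retraction contradiction — does not follow.

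This is exactly why the paper introduces the corrected parameter $\tilde b_L=b_L+(-1)^L(H^L\e,\chi_{B_\delta}\Lambda Q)/(4\delta|\log b_1|)$ in Lemma \ref{improvedbl}: commuting the $\e$ equation with $H^L$ and testing against $\chi_{B_\delta}\Lambda Q$ turns the dominant error into a total time derivative and gains the extra factor $|\log b_1|^{-1/2}$, so that $|(\tilde b_L)_s+(2L-1+c_{b_1})b_1\tilde b_L|\lesssim b_1^{L+1}/|\log b_1|^{3/2}$ and the renormalized forcing becomes $O((\log s)^{-1/4})=o(1)$; the Brouwer argument is then run on the shifted variables $\tilde V$, which differ from $V$ by $O(s^{-1/4})$ by \fref{estexit}. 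Without this device, or an equivalent gain of half a logarithm on the $b_L$ law, your closing of \fref{controlunstable} fails. A secondary, related point: your crude modulation estimate $\lesssim\sqrt{\mathcal E_{2L+2}}+b_1^{L+3/2}$ drops the $1/\sqrt{\log M}$ prefactor of \fref{parameterspresicely}; that gain is what keeps the coefficient of $\mathcal E_{2L+2}$ in Proposition \ref{AEI2} small and allows the bootstrap constant $K$ to be absorbed (first $M$ large, then $K$ large) when closing \fref{init3hb}, so it should be retained rather than merged away.
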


\begin{remark}
\label{remarkcauchy}
Note that the bounds \fref{init3h} easily imply\footnote{see \cite{RSc1} for the full computation.} the control of the $H^2$ norm of the full map \fref{fullmap} $$\int |\Delta v(s)|^2<C(s)<+\infty, \ \ s<s^*$$ and therefore the blow up criterion \fref{regularitiyt} ensures that the map is well defined on $[s,s^*)$.
\end{remark}

Equivalently, given $(\e(0),V(0))$ as above, we introduce the time \bee
s^*& = & s^*(\e(0),V(0))\\
\nonumber & = & \sup\{s\geq s_0\ \ \mbox{such that} \ \ \fref{init2h}, \fref{init3h}, \fref{init3hb}, \fref{controlunstable}\ \ \mbox{hold on }\ \ [s_0,s]\}.
\eee
Observe that the continuity of the flow and the initial smallness \fref{init2energy}, \fref{init2} ensure that $s^*>0$. We then assume by contradiction:
\be
\label{hypcontr}
\forall \left(V_2(0),\dots,V_L(0)\right)\in \mathcal B_{L-1}(2), \ \ s^*<+\infty,
\ee
 and look for a contradiction. Our main claim is that the a priori control of the unstable modes \fref{controlunstable} is enough to improve the bounds  \fref{init2h}, \fref{init3h}, \fref{init3hb}, and then the claim follows from the $(L-1)$ codimensional instability \fref{specta} of the system \fref{systdynfund}  near the exceptional solution $b^e$ through a standard topological argument \`a la Brouwer.\\
 
 The rest of this section is devoted to the derivation of the key Lemmas for the proof of Proposition \ref{bootstrap} which is completed in section \ref{bootstrap}. We will make a systematic implicit use of the interpolation bounds of Lemma \ref{lemmainterpolation} which are a consequence of the coercivity of the $\mathcal E_{2k+2}$ energy given by Lemma \ref{propcorc}.


\subsection{Equation for the radiation}


Recall the decomposition of the flow:
  \bee
  u(t,r) = (\tilde{Q}_{b(t)} + \varepsilon) (s,y) = (Q + \tilde{\alpha}_{b(t)})_{\lambda(s)} + w(t,r).
  \eee
 We use the rescaling formulas
  \bea
  \nonumber u(t,r) = v(s,y)\mbox{,} \ \ \ y = \frac{r}{\lambda(t)} \mbox{,}  \ \ \ \ \ \partial_t u = \frac{1}{\lambda^2(t)}(\partial_s v - \frac{\lambda_s}{\lambda}\Lambda v)_{\lambda}
  \eea  
  to derive the equation for $\e$ in renormalized variables:
  \be
  \label{eqepsilon}
  \partial_s \varepsilon  - \frac{\lambda_s}{\lambda} \Lambda \varepsilon + H \varepsilon = F - \widetilde{Mod} = \mathcal F.
  \ee
  Here  $H$ is the linearized operator given by \fref{defh}, $\widetilde{Mod}(t)$ is given by 
  \bea
\label{defmodtuntilde}
\widetilde{Mod}(t)& = & -\left(\lsl+b_1\right)\Lambda \qbt\\
\nonumber & + &   \Sigma_{i=1}^{L}\left[(b_i)_s+(2i-1+c_{b_1})b_1b_i-b_{i+1}\right]\left[\tT_i+\chi_{B_1}\Sigma_{j=i+1}^{L+2}\frac{\partial S_j}{\partial b_i}\right],
\eea
and
  \be
  \label{defF}
  F = -\Psit_b +L(\varepsilon) - N(\varepsilon)
  \ee
where $L$ is the linear operator corresponding to the error in the linearized operator from $Q$ to $\tilde{Q}_b$:
  \be
  \label{defLe}
 L(\varepsilon) =  \frac{f'(Q)-f'(\qbt)}{y^2}\e
  \ee
  and the remainder term is the purely nonlinear term:
  \be
  \label{defNe}
  N(\varepsilon) = \frac{f(\tilde{Q}_b+ \varepsilon) - f(\tilde{Q}_b)-\e f'(\qbt)}{y^2}. 
  \ee
We also need to write the flow \fref{eqepsilon} in original variables. For this, let the rescaled operators
$$A_{\lambda} = -\partial_r + \frac{Z_{\lambda}}{r}, \ \ \ A^*_{\lambda} = \partial_r + \frac{1+Z_{\lambda}}{r}\ \ $$
\be
\label{defhtilde}
H_{\lambda} = A^*_{\lambda}A_{\lambda} = -\Delta + \frac{V_{\lambda}}{r^2}, \ \ \tilde{H}_{\lambda} = A_{\lambda}A^*_{\lambda} = -\Delta + \frac{\tilde{V}_{\lambda}}{r^2},
\ee
and the renormalized function $$w(t,r)=\e(s,y),$$
then \fref{eqepsilon} becomes:
  \be
  \label{eqenwini}
  \partial_t w + H_{\lambda}w = \frac 1{\lambda^2} \mathcal F_{\lambda}.
  \ee
Observe from \fref{approzimatesolution} that for $s<s^*$, 
 \be
 \label{controlbkbk}
 |b_k|\lesssim b_1^k, \ \ 0<b_1\ll 1
 \ee
 and hence the a priori bound \fref{aprioirbound} holds.


    \subsection{Modulation equations}
  

Let us now compute the modulation equations for $(b,\l)$ as a consequence of the choice of orthogonality conditions \fref{ortho}.

  \begin{lemma}[Modulation equations]
\label{modulationequations}
There holds the bound on the modulation parameters :
\be
\label{parameters}
\left|\frac{\lambda_s}{\lambda} + b_1\right| +\Sigma_{k=1}^{L-1}|(b_k)_s+(2k-1+c_{b_1})b_1b_k-b_{k+1} | \lesssim b_1^{L+\frac32},
\ee
\be
\label{parameterspresicely}
\left| (b_L)_s + (2L-1+c_{b_1})b_1b_L \right| \lesssim \frac{1}{\sqrt{\log M}} \left( \sqrt{ \mathcal E_{2L+2}} +  \frac{b_1^{L+1}}{|\log b_1|}  \right).
\ee
\end{lemma}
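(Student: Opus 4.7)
The plan is to derive \fref{parameters} and \fref{parameterspresicely} by differentiating the orthogonality conditions \fref{ortho} in $s$. Using \fref{eqepsilon} to express $\pa_s\e$ and the self-adjointness of $H$, one obtains for each $0 \le k \le L$:
$$
(\e, H^{k+1}\Phi_M) \;=\; \lsl\,(\Lambda\e, H^k\Phi_M) + (\mathcal F, H^k\Phi_M),
$$
where $\mathcal F = F - \widetilde{Mod}$. The structural observation underlying the whole proof is that for $k < L$ the LHS \emph{vanishes} by \fref{ortho} applied at the index $k+1 \le L$; only at $k = L$ does $(\e, H^{L+1}\Phi_M)$ survive, and it is precisely this surviving term that accounts for the weaker bound \fref{parameterspresicely} compared to \fref{parameters}.

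Next, I expand $(\widetilde{Mod}, H^k\Phi_M)$ from \fref{defmodtuntilde}. Since $\Phi_M$ is supported in $\{y\le 2M\}$ and $B_1 \gg M$, the cutoff $\chi_{B_1} \equiv 1$ on the support of $\Phi_M$, so each $\tilde T_i$ may be replaced by $T_i$. The pairing \fref{vnkoeoueo} then isolates the diagonal of the associated linear system:
$$
(\Lambda Q, H^k\Phi_M) = c_M\,\delta_{k,0}, \qquad (T_i, H^k\Phi_M) = (-1)^i c_M\,\delta_{k,i}, \qquad c_M := (\chi_M\Lambda Q, \Lambda Q) \sim 4\log M,
$$
while the residual contributions $\Lambda\tilde\alpha_b$ and $\chi_{B_1}\sum_{j>i}\partial_{b_i}S_j$ produce $O(b_1)\cdot c_M$ off-diagonal entries thanks to the homogeneity degrees established in Proposition \ref{consprofapproch}. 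This yields a diagonally dominant linear system $c_M(D_L + O(b_1))\vec X = \vec Y$ for $\vec X := (\lsl + b_1, \mathrm{mod}_1, \ldots, \mathrm{mod}_L)^{T}$ with $D_L = \mathrm{diag}(-1,-1,1,-1,\ldots,(-1)^L)$, invertible for $b_1$ small with inverse of size $c_M^{-1}$.

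It remains to estimate $\vec Y_k$ componentwise. The error $\Psi_b$ contributes $|(H^k\Psi_b, \Phi_M)| \lesssim M^C b_1^{L+3}\sqrt{\log M}$ by \fref{fluxcomputationonebis} combined with \fref{estphimlto}. The linear correction $L(\e)$ is small because $|f'(Q) - f'(\tilde Q_b)| \lesssim b_1$ on $\{y \le 2M\}$, so by local coercivity $|(L(\e), H^k\Phi_M)| \lesssim M^C b_1\sqrt{\mathcal E_{2L+2}}$, and $(N(\e), H^k\Phi_M)$ is handled analogously by interpolation against $\mathcal E_{2L+2}$. The term $\lsl(\Lambda\e, H^k\Phi_M)$, after integration by parts as $-\lsl(\e, (\Lambda + 2)H^k\Phi_M)$, is bounded by $|\lsl|M^C\sqrt{\mathcal E_{2L+2}}$. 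For $k < L$ these contributions sum to $\lesssim M^C b_1\sqrt{\mathcal E_{2L+2}} \lesssim M^C b_1^{L+2}/|\log b_1|$, and dividing by $c_M \sim \log M$ yields \fref{parameters} for $b_1$ small. For $k = L$ the dominant RHS term is $(\e, H^{L+1}\Phi_M) = (\Phi_M, H^{L+1}\e)$, bounded by $\|\Phi_M\|_{L^2}\sqrt{\mathcal E_{2L+2}} \lesssim \sqrt{\log M}\sqrt{\mathcal E_{2L+2}}$; the residual $\Psi_b$ piece is absorbed using the sharp weighted estimate \fref{estimathehtwo} into the $b_1^{L+1}/|\log b_1|$ term. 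Dividing by $c_M \sim \log M$ produces the $1/\sqrt{\log M}$ prefactor of \fref{parameterspresicely}.

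The main obstacle is the careful bookkeeping of the $M$ and $b_1$ dependence of each nonlinear and error contribution. In particular, the improvement from $1/\log M$ to $1/\sqrt{\log M}$ in the last equation reflects the fact that the norm $\|\Phi_M\|_{L^2} \sim \sqrt{\log M}$ enters the sharp $k = L$ estimate rather than $|c_M| \sim \log M$; while the orthogonality cancellation $(\e, H^{k+1}\Phi_M) = 0$ for $k < L$ is the essential structural ingredient that makes \fref{parameters} uniformly sharper than \fref{parameterspresicely}.
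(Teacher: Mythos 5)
Your proposal is correct and follows essentially the same route as the paper: you pair the $\e$-equation with $H^k\Phi_M$, use the orthogonality conditions to cancel $(\pa_s\e,H^k\Phi_M)$ and, for $k\le L-1$, the linear term $(\e,H^{k+1}\Phi_M)$, extract the near-diagonal system from \fref{vnkoeoueo} (with $\chi_{B_1}\equiv 1$ on ${\rm supp}\,\Phi_M$ and $O(M^Cb_1)$ off-diagonal corrections absorbed for $b_1<b^*(M)$), and estimate $\Psit_b$, $L(\e)$, $N(\e)$ and $\lsl\Lambda\e$ by the local flux bound \fref{fluxcomputationonebis} and Hardy/coercivity bounds. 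The mechanism you identify for \fref{parameterspresicely} — the surviving term $(H^{L+1}\e,\Phi_M)$ estimated with $\|\Phi_M\|_{L^2}\sim\sqrt{\log M}$ against the normalization $(\Lambda Q,\Phi_M)\sim\log M$ — is exactly the paper's.
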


\begin{remark} Note that this implies in the bootstrap the rough bound:
\be
\label{rougboundpope}
|(b_1)_s|\leq 2b_1^2.
\ee
and in particular \fref{aprioribound} holds. 
\end{remark}

\begin{proof}[Proof of Lemma \ref{modulationequations}]

{\bf step 1} Law for $b_L$. Let 
\be
 \label{defut}
 D(t) = \left|\frac{\lambda_s}{\lambda} + b_1\right|+\Sigma_{k=1}^L|(b_k)_s + (2k-1+c_{b_1})b_1b_k-b_{k+1}|.
 \ee
 We take the inner product of \fref{eqepsilon} with $H^L\Phi_M$ and obtain using the orthogonality \fref{ortho}:
\bea
\label{modequone}
(\widetilde{\mbox{Mod}}(t),H^L\Phi_M)& = & -(\Psit_b,H^L\Phi_M)-(H^L\e,H\Phi_M)\\
\nonumber & - & \left(-\lsl\Lambda \e-L(\e)+N(\e),H^L\Phi_M\right).
\eea
 We first compute from the construction of the profile, \fref{defmodtuntilde}, the localization $\mbox{Supp}(\Phi_M)\subset[0,2M]$ from \fref{defdirection} and the identities \fref{nondgeener}, \fref{orthohiM}, \fref{vnkoeoueo}:
 \bee
&&\nonumber \left(H^L\left(\widetilde{\mbox{Mod}}(t)\right),\Phi_M\right) = -\left(b_1+\frac{\lambda_s}{\lambda}\right) \left(H^L\Lambda \qbt , \Phi_M\right)\\
& + &  \Sigma_{i=1}^{L}\left[(b_i)_s+(2i-1+c_{b_1})b_1b_i-b_{i+1}\right]\left(\tT_i+\chi_{B_1}\Sigma_{j=i+1}^{L+2}\frac{\partial S_j}{\partial b_i},H^L\Phi_M\right)\\
& = &  (-1)^L(\Lambda Q, \Phi_M)((b_L)_s + (2L-1+c_{b_1})b_1b_L)+O\left(M^Cb_1| D(t)|\right).
\eee
The linear term in \fref{modequone} is estimated\footnote{Observe that we do not use the interpolated bounds of Lemma \ref{lemmainterpolation} but directly the definition \fref{defnorme} of $\mathcal E_{2L+2}$, and hence the dependence of the constant in $M$ is explicit, and this will be crucial for the analysis.} from \fref{init3h}, \fref{estphimlto}
$$
\nonumber \left| (H^L \varepsilon,H\Phi_M) \right| \lesssim \| H^{L+1}\varepsilon \|_{L^2}\sqrt{\log M}=\sqrt{\log M\mathcal E_{2L+2}}
$$
and the remaining nonlinear term is estimated using the Hardy bounds of Appendix A:
$$\left|\left(-\lsl\Lambda \e+L(\e)+N(\e),H^L\Phi_M\right)\right|\lesssim M^Cb_1(\sqrt{\mathcal E_{2L+2}}+|D(t)|).$$
We inject these estimates into \fref{modequone} and conclude from \fref{nondgeener} and the local estimate \fref{fluxcomputationonebis}:
\bea
\label{estblone}
\nonumber&& \left|(b_L)_s+(2L-1+c_{b_1})b_1b_L\right|=  \frac{\sqrt{\log M\mathcal E_{2L+2}}}{\log M}+M^Cb_1|D(t)|+M^Cb_1^{L+\frac 32}\\
& \lesssim &  \frac{1}{\sqrt{\log M}} \left( \sqrt{ \mathcal E_{2L+2}} +  \frac{b_1^{L+1}}{|\log b_1|}  \right)+M^Cb_1|D(t)|.
\eea

{\bf step 2} Degeneracy of the law for $\lambda$ and $(b_k)_{1\leq k\leq L-1}$. We now take the inner product of \fref{eqepsilon} with $H^k\Phi_M$, $0\leq k\leq L-1$ and obtain:
\bea
\label{veovoheoehe}
\nonumber (\widetilde{\mbox{Mod}}(t),H^k\Phi_M)  & = &   -(\Psit_b,H^k\Phi_M)-(H^{k+1}\e,H\Phi_M)\\
& - &  \left(-\lsl\Lambda \e-L(\e)+N(\e),H^k\Phi_M\right).
\eea
Note first that the choice of orthogonality conditions \fref{ortho} gets rid of the linear term in $\varepsilon$: $$\forall 0\leq k\leq L-1, \ \ (H^{k+1}\e,\Phi_M)=0.$$ 
Next, we compute from \fref{defmodtuntilde}, the localization $\mbox{Supp}(\Phi_M)\subset[0,2M]$ from \fref{defdirection} and the identities \fref{nondgeener}, \fref{orthohiM}, \fref{vnkoeoueo}:
 \bee
&&\nonumber \left(H^k\left(\widetilde{\mbox{Mod}}(t)\right),\Phi_M\right) = -\left(b+\frac{\lambda_s}{\lambda}\right) \left(H^k\Lambda \qbt , \Phi_M\right)\\
& + &  \Sigma_{i=1}^{L}\left[(b_i)_s+(2i-1+c_{b_1})b_1b_i-b_{i+1}\right]\left(\tilde{T}_i+\chi_{B_1}\Sigma_{j=i+1}^{L+2}\frac{\partial S_j}{\partial b_i},H^k\Phi_M\right)\\
& = & (\Lambda Q,\Phi_M)\left\{\begin{array}{ll}-(\lsl+b_1)\ \ \mbox{for}\ \ k=0\\ 
(-1)^k((b_k)_s+(2k-1+c_{b_1})b_1b_k-b_{k+1})\ \ \mbox{for}\ \ 1\leq k\leq L-1 \end{array}\right .\\
& + & O\left(M^Cb_1| D(t)|\right).
\eee
Nonlinear terms are easily estimated using the Hardy bounds:
$$\left|\left(-\lsl\Lambda \e+L(\e)+N(\e),H^k\Phi_M\right)\right|\lesssim M^Cb_1(\sqrt{\mathcal E_{2L+2}}+|D(t)|)\lesssim b_1^{L+\frac32}+b_1M^C|D(t)|.$$
Injecting this bound into \fref{veovoheoehe} together with the local bound \fref{fluxcomputationonebis} yields the first bound: 
\be
\label{ceheohoeh}
D(t) \lesssim b_1^{L+\frac32}
\ee
and \fref{parameters} is proved. Injecting this bound into \fref{estblone} yields \fref{parameterspresicely}.
\end{proof}


    \subsection{Improved modulation equation fo $b_L$}
  

Observe that \fref{parameterspresicely}, \fref{init3hb} yield the pointwise bound $$\left| (b_L)_s + (2L-1+c_{b_1})b_1b_L \right| \lesssim \frac{1}{\sqrt{\log M}} \left( \sqrt{ \mathcal E_{2L+2}} +  \frac{b_1^{L+1}}{|\log b_1|}  \right)\lesssim \frac{b_1^{L+1}}{|\log b_1|}$$ which is worse than \fref{parameters} and critical to close \fref{controlunstable}. We claim that a $|\log b_1|$ is easily gained up to an oscillation in time.

\begin{lemma}[Improved control of $b_L$]
\label{improvedbl}
Let  $B_\delta=B_0^{\delta}$ and
\be
\label{defbtildel}
\tilde{b}_L=b_L+\frac{(-1)^L(H^L\e,\chi_{B_\delta}\Lambda Q)}{4\delta|\log b_1|},
\ee
then 
\be
\label{poitwidediff}
|\tilde{b}_L-b_L|\lesssim b_1^{L+\frac 12}
\ee
and $\tilde{b}_L$ satisfies the pointwise differential equation:
\be
\label{esteqbrildl}
|(\tilde{b}_{L})_s+(2L-1+c_{b_1})b_1\tilde{b}_L|\lesssim \frac{C(M)}{\sqrt{|\log b_1|}}\left[\sqrt{\mathcal E_{2L+2}}+\frac{b_1^{L+1}}{|\log b_1|}\right].
\ee
\end{lemma}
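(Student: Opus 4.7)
The pointwise bound \eqref{poitwidediff} follows directly from Cauchy--Schwarz. With $B_\delta = B_0^\delta = b_1^{-\delta/2}$, the test function satisfies $\int (1+y^2)|\chi_{B_\delta}\Lambda Q|^2 \lesssim B_\delta^2 = b_1^{-\delta}$ since $\Lambda Q \sim 2/y$, while the coercivity of $\mathcal E_{2L+2}$ (a consequence of the orthogonality conditions \eqref{ortho}) gives $\int |H^L\e|^2/(1+y^2) \lesssim \mathcal E_{2L+2}$. Combined with the bootstrap $\mathcal E_{2L+2} \leq K b_1^{2L+2}/|\log b_1|^2$, one obtains $|(H^L\e, \chi_{B_\delta}\Lambda Q)| \lesssim b_1^{L+1-\delta/2}/|\log b_1|$, and hence $|\tilde b_L - b_L| \lesssim b_1^{L+1-\delta/2}/|\log b_1|^2$, which is much smaller than $b_1^{L+1/2}$ as long as $\delta$ is chosen small (say $\delta\le 1/2$).

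For the improved ODE \eqref{esteqbrildl}, I would differentiate the definition \eqref{defbtildel} in $s$. The time derivatives falling on the $|\log b_1|$ and on the $\chi_{B_\delta}$ factors produce corrections localized on the annulus $B_\delta\le y\le 2B_\delta$ of size controlled by $|(b_1)_s|/b_1\lesssim b_1$ via \eqref{rougboundpope}, and these are easily absorbed. The main identity uses $\partial_s \e = -H\e+(\lambda_s/\lambda)\Lambda\e + \mathcal F$ with $\mathcal F = -\Psit_b + L(\e) - N(\e) - \widetilde{\mathrm{Mod}}$ to obtain
\[
\partial_s (H^L\e,\chi_{B_\delta}\Lambda Q) = -(H^L\e,H(\chi_{B_\delta}\Lambda Q)) + \tfrac{\lambda_s}{\lambda}(H^L\Lambda\e,\chi_{B_\delta}\Lambda Q) + (H^L\mathcal F,\chi_{B_\delta}\Lambda Q) + \text{l.o.t.},
\]
where the first term uses the self-adjointness of $H$.

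The cancellation mechanism sits in the modulation part of $\mathcal F$. Since $H^L\tilde T_L = (-1)^L\Lambda Q$ identically on $\{y\le B_1\}$, and $\operatorname{supp}\chi_{B_\delta}\subset\{y\le 2B_\delta\}\subset\{y\le B_1\}$ for $\delta<1$, one has
\[
(H^L\tilde T_L,\chi_{B_\delta}\Lambda Q) = (-1)^L\int\chi_{B_\delta}|\Lambda Q|^2 = (-1)^L\cdot 4\delta|\log b_1|\,(1+o(1)).
\]
The $L$-th component of $\widetilde{\mathrm{Mod}}$ thus contributes $-(-1)^L[(b_L)_s + (2L-1+c_{b_1})b_1 b_L]\cdot 4\delta|\log b_1|(1+o(1))$; after division by $4\delta|\log b_1|$ and multiplication by $(-1)^L$, this exactly cancels the $(b_L)_s$ term in $(\tilde b_L)_s$, and converts the damping term $(2L-1+c_{b_1})b_1 b_L$ into $(2L-1+c_{b_1})b_1 \tilde b_L$ modulo $O(b_1|\tilde b_L-b_L|)=O(b_1^{L+3/2})$, which fits the right-hand side. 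The contributions of $\widetilde{\mathrm{Mod}}$ coming from the other modes $(b_i)_s+\cdots$, $1\le i\le L-1$, are already under control by \eqref{parameters} and produce terms of order $b_1^{L+3/2}$.

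It remains to bound each residual term by $C(M)|\log b_1|^{-1/2}(\sqrt{\mathcal E_{2L+2}} + b_1^{L+1}/|\log b_1|)$ after normalization by $4\delta|\log b_1|$. The linear term reduces by $H\Lambda Q=0$ to the commutator $H(\chi_{B_\delta}\Lambda Q)=[H,\chi_{B_\delta}]\Lambda Q$, supported on the annulus of size $O(B_\delta^{-3})$; using the weighted coercivity of $\mathcal E_{2L+2}$ to control $\|H^L\e\|_{L^2(y\sim B_\delta)}$ one obtains a contribution far smaller than $\sqrt{\mathcal E_{2L+2}}$. The scaling term is handled by commuting $\Lambda$ with $H^L$ and using the lower-order energies $\mathcal E_{2k}$, $k\le L$, from \eqref{init3h}. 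The profile error term uses the local bound \eqref{fluxcomputationonebis} to give $b_1^{L+3}$ up to powers of $M$, and the nonlinear and $L(\e)$ terms are controlled by Hardy-type weighted inequalities as in Lemma \ref{modulationequations}. The main obstacle is the careful annulus estimate for the commutator $[H,\chi_{B_\delta}]\Lambda Q$: one must exploit the sharp weighted coercivity of $\mathcal E_{2L+2}$ rather than any crude local bound, and the choice $\delta\ll 1$ is essential so that $B_\delta$ stays well inside the region $y\le B_1$ where $\tilde T_L$ coincides with $T_L$ and the identity $H^L T_L=(-1)^L\Lambda Q$ yields the clean cancellation described above.
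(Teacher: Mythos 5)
Your overall architecture is the same as the paper's: differentiate $(H^L\e,\chi_{B_\delta}\Lambda Q)$ in $s$ using the $\e$-equation, extract the leading factor $(\Lambda Q,\chi_{B_\delta}\Lambda Q)\simeq 4\delta|\log b_1|$ from the $b_L$-component of $\widetilde{Mod}$ via $H^LT_L=(-1)^L\Lambda Q$, absorb the cutoff/scaling/profile/nonlinear errors, and then renormalize by $4\delta|\log b_1|$ to convert \fref{calculinter} into \fref{esteqbrildl}; the bound \fref{poitwidediff} is indeed soft (though note that the coercivity of $\mathcal E_{2L+2}$ does not give $\int |H^L\e|^2/(1+y^2)\lesssim \mathcal E_{2L+2}$ as you state -- the available weight is $y^{-4}(1+|\log y|^2)^{-1}$ at infinity -- but since your pairing is supported in $y\leq 2B_\delta$ this only costs harmless factors $B_\delta^2|\log b_1|^2$ and \fref{poitwidediff} still follows).

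The genuine problem is your treatment of the main linear term. The paper estimates $(H^{L+1}\e,\chi_{B_\delta}\Lambda Q)$ by a \emph{direct global} Cauchy--Schwarz: since $\|\chi_{B_\delta}\Lambda Q\|_{L^2}^2\sim \log B_\delta\sim \delta|\log b_1|$, this term is $\lesssim \sqrt{|\log b_1|}\sqrt{\mathcal E_{2L+2}}$, and after division by $4\delta|\log b_1|$ one gets exactly the factor $|\log b_1|^{-1/2}$ that makes \fref{esteqbrildl} an \emph{improvement} over \fref{parameterspresicely}. You instead integrate by parts and use $H\Lambda Q=0$ to reduce to the commutator $[H,\chi_{B_\delta}]\Lambda Q$ (equivalently, since $A\Lambda Q=0$, to $(\e_{2L+1},\chi'_{B_\delta}\Lambda Q)$) on the annulus $y\sim B_\delta$, asserting this is ``far smaller than $\sqrt{\mathcal E_{2L+2}}$''. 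But the only control of $H^L\e$ or $\e_{2L+1}$ on that annulus comes from the weighted coercivity ($\int |\e_{2L+1}|^2/(y^2(1+|\log y|^2))\lesssim \mathcal E_{2L+2}$, or the analogous bound for $\e_{2L}$), and converting it into an annulus $L^2$ bound costs $B_\delta\log B_\delta$ (resp. $B_\delta^2\log B_\delta$); multiplying by $\|\chi'_{B_\delta}\Lambda Q\|_{L^2}\sim B_\delta^{-1}$ (resp. the $B_\delta^{-2}$ size of the commutator) yields only $\lesssim |\log b_1|\,\sqrt{\mathcal E_{2L+2}}$ -- a full factor $\sqrt{|\log b_1|}$ worse than the paper's bound, and this loss is intrinsic to localizing at $y\sim B_\delta$ where $\log y\sim \delta|\log b_1|$. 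After dividing by $4\delta|\log b_1|$ your route therefore only gives $|(\tilde b_L)_s+(2L-1+c_{b_1})b_1\tilde b_L|\lesssim C(M)\sqrt{\mathcal E_{2L+2}}$, i.e. no gain over \fref{parameterspresicely}; plugged into Step 5 of Section \ref{sectionfour} this produces an error of size $(\log s)^{1/4}$ instead of $(\log s)^{-1/4}$ in \fref{eqvtilde}, and the control of the unstable mode no longer closes. So for this one term you must keep the paper's direct Cauchy--Schwarz (no integration by parts); the commutator trick is fine for the terms that already carry an extra factor $b_1$ (cutoff and scaling terms), where polynomial losses in $b_1^{-\delta}$ are affordable, but not for the pure linear term.
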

  
\begin{proof}[Proof of Lemma \ref{improvedbl}] We commute \fref{eqepsilon} with $H^L$ and take the scalar product with $\chi_{B_{\delta}}\Lambda Q$ for some small enough universal constant $0<\delta\ll 1$. This yields:
\bee
&&\frac{d}{ds}\left\{(H^L\e,\chi_{B^{\delta}}\Lambda Q)\right\}-(H^L\e,\Lambda Q\pa_s(\chi_{B^{\delta}}))\\
& = & -(H^{L+1}\e,\chi_{B^{\delta}}\Lambda Q)+\lsl(H^{L}\Lambda \e,\chi_{B_\delta}\Lambda Q)+(F-\widetilde{\mbox{Mod}},H^{L}\chi_{B_{\delta}}\Lambda Q).
\eee
The linear term is estimated by Cauchy Schwarz:
$$|(H^{L+1}\e,\chi_{B^{\delta}}\Lambda Q)|\lesssim C(M)\sqrt{|\log b_1|}\sqrt{\mathcal E_{2L+2}}.$$ We similarily estimate using \fref{parameters}:
\bee
&&\left|(H^L\e,\Lambda Q\pa_s(\chi_{B^{\delta}}))\right|+\left|\lsl(H^{L}\Lambda \e,\chi_{B_\delta}\Lambda Q)\right|\\
& \lesssim & C(M)\frac{|(b_1)_s|}{b_1}\frac{1}{b_1^{C\delta}}\sqrt{\mathcal E_{2L+2}}+\frac{b_1}{b_1^{C\delta}}C(M)\sqrt{\mathcal E_{2L+2}}\lesssim  \sqrt{|\log b_1|}\sqrt{\mathcal E_{2L+2}}.
\eee
The estimate on the error terms easily follows from the Hardy bounds:
$$|(L(\e),H^{L}\chi_{B^{\delta}}\Lambda Q)|+(N(\e),H^{L}\chi_{B^{\delta}}\Lambda Q)|\lesssim \frac{b_1}{b_1^{C\delta}}C(M)\sqrt{\mathcal E_{2L+2}}\lesssim  \sqrt{|\log b_1|}\sqrt{\mathcal E_{2L+2}}.$$ We further estimate from \fref{fluxcomputationonebis}:
$$|(H^L\e,\tilde{\Psi}_b)|\lesssim \frac{b_1^{L+3}}{b_1^{C\delta}}C(M)\sqrt{\mathcal E_{2L+2}}\lesssim  \sqrt{|\log b_1|}\sqrt{\mathcal E_{2L+2}}.$$
We now compute from \fref{parameters}, \fref{defmodtuntilde}:
\bee
&&-(\widetilde{\mbox{Mod}},H^{L}\chi_{B_{\delta}}\Lambda Q)=O\left(\frac{b_1^{L+\frac 32}}{b_1^{C\delta}}\right)\\
& + & \left[(b_L)_s+(2L-1+c_{b_1})b_1b_L\right]\left(H^L\tilde{T}_L+\Sigma_{j=L+1}^{L+2}H^L\left[\chi_{B_1}\frac{\partial S_j}{\pa_{b_L}}\right],\chi_{B_{\delta}}\Lambda Q\right)\\
& = & (-1)^L\left[(b_L)_s+(2L-1+c_{b_1})b_1b_L\right]\left[(\Lambda Q,\chi_{B_\delta}\Lambda Q)+O\left(b_1^{1-C\delta}\right)\right]+O\left(\frac{b_1^{L+\frac 32}}{b_1^{C\delta}}\right)\\
& = & (-1)^L\left[(b_L)_s+(2L-1+c_{b_1})b_1b_L\right]4\delta|\log b_1|+O\left(\sqrt{|\log b_1|}\sqrt{\mathcal E_{2L+2}}+b_1^{L+1}\right).
\eee
The collection of above bounds yields the preliminary estimate:
\bea
\label{calculinter}
\nonumber &&\left|\frac{d}{ds}\left\{(H^L\e,\chi_{B^{\delta}}\Lambda Q)\right\}+ (-1)^L\left[(b_L)_s+(2L-1+c_{b_1}))b_1b_L\right]4\delta|\log b_1|\right|\\
& \lesssim &  C(M)\sqrt{|\log b_1|}\left[\sqrt{\mathcal E_{2L+2}}+\frac{b_1^{L+1}}{|\log b_1|}\right]
\eea
We estimate in brute force from \fref{defbtildel}: $$|\tilde{b}_L-b_L|\lesssim |\log b_1|^Cb_1^{L+1-C\delta}\lesssim b_1^{L+\frac 12}$$ and we therefore rewrite \fref{calculinter} using \fref{rougboundpope}:
\bee
&&|(\tilde{b}_{L})_s+(2L-1+c_{b_1})b_1\tilde{b}_L|\\
& \lesssim&  |(H^L\e,\chi_{B_\delta}\Lambda Q)|\left|\frac{d}{ds}\left\{\frac{1}{4\delta\log b_1}\right\}\right|+ \frac{C(M)\sqrt{|\log b_1|}}{|\log b_1|}\left[\sqrt{\mathcal E_{2L+2}}+\frac{b_1^{L+1}}{|\log b_1|}\right]\\
& \lesssim & b_1^{1-C\delta}\sqrt{\matchal E_{2L+2}}+ \frac{C(M)}{\sqrt{|\log b_1|}}\left[\sqrt{\mathcal E_{2L+2}}+\frac{b_1^{L+1}}{|\log b_1|}\right]
\eee
and \fref{esteqbrildl} is proved.
 \end{proof}

\subsection{The Lyapounov monotonicity}


We now turn to the core of the argument which is the derivation of a suitable Lyapounov functional for the $\mathcal E_{2L+2}$ energy. 

\begin{proposition}[Lyapounov monotonicity]
\label{AEI2}
There holds:
\bea
\label{monoenoiencle}
\nonumber &&\frac{d}{dt} \left\{\frac{1}{\lambda^{4L+2}}\left[\mathcal E_{2L+2}+O\left(b^{\frac 45}_1\frac{b_1^{2L+2}}{|\log b_1|^2}\right)\right]\right\}\\
 & \leq & C\frac{ b_1}{\lambda^{4L+4}}\left[ \frac{\mathcal E_{2L+2}}{\sqrt{\log M}}+\frac{b_1^{2L+2}}{|\log b_1|^2}+\frac{b_1^{L+1}\sqrt{\mathcal E_{2L+2}}}{|\log b_1|} \right]
\eea
for some universal constant $C>0$ independent of $M$ and of the bootstrap constant $K$ in \fref{init2h}, \fref{init3h}.
\end{proposition}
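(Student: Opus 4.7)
The plan is to prove \fref{monoenoiencle} by an energy estimate performed on $H^{L+1}\e$, leveraging the parabolic dissipation coming from the positivity of $H$ and the sharp bounds on $\Psi_b$ established in Proposition \ref{consprofapprochloc}. The natural object is the quantity $\int |H_\lambda^{L+1} w|^2 = \mathcal E_{2L+2}/\lambda^{4L+2}$, so that working in the original $(t,r)$ variables with $w$ and then reinterpreting via the scaling reduces the question to a direct energy identity for the Sobolev-type norm $\int|H_\lambda^{L+1}w|^2$, where $H_\lambda=A_\lambda^*A_\lambda$ supplies the parabolic dissipation.

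\textbf{Step 1: Energy identity.} Starting from $\partial_t w+H_\lambda w=\lambda^{-2}\mathcal F_\lambda$ in \fref{eqenwini}, I commute $\partial_t$ through $H_\lambda^{L+1}$. Since $H_\lambda=-\Delta+V_\lambda/r^2$ depends on $t$ only through $\lambda$, $[\partial_t,H_\lambda]=-(\lambda_t/\lambda)(\Lambda V)_\lambda/r^2$ is a multiplication operator, and $[\partial_t,H_\lambda^{L+1}]=\sum_{k=0}^{L}H_\lambda^k[\partial_t,H_\lambda]H_\lambda^{L-k}$. Testing $\partial_t H_\lambda^{L+1}w$ against $H_\lambda^{L+1}w$ and using $\int H_\lambda\phi\cdot\phi=\int|A_\lambda\phi|^2$ with $\phi=H_\lambda^{L+1}w$ produces
$$\frac12\frac{d}{dt}\int|H_\lambda^{L+1}w|^2+\int|A_\lambda H_\lambda^{L+1}w|^2=\frac{1}{\lambda^2}\bigl(H_\lambda^{L+1}\mathcal F_\lambda,H_\lambda^{L+1}w\bigr)+\bigl([\partial_t,H_\lambda^{L+1}]w,H_\lambda^{L+1}w\bigr).$$
Equivalently, in rescaled variables this reads
$$\tfrac12\tfrac{d}{ds}\mathcal E_{2L+2}+(2L+1)b_1\mathcal E_{2L+2}+\int|AH^{L+1}\e|^2=(H^{L+1}\mathcal F,H^{L+1}\e)+\text{l.o.t.},$$
where I used $[H,\Lambda]=2H+(\Lambda V)/y^2$, $\int\Lambda f\cdot f=-\int f^2$, and $\lambda_s/\lambda=-b_1+O(b_1^{L+3/2})$ from \fref{parameters}; the $(2L+1)b_1\mathcal E_{2L+2}$ coefficient is exactly what converts $d/ds$ into $d/dt$ of $\mathcal E_{2L+2}/\lambda^{4L+2}$ after multiplying by $\lambda^{-2}$.

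\textbf{Step 2: Estimating the source.} Decomposing $\mathcal F=-\Psit_b+L(\e)-N(\e)-\widetilde{\rm Mod}$, I treat each piece. The profile error is handled by Cauchy--Schwarz combined with the sharp bound \fref{controleh4erreursharptildebis},
$$|(H^{L+1}\Psit_b,H^{L+1}\e)|\le\|H^{L+1}\Psit_b\|_{L^2}\sqrt{\mathcal E_{2L+2}}\lesssim\frac{b_1^{L+1}}{|\log b_1|}\sqrt{\mathcal E_{2L+2}},$$
producing the third term on the RHS of \fref{monoenoiencle}. The nonlinear remainders $L(\e),N(\e)$ are quadratic in $\e$ with coefficients decaying in $y$, and using the weighted Hardy coercivity attached to the $\mathcal E_{2k}$ norms together with the bootstrap bounds \fref{init3h}--\fref{init3hb} gives contributions controlled by $b_1\cdot b_1^{2L+2}/|\log b_1|^2$. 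The delicate piece is $\widetilde{\rm Mod}$ in \fref{defmodtuntilde}: the modulation equation \fref{parameters} gives $(b_k)_s+(2k-1+c_{b_1})b_1b_k-b_{k+1}=O(b_1^{L+3/2})$ for $k\le L-1$, while the $k=L$ component is handled via \fref{parameterspresicely} (or its refined form \fref{esteqbrildl}). The orthogonality \fref{ortho} is designed so that, after transferring $H^{L+1}$ onto $\widetilde{\rm Mod}$ by self-adjointness, the main contribution lies in a direction essentially orthogonal to $\e$ at scale $M$; the normalization \fref{nondgeener} together with \fref{estphimlto} supplies the crucial $1/\sqrt{\log M}$ gain, yielding the $\mathcal E_{2L+2}/\sqrt{\log M}$ term.

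\textbf{Step 3: Correction term and conclusion.} The commutators $[\partial_t,H_\lambda^{L+1}]$ together with the small defect $(\lambda_s/\lambda+b_1)$ produce contributions that are not pointwise dominated by the RHS of \fref{monoenoiencle} but can be written as $d/dt$ of a quantity of size $O(b_1^{4/5}\,b_1^{2L+2}/|\log b_1|^2)$, precisely of the type absorbed into the LHS of \fref{monoenoiencle} as the correction term inside the bracket; this is the parabolic analogue of the boundary-in-time corrections exploited in \cite{RaphRod}, \cite{MRR}. Discarding the nonnegative dissipation $\int|A_\lambda H_\lambda^{L+1}w|^2$ and collecting everything through the conversion $b_1/\lambda^{4L+4}=(b_1/\lambda^2)\cdot\lambda^{-(4L+2)}$, $ds/dt=\lambda^{-2}$, yields \fref{monoenoiencle}. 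The main obstacle is Step 2's modulation estimate: extracting the $1/\sqrt{\log M}$ gain is not an interpolation but a structural cancellation based on the precise form of $\Phi_M$ in \fref{defdirection} and the identities \fref{vnkoeoueo}; a secondary difficulty is bookkeeping, ensuring that the commutator errors from $[H,\Lambda]$ and $[\partial_t,H_\lambda^{L+1}]$, which couple $\mathcal E_{2L+2}$ to lower-order $\mathcal E_{2k}$, stay within budget thanks to the lossy bounds \fref{init3h} for $k\le L$.
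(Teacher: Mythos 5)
Your overall architecture (energy identity on a top-order Sobolev norm, sharp profile bounds for $\Psit_b$, modulation estimates for $\widetilde{\rm Mod}$, dissipation for the remainder) is the right family of ideas, but there is a genuine gap at the heart of Steps 1 and 3: the borderline term coming from the time dependence of the potential. In your scheme it sits inside $\bigl([\pa_t,H_\l^{L+1}]w,H_\l^{L+1}w\bigr)$ (equivalently, in rescaled variables, in the commutator of $b_1\Lambda$ with $H^{L+1}$): its leading piece is quadratic in the top-order unknowns, carries only \emph{one} power of $b_1$, and its natural estimate is $C\,b_1\mathcal E_{2L+2}/\l^{4L+4}$ with $C$ either a universal but not small constant, or $C(M)$ when a factor $\e_{2k}$ with $k\le L$ must be converted into $\mathcal E_{2L+2}$ through the weighted coercivity of Lemma \ref{propcorc}. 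Neither is admissible: the statement requires the coefficient of $b_1\mathcal E_{2L+2}/\l^{4L+4}$ to be $O(1/\sqrt{\log M})$ with $C$ independent of $M$ and $K$, and this is not cosmetic — in Section \ref{sectionfour} the time integration of this term produces a contribution $\sim C K\,b_1^{2L+2}/|\log b_1|^2$, which must be beaten by $\frac K2 b_1^{2L+2}/|\log b_1|^2$, impossible if $C$ is merely universal. Your Step 3 asserts that such contributions "can be written as $d/dt$ of a quantity of size $O(b_1^{4/5}b_1^{2L+2}/|\log b_1|^2)$", but that is exactly the statement that needs a mechanism; it is not automatic. The paper's resolution is structural: it does \emph{not} run the identity on $\int|H_\l^{L+1}w|^2$ directly, but writes $\mathcal E_{2L+2}=\int \tilde H_\l w_{2L+1}\,w_{2L+1}$ with the conjugate Hamiltonian $\tilde H_\l=A_\l A_\l^*$, so that the dangerous piece appears as the single term $b_1\int\frac{(\Lambda\tilde V)_\l}{2\l^2r^2}w_{2L+1}^2$, and then adds the explicit cross term $2\int\frac{b_1(\Lambda Z)_\l}{\l^2 r}w_{2L+1}w_{2L}$ to the energy; differentiating this correction and using $\pa_t w_{2L}=-A_\l^*w_{2L+1}+\dots$ together with the algebraic identity $\frac{2(1+Z)\Lambda Z-\Lambda^2 Z}{2y^2}=\frac{\Lambda\tilde V}{2y^2}$ cancels that term \emph{exactly}, leaving only defects weighted by $|\lsl+b_1|\lesssim b_1^{L+\frac32}$; the cross term itself is what produces the $O(b_1^{4/5}b_1^{2L+2}/|\log b_1|^2)$ correction inside the bracket. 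Without this (or an equivalent sign/cancellation argument), your identity does not close.

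Two secondary points. First, testing $H_\l^{L+1}(\l^{-2}\mathcal F_\l)$ directly against $H_\l^{L+1}w$ forces you to control $\|H^{L+1}N(\e)\|_{L^2}$ and $\|H^{L+1}L(\e)\|_{L^2}$, i.e.\ $2L+2$ derivatives of the nonlinear terms, which the available norms do not comfortably give; the paper integrates by parts once and only ever needs $\|AH^L\mathcal F_1\|_{L^2}$, absorbed against the dissipation $\int|\tilde H_\l w_{2L+1}|^2$, together with the weighted bounds \fref{crucialboundthree}--\fref{crucialboundtwo} paired with the $b_1$-cross terms. Second, the $1/\sqrt{\log M}$ gain on the $\widetilde{\rm Mod}$ contribution does not come from $\widetilde{\rm Mod}$ being "essentially orthogonal to $\e$"; it enters through the modulation estimates \fref{parameters}--\fref{parameterspresicely} (the division by $(\Lambda Q,\Phi_M)\sim 4\log M$ against $\|\Phi_M\|_{L^2}\lesssim\sqrt{\log M}$), which bound $D(t)$ by $\frac{1}{\sqrt{\log M}}(\sqrt{\mathcal E_{2L+2}}+\frac{b_1^{L+1}}{|\log b_1|})$, and are then combined with $\|H^{L+1}\widetilde{\rm Mod}\|_{L^2}\lesssim b_1|D(t)|$ and the corresponding weighted bounds.
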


\begin{proof}[Proof of Proposition \ref{AEI2}] {\bf step 1} Suitable derivatives. We define the derivatives of $w$ associated with the linearized Hamiltonian $H_\l$:  
$$w_1 = A_{\lambda}w, \ \ w_{k+1}=\left\{\begin{array}{ll} A^*_{\l}w_k\ \ \mbox{for k odd}\\ A_\l w_k\ \ \mbox{for k even}
\end{array}\right., \ \ 1\leq k \leq 2L+1$$
and its renormalized version $$\e_1 = A\e, \ \ \e_{k+1}=\left\{\begin{array}{ll} A^*\e_k\ \ \mbox{for k odd}\\ A \e_k\ \ \mbox{for k even}
\end{array}\right., \ \ 1\leq k \leq 2L+1.$$ 
We compute from \fref{eqenwini}:
\be
\label{eqtwol}
\partial_{t} w_{2L} + H_{\lambda} w_{2L} =  [\pa_t,H_\l^L]w + H^L_{\lambda} \left(\frac 1{\lambda^2} \mathcal F_{\lambda}\right)
\ee
\be
\label{eqw3}
\partial_{t} w_{2L+1}+ \tilde{H}_{\lambda} w_{2L+1} = \frac{\partial_{t} Z_{\lambda}}{r} w_{2L} + A_{\lambda}\left( [\pa_t,H_\l^L]w\right)+ A_\l H^L_\l  \left(\frac 1{\lambda^2} \mathcal F_{\lambda}\right).
\ee
We recall the action of time derivatives on rescaling: 
\be
\label{resaclingaction}
\partial_tv_{\lambda}=\frac{1}{\l^2}\left(\pa_sv-\lsl\Lambda v\right)_{\lambda}.
\ee

{\bf step 2} Modified energy identity. We compute the energy identity on \fref{eqw3} using \fref{resaclingaction}:
\par
\bea
\label{firstestimate}
&&\nonumber  \frac{1}{2}\frac{d}{dt} \mathcal E_{2L+2} =  \frac{1}{2} \frac{d}{d t}\left\{ \int \tilde{H}_{\lambda}w_{2L+1} w_{2L+1} \right\}= \int \tilde{H}_{\lambda} w_{2L+1} \partial_{t} w_{2L+1} + \int \frac{\partial_{t} \tilde{V}_\l}{2r^2} w_{2L+1}^2\\
\nonumber & = & - \int (\tilde{H}_\l w_{2L+1})^2+b_1\int \frac{(\Lambda \tilde{V})_\l}{2\l^2r^2} w_{2L+1}^2-\left(\lsl+b_1\right)\int \frac{(\Lambda \tilde{V})_\l}{2\l^2r^2} w_{2L+1}^2 \\
& + &\int \tilde{H}_\l w_{2L+1}\left[\frac{\partial_{t} Z_{\lambda}}{r} w_{2L} +  A_{\lambda}\left( [\pa_t,H_\l^L]w\right)+A_\l H^L_{\lambda}\left( \frac{1}{\lambda^2}\mathcal F_{\lambda}\right)\right].
\eea
We further compute from \fref{eqtwol}, \fref{eqw3}:
\bee
&&\frac{d}{dt}\left\{\int \frac{b_1(\Lambda Z)_\l}{\l^2r}w_{2L+1}w_{2L}\right\} =  \int \frac{d}{dt}\left(\frac{b_1(\Lambda Z)_\l}{\l^2r}\right)w_{2L+1}w_{2L}\\
& + & \int \frac{b_1(\Lambda Z)_\l}{\l^2r}w_{2L}\left[-\tilde{H}_\l w_{2L+1}+\frac{\partial_{t} Z_{\lambda}}{r} w_{2L} + A_{\lambda}\left( [\pa_t,H_\l^L]w\right)+A_\l H ^L_{\lambda}\left( \frac{1}{\lambda^2}\mathcal F_{\lambda}\right)\right]\\
& + & \int\frac{b_1(\Lambda Z)_\l}{\l^2r}w_{2L+1}\left[-A_\l^*w_{2L+1}+ [\pa_t,H_\l^L]w + H^L_{\lambda} \left(\frac 1{\lambda^2} \mathcal F_{\lambda}\right)\right]
\eee
We now integrate by parts to compute using \fref{defpotential}:
\bee
&&\int\frac{b_1(\Lambda Z)_\l}{\l^2r}w_{2L+1} A_\l^*w_{2L+1} =  \frac{b_1}{\l^{4L+4}}\int \frac{\Lambda Z}{y}\e_{2L+1}A^*\e_{2L+1}\\
& = & \frac{b_1}{\l^{4L+4}}\int\frac{2(1+Z)\Lambda Z-\Lambda^2Z}{2y^2}\e_{2L+1}^2=  \frac{b_1}{\l^{4L+4}}\int\frac{\Lambda\tilde{V}}{2y^2}\e_{2L+1}^2=b_1 \int \frac{(\Lambda \tilde{V})_\l}{2\l^2r^2} w_{2L+1}^2.
\eee
Injecting this into the energy identity \fref{firstestimate} yields the modified energy identity:
\bea
\label{modifeideenrgy}
\nonumber &&\frac{1}{2}\frac{d}{dt}\left\{ \mathcal E_{2L+2}+2\int \frac{b_1(\Lambda Z)_\l}{\l^2r}w_{2L+1}w_{2L}\right\} =  - \int (\tilde{H}_\l w_{2L+1})^2\\
\nonumber & - & \left(\lsl+b_1\right)\int \frac{(\Lambda \tilde{V})_\l}{2\l^2r^2} w_{2L+1}^2+\int \frac{d}{dt}\left(\frac{b_1(\Lambda Z)_\l}{\l^2r}\right)w_{2L+1}w_{2L}\\
\nonumber & + & \int \tilde{H}_\l w_{2L+1}\left[\frac{\partial_{t} Z_{\lambda}}{r} w_{2L} +A_{\lambda}\left( [\pa_t,H_\l^L]w\right)+A_\l H^L_{\lambda}\left( \frac{1}{\lambda^2}\mathcal F_{\lambda}\right)\right]\\
\nonumber & + & \int \frac{b_1(\Lambda Z)_\l}{\l^2r}w_{2L}\left[-\tilde{H}_\l w_{2L+1}+\frac{\partial_{t} Z_{\lambda}}{r} w_{2L} + A_{\lambda}\left( [\pa_t,H_\l^L]w\right)+A_\l H^L_{\lambda}\left( \frac{1}{\lambda^2}\mathcal F_{\lambda}\right)\right]\\
& + & \int\frac{b_1(\Lambda Z)_\l}{\l^2r}w_{2L+1}\left[ [\pa_t,H_\l^L]w + H^L_{\lambda} \left(\frac 1{\lambda^2} \mathcal F_{\lambda}\right)\right]
\eea
We now aim at estimating all terms in the RHS of \fref{modifeideenrgy}. All along the proof, we shall make an implicit use of the coercitivity estimates of Lemma \ref{coerchtilde} and Lemma \ref{lemmainterpolation}.\\

{\bf step 3} Lower order quadratic terms. We treat the lower order quadratic terms in \fref{modifeideenrgy} using dissipation. Indeed, we have from \fref{comportementz}, \fref{comportementv}, \fref{rougboundpope}  the bounds:
\bea
\label{nveknvneo}
|\partial_t Z_{\lambda}| + |\partial_t V_{\lambda}| \lesssim \frac{b_1}{\lambda^2} \left( |\Lambda Z| + |\Lambda V|\right)_{\lambda} \lesssim \frac{b_1}{\lambda^2}\frac{y^2}{1+y^4}.
\eea
We moreover claim the bound:
\be
\label{commutator}
\int\frac{\left([\pa_t,H_\l^L]w\right)^2}{\l^2(1+y^2)}+\int\left|A_{\lambda}\left( [\pa_t,H_\l^L]w\right)\right|^2\lesssim C(M)\frac{b_1^2}{\l^{4L+4}}\matchal E_{2L+2}
\ee
which is proved in Appendix \ref{proofcommutator}. We conclude from Cauchy Schwartz, the rough bound \fref{rougboundpope} and Lemma \ref{lemmainterpolation}:
\bee
&&\int\left|\tilde{H}_\l w_{2L+1}\left[\frac{\partial_{t} Z_{\lambda}}{r} w_{2L} + \int A_{\lambda}\left( [\pa_t,H_\l^L]w\right)\right]\right|+\int|\tilde{H}_\l w_{2L+1}|\left|\frac{b(\Lambda Z)_\l}{\l^2r}w_{2L}\right|\\
& \leq &\frac{1}{2}\int|\tilde{H}_\l w_{2L+1}|^2+\frac{b_1^2}{\l^{4L+4}}\left[\int \frac{\varepsilon_{2L}^{2}}{1+y^6}+C(M)\matchal E_{2L+2}\right]\\
& \leq &\frac{1}{2}\int|\tilde{H}_\l w_{2L+1}|^2+\frac{b_1}{\l^{4L+4}}C(M)b_1\matchal E_{2L+2}.
\eee
All other quadratic terms are lower order by a factor $b_1$ using again \fref{rougboundpope}, \fref{commutator}, \fref{parameters} and Lemma \ref{lemmainterpolation}:
\bee
&&\left|\lsl+b_1\right|\int \left|\frac{(\Lambda \tilde{V})_\l}{2\l^2r^2} w_{2L+1}^2\right|+\int\left|\frac{b_1(\Lambda Z)_\l}{\l^2r}w_{2L}\left[\frac{\partial_{t} Z_{\lambda}}{r} w_{2L} + A_{\lambda}\left( [\pa_t,H_\l^L]w\right)\right]\right|\\
& + & \int\left|\frac{b_1(\Lambda Z)_\l}{\l^2r}w_{2L+1}[\pa_t,H^L_\l]w\right|+\left|\int \frac{d}{dt}\left(\frac{b_1(\Lambda Z)_\l}{\l^2r}\right)w_{2L+1}w_{2L}\right|\\
& \lesssim & \frac{b_1^2}{\l^{4L+4}}\left[\int\frac{\e_{2L+1}^2}{1+y^4}+\int \frac{\varepsilon_{2L}^2}{1+y^6}+C(M)\mathcal E_{2L+2}\right]\lesssim \frac{b_1}{\l^{4L+4}}C(M)b_1\matchal E_{2L+2}.
\eee 
We similarily estimate the boundary term in time using \fref{interpolationboundlossy}: $$\left|\int \frac{b_1(\Lambda Z)_\l}{\l^2r}w_{2L+1}w_{2L}\right|\lesssim \frac{b_1}{\l^{4L+2}}\left[\int\frac{\e_{2L+1}^2}{1+y^2}+\int\frac{\e_{2L}^2}{1+y^4}\right]\lesssim \frac{b_1}{\l^{4L+2}}|\log b_1|^Cb_1^{2L+2}.$$
We inject these estimates into \fref{modifeideenrgy} to derive the preliminary bound:
\bea
\label{neoheohohe}
  &&\frac{1}{2}\frac{d}{dt}\left\{ \frac{1}{\l^{4L+2}}\left[\mathcal E_{2L+2}+O\left(b^{\frac 45}_1\frac{b^{2L+2}}{|\log b|^2}\right)\right]\right\}\leq -\frac12\int(\tilde{H}_\l w_{2L+1})^2\\
\nonumber  & + & \int \tilde{H}_\l w_{2L+1}A_\lambda H^L_\l\left( \frac{1}{\lambda^2}\mathcal F_{\lambda}\right)+\int H^L_{\lambda} \left(\frac 1{\lambda^2} \mathcal F_{\lambda}\right)\left[\frac{b_1(\Lambda Z)_\l}{\l^2r}w_{2L+1}+A^*_{\l}\left( \frac{b_1(\Lambda Z)_\l}{\l^2r}w_{2L}\right)\right]\\
 \nonumber & + &\frac{b_1}{\l^{4L+4}}\sqrt{b_1}b_1^{2L+2}
\eea
with constants independent of $M$ for $|b|<b^*(M)$ small enough.\\
We now estimate all terms in the RHS of \fref{neoheohohe}.\\

{\bf step 4} Further use of dissipation. Let us introduce the decomposition from \fref{eqepsilon}, \fref{defF}:
\be
\label{fprcingdecomp}
\mathcal F=\mathcal F_0+\mathcal F_1, \ \ \mathcal F_0=-\Psit_b-\widetilde{Mod}(t), \ \ \mathcal F_1=L(\e)-N(\e).
\ee
The first term in the RHS of \fref{neoheohohe} is estimated after an integration by parts:
\bea
\label{oneone}
\nonumber&& \left| \int \tilde{H}_\l w_{2L+1}A_\lambda H^L_\l\left( \frac{1}{\lambda^2}\mathcal F_{\lambda}\right)\right|\\
\nonumber & \leq & \frac{C}{\l^{4L+4}}\|A^*\e_{2L+1}\|_{L^2}\|H^{L+1}\mathcal F_0\|_{L^2}+\frac14\int|\tilde{H}_\l w_{2L+1}|^2+\frac{C}{\l^{4L+4}}\int|AH^L\mathcal F_1|^2\\
& \leq & \frac{C}{\l^{4L+4}}\left[\|H^{L+1}\mathcal F_0\|_{L^2}\sqrt{\mathcal E_{2L+2}}+\|AH^L\mathcal F_1\|_{L^2}^2\right]+\frac14\int|\tilde{H}_\l w_{2L+1}|^2
\eea
for some universal constant $C>0$ independent of $M$.\\
The last two terms in \fref{neoheohohe} can be estimated in brute force from Cauchy Schwarz:
\bea
\label{onetwo}
\nonumber \left|\int H^L_{\lambda} \left(\frac 1{\lambda^2} \mathcal F_{\lambda}\right)\frac{b_1(\Lambda Z)_\l}{\l^2r}w_{2L+1}\right| & \lesssim&  \frac{b_1}{\l^{4L+4}}\left(\int\frac{1+|\log y|^2}{1+y^4}|H^L\mathcal F|^2\right)^{\frac12}\left(\int\frac{\e_{2L+1}^2}{y^2(1+|\log y|^2)}\right)^{\frac12}\\
& \lesssim & \frac{b_1}{\l^{4L+4}}\sqrt{\mathcal E_{2L+2}}\left(\int\frac{1+|\log y|^2}{1+y^4}|H^L\mathcal F|^2\right)^{\frac12}
\eea
where constants are independent of $M$ thanks to the estimate \fref{coercwthree} for $\e_{2L+1}$. Similarily:
\bea
\label{onethree}
 &&\left|\int H^L_{\lambda} \left(\frac 1{\lambda^2} \mathcal F_{\lambda}\right)A^*_{\l}\left( \frac{b_1(\Lambda Z)_\l}{\l^2r}w_{2L}\right)\right|\\
\nonumber &  \lesssim &  \frac{b_1}{\l^{4L+4}}\left(\int\frac{1+|\log y|^2}{1+y^2}|AH^L\mathcal F|^2\right)^{\frac12}\left(\int\frac{\e_{2L}^2}{(1+y^4)(1+|\log y|^2)}\right)^{\frac12}\\
\nonumber & \lesssim & \frac{b_1}{\l^{4L+4}}C(M)\sqrt{\mathcal E_{2L+2}}\left(\int\frac{1+|\log y|^2}{1+y^2}|AH^L\mathcal F_0|^2+\int|AH^L\mathcal F_1|^2\right)^{\frac12}
\eea
We now claim the bounds:
\be
\label{crucialboundthree}
\int\frac{1+|\log y|^2}{1+y^4}|H^L\mathcal F|^2\lesssim \frac{b_1^{2L+2}}{|\log b_1|^2}+\frac{\mathcal E_{2L+2}}{\log M},
\ee
\be
\label{weigheivbiovheo}
\int\frac{1+|\log y|^2}{1+y^2}|AH^L\mathcal F_0|^2 \lesssim \delta(\alpha^*)\left[\frac{b_1^{2L+2}}{|\log b_1|^2}+\mathcal E_{2L+2}\right],
\ee
\be
\label{cnofooeeo}
\int|H^{L+1}\mathcal F_0|^2\lesssim b_1^2\left[\frac{b_1^{2L+2}}{|\log b_1|^2}+\frac{\mathcal E_{2L+2}}{\log M}\right],
\ee
\be
\label{crucialboundtwo}
\int |AH^L\mathcal F_1|^2\lesssim b_1\left[\frac{b_1^{2L+2}}{|\log b_1|^2}+\frac{\mathcal E_{2L+2}}{\log M}\right],
\ee
with all $\lesssim$ constants independent of $M$ for $|b|<\alpha^*(M)$ small enough, and where $$\delta(\alpha^*)\to 0\ \ \mbox{as} \ \ \alpha^*(M)\to 0.$$ Injecting these bounds together with \fref{oneone}, \fref{onetwo}, \fref{onethree} into \fref{neoheohohe} concludes the proof of \fref{monoenoiencle}. We now turn to the proof of \fref{crucialboundthree}, \fref{weigheivbiovheo}, \fref{cnofooeeo}, \fref{crucialboundtwo}.\\

{\bf step 5} $\Psit_b$ terms. The contribution of $\Psit_b$ terms to \fref{crucialboundthree}, \fref{weigheivbiovheo}, \fref{cnofooeeo} is estimated from \fref{estimathehtwobis}, \fref{controleh4erreursharptildebis} which are at the heart of the construction of $\tilde{Q}_b$ and yield the desired bounds.\\

{\bf step 6} $\widetilde{Mod(t)}$ terms. Recall \fref{defmodtuntilde}:
\bee
\widetilde{Mod}(t)& = & -\left(\lsl+b_1\right)\Lambda \qbt\\
\nonumber & + &   \Sigma_{i=1}^{L}\left[(b_i)_s+(2i-1+c_{b_1})b_1b_i-b_{i+1}\right]\left[\tT_i+\chi_{B_1}\Sigma_{j=i+1}^{L+2}\frac{\partial S_j}{\partial b_i}\right],
\eee
and the notation \fref{defut}.\\
{\it Proof of \fref{cnofooeeo} for $\widetilde{Mod}$}: We recall that $|b_k|\lesssim b_1^k$ and estimate from Lemma \ref{lemmaestimate}:
\bee
\int|H^{L+1}\Lambda \qbt|^2& \lesssim&  \Sigma_{i=1}^L\int|H^{L+1}b_i\Lambda \tt_i|^2+\Sigma_{i=2}^{L+2}\int|H^{L+1}\Lambda \tilde{S}_{i}|^2\\
& \lesssim & \Sigma_{i=1}^Lb_1^{2i}\int_{y\leq 2B_1}\left|\frac{(1+|\log y|^C)y^{2i-1}}{1+y^{2L+2}}\right|^2+\Sigma_{i=2}^{L+1}b_1^{2i}+\frac{b_1^{2L+4}}{b_1^2|\log b_1|^2}\\
& \lesssim & b_1^2.
\eee
We then use the cancellation $H^{L+1}T_i=0$ for $1\leq i\leq L$ to estimate:
$$\Sigma_{i=1}^L\int |H^{L+1}\tilde{T}_i|^2\lesssim \Sigma_{i=1}^L\int_{B_1\leq y\leq 2B_1}\left|\frac{y^{2i-1}}{y^{2L+2}}\right|^2\lesssim b_1^2.$$
Then using Lemma \ref{lemmaestimate} again\footnote{this is where we used the logarithmic gain \fref{esthooeh} induced by \fref{defgby}.}: for $1\leq i\leq L$, 
\bee
\Sigma_{j=i+1}^{L+2}\int\left|H^{L+1}\left[\chi_{B_1}\frac{\partial S_j}{\partial b_i}\right]\right|^2 & \lesssim & \Sigma_{j=i+1}^{L+1}b_1^{2(j-i)}+\frac{b_1^{2(L+2-i)}}{b_1^2|\log b_1|^2}\lesssim b_1^2.
\eee
We thus obtain from Lemma \ref{modulationequations} the expected bound:
$$\int|H^{L+1}\widetilde{Mod}|^2\lesssim b_1^2|D(t)|^2\lesssim b_1^2\left[\frac{\mathcal E_{2L+2}}{|\log M|}+\frac{b_1^{2L+2}}{|\log b_1|^2}\right].$$
{\it Proof of \fref{crucialboundthree} for $\widetilde{Mod}$}: We use Lemma \ref{lemmaestimate} to derive the rough bound:
\bee
\int\frac{1+|\log y|^2}{1+y^4}|H^{L}\Lambda \qbt|^2& \lesssim & \Sigma_{i=1}^L\int\frac{1+|\log y|^2}{1+y^4}|H^{L}b_i\Lambda \tt_i|^2+\Sigma_{i=2}^{L+2}\int\frac{1+|\log y|^2}{1+y^4}|H^{L}\Lambda \tilde{S}_{i}|^2\\
& \lesssim & \Sigma_{i=1}^Lb_1^{2i}\int_{y\leq 2B_1}\frac{1+|\log y|^C}{1+y^4}\left|\frac{y^{2i-1}}{1+y^{2L}}\right|^2+\Sigma_{i=2}^{L+1}b_1^{2i}|\log b_1|^3\\
& + & b_1^{2L+4}\int_{y\leq 2B_1}\frac{1+|\log y|^2}{1+y^4}\left|\frac{1+y^{2(L+2)-1}}{1+y^{2L}}\right|^2\\
& \lesssim & 1.
\eee
Next:
$$\Sigma_{j=1}^L\frac{1+|\log y|^2}{1+y^4}|H^L\tt_i|^2\lesssim \Sigma_{j=1}^L\int_{y\leq 2B_1}\frac{1+|\log y|^C}{1+y^4}\left|\frac{y^{2i-1}}{1+y^{2L}}\right|^2\lesssim 1,$$ and finally using Lemma \ref{lemmaestimate} again: for $1\leq i\leq L$,
\bee
&&\Sigma_{j=i+1}^{L+2}\int\frac{1+|\log y|^2}{1+y^4}\left|H^L\left[\chi_{B_1}\frac{\partial S_j}{\partial b_i}\right]\right|^2\\
& \lesssim & \Sigma_{j=i+1}^{L+1}b_1^{2(j-i)}|\log b_1|^2 +   b_1^{2(L-i)+4}\int_{y\leq 2B_1}\frac{1+|\log y|^2}{1+y^4}\left|\frac{1+y^{2(L+2)-1}}{1+y^{2L}}\right|^2\\
& \lesssim & 1.
\eee
We thus obtain from Lemma \ref{modulationequations} the expected bound:
$$\int\frac{1+|\log y|^2}{1+y^4}|H^{L}\widetilde{Mod}|^2\lesssim |D(t)|^2\lesssim \frac{\mathcal E_{2L+2}}{|\log M|}+\frac{b_1^{2L+2}}{|\log b_1|^2}.$$

{\it Proof of \fref{weigheivbiovheo} for $\widetilde{Mod}$}: We use Lemma \ref{lemmaestimate} to estimate:
\bee
&&\int\frac{1+|\log y|^2}{1+y^2}|AH^{L}\Lambda \qbt|^2\\
 &\lesssim & \Sigma_{i=1}^L\int\frac{1+|\log y|^2}{1+y^2}|H^{L}b_i\Lambda \tt_i|^2+\Sigma_{i=2}^{L+2}\int\frac{1+|\log y|^2}{1+y^2}|AH^{L}\Lambda \tilde{S}_{i}|^2\\
& \lesssim & \Sigma_{i=1}^Lb_1^{2i}\int_{y\leq 2B_1}\frac{1+|\log y|^2}{1+y^2}\left|\frac{y^{2i-1}}{1+y^{2L}}\right|^2+\Sigma_{i=2}^{L+1}b_1^{2i}|\log b_1|^3\\
& + & b_1^{2L+4}\int_{B_1\leq y\leq 2B_1}\frac{1+|\log y|^2}{1+y^2}\left|\frac{1+y^{2(L+2)-1}}{1+y^{2L+1}}\right|^2\\
& \lesssim & b_1^2.
\eee
Next using the cancellation $AH^LT_i=0$, $1\leq i\leq L$, and 
$$\Sigma_{j=1}^L\frac{1+|\log y|^2}{1+y^2}|AH^L\tt_i|^2\lesssim \Sigma_{j=1}^L\int_{B_1\leq y\leq 2B_1}\frac{1+|\log y|^C}{1+y^2}\left|\frac{y^{2i-1}}{1+y^{2L}}\right|^2\lesssim b_1|\log b_1|^C.$$ 
and finally using Lemma \ref{lemmaestimate} again: for $1\leq i\leq L$,
\bee
&&\Sigma_{j=i+1}^{L+2}\int\frac{1+|\log y|^2}{1+y^2}\left|AH^L\left[\chi_{B_1}\frac{\partial S_j}{\partial b_i}\right]\right|^2\\
& \lesssim & \Sigma_{j=i+1}^{L+1}b_1^{2(j-i)}|\log b_1|^C +   b_1^{2(L-i)+4}\int_{y\leq 2B_1}\frac{1+|\log y|^2}{1+y^2}\left|\frac{1+y^{2(L+2)-1}}{1+y^{2L+1}}\right|^2\\
& \lesssim & b_1.
\eee
We thus obtain from Lemma \ref{modulationequations} the desired bound:
$$\int\frac{1+|\log y|^2}{1+y^2}|AH^{L}\widetilde{Mod}|^2\leq \sqrt{b_1}|D(t)|^2\lesssim \delta(\alpha^*)\left[\mathcal E_{2L+2}+\frac{b_1^{2L+2}}{|\log b_1|^2}\right].$$

{\bf step 7} Nonlinear term $N(\e)$. {\it Control near the origin $y\leq 1$}: We rewrite from \fref{defNe} and a Taylor Lagrange formula 
\be
\label{formulanesilon}
N(\e)=zN_0(\e), \ \ z=y\left(\frac{\e}{y}\right)^2, \ \  N_0(\e)=\frac{1}{y}\int_0^1(1-\tau)f''(\qbt+\tau\e)d\tau.
\ee
First observe from \fref{Taylororigin} and the Taylor expansion at the origin of $T_i$ given by \fref{defthetak} that 
\be
\label{expansionz}
z=\frac{1}{y}\left[\Sigma_{i=1}^{L+1}c_{i}T_{L+1-i}+r_\e\right]^2=\Sigma_{i=0}^{L}\tilde{c}_iy^{2i+1}+\tilde{r}_{\e}
\ee with from \fref{boundci}, \fref{boundrestebis}:
 $$|\tilde{c}_{i}|\lesssim C(M)\mathcal E_{2L+2},$$
 \be
 \label{expsnionadpatedbis}
|\pa_y^k\tilde{r}_{\e}|\lesssim y^{2L+1-k}|\log y| C(M)\mathcal E_{2L+2}, \ \ 0\leq k\leq 2L+1.
 \ee
 We now let $\tau\in [0,1]$ and $$v_\tau=\qbt+\tau\e,$$ and obtain from Proposition \ref{consprofapproch} and \fref{Taylororigin} the Taylor expansion at the origin:
 \be
 \label{taylororoiign}
 v_\tau=\Sigma_{i=0}^{L}\hat{c}_iy^{2i+1}+\hat{r}_{\e}
 \ee
 with 
 \be
 \label{expsnionadpatedbisbis}
|\hat{c}_i|\lesssim 1, \ \ |\pa_y^k\hat{r}_{\e}|\lesssim y^{2L+1-k}|\log y|, \ \ 0\leq k\leq 2L+1.
 \ee
Recall that $f\in \mathcal C^{\infty}$ with $f^{2k}(0)=0$, $k\geq 0$. We therefore obtain a Taylor expansion $$f''(v_\tau)=\Sigma_{i=1}^{L+1}\frac{f^{(2i+1)}(0)}{i!}v_\tau^{2i-1}+\frac{v^{2L+2}}{(2L+1)!}\int_0^1(1-\sigma)^{2L+1}f^{(2L+4)}(\sigma v_\tau)d\sigma$$ which together with \fref{taylororoiign} ensures an expansion: $$N_0(\e)=\Sigma_{i=0}^{L}\hat{\hat{c}}_iy^{2i}+\hat{\hat{r}}_{\e},$$ $$|\hat{\hat{c}}_i|\lesssim 1, \ \ , \ \ |\pa_y^k\hat{\hat{r}}_{\e}|\lesssim y^{2L-k}|\log y| \ \ 0\leq k\leq 2L+1.$$ Combining this with \fref{expansionz} ensures the expansion: 
\be
\label{decompne}
N(\e)=zN_0(\e)=\Sigma_{i=0}^{L}\tilde{\tilde{c}}_iy^{2i+1}+\tilde{\tilde{r}}_{\e}
\ee with 
$$|\tilde{\tilde{c}}_{i}|\lesssim C(M)\mathcal E_{2L+2}, \ \ |\pa_y^k\tilde{\tilde{r}}_{\e}|\lesssim y^{2L+1-k}|\log y| C(M)\mathcal E_{2L+2}, \ \ 0\leq k\leq 2L+1.$$
Observe that this implies from direct check the bound: 
 \bee
 |\mathcal A^k\tilde{\tilde{r}}_{\e}| &\lesssim & \Sigma_{i=0}^k\frac{\pa_y^i\tilde{\tilde{r}}_{\e}}{y^{k-i}}\lesssim C(M)\mathcal E_{2L+2}\Sigma_{i=0}^k\frac{|\log y|y^{2L+1-i}}{y^{k-i}}\\
 & \lesssim & y^{2L+1-k}|\log y| C(M)\mathcal E_{2L+2}, \ \ 0\leq k\leq 2L+1
 \eee
 We now compute using a simple induction based on the expansions \fref{comportementz}, \fref{comportementv} and the cancellation $A(y)=O(y^2)$ that for $y\leq 1$: 
 \be
 \label{estak}
 \left\{\begin{array}{ll} \mathcal A^{2k+1}\left(\Sigma_{i=0}^{L}\tilde{\tilde{c}}_iy^{2i+1}\right)=\Sigma_{i=k+1}^{L}c_{i,2k+1}y^{2(i-k)}+O(y^{2(L-k)+2}),\\
\mathcal A^{2k+2}\left(\Sigma_{i=0}^{L}\tilde{\tilde{c}}_iy^{2i+1}\right)=\Sigma_{i=k+1}^{L}c_{i,2k+2}y^{2(i-k)-1}+O(y^{2(L-k)+1})
\end{array}\right.
\ee
We conclude from \fref{decompne}: 
\be
\label{controlrogi}
\|\mathcal A^kN(\e)\|_{L^{\infty}(y\leq 1)}\lesssim C(M)\mathcal E_{2L+2}, \ \ 0\leq k\leq 2L+1
\ee and thus in particular the control near the origin:
$$\int_{y\leq 1}\frac{1+|\log y|^2}{1+y^4}|H^LN(\e)|^2+\int_{y\leq 1}|AH^L\mathcal N(\e)|^2\lesssim C(M)\left(\mathcal E_{2L+2}\right)^2\lesssim b_1^2 b_1^{2L+2}.
$$
{\it Control for $y\geq 1$}: We give the detailed proof of \fref{crucialboundtwo}. The proof of \fref{crucialboundthree} follows the exact same lines -with in fact more room- and is left to the reader. Let 
\be
\label{cnekonronroenoer}
\zeta=\frac{\e}{y}, \ \ N_1(\e)=\int_0^1(1-\tau)f''(\qbt+\tau\e)d\tau\ \ \mbox{so that}\ \ N(\e)=\zeta^2N_1. 
\ee
We first estimate from \fref{weightbis}: for $(i,j)\in \Bbb N\times \Bbb N$ with $1\leq i+j\leq 2L+1$,
\bea
\label{zetpovctlinfty}
 \left\|\frac{\pa_y^i\zeta}{y^{j-1}}\right\|^{2}_{L^{\infty}(y\geq 1)} & \lesssim & \Sigma_{k=0}^i\left\|\frac{\pa_y^k\e}{y^{j+i-k}}\right\|^{2}_{L^{\infty}(y\geq 1)}\\
\nonumber  & \lesssim & |\log b_1|^C\left\{\begin{array}{lll}  b_1^{(i+j)\frac{2L}{2L-1}}\ \ \mbox{for}\ \ 1\leq i+j\leq 2L-1,\\ b_1^{2L+1} \ \ \mbox{for}\ \ i+j= 2L, \\  b_1^{2L+2} \ \ \mbox{for}\ \ i+j= 2L+1.\end{array}\right.
\eea
Similarily from \fref{estimatelossy}: for $(i,j)\in \Bbb N\times \Bbb N^*$ with $2\leq i+j\leq 2L+2$,
 \bea
\label{zetpovctlinftybis}
\nonumber \int_{y\geq 1 }\frac{1+|\log y|^C}{1+y^{2j-2}}|\pa_y^i\zeta|^2&\lesssim& \Sigma_{k=0}^i\int_{y\geq 1 }\frac{1+|\log y|^C}{1+y^{2j+2(i-k)}}|\pa^k_y\e|^2\\
& \lesssim&   |\log b_1|^C\left\{ \begin{array}{lll} b_1^{(i+j-1)\frac{2L}{2L-1}} \ \ \mbox{for}\ 2\leq  i+j\leq 2L\\
 b_1^{2L+1}  \ \ \mbox{for}\ \  i+j= 2L+1\\
  b_1^{2L+2}  \ \ \mbox{for}\   i+j=2L+2.
  \end{array}\right.
\eea
Moreover, from the energy bound \fref{init2h}: 
\be
\label{energyboundbis}
\int_{y\geq 1}|\zeta|^2\lesssim 1.
\ee
We now claim the pointwise bound for $y\geq 1$:
\be
\label{pointwisebound}
\forall 1\leq k\leq 2L+1, \ \ |\pa_y^kN_1(\e)|\lesssim |\log b_1|^C\left[\frac{1}{y^{k+1}}+b_1^{\frac{a_k}{2}}\right],
\ee
with 
\be
\label{defakkk}
 \ a_k= \left\{\begin{array}{lll}  k\frac{2L}{2L-1}\ \ \mbox{for}\ \ 1\leq k\leq 2L-1\\ 2L+1\ \ \mbox{for}\ \ k= 2L, \\ 2L+2 \ \ \mbox{for}\ \ k= 2L+1\end{array}\right..
\ee
which is proved below. For $k=0$, we simply need the obvious bound:
\be
\label{novioe}
\|N_1(\e)\|_{L^{\infty}(y\geq 1)}\lesssim 1.
\ee
We then estimate in brute force from \fref{cnekonronroenoer}, \fref{pointwisebound}, \fref{novioe}:
\bee
|AH^LN(\e)|&\lesssim& \Sigma_{k=0}^{2L+1}\frac{|\pa_y^kN(\e)|}{y^{2L+1-k}}\lesssim \Sigma_{k=0}^{2L+1}\frac{1}{y^{2L+1-k}}\Sigma_{i=0}^k|\pa_y^i\zeta^2||\pa_y^{k-i}N_1(\e)|\\
& \lesssim &  \Sigma_{k=0}^{2L+1}\frac{|\pa_y^k\zeta^2|}{y^{2L+1-k}}+\Sigma_{k=1}^{2L+1}\frac{1}{y^{2L+1-k}}\Sigma_{i=0}^{k-1}|\pa_y^i\zeta^2| |\log b_1|^C\left[\frac{1}{y^{k-i+1}}+b_1^{\frac{a_{k-i}}{2}}\right]\\
& \lesssim &  \Sigma_{k=0}^{2L+1}\frac{|\pa_y^k\zeta^2|}{y^{2L+1-k}}+|\log b_1|^C\Sigma_{i=0}^{2L}\frac{|\pa_y^i\zeta^2|}{y^{2L+2-i}} +|\log b_1|^C\Sigma_{k=1}^{2L+1}\Sigma_{i=0}^{k-1} b_1^{\frac{a_{k-i}}{2}}\frac{|\pa_y^i\zeta^2|}{y^{2L+1-k}}\\
& \lesssim & |\log b_1|^C\left[  \Sigma_{k=0}^{2L+1}\frac{|\pa_y^k\zeta^2|}{y^{2L+1-k}}+\Sigma_{k=1}^{2L+1}\Sigma_{i=0}^{k-1} b_1^{\frac{a_{k-i}}{2}}\frac{|\pa_y^i\zeta^2|}{y^{2L+1-k}}\right]
\eee
and hence:
\bee
\int_{y\geq 1}|AH^LN(\e)|^2&\lesssim & |\log b_1|^C \Sigma_{k=0}^{2L+1}\Sigma_{i=0}^k\int_{y\geq 1}\frac{|\pa_y^i\zeta|^2|\pa_y^{k-i}\zeta|^2}{y^{4L+2-2k}}\\
& + &  |\log b_1|^C \Sigma_{k=1}^{2L+1}\Sigma_{i=0}^{k-1} \Sigma_{j=0}^ib_1^{a_{k-i}}\int_{y\geq 1}\frac{|\pa_y^j\zeta|^2|\pa_y^{i-j}\zeta|^2}{y^{4L+2-2k}}.
\eee
We now claim the bounds 
\be
\label{boudntoneere}
\Sigma_{k=0}^{2L+1}\Sigma_{i=0}^k\int_{y\geq 1}\frac{|\pa_y^i\zeta|^2|\pa_y^{k-i}\zeta|^2}{y^{4L+2-2k}}\lesssim |\log b_1|^C b_1^{\delta(L)} b_1^{2L+3}
\ee
\be
\label{boudntoneerebis}
  |\log b_1|^C \Sigma_{k=1}^{2L+1}\Sigma_{i=0}^{k-1} \Sigma_{j=0}^ib_1^{a_{k-i}}\int_{y\geq 1}\frac{|\pa_y^j\zeta|^2|\pa_y^{i-j}\zeta|^2}{y^{4L+2-2k}}\lesssim |\log b_1|^C b_1^{\delta(L)} b_1^{2L+3}
 \ee
for some $\delta(L)>0$, and this concludes the proof of \fref{crucialboundtwo} for $N(\e)$.\\
{\it Proof of \fref{pointwisebound}}: We first extract from Proposition \ref{consprofapproch} the rough bound: 
\be
\label{esnfienneo}
|\pa_y^k\qbt|\lesssim |\log b_1|^C\left[\frac{1}{y^{k+1}}+\Sigma_{i=1}^{2L+2}b_1^iy^{2i-1-k}{\bf 1}_{y\leq 2B_1}\right]\lesssim  \frac{|\log b_1|^C}{y^{k+1}}.
\ee Let then $\tau\in [0,1]$ and $v_{\tau}=\qbt+\tau\e$, we conclude from \fref{esnfienneo}, \fref{weightbis}, \fref{defakkk}:
\be
\label{cneononepeiop}
|\pa_y^kv_{\tau}|\lesssim  |\log b_1|^C\left[\frac{1}{y^{k+1}}+b_1^{\frac{a_k}{2}}\right], \ \ 1\leq k\leq 2L+1, \ \ y\geq 1.
\ee
We therefore estimate $N_1$ through the formula \fref{cnekonronroenoer} using the rough bound $|\pa_v^if|\lesssim 1$ and the Faa di Bruno formula: for $1\leq k\leq 2L+1$,
\bee
|\pa_y^kN_1(\e)|& \lesssim & \int_0^1\Sigma_{m_1+2m_2+\dots+km_k=k}\left|\pa_v^{m_1+\dots+m_k}f(v_{\tau})\right|\Pi_{i=1}^k|\pa^i_yv_{\tau}|^{m_i}d\tau\\
& \lesssim &|\log b_1|^C \Sigma_{m_1+2m_2+\dots+km_k=k}|\Pi_{i=1}^k\left[\frac{1}{y^{i+1}}+b_1^{\frac{a_i}{2}}\right]^{m_i}\\
& \lesssim &  |\log b_1|^C\left[\frac{1}{y^{k+1}}+b_1^{\frac{\alpha_k}{2}}\right].
\eee
To estimate $\alpha_k$ from the definition \fref{defakkk}, we observe that for $k\leq 2L-1$, $i\leq 2L-1$ and thus $$\alpha_k\geq \Sigma_{i=0}^k \frac{2iL}{L-1}m_i=\frac{2kL}{L-1}=a_k.$$ For $k=2L$, we have to treat the boundary term $i=k$, $(m_1,\dots,m_{k-1},m_k)=(0,\dots,0,1)=1$ which yields: $$\alpha_{2L}\geq \min\{\frac{2L(2L)}{2L-1};2L+1\}=2L+1.$$ For $k=2L+1$, we have the the two boundary terms $(m_1,m_2,\dots,m_{k-2},m_{k-1},m_k)=(1,0, \dots,0,1,0)$, $(m_1,\dots,,m_{k-1},m_k)=(0,\dots,0,1)$ which yields:
$$\alpha_{2L+1}\geq \min \{\frac{2L(2L+1)}{2L-1}; 2L+1+\frac{2L}{2L-1};2L+2\}= 2L+2.$$
and \fref{pointwisebound} is proved.\\
\noindent{\it Proof of \fref{boudntoneere}}: Let $0\leq k\leq 2L+1$, $0\leq i\leq k$. Let $I_1=k-i$, $I_2=i$, then we can pick $J_2\in \Bbb N^*$ such that $$ \max\{1;2-i\}\leq J_2\leq \min\{2L+3-k;2L+2-i\}$$ and define $$J_1=2L+3-k-J_2.$$ Then from direct inspection,
$$
(I_1,J_1,I_2,J_2)\in \Bbb N^3\times \Bbb N^*, \ \ \left\{\begin{array}{ll}1\leq I_1+J_1\leq 2L+1, \ \ 2\leq I_2+J_2\leq 2L+2,\\ I_1+I_2+J_1+J_2=2L+3.\end{array}\right. .
$$
Thus 
\bee
A_i& = & \int_{y\geq 1}\frac{|\pa_y^i\zeta|^2|\pa_y^{k-i}\zeta|^2}{y^{4L+2-2k}}= \int_{y\geq 1}\frac{|\pa_y^{I_1}\zeta|^2|\pa_y^{I_2}\zeta|^2}{y^{2J_1-2+2J_2-2}}\lesssim \left\|\frac{\pa_y^{I_1}\zeta}{y^{J_1-1}}\right\|_{L^{\infty(y\geq 1)}}^2\int_{y\geq 1}\frac{|\pa_y^{I_2}\zeta|^2}{y^{2J_2-2}}\\
& \lesssim & |\log b_1|^Cb_1^{d_{i,k}}
\eee
where we now compute the exponent $d_{i,k}$ using \fref{zetpovctlinfty}, \fref{zetpovctlinftybis}:
\begin{itemize} 
\item for $I_1+J_1\leq 2L-1$, $I_2+J_2\leq 2L$, 
$$d_{i,k}= \frac{2L}{2L-1}(I_1+J_1+I_2+J_2-1)=\frac{2L(2L+2)}{2L-1}>2L+3;
$$
\item for $I_1+J_1=2L$, $I_2+J_2=3$,
$$d_{i,k}=2L+1+\frac{2L}{2L-1}(3-1)>2L+3;$$
\item for $I_1+J_1=2L+1$, $I_2+J_2=2$, $$d_{i,k}=2L+2+\frac{2L}{2L-1}>2L+3,$$
\item for $I_2+J_2=2L+1$, $I_1+J_1=2$,
$$d_{i,k}=\frac{2(2L)}{2L-1}+2L+1>2L+3,$$
\item for $I_2+J_2=2L+2$, $I_1+J_1=1$, $$d_{i,k}=2L+2+\frac{2L}{2L-1}>2L+3,$$
\end{itemize}
and \fref{boudntoneere} is proved.\\
{\it Proof of \fref{boudntoneerebis}}: Let $1\leq k\leq 2L+1$, $0\leq j\leq i\leq k-1$. For $k=2L+1$ and $0\leq i=j\leq 2L$, we use the energy bound \fref{energyboundbis} to estimate:
\bee
b_1^{a_{k-i}}\int_{y\geq 1}\frac{|\pa_y^j\zeta|^2|\pa_y^{i-j}\zeta|^2}{y^{4L+2-2k}} & = & b_1^{a_{2L+1-i}}\int_{y\geq 1}|\pa_y^i\zeta|^2|\zeta|^2\lesssim b_1^{a_{2L+1-i}}\|\zeta\|^2_{L^{\infty}(y\geq 1)}\int_{y\ge 1} |\pa_y^i\zeta|^2\\
& \lesssim & b_1^{d_{i,2L+1}}
\eee
with 
\bee
d_{i,2L+1}& =& \left\{\begin{array}{ll}\frac{2L}{2L-1}+2L+2\ \ \mbox{for}\ \ i=0,\\ \frac{2L}{2L-1}+2L+2+\frac{2L}{2L-1}\ \ \mbox{for}\ \ i=1,\\ \frac{2L}{2L-1}+\frac{2L}{2L-1}(i+1-1)+\frac{2L}{2L-1}(2L+1-i)\ \ \mbox{for}\ \ 2\leq i\le 2L\end{array}\right.\\
& >& 2L+3.
\eee
This exceptional case being treated, we let $I_1=j$, $I_2=i-j$ and pick $J_2\in \Bbb N^*$ with $$\max\{1;2-(i-j);2-(k-j)\}\leq J_2\leq \min\{2L+3-k;2L+2-(k-j);2L+2-(i-j)\}.$$ Let $$J_1=2L+3-k-J_2,$$ then from direct check: 
$$
(I_1,J_1,I_2,J_2)\in \Bbb N^3\times \Bbb N^*, \ \ \left\{\begin{array}{ll}1\leq I_1+J_1\leq 2L+1, \ \ 2\leq I_2+J_2\leq 2L+2,\\ I_1+I_2+J_1+J_2=2L+3-(k-i).\end{array}\right. .
$$
Hence:
\bee
b_1^{a_{k-i}}\int_{y\geq 1}\frac{|\pa_y^j\zeta|^2|\pa_y^{i-j}\zeta|^2}{y^{4L+2-2k}}&=&b_1^{a_{k-i}}\int_{y\geq 1}\frac{|\pa_y^{I_1}\zeta|^2|\pa_y^{I_2}\zeta|^2}{y^{2J_2-2+2J_1-2}}\lesssim b_1^{a_{k-i}}\left\|\frac{\pa_y^{I_1}\zeta}{y^{J_1-1}}\right\|^2_{L^{\infty}(y\ge1)}\int_{y\geq 1}\frac{|\pa_y^{I_2}\zeta|^2}{y^{2J_2-2}}\\
& \lesssim & |\log b_1|^Cb_1^{d_{i,j,k}}
\eee
where we now compute the exponent $d_{i,k}$ using \fref{zetpovctlinfty}, \fref{zetpovctlinftybis}, \fref{defakkk}:
\begin{itemize}
\item for $I_1+J_1\leq 2L-1$, $I_2+J_2\leq 2L$, $k-i\leq 2L-1$,
$$d_{i,j,k}=  (k-i)\frac{2L}{2L-1}+(2L+3-(k-i)-1)\frac{2L}{2L-1}=\frac{2L(2L+2)}{2L-1}>2L+3;$$
\item  for $I_1+J_1\leq 2L-1$, $I_2+J_2\leq 2L$, $k-i=2L$,  $$d_{i,j,k}= 2L+1 +(2L+3-2L-1)\frac{2L}{2L-1}>2L+3;$$
\item for $I_1+J_1=2L$, $I_2+J_2=3-(k-i)\geq 2$ and thus $k-i=1$, $I_2+J_2=2$, $$d_{i,j,k}=\frac{2L}{2L-1}+2L+1+\frac{2L}{2L_1}>2L+3;$$
\item for $I_2+J_2=2L+1$, $I_1+J_1=2-(k-i)\geq 1$ and thus $k-i=1$. $I_1+J_1=1$, $$d_{i,j,k}=\frac{2L}{2L-1}+2L+1+\frac{2L}{2L-1}>2L+3,$$
\end{itemize}
and this concludes the proof of \fref{boudntoneerebis}.\\

{\bf step 8} Small linear term $L(\e)$.\\

Let us rewrite from  a Taylor expansion: 
\be
\label{cnoheiohoe}
L(\e)=-\e N_2(\tilde{\alpha}_b),\ \ N_2(\tilde{\alpha}_b)=\frac{f'(Q+\tilde{\alpha}_b)-f'(Q)}{y^2}=\frac{\tilde{\alpha_b}}{y^2}\int_0^1f''(Q+\tau \tilde{\alpha}_b)d\tau.
\ee
{\it Control for $y\leq 1$}: We use a Taylor expansion with the cancellation $f^{2k}(0)=0$, $k\geq 0$. and Proposition \ref{consprofapproch} to ensure for $y\leq1 $ a decomposition $$N_2(\tilde{\alpha}_b)=b_1\left[\Sigma_{i=0}^L\tilde{c}_iy^{2i}+r\right], \ \ |\tilde{c}_i|\lesssim 1, \ \ |\pa_y^kr|\lesssim y^{2L+2-k}, \ \ 0\leq k\leq 2L+1.$$ We combine this with \fref{Taylororigin} and obtain the representation for $y\leq 1$:
\be
\label{represLorigin}
L(\e)=\left[\Sigma_{i=1}^{L+1}c_{i}T_{L+1-i}+r_\e\right]b_1\left[\Sigma_{i=0}^Lc_iy^{2i}+r\right]= b_1\left[\Sigma_{i=1}^{L}\hat{c}_{i}y^{2i-1}+\hat{r}_\e\right]
\ee
with bounds:
 \be
 \label{boundcibis}
 |\hat{c}_{i}|\lesssim C(M)\sqrt{\mathcal E_{2L+2}},
 \ee
 \be
 \label{boundreste}
|\pa_y^k \hat{r}_{\e}|\lesssim y^{2L+1-k}|\log y|C(M)\sqrt{\mathcal E_{2L+2}}, \ \ 0\leq k\leq 2L+1, \ \ y\leq 1.
\ee
We now apply $(\matchal A^k)_{0\leq k\leq 2L+1}$ to \fref{represLorigin} and conclude using \fref{estak}:
\be
\label{coneonienoeg}
\|\mathcal A^kL(\e)\|_{L^{\infty}(y\leq 1)}\lesssim b_1C(M)\sqrt{\mathcal E_{2L+2}}
\ee from which:
$$\int_{y\leq 1}\frac{1+|\log y|^2}{1+y^4}|H^LL(\e)|^2+\int_{y\leq 1}|AH^L L(\e)|^2\lesssim C(M)b_1^2\mathcal E_{2L+2}\lesssim C(M) b_1^2 b_1^{2L+2}.
$$
{\it Control for $y\geq 1$}: We give the detailed proof of \fref{crucialboundtwo}, the proof of \fref{crucialboundthree} follows the exact same lines and is left to the reader. We claim the pointwise bound for $y\geq 1$:
\be
\label{bmpbrpoitwisebound}
\forall0\leq k\leq 2L+1, \ \ |\pa_y^kN_2(\tilde{\alpha}_b)|\lesssim \frac{b_1|\log b_1|^C}{y^{k+1}}.
\ee
which is proved below. This yields from Leibniz rule 
\be
\label{nknnveo}
|\pa_y^k L(\e)|\lesssim \Sigma_{i=0}^k\frac{b_1|\log b_1|^C|\pa_y^i\e|}{y^{k-i+1}}
\ee
 and thus:
\bee
|AH^LL(\e)|&\lesssim &\Sigma_{k=0}^{2L+1}\frac{|\pa_y^kL(\e)|}{y^{2L+1-k}}\lesssim \Sigma_{k=0}^{2L+1}\frac{1}{y^{2L+1-k}}\Sigma_{i=0}^{k}\frac{b_1|\log b_1|^C|\pa_y^i\e|}{y^{k-i+1}}\\
& \lesssim & b_1|\log b_1|^C\Sigma_{i=0}^{2L+1} \frac{|\pa_y^i\e|}{y^{2L+2-i}}.
\eee
We therefore conclude from \fref{weight} with $k=L$:
$$\int_{y\geq 1} |AH^LL(\e)|^2\lesssim b_1^2|\log b_1|^C\Sigma_{i=0}^{2L+1}\int_{y\geq 1}\frac{|\pa_y^i\e|^2}{y^{4L+4-2i}}\lesssim |\log b_1|^Cb_1^{2L+4},$$ and \fref{crucialboundtwo} is proved.\\
{\it Proof of \fref{bmpbrpoitwisebound}}: Let $$N_3=\int_0^1f''(Q+\tau \tilde{\alpha}_b)d\tau.$$ Let $\tilde{v}_\tau=Q+\tau \tilde{\alpha}_b$, $0\leq\tau\leq 1$, we estimate from Proposition \ref{consprofapproch}:
$$|\pa_y^k\tilde{v}_{\tau}|\lesssim  \frac{|\log b_1|^C}{y^{k+1}}, \ \ 1\leq k\leq 2L+1, \ \ y\geq 1,$$ and hence using the Faa di Bruno formula:
\bee
|\pa_y^kN_3(\tilde{\alpha}_b)|& \lesssim & \int_0^1\Sigma_{m_1+2m_2+\dots+km_k=k}\left|\pa_v^{m_1+\dots+m_k}f(\tilde{v}_{\tau})\right|\Pi_{i=1}^k|\pa^i_y\tilde{v}_{\tau}|^{m_i}d\tau\\
& \lesssim &|\log b_1|^C \Sigma_{m_1+2m_2+\dots+km_k=k}\Pi_{i=1}^k\left[\frac{1}{y^{i+1}}\right]^{m_i}\lesssim \frac{|\log b_1|^C}{y^{k+1}}. 
\eee
This yields in particular the rough bound $$|\pa_y^kN_3(\tilde{\alpha}_b)|\lesssim \frac{|\log b_1|^C}{y^{k}}, \ \ y\geq 1,\ \ 0\leq k\leq 2L+1$$
 and hence from the Leibniz rule: 
 \be
 \label{vnonvnonori}
 \left|\pa_y^k\left(\frac{N_3(\tilde{\alpha}_b)}{y^2}\right)\right|\lesssim  \frac{|\log b_1|^C}{y^{k+2}}, \ \ y\geq 1,\ \ 0\leq k\leq 2L+1.
 \ee
 We extract from Proposition \ref{consprofapproch} the rough bound $$|\pa_y^k\tilde{\alpha}_b|\lesssim \frac{|\log b_1|^Cb_1}{y^{k-1}}, \ \ 0\leq k\leq 2L+1$$ and conclude from Leibniz rule: 
 $$|\pa_y^kN_2|\lesssim \Sigma_{i=0}^k|\log b_1|^C\frac{b_1}{y^{i+2}y^{k-i-1}}\lesssim \frac{b_1|\log b_1|^C}{y^{k+1}},$$
 and \fref{bmpbrpoitwisebound} is proved.\\
 
This concludes the proof of \fref{crucialboundthree}, \fref{weigheivbiovheo}, \fref{cnofooeeo}, \fref{crucialboundtwo} and thus of Proposition \ref{AEI2}.

\end{proof}

  
  \section{Closing the bootstrap and proof of Theorem \ref{thmmain}}
  \label{sectionfour}

We are now in position to close the bootstrap bounds of Proposition \ref{bootstrap}. The proof of Theorem \ref{thmmain} will easily follow.


\subsection{Proof of Proposition \ref{bootstrap}}
\label{sectionbootstrap}

Our aim is first to show that for $s<s^*$, the a priori bounds \fref{init2h}, \fref{init3h}, \fref{init3hb} can be improved, and then the unstables modes $(U_k)_{2\le k\le L}$ will be controlled through a standard topological argument.\\

{\bf step 1} Improved $\dot{H}^1$ bound. First observe from \fref{defukbis} and the a priori bound on $U_k$ form $s<s^*$ that 
\be
\label{cneocneneo}
|b_k(s)|\lesssim  |b_k(0)|.
\ee
The energy bound \fref{init2h} is now a straightforward consequence of the dissipation of energy and the bounds \fref{cneocneneo}, \fref{controlunstable}. Indeed, let $$\tilde{\e}=\e+\alphah,$$ then
\bea
\label{consebfie}
E_0 & = & \int|\pa_y(Q+\et)|^2+\int\frac{g^2(Q+\et)}{y^2}\\
\nonumber & = & E(Q)+(H\et,\et)+\int\frac{1}{y^2}\left[g^2(Q+\et)-2f(Q)\et-f'(Q)\et^2\right].
\eea
We first use the bound on the profile which is easily extracted from Proposition \ref{consprofapproch}
$$\int|\pa_y\alphah|^2+\int\frac{|\alphah|^2}{y^2}\lesssim b_1|\log b_1|^C\leq \sqrt{b_1(0)} $$ using \fref{cneocneneo}. This ensures using Lemma \ref{coerch} the coercivity:
\bee
(H\et,\et)&\geq &c(M)\left[\int|\pa_y\et|^2+\int\frac{|\et|^2}{y^2}\right]-\frac{1}{c(M)}(\et,\Phi_M)^2\\
& \geq & c(M)\left[\int|\pa_y\e|^2+\int\frac{|\e|^2}{y^2}\right]-\sqrt{b_1(0)}.
\eee
The nonlinear term is estimated from a Taylor expansion:
$$
\left|\int\frac{1}{y^2}\left[g^2(Q+\et)-2f(Q)\et-f'(Q)\et^2\right]\right|\lesssim  \int \frac{|\et|^3}{y^2}\lesssim  \left(\int|\pa_y\et|^2+\int\frac{|\et|^2}{y^2}\right)^{\frac{3}{2}}.
$$
where we used the Sobolev bound $$\|\et\|_{L^{\infty}}^2\lesssim \|\pa_y\et\|_{L^2}\|\frac{\et}{y}\|_{L^2}.$$
We inject these bounds into the dissipation of energy \fref{consebfie} together with the initial bound \fref{init2energy} to estimate: 
\be
\label{init2hbis}
\int|\pa_y\e|^2+\int\frac{|\e|^2}{y^2}  \lesssim  \int|\pa_y\et|^2+\int\frac{|\et|^2}{y^2}+b_1|\log b_1|^C\leq c(M)\sqrt{b_1(0)}\leq (b_1(0))^{\frac 14}
\ee
for $|b_1(0)|\leq b_1^*(M)$ small enough.\\

{\bf step 2} Integration of the scaling law. Let us compute explicitly the scaling parameter for $s<s^*$. From \fref{parameters}, \fref{controlunstable}, \fref{defuk}, \fref{approzimatesolution}, we have the rough bound: 
$$-\lsl=\frac{c_1}{s}-\frac{|d_1|}{\log s}+O\left(\frac{1}{s(\log s)^{\frac 54}}\right)$$ which we rewrite 
\be
\label{eneoeoe}
\left|\frac{d}{ds}\left\{\frac{s^{c_1}\l(s)}{(\log s)^{|d_1|}}\right\}\right|\lesssim \frac{1}{s(\log s)^{\frac 54}}.
\ee
We integrate this using the initial value $\l(0)=1$ and conclude:
\be
\label{valuel}
\frac{s^{c_1}\l(s)}{(\log s)^{|d_1|}}=\frac{s_0^{c_1}}{(\log s_0)^{|d_1|}}+O\left(\frac{1}{(\log s_0)^{\frac 14}}\right).
\ee
Together with the law for $b_1$ given by \fref{controlunstable}, \fref{defuk}, \fref{approzimatesolution}, this implies:
\be
\label{keyestimated}
b_1(0)^{c_1}|\log b_1(0)|^{|d_1|}\lesssim \frac{b_1^{c_1}(s)|\log b_1|^{|d_1|}}{\l(s)}\lesssim b_1(0)^{c_1}|\log b_1(0)|^{|d_1|}.
\ee

{\bf step 3} Improved control of $\mathcal E_{2L+2}$. We now improve the control of the high order $\mathcal E_{2L+2}$ energy \fref{init3hb} by reintegrating the Lyapounov monotonicity \fref{monoenoiencle} in the regime governed by \fref{valuel}, \fref{defuk}. Indeed, we inject the bootstrap bound \fref{init3hb} into the monotonicity formula \fref{monoenoiencle} and integrate in time $s$: $\forall s\in [s_0,s^*)$,
\bea
\label{monotnyintegree}
\nonumber \mathcal E_{2L+2}(s) & \leq & 2\left(\frac{\lambda(s)}{\lambda(0)}\right)^{4L+2}\left[\mathcal E_{2L+2}(0)+Cb^{\frac 45}_1(0)\frac{b_1^{2L+2}(0)}{|\log b_1(0)|^2}\right]+\frac{b_1^{2L+2}(s)}{|\log b_1(s)|^2}\\
 & + & C\left[1+\frac{K}{\log M}+\sqrt{K}\right]\lambda^{2L+4}(s)\int_{s_0}^{s} \frac{b_1}{\lambda^{4L+2}}\frac{b_1^{2L+2}}{|\log b_1|^2}d\sigma
\eea
for some universal constant $C>0$ independent of $M$. We now observe from \fref{keyestimated} that the integral in the RHS of \fref{monotnyintegree} is divergent since:
 \bee
 \frac{b_1}{\lambda^{4L+2}}\frac{b_1^{2L+2}}{|\log b_1|^2}\gtrsim C(b_0)\frac{b_1^{2L+3}}{b_1^{(4L+2)c_1}|\log b_1|^C}\gtrsim \frac{C(b_0)}{(\log s)^Cs^{2L+3-(4L+2)\frac{L}{2L-1}}}=\frac{C(b_0)}{(\log s)^Cs^{\frac{2L-3}{2L-1}}},
 \eee
  and therefore from \fref{keyestimated} and $\frac 1s\lesssim b_1\lesssim \frac 1s$:
 \bee
 \lambda^{4L+2}(s)\int_{s_0}^{s} \frac{b_1}{\lambda^{4L+2}}\frac{b_1^{2L+2}}{|\log b_1|^2}d\sigma\lesssim \frac{b_1^{2L+2}(s)}{|\log b_1(s)|^2}.
 \eee
We now estimate the contribution of the initial data using \fref{keyestimated} and the initial bounds \fref{init2}, \fref{intilzero}:
\bee
&&\left(\frac{\lambda(s)}{\lambda(0)}\right)^{4L+2}\left[\mathcal E_{2L+2}(0)+Cb^{\frac 45}_1(0)\frac{b_1^{2L+2}(0)}{|\log b_1(0)|^2}\right]\lesssim \lambda^{4L+2}(s)b^{\frac 45}_1(0)\frac{b_1^{2L+2}(0)}{|\log b_1(0)|^2}\\
& \lesssim & (b_1(s))^{(4L+2)\frac{L}{2L-1}}|\log b_1(s)|^C(b_1(0))^{\frac 45+2L+2-(4L+2)\frac{L}{2L-1}}|\log b_1(0)|^C\\
& \lesssim & \frac{b_1^{2L+2}(s)}{|\log b_1(s)|^2}
\eee
where we used the algebra for $L\geq 2$: $$0<\frac{L(4L+2)}{2L-1}-(2L+2)=\frac{2}{2L-1}<\frac 45.$$
Injecting these bounds into \fref{monotnyintegree} yields 
\be
\label{init3hbbis}\mathcal E_{2L+2}(s)\lesssim \frac{b_1^{2L+2}(s)}{|\log b_1(s)|^2}\left[1+\frac{K}{\log M}+\sqrt{K}\right]\leq \frac{K}{2} \frac{b_1^{2L+2}(s)}{|\log b_1(s)|^2}
\ee for $K$ large enough independent of $M$.\\

{\bf step 4} Improved control of $\mathcal E_{2k+2}$, $0\leq k\leq L-1$. We now claim the improved bound on the intermediate energies:
\be
\label{init3hbbisbis}
\mathcal E_{2k+2}\leq b_1^{(4k+2)\frac{2L}{2L-1}}|\log b_1|^{C+\sqrt{K}}.
\ee
This follows from the monotonicity formula: for $0\leq k\leq L-1$,
\bea
\label{monoenoiencleappouetpoute}
&& \frac{d}{dt} \left\{\frac{1}{\lambda^{4k+2}}\left[\mathcal E_{2k+2}+O\left(b_1^{\frac 12}b_1^{(4k+2)\frac{2L}{2L-1}}\right)\right]\right\}\\
\nonumber & \lesssim & \frac{|\log b_1|^{C}}{\l^{4k+4}}\left[b_1^{2k+3}+b_1^{1+\delta+(2k+1)\frac{2L}{2L-1}}+\sqrt{b_1^{2k+4}\matchal E_{2k+2}}\right]
 \eea
for some universal constants $C,\delta>0$ independent of the bootstrap constant $K$. The proof is similar to the one of \fref{init3hbbis} and in fact simpler since we allow for logarithmic losses, details are given in Appendix \ref{appendenergy}.\\
We now estimate using \fref{valuel}:
\bee
\l^{4k+2}(s)\int_{s_0}^s\frac{b_1^{2k+3}}{\l^{4k+2}}|\log b_1|^{C}\lesssim \frac{(\log s)^{|d_1|}}{s^{(4k+2)c_1}}\int_{s_0}^s\frac{(\log \sigma)^C}{\sigma^{2k+3-c_1(4k+2)}}d\sigma.
\eee
We compute from \fref{defalphai}: 
\be
\label{keyalgebra}
(2k+3)-c_1(4k+2)=1+\frac{2(L-k-1)}{2L-1}
\ee
and hence:
\bee
\l^{4k+2}(s)\int_{s_0}^s\frac{b_1^{2k+3}}{\l^{4k+2}}|\log b_1|^{C}\lesssim \frac{(\log s)^{|d_1|+C}}{s^{(4k+2)c_1}}\lesssim b_1^{4k+2}|\log b_1|^C.
\eee
Similarily, from \fref{keyestimated}: 
\bee
\l^{4k+2}(s)\int_{s_0}^s\frac{b_1^{1+\delta+(2k+1)\frac{2L}{2L-1}}}{\l^{4k+2}}|\log b_1|^Cd\sigma &\lesssim&  \frac{(\log s)^{|d_1|+C}}{s^{(4k+2)c_1}}\int_{s_0}^s\frac{(\log \sigma)^C}{\sigma^{1+\delta}}d\sigma\\
& \lesssim & b_1^{4k+2}|\log b_1|^C
\eee
and using \fref{keyalgebra}, \fref{init3h}:
\bee
&&\l^{4k+2}(s)\int_{s_0}^s\frac{|\log b_1|^{C}}{\l^{4k+2}}\sqrt{b_1^{2k+4}\matchal E_{2k+2}}d\sigma \lesssim  \frac{(\log s)^{|d_1|+C}}{s^{(4k+2)c_1}}\int_{s_0}^s\frac{(\log \sigma)^{C+\sqrt{K}}}{\sqrt{s^{2k+4-(2k+1)\frac{2L}{2L-1}}}}d\sigma\\
& \lesssim & |\log b_1|^{C+\sqrt{K}}b_1^{4k+2}\int_{s_0}^s\frac{d\sigma}{\sigma^{1+\frac{L-k-1}{2L-1}}}\lesssim  |\log b_1|^{C+\sqrt{K}}b_1^{4k+2}.
\eee
The time integration of \fref{monoenoiencleappouetpoute} from $s=s_0$ to $s$ therefore yields using also the initial smallness \fref{init2} and \fref{keyestimated}:
$$\matchal E_{2k+2}(s)\lesssim \l^{4k+2}(s)b_1(0)^{10L+4}+ |\log b_1(s)|^{C+\sqrt{K}}b_1^{4k+2}(s)\lesssim  |\log b_1(s)|^{C+\sqrt{K}}b_1^{4k+2}(s),$$ and \fref{init3hbbisbis} is proved.

\begin{remark}
\label{remarkreg} For $0\leq k\leq L-2$, the above argument shows the bound $$\matchal E_{2k+2}\lesssim \l^{4k+2}$$ which equivalently corresponds to a uniform high order Sobolev control $w$. The logarithmic loss for $k=L-1$ could be gained as well with a little more work, see \cite{RSc1} for the case $L=1$. This shows that the limiting excess of energy $u^*$ in \fref{concnenergy} enjoys some suitable high order Sobolev regularity.
\end{remark}

{\bf step 5} Contradiction through a topological argument. Let us consider $$\tilde{b}_k=b_k\ \ \mbox{for}\ \ 1\leq k\leq L, \ \ \tilde{b}_L\ \ \mbox{given by \fref{defbtildel}}$$ and the associated variables $$\bt_k=b_k^e+\frac{\tilde{U}_k}{s^k(\log s)^{\frac 54}},\ \ 1\leq k\leq L, \ \ \bt_{k+1}=\tilde{U}_{k+1}\equiv 0, \ \ \tilde{V}=P_L\tilde{U}.$$ From \fref{poitwidediff}:
\be
\label{estexit}
|V-\tilde{V}|\lesssim s^L|\log s|^Cb_1^{L+\frac 12}\lesssim \frac{1}{s^{\frac14}}.
\ee
Let the associated control of the unstable models be:
\be
\label{controlunstablebis}
|\tilde{V}_1(s)|\leq 2, \ \ \left(\tilde{V}_2(s),\dots,\tilde{V}_L(s)\right)\in \mathcal B_{L-1}(\frac 12).
\ee
and the slightly modified exit time:
\bee
\tilde{s}^*= \sup\{s\geq s_0\ \ \mbox{such that} \ \ \fref{init2h}, \fref{init3h}, \fref{init3hb}, \fref{controlunstablebis}\ \ \mbox{hold on }\ \ [s_0,s]\},
\eee
then \fref{estexit} and the assumption \fref{hypcontr} imply:
\be
\label{hypcontrbis}
\forall \left(\tilde{V}_2(0),\dots,\tilde{V}_L(0)\right)\in \mathcal B_{L-1}(\frac 12), \ \ \tilde{s}^*<+\infty.
\ee
We claim that this contradicts Brouwer fixed point theorem.\\
Indeed, we first estimate from \fref{reformulaion}
$$
(\bt_k)_s+\left(2k-1+\frac{2}{\log s}\right)\bt_1\bt_k-\bt_{k+1}=  \frac{1}{s^{k+1}(\log s)^{\frac 54}}\left[s(\tilde{U}_k)_s-(A_L\tilde{U})_k+O\left(\frac{1}{\sqrt{\log s}}\right)\right] 
$$
and thus from \fref{parameterspresicely}, \fref{esteqbrildl}, \fref{init3hbbis} and \fref{cbdb}:
\bee
\left |s(\tilde{U}_k)_s-(A_L\tilde{U})_k\right|&\lesssim &\frac{1}{\sqrt{\log s}}+s^{k+1}(\log s)^{\frac 54}\left|(\bt_k)_s+\left(2k-1+\frac{2}{\log s}\right)\bt_1\bt_k-\bt_{k+1}\right|\\
& \lesssim &\frac{1}{\sqrt{\log s}}+s^{k+1}(\log s)^{\frac 54}\left[b_1^{L+\frac 32}+\frac{1}{s^{k+1}(\log s)^2}+\frac{b_1^{L+1}}{|\log b_1|^{\frac 32}}\right]\\
& \lesssim & \frac{1}{(\log s)^{\frac 14}}
\eee
and hence using the diagonalization \fref{specta}:
\be
\label{eqvtilde}
s(\tilde V)_s=D_L\tilde{V}_s+O\left(\frac{1}{(\log s)^{\frac 14}}\right).
\ee
This first implies the control of the stable mode $\tilde{V}_1$ from \fref{specta}:
$$|(s\tilde{V}_1)_s|\lesssim \frac{1}{(\log s)^{\frac14}}$$ and thus from \fref{estintial}:
\be
\label{controlstable}
|\tilde{V}_1(s)|\lesssim \frac{1}{s}+\frac{1}{s}\int_{s_0}^s\frac{d\tau}{(\log \tau)^{\frac14}}\leq \frac{1}{10}.
\ee
Now from \fref{init2hbis}, \fref{init3hbbis}, \fref{init3hbbisbis}, \fref{controlstable} and a standard continuity argument, 
\be
\label{cneoneno}
\Sigma_{i=2}^L\tilde{V}_i^2(s^*)=\frac{1}{4}.
\ee 
We then compute from \fref{eqvtilde} the fundamental strict outgoing condition at the exit time $\tilde{s}^*$ defined by \fref{cneoneno}:
\bee
\frac 12\frac{d}{ds}\left\{\Sigma_{i=2}^L\tilde{V}_i^2\right\}_{|s=\tilde{s}^*}&=&\Sigma_{i=2}^L(\tilde{V}_i)_s\tilde{V}_i= \frac{1}{\tilde{s}^*}\left[\Sigma_{i=2}^{L}\frac{i}{2L-1}\tilde{V}_i^2(s^*)+O\left(\frac{1}{(\log \tilde{s}^*)^{\frac14}}\right)\right]\\
& \geq& \frac{1}{s^*}\left[\frac{2}{2L-1}\frac{1}{4}+O\left(\frac{1}{(\log \tilde{s}^*)^{\frac 14}}\right)\right]>0.
\eee
This implies from standard argument the continuity of the map $$(\tilde{V}_i)_{2\leq i\leq L}\in \mathcal B_{L-1}(\frac 12)\mapsto \tilde{s}^*\left[(\tilde{V}_i)_{2\leq i\leq L}\right]$$ and hence the continuous map $$ \begin{array}{ll} \mathcal B_{L-1}(\frac 12)\to \mathcal B_{L-1}(\frac 12)\\
(\tilde{V}_i)_{2\leq i\leq L}\mapsto \left\{\tilde{V}_i\left[\tilde{s}^*( (\tilde{V}_i)_{2\leq i\leq L})\right]\right\}
\end{array}$$
is the identity on the boundary sphere $\Bbb S_{L-1}(\frac 12)$, a contradiction to Brouwer's fixed point theorem. This concludes the proof of Proposition \ref{bootstrap}.


\subsection{Proof of Theorem \ref{thmmain}}


We pick an initial data satisfying the conclusions of Proposition \ref{bootstrap}.  In particular, \fref{eneoeoe} implies the existence of $c(u_0)>0$ such that $$\lambda(s)=c(u_0)\frac{(\log s)^{|d_1|}}{s^{c_1}}\left[1+O\left(\frac{1}{(\log s)^{\frac 14}}\right)\right]$$ and then from \fref{parameters}, \fref{defuk}:
$$-\l\l_t=-\lsl=b_1+O\left(\frac{1}{s^L}\right)=\frac{c_1}{s}\left[1+O\left(\frac{1}{\log s}\right)\right]=\frac{c(u_0)\l^{\frac 1{c_1}}}{|\log \l|^{\frac{|d_1|}{c_1}}}\left[1+O\left(\frac{1}{(\log s)^{\frac 14}}\right)\right]$$ and hence the pointwise differential equation: $$-\l^{\frac{-(L-1)}{L}}|\log \l|^{\frac{2}{2L-1}}\l_t=c(u_0)(1+o(1)).$$ We easily conclude that $\l$ touches zero at some finite time $T=T(u_0)<+\infty$ with near blow up time: $$\l(t)=c(u_0)(1+o(1))\frac{(T-t)^{L}}{|\log (T-t)|^{\frac{2L}{2L-1}}}(1+o(1)).$$ The strong convergence \fref{concnenergy} now follows as in \cite{RSc1}. This concludes the proof of Theorem \ref{thmmain}.


\begin{appendix}.



\section{Regularity in corotational symmetry}


We detail in this appendix the regularity of smooth maps with 1-corotational symmetry. 

\begin{lemma}[Regularity in corotational symmetry]
\label{regularitycorot}
Let $v$ be a smooth 1-corotational map 
\be
\label{corotv}
v(y,\theta)=\left|\begin{array}{ll}g(u(y))\cos\theta\\g(u(y))\sin\theta\\z(u(y))\end{array}\right., 
\ee 
with 
\be
\label{smoothboundary}
v(0)=e_z, \ \ \lim_{y\to +\infty} v(x)\to -e_z.
\ee
Assume that $v$ is smooth in Sobolev sense: $$\Sigma_{i=1}^N\int |(-\Delta)^{\frac i2} v|^2<+\infty,$$ for some $N\gg L$,
then:\\
(i) $u$ is a smooth function of $y$ with a Taylor expansion at the origin for $p\leq 10L$:
\be
\label{cnekoheoeo}
u(y)=\Sigma_{k=0}^{p}c_ky^{2k+1}+O(y^{2p+3}).
\ee
(ii) Assume that $u(y)=Q(y)+\e(y)$ with 
\be
\label{estlinfty}
\|\nabla \e\|_{L^2}+\|\frac{\e}{y}\|_{L^2}\ll 1,
\ee
and consider the sequence of suitable derivatives $\e_k=\mathcal A^k\e$, then: $\forall 1\leq k\leq L$
\bea
\label{cnojeooej}
&& \int|\e_{2k+2}|^2+\int\frac{|\e_{2k+1}|^2}{y^2(1+y^2)}\\
\nonumber & + & \Sigma_{p=0}^{k}\int\left[\frac{|\e_{2p-1}|^2}{y^6(1+|\log y|^2)(1+y^{4(k-p)})}+\frac{|\e_{2p}|^2}{y^4(1+|\log y|^2)(1+y^{4(k-p)})}\right]<+\infty.
\eea
\end{lemma}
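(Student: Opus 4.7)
The proof splits naturally into the two statements.

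For Part (i), the strategy is to pass through Cartesian coordinates. Writing
\begin{equation*}
v_1+iv_2 = (x_1+ix_2)\,\frac{g(u(y))}{y},\qquad v_3 = z(u(y)),
\end{equation*}
the Sobolev hypothesis with $N\gg L$ gives, by two-dimensional Sobolev embedding, $v\in C^{10L}(\mathbb{R}^2;\mathbb{R}^3)$. Since $v_3$ is a radial function of class $C^{10L}$, it must be a smooth even function of $y$ near the origin; the same argument applied to the scalar factor $g(u(y))/y$ multiplying $x_1+ix_2$ in the first two components shows that $g(u(y))/y$ is a smooth even function of $y$ as well. Using $g(0)=0$ and $g'(0)=1$ from \eqref{assumtiong} to invert $u\mapsto g(u)$ locally near $0$, one deduces that $u$ itself is an odd smooth function of $y$ near the origin, yielding the Taylor expansion \eqref{cnekoheoeo} to any order $p\leq 10L$.

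For Part (ii), the plan proceeds in two steps: first one shows $\int|H^{k+1}\e|^2<\infty$ for $0\leq k\leq L$, and then all the weighted bounds on lower-order suitable derivatives follow by iterating Hardy-type inequalities built on the factorization $H=A^*A$. For the first step, a direct computation under the corotational ansatz shows that the intrinsic iterated Laplacian of $v$, projected on $T_vM$ via \eqref{hfgeneral}, reduces on the scalar equation for $u$ to $H^{k+1}\e$ plus nonlinear corrections of lower order controlled by $\|\e\|_{L^\infty}$. The smallness hypothesis \eqref{estlinfty} together with Sobolev embedding gives $\|\e\|_{L^\infty}\ll 1$, and Part (i) gives the odd cancellation of $\e$ at the origin needed to make the $1/y^2$ potential term harmless; combining these, the sum $\sum_{i\leq N}\int|(-\Delta)^{i/2}v|^2<\infty$ translates into $\int|H^{k+1}\e|^2<\infty$ for every $k\leq L$.

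The second step is an induction on $k$ using the formulas \eqref{otherformula}
\begin{equation*}
Au = -\Lambda Q\,\partial_y(u/\Lambda Q),\qquad A^*u = \frac{1}{y\Lambda Q}\partial_y(uy\Lambda Q),
\end{equation*}
the subcoercivity of Lemma \ref{propcorc}, and the asymptotics $\Lambda Q\sim y$ at $0$ and $\Lambda Q\sim 2/y$ at $\infty$ read off from \eqref{origin}--\eqref{infinity}. At each level, inverting $A$ and $A^*$ via the above formulas and using $\int|\e_{2k+2}|^2<\infty$ as the seed yields the intermediate bound $\int|\e_{2k+1}|^2/(y^2(1+y^2))<\infty$; the weights $y^{4(k-p)}$ in \eqref{cnojeooej} then encode the extra spatial gain obtained by moving one additional level in the $A,A^*$ chain, while the $(1+|\log y|^2)^{-1}$ factor absorbs the logarithmic slow decay of the resonance $\Lambda Q$ of $H$ at infinity.

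\textbf{Main obstacle.} The delicate point is the first step of Part (ii), namely the passage from the extrinsic Sobolev norms of the map $v$ to the intrinsic energies $\int|H^{k+1}\e|^2$. One must absorb, into finite lower-order norms of $\e$, the $1/y^{2j}$-type potential terms arising from the curvature of the target, without disturbing the fine weights required in the subsequent Hardy iteration. This uses in an essential way the precise behavior of the potentials $Z,V,\widetilde V$ at $0$ and $\infty$ recorded in \eqref{comportementz}--\eqref{comportementvtilde}, together with the harmonic map equation \eqref{harmonicmapequation} for $Q$ to subtract the leading non-linear contribution. Once this comparison is achieved, the Hardy iterations that produce \eqref{cnojeooej} are mechanical.
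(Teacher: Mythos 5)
Your part (i) is correct and is a genuinely different (and perfectly acceptable) route: you read the odd Taylor structure of $g(u)$ off the degree-one equivariance of the Cartesian components $(x_1+ix_2)\,g(u(y))/y$ and then invert $g$ near $0$, whereas the paper bounds the iterated Laplacians of $v$ pointwise near the origin and uses the factorization $\Bbb H=\Bbb A^*\Bbb A$ on the radial profile $w_1=g(u)$ to kill the even Taylor coefficients. Both arguments give \eqref{cnekoheoeo} to the finite order allowed by $N\gg L$.

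Part (ii), however, contains a genuine gap in your second step. You propose to deduce the weighted bounds on the lower $\e_j$ from $\int|\e_{2k+2}|^2<+\infty$ by Hardy-type iteration together with ``the subcoercivity of Lemma \ref{propcorc}''. This is circular: the hypotheses of Lemma \ref{propcorc} are precisely the finiteness \eqref{cnojeooejbis}, which is word for word the conclusion \eqref{cnojeooej} you are trying to establish, together with the orthogonality conditions \eqref{orthoappendix} which are not available here — they are only arranged later by modulation, and it is Lemma \ref{regularitycorot} that licenses the application of Lemma \ref{propcorc}, not the converse (the same remark applies to Lemmas \ref{coerchtilde}, \ref{ceorcetwobis}, \ref{boundweight}, whose proofs presuppose the a priori finiteness of the quantities being estimated; without orthogonality the quantitative coercivity is in any case false because of the kernel of $H$). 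Nor can a bare Hardy iteration seeded only by the top norm succeed: inverting $A$ and $A^*$ through \eqref{otherformula} produces the homogeneous solutions $1/(y\Lambda Q)\sim y^{-2}$ and $\Gamma\sim y^{-1}$ near the origin, which are not ruled out by $\int|\e_{2k+2}|^2<+\infty$ and are too singular for the weights in \eqref{cnojeooej}; some input on the behavior of $\e$ at $y=0$ is indispensable. Once that input is used, the coercivity machinery becomes superfluous: the paper's (much more elementary) argument is to combine the odd Taylor expansion from (i) with the cancellation $A(y)=O(y^2)$ coming from \eqref{comportementz} to get the pointwise bounds $|\e_{2p}|\lesssim y$, $|\e_{2p+1}|\lesssim y^2$ near the origin, which make every integral in \eqref{cnojeooej} converge at $y=0$; and for $y\geq 1$, where the weights are bounded or decaying, it suffices to prove the unweighted bounds $\int_{y\geq 1}|\partial_y^j\e|^2<+\infty$ up to order $2L+2$ — this is exactly your ``extrinsic-to-intrinsic'' transfer, carried out in the paper through $\Delta g(u)=g'(u)\Delta u+\dots$, the lower bound $|g'(u)|\gtrsim 1$ for $y$ large (where $u$ is close to $\pi$ by \eqref{estlinfty}), and the Faa di Bruno formula. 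So your step 1 identifies the right obstacle, but your step 2 as written rests on a lemma you are not entitled to use and would need to be replaced by the direct pointwise/unweighted argument above.
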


\begin{proof}[Proof of Lemma \ref{regularitycorot}]

Let us consider the rotation matrix \be
 \label{defR}
 R=\left(\begin{array}{lll} 0 & -1 & 0 \\ 1 & 0 &0\\ 0 & 0& 0
\end{array}
\right ),
\ee and rewrite \fref{corotv}: 
\be
\label{coccooejeo}
v(r,\theta)=e^{\theta R}w\ \ \mbox{with}\ \ w(r)=\left|\begin{array}{lll}w_1=g(u)\\0\\w_3=z(u)\end{array}\right.
\ee
{\bf step 1} Control at the origin. We compute the energy density 
\be
\label{gradv}
|\nabla v|^2=|\pa_yw|^2+\frac{|w|^2}{y^2}
\ee which is bounded from the smoothess of $v$ from which 
\be
\label{firstbound}
\left|\frac{w_1}{y}\right|+|\pa_yw_1|\lesssim 1.
\ee Similarily, 
\be
\label{formulalaplacev}
\Delta v=e^{\theta R}\left(\Delta w+R^2\frac{w}{y^2}\right)=e^{\theta R}\left|\begin{array}{lll}-\Bbb Hw_1\\0\\\Delta w_3\end{array}\right.
\ee 
where $$\Bbb Hw_1=-\Delta w_1 +\frac{w_1}{y^2}=\Bbb A^*\Bbb Aw_1$$ with $$\Bbb A=-\pa_y+\frac{1}{y}, \ \ \Bbb A^*=\pa_y+\frac{2}{y}.$$ The regularity of $v$ implies $$|\Bbb Hw_1|\lesssim 1$$ near the origin which together with \fref{firstbound} yields: 
\be
\label{estzwitht}
\Bbb Aw_1(y)=\frac{1}{y^2}\int_0^r(\Bbb Hw_1)\tau^2d\tau=O(y).
\ee We now observe that $$\Bbb Hw_1=-\pa_{yy}w_1+\frac{1}{y}\Bbb Aw_1$$ and conclude $$|\pa^2_{yy}w_1|\lesssim 1.$$ We now iterate this argument once on \fref{formulalaplacev}. Indeed, at the origin, $$\left|\pa_y\Bbb H w_1\right|^2+\frac{|\Bbb Hw_1|^2}{y^2}\lesssim |\nabla \Delta v|^2\lesssim 1, \ \ |\Bbb H^2 w_1|\lesssim |\Delta^2 v|\lesssim 1$$ and hence
 $$|\Bbb Hw_1|\lesssim y, \ \ |\Bbb A\Bbb Hw_1|\lesssim y, \ \ |\Bbb H^2w_1|\lesssim 1.$$ This yields the $\mathcal C^3$ regularity of $w_1$ at the origin and the improved bound from \fref{estzwitht}: $$\Bbb Aw_1(y)=\frac{1}{y^2}\int_0^y(\Bbb Hw_1)\tau^2d\tau=O(y^2).$$ A simple induction now yields for all $k\geq 1$ the $\mathcal C^k$ regularity of $w_1$, and that the sequence $$(w_1)_0=w_1, \ \ (w_1)_{k+1}=\left\{\begin{array}{ll} \Bbb A^*(w_1)_k\ \ \mbox{for k odd}\\ \Bbb A (w_1)_k\ \ \mbox{for k even}
\end{array}\right., \ \ k\geq 1.$$ satisfies the bound 
\be
\label{exapnsionwkbounds}
|(w_1)_k|\lesssim \left\{\begin{array}{ll} y\ \ \mbox{for k even}\\ y^2 \ \ \mbox{for k odd}
\end{array}\right..
\ee 
We therefore let a Taylor expansion at the origin $$w_1(y)=\Sigma_{i=1}^{p}c_iy^i+O(y^{p+1}),$$ apply successively the operator $\Bbb A, \Bbb A^*$ and conclude from the relations $$\Bbb A(r^k)=-(k-1)y^{k-1}, \ \ \Bbb A^*(y^k)=(k+2)y^{k-1} $$ and \fref{exapnsionwkbounds} that $$c_{2k}=0, \ \ \forall k\geq 1.$$ We now recall from \fref{coccooejeo} that $w_1=g(u)$ and the Taylor expansion \fref{cnekoheoeo} now follows from the odd parity of $g$ at the origin.\\ We now claim that this implies the bound \fref{cnojeooej} at the origin. Indeed, $\e$ admits a Taylor expansion \fref{cnekoheoeo} from \fref{origin} to which we apply successively the operators $A,A^*$. We observe from \fref{comportementz} the cancellation $$A(y)=cy^2+O(y^3)$$ which ensures the bound near the origin 
\be
\label{devayorigin}
|\e_{2k}|\lesssim y, \ \ |\e_{2k+1}|\lesssim y^2,
\ee hence the finiteness of the norms \fref{cnojeooej} at the origin.\\

{\bf step 2}. Control for $r\geq 1$. We first claim:
\be
\label{derivativescontrol}
\int \frac{\e^2}{y^2}+\Sigma_{k=1}^{L+2}\int (\pa_y^k\e)^2<+\infty.
\ee
Indeed, from \fref{estlinfty}, 
\be
\label{cneneonve}
\|\e\|_{L^{\infty}}\lesssim \|\nabla \e\|_{L^2}+\|\frac{\e}{y}\|_{L^2}\ll 1.
\ee From \fref{gradv}, 
$$\int |\Delta g(u)|^2\lesssim \int |\Delta v|^2<+\infty.$$ Now $$\Delta g(u)=g'(u)\Delta u +(\pa_yu)^2g'(u)g''(u)$$ and we  estimate using Sobolev and the $L^{\infty}$ control \fref{cneneonve}: $$\int\left((\pa_y\e)^2g'(u)g''(u)\right)^2\lesssim \int |\Delta \e|^2 \int |\nabla\e|^2\ll \int |\Delta \e|^2.$$ Moreoever,  from the smallness \fref{cneneonve} and the structure of $Q$, $$|g'(u)|\gtrsim 1\ \ \mbox{as}\ \  r\to +\infty$$ from which: $$1+\int |\Delta v|^2\gtrsim \int |\Delta \e|^2.$$  The control of higher order Sobolev norms \fref{derivativescontrol} now follows similarly by induction using the Faa di Bruno formula for the computation of $\pa_y^kg(u)$, this is left to the reader. Now \fref{derivativescontrol} easily implies $$\Sigma_{i=1}^{L+2}\int_{y\geq 1}|\e_{k}|^2<+\infty$$ and the bound \fref{cnojeooej} is proved far out.\\
\end{proof}


\section{Coercitivity bounds}


Given $M\geq 1$, we let $\Phi_M$ be given by \fref{defdirection}. Let us recall the coercivity of the operator $H$ which is a standard consequence of the knowledge of the kernel of $H$ and the nondegeneracy \fref{nondgeener}.

\begin{lemma}[Coercivity of $H$]
\label{coerch}
Let $M\geq 1$ large enough, then there exists $C(M)>0$ such that for all radially symmetric $u$ with $$\int\left[|\pa_y u|^2+\frac{|u|^2}{y^2}\right]<+\infty $$ satisfying $$(u,\Phi_M)=0,$$ there holds: $$(Hu,u)\geq C(M)\left[\int|\pa_y u|^2+\frac{|u|^2}{y^2}\right].$$
\end{lemma}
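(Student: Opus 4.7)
The plan is to exploit the factorization $H = A^*A$ from \fref{facotrization}, which gives $(Hu,u) = \|Au\|_{L^2}^2$, and then to derive the coercive lower bound through a two-step scheme: first a subcoercive estimate (independent of the orthogonality), and then a contradiction/compactness argument that uses the orthogonality to absorb the remaining local term. The constant $C(M)$ will be allowed to degenerate with $M$, since only the qualitative positivity is needed.

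\textbf{Step 1 (subcoercivity).} First I would establish an inequality of the form
\[
\int|\pa_y u|^2 + \int \frac{|u|^2}{y^2} \;\lesssim\; \|Au\|_{L^2}^2 + \int \chi_{2M}\,\frac{|u|^2}{1+y^2},
\]
using the factorization formula \fref{otherformula}, $Au = -\Lambda Q\,\pa_y(u/\Lambda Q)$, combined with the asymptotics \fref{origin}, \fref{slowedecay} on $\Lambda Q$, and standard two-dimensional radial Hardy inequalities on the regions $y\leq 1$ and $y\geq 1$ separately. The cutoff scale $M$ plays no role here; it is merely a convenient localization of the zone containing the unique $\mathcal H$-bad direction $\Lambda Q$.

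\textbf{Step 2 (compactness).} Assuming the conclusion fails, I would extract a sequence $u_n\in \mathcal H$ with $\|u_n\|_{\mathcal H}=1$, $(u_n,\Phi_M)=0$, and $(Hu_n,u_n)\to 0$. By weak compactness in $\mathcal H$ and by local (Rellich) compactness for radial functions controlled in $\dot H^1 \cap L^2(dy/y^2)$, up to a subsequence $u_n \rightharpoonup u_\infty$ weakly in $\mathcal H$ and strongly in $L^2_{\mathrm{loc}}(y\,dy)$. The weak lower semicontinuity of $\|A\cdot\|_{L^2}^2$ and the hypothesis $(Hu_n,u_n)\to 0$ then force $Au_\infty = 0$, so by \fref{efinitei} we have $u_\infty = \alpha\,\Lambda Q$; note that $\Lambda Q \in \mathcal H$ thanks to the decay $\Lambda Q\sim 2/y$ at infinity and smoothness at $0$. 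Passing to the limit in $(u_n,\Phi_M)=0$ yields $\alpha(\Lambda Q,\Phi_M)=0$, and the nondegeneracy \fref{nondgeener} gives $\alpha=0$, i.e.\ $u_\infty \equiv 0$. Combined with Step 1 and the strong local convergence $u_n \to 0$ in $L^2(\chi_{2M}\,y\,dy/(1+y^2))$, this forces $\|u_n\|_{\mathcal H} \to 0$, a contradiction with $\|u_n\|_{\mathcal H}=1$.

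The main obstacle to watch for is the local compactness statement in the borderline dimension $N=2$: the pure Dirichlet norm does not control $L^2_{\mathrm{loc}}$ by the failure of the radial Hardy inequality, which is precisely why the ambient norm in $\mathcal H$ includes the $\int|u|^2/y^2$ term. Together with the $\dot H^1$ bound this supplies Rellich compactness on any bounded annulus bounded away from $0$ and $\infty$, and the decay of $u_n/y$ at infinity, together with control near $0$ provided by the cancellation structure of $A$, allows one to handle the endpoints. Once this compactness is settled, the rest of the argument is routine.
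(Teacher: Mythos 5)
Your proof is correct, and it is essentially the argument the paper has in mind: Lemma \ref{coerch} is stated there as a standard consequence of the kernel of $H$ and the nondegeneracy \fref{nondgeener}, and your subcoercivity-plus-compactness scheme is exactly the one the paper carries out in detail for the analogous weighted statements (Lemmas \ref{ceorcetwobis} and \ref{boundweight}). Your handling of the endpoint $y\to 0$ via the $\int |u|^2/y^2$ term of the $\mathcal H$-norm, and of the limit through $(Hu,u)=\|Au\|_{L^2}^2$, $\ker A=\mbox{Span}(\Lambda Q)$ and $(\Lambda Q,\Phi_M)\neq 0$, matches the paper's treatment.
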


We now recall the coercivity of $\tilde{H}$ which is a simple consequence of \fref{efinitei} and is proved in \cite{RaphRod}.

\begin{lemma}[Coericivity of $\tilde{H}$]
\label{coerchtilde}
Let $u$ with 
\be
\label{assmuttpo}
\int |\pa_yu|^2+\int\frac{|u|^2}{y^2}<+\infty,
\ee then 
\be
\label{coercwthree}
(\tilde{H}u,u)=\|A^*u\|_{L^2}^2\geq c_0\left[\int|\pa_yu|^2+\int\frac{|u|^2}{y^2(1+|\log y|^2)}\right]
\ee
for some universal constant $c_0>0$.
\end{lemma}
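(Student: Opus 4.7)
\medskip
\noindent\emph{Proof strategy.} The identity $(\tilde H u,u) = \|A^*u\|_{L^2}^2$ follows directly from the factorization $\tilde H = AA^*$ (see \fref{computationhtilde}) after one integration by parts. The only non-trivial boundary terms sit at $y=0$ and $y=+\infty$; they vanish because $A^*u = \pa_y u + (1+Z)u/y$ with $|1+Z|\lesssim 1$ by \fref{comportementz}, so under the assumption \fref{assmuttpo} all boundary contributions are controlled by the $\dot H^1_{\mathrm{rad}}$ norm of $u$ and decay to zero. It therefore suffices to prove the weighted bound
\[
\|A^*u\|_{L^2}^2 \gtrsim \int|\pa_y u|^2 + \int \frac{|u|^2}{y^2(1+|\log y|^2)}.
\]

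The strategy is to exploit the explicit kernel of $A^*$ from \fref{efinitei}, namely $A^*\psi = 0$ with $\psi(y) := 1/(y\Lambda Q(y))$. By the conjugation formula \fref{otherformula}, the substitution $u = \psi v$, i.e.\ $v := y\,u\,\Lambda Q$, turns $A^*$ into a plain derivative:
\[
A^*u = \frac{v'}{y\Lambda Q}, \qquad \|A^*u\|_{L^2}^2 = \int_0^{+\infty}\frac{(v')^2}{y(\Lambda Q)^2}\,dy.
\]
Using the asymptotics $\Lambda Q(y) \sim c_0\,y$ as $y\to 0$ and $\Lambda Q(y)\sim 2/y$ as $y\to+\infty$ (from \fref{origin}, \fref{infinity}), the right-hand side behaves like $\int_0^1 (v')^2 y^{-3}\,dy + \int_1^{+\infty} y(v')^2\,dy$. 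Writing the target integrals in the same $v$ variable reduces the problem to the one-dimensional weighted Hardy inequality
\[
\int_0^1 \frac{v^2}{y^5}\,dy + \int_1^{+\infty}\frac{v^2}{y(1+|\log y|^2)}\,dy \;\lesssim\; \int_0^1 \frac{(v')^2}{y^3}\,dy + \int_1^{+\infty} y(v')^2\,dy,
\]
after which the control of $\int |\pa_y u|^2$ is recovered from $\pa_y u = \psi' v + \psi v'$ combined with the matching asymptotics of $\psi,\psi'$.

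The main obstacle is the logarithmic weight at infinity. Near the origin the required inequality is standard Hardy, using that $v(0)=0$ because $u/y\in L^2$ forces $v$ to vanish to high order at $0$. At infinity, however, $\psi\sim y/2$ does not belong to $L^2_{\mathrm{rad}}(y\,dy)$ but only grows linearly, so the naive unweighted estimate $\int v^2 \, dy/y \lesssim \int y(v')^2\,dy$ fails for $v\equiv$ const: precisely the obstruction arising from the fact that $\psi\in \ker A^*$ is not in $\dot H^1_{\mathrm{rad}}$. The weight $(1+|\log y|^2)^{-1}$ compensates exactly this failure: writing $v(y)=v(1)+\int_1^y v'(\tau)\,d\tau$, Cauchy--Schwarz gives $|v(y)-v(1)|^2\le (\log y)\int_1^{+\infty}\tau(v')^2\,d\tau$, and combining with the finiteness of $\int|u|^2/y^2$ (which pins down the constant part of $v$) yields the logarithmically-weighted $L^2$ bound via an elementary weighted Hardy argument. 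This is the version of the classical $\log$-Hardy inequality carried out in \cite{RaphRod}, and concludes the proof.
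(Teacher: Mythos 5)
Your overall strategy -- conjugating $A^*$ by its explicit kernel element $\psi=\frac{1}{y\Lambda Q}$ through \fref{otherformula}, setting $v=yu\Lambda Q$ so that $\|A^*u\|_{L^2}^2=\int_0^{+\infty}\frac{(v')^2}{y(\Lambda Q)^2}dy$, and reducing to one-dimensional weighted Hardy inequalities -- is sound, and it is essentially the argument of \cite{RaphRod}, which is where the paper itself sends the reader instead of giving a proof. Your displayed reduction is correct, and the near-origin inequality $\int_0^1 y^{-5}v^2\lesssim\int_0^1 y^{-3}(v')^2$ is a standard weighted Hardy estimate (the boundary term at $y=1$ even comes with a favorable sign). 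One slip in the narrative: $\psi\to \frac12$ as $y\to+\infty$ since $\Lambda Q\sim\frac 2y$, so $\psi$ is bounded, not growing like $y/2$; the linearly growing solution is $\Gamma\in{\rm ker}\,H$, not ${\rm ker}\,A^*$. The obstruction at infinity is therefore $u\sim{\rm const}$, for which $\int\frac{u^2}{y^2}y\,dy$ diverges logarithmically while $A^*u$ vanishes, and this is exactly what the weight $(1+|\log y|^2)^{-1}$ cures; your displayed weights are nevertheless the correct ones, and the recovery of $\int|\pa_yu|^2$ from $\pa_yu=\psi'v+\psi v'$ does work because $\psi'=O(y^{-3})$ at infinity.

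The genuine gap is in the step you yourself flag as the main obstacle. The mechanism you propose at infinity -- Cauchy--Schwarz giving $|v(y)-v(1)|^2\le(\log y)\int_1^{+\infty}\tau(v')^2d\tau$, then integration against the weight -- does not close: $\int_1^{+\infty}\frac{\log y}{y(1+|\log y|^2)}dy$ diverges, so this pointwise bound loses exactly one power of the logarithm and yields nothing. The log-Hardy inequality is not a consequence of the crude pointwise estimate; it requires the cancellation structure of Hardy's inequality. Concretely, set $s=\log y$, $w(s)=v(e^s)$: the needed bound becomes $\int_0^{+\infty}\frac{w^2}{1+s^2}ds\lesssim w(0)^2+\int_0^{+\infty}(w')^2ds$, which follows from the sharp one-dimensional Hardy inequality $\int_0^{+\infty}\frac{(w-w(0))^2}{s^2}ds\le 4\int_0^{+\infty}(w')^2ds$ (proved by integration by parts), since $\frac{1}{1+s^2}\le\frac{1}{s^2}$; equivalently, invoke the two-dimensional logarithmic Hardy inequality \fref{harfylog'} recalled in the appendix. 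Relatedly, ``the finiteness of $\int|u|^2/y^2$ pins down the constant part of $v$'' is only a qualitative statement and cannot enter a quantitative coercivity bound; the correct control is $|v(1)|^2=\left|\int_0^1v'dy\right|^2\le\int_0^1\frac{(v')^2}{y^3}dy\int_0^1y^3dy\lesssim\|A^*u\\|_{L^2}^2$, using $v\to0$ at the origin, i.e. it comes from your near-origin piece. With these two repairs your scheme does prove the lemma.
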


We now claim the following weighted coercivity bound on $H$.

\begin{lemma}[Coercivity of $\mathcal E_2$]
\label{ceorcetwobis}
There exists $C(M)>0$ such that for all radially symmetric $u$ with 
 \be
 \label{estinitialbisbisbis}
 \int\frac{|u|^2}{y^4(1+|\log y|^2)}+\int\frac{|Au|^2}{y^2(1+y^2)}<+\infty
 \ee 
 and $$(u,\Phi_M)=0,$$ there holds: 
 \be
 \label{coerciveboundbis}
  \int|Hu|^2 \geq  C(M)\left[\int\frac{|u|^2}{y^4(1+|\log y|^2)}+\int\frac{|Au|^2}{y^2(1+|\log y|^2)}\right].
  \ee
 \end{lemma}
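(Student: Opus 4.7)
The plan is to reduce the weighted $L^2$ coercivity of $\int|Hu|^2$ to two separate estimates: one on $Au$ obtained directly from Lemma \ref{coerchtilde}, and one on $u$ obtained by inverting the first order operator $A$ together with the orthogonality condition $(u,\Phi_M)=0$.

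\emph{Step 1: Control of $Au$.} Set $v=Au$. The finiteness assumption \fref{estinitialbisbisbis} together with the factorization $H=A^*A$ allows us to run the energy identity $\|Hu\|_{L^2}^2=\|A^*v\|_{L^2}^2=(\tilde Hv,v)$ after an integration by parts which I will first justify by approximating $u$ by smooth truncations. Lemma \ref{coerchtilde} applied to $v$ then yields
\[
\int |Hu|^2 \;=\;\|A^*(Au)\|_{L^2}^2 \;\geq\; c_0\Bigl[\int |\pa_y(Au)|^2+\int\frac{|Au|^2}{y^2(1+|\log y|^2)}\Bigr],
\]
which is the second half of \fref{coerciveboundbis}; note that no orthogonality is needed for this piece since $\tilde H$ is positive thanks to \fref{efinitei}.

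\emph{Step 2: From $Au$ to $u$ via the orthogonality.} Using \fref{otherformula}, I will write $u=\alpha\Lambda Q-\Lambda Q\int_{y_\star}^{y}\frac{Au}{\Lambda Q}\,d\tau$ for a base point $y_\star\sim 1$ chosen so that the $\Gamma$-component vanishes, and then fix $\alpha$ by imposing $(u,\Phi_M)=0$, which is allowed thanks to the non-degeneracy \fref{nondgeener}. The orthogonality gives $|\alpha|\lesssim_M \bigl|\int u\,\Phi_M\bigr|/\log M$, and a Cauchy--Schwarz on a compact dyadic annulus bounds this by the weighted $L^2$ norm of $u$ on that annulus, which in turn is absorbed by Lemma \ref{coerch} applied on a bounded region after cutting off. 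A one-dimensional Hardy inequality with logarithmic weight, applied separately on $y\leq 1$ and $y\geq 1$ using the asymptotics $\Lambda Q\sim 2y$ at $0$ and $\Lambda Q\sim 2/y$ at $\infty$, then converts the control of $Au/(y\sqrt{1+|\log y|^2})$ into the desired control of $u/(y^2\sqrt{1+|\log y|^2})$.

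\emph{Step 3: Closing the argument by a compactness/contradiction scheme.} The most efficient way to combine Step 1 and Step 2 without tracking the precise Hardy constants is a standard contradiction argument: assume a sequence $u_n$ with $(u_n,\Phi_M)=0$, LHS of \fref{coerciveboundbis} equal to $1$, and $\int|Hu_n|^2\to 0$. Step 1 forces $Au_n\to 0$ in the weighted space, so by weak compactness and Rellich on any dyadic annulus $\{1/R\leq y\leq R\}$, a subsequence converges weakly to some $u_\infty$ with $Au_\infty=0$, hence $u_\infty=c\Lambda Q$. The orthogonality passes to the limit (the local compactness is enough since $\Phi_M$ has compact support by \fref{defdirection}), and \fref{nondgeener} yields $c=0$.

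\emph{Main obstacle.} The delicate point will be ruling out mass concentration at $y=0$ and $y=+\infty$ in order to upgrade the weak convergence $u_n\rightharpoonup 0$ to strong convergence in the weighted norm controlling the LHS. Near $y=0$ the weight $y^{-4}(1+|\log y|^2)^{-1}$ combined with the measure $y\,dy$ is borderline, and one uses the $Au$-coercivity of Step 1 together with the representation $u=\alpha\Lambda Q-\Lambda Q\int_0^y(Au/\Lambda Q)$ to show that any mass concentrating at $0$ would already be trapped by $\int|Au|^2/(y^2(1+|\log y|^2))\to 0$. Near $y=+\infty$, the Hardy inequality with logarithmic weight gives a boundary term at $y=R$ which is handled by the strong $L^2$ convergence on $\{y\le R\}$ and then letting $R\to\infty$. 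These two tail estimates are the only genuinely new computations; the rest of the argument is a direct application of Lemmas \ref{coerch} and \ref{coerchtilde}.
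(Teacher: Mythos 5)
Your overall scheme coincides with the paper's: apply the coercivity of $\tilde{H}$ to $Au$, pass from $Au$ to $u$ by a Hardy-type argument up to localized lower-order terms, and close by a compactness/contradiction argument in which $(u,\Phi_M)=0$ together with the nondegeneracy \fref{nondgeener} eliminates the kernel element $\Lambda Q$ (your use of $Au_\infty=0$ instead of $Hu_\infty=0$ is a harmless variant, and your treatment at infinity amounts to the weighted logarithmic Hardy inequality \fref{harfylog'} with $\gamma=4$, which is equivalent to the paper's integration by parts). There is, however, a genuine gap in your treatment near the origin. Step 1, i.e. Lemma \ref{coerchtilde} applied to $Au$, only retains $\int|\pa_y(Au)|^2+\int\frac{|Au|^2}{y^2(1+|\log y|^2)}$, and this information does \emph{not} control $\int_{y\le 1}\frac{|u|^2}{y^4(1+|\log y|^2)}$. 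Concretely, take $u=\Lambda Q\,\log\frac1y$ on $0<y\le\frac12$, smoothly cut off beyond. Then $Au=-\Lambda Q\,\pa_y\bigl(\tfrac{u}{\Lambda Q}\bigr)=\frac{\Lambda Q}{y}=Q'(y)$ is smooth and bounded near $y=0$, so both quantities from Step 1 are finite there, while $\int_{y\le \frac12}\frac{|u|^2}{y^4(1+|\log y|^2)}\,y\,dy\sim\int_0^{\frac12}\frac{|\log y|^2}{1+|\log y|^2}\frac{dy}{y}=+\infty$. The same example (with $v=u/\Lambda Q=|\log y|$) shows that the ``one-dimensional Hardy inequality with logarithmic weight'' you invoke on $y\le 1$, converting $Au/(y\sqrt{1+|\log y|^2})$ into $u/(y^2\sqrt{1+|\log y|^2})$, is false, and that the concentration-at-the-origin step of your compactness argument, which you channel entirely through $\int\frac{|Au_n|^2}{y^2(1+|\log y|^2)}\to0$, cannot close as stated.

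The lemma itself is of course safe, because for this $u$ one has $Hu=A^*(Au)\sim \frac{2Q'(0)}{y}$ near the origin, so $\int|Hu|^2=+\infty$: the point is that near $y=0$ you must use the full strength of $\int|Hu|^2$ and not only the two corollaries extracted in Step 1. This is exactly what the paper's estimate \fref{firtgiefei} encodes: since $1+Z\to 2$, i.e. $\tilde V=(1+Z)^2-\Lambda Z\to 4>0$ at the origin, expanding $\int|Hu|^2=\int|A^*(Au)|^2$ yields the log-free bound $\int_{y\lesssim 1}\frac{|Au|^2}{y^2}\lesssim \int|Hu|^2+\hbox{(localized terms)}$, and only with this stronger weight does the logarithmic Hardy inequality applied to $u/\Lambda Q$, via $Au=-\Lambda Q\,\pa_y(u/\Lambda Q)$, produce $\int_{y\le1}\frac{|u|^2}{y^4(1+|\log y|^2)}$ up to compactly localized errors that your compactness step can then absorb. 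Two minor further remarks: in Step 3 it is the right-hand side of \fref{coerciveboundbis}, not the left-hand side, that should be normalized to $1$; and the quantitative determination of $\alpha$ from $(u,\Phi_M)=0$ in Step 2 is redundant once you run the contradiction argument, where the orthogonality is only needed to conclude $c=0$ in the limit.
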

 
 \begin{proof}[Proof of Lemma \ref{ceorcetwobis}] This lemma is proved in \cite{RaphRod} in the case of the sphere target. Let us briefly recall the argument for the sake of completeness.\\
 
 {\bf step 1} Conclusion using a subcoercivity lower bound. We claim the subcoercivity lower bound for any $u$ satisfying \fref{estinitialbisbis}:
 \bea
 \label{subceorovitybis}
\nonumber &&\int|H u|^2 \gtrsim  \int \frac{|\pa^2_{y}u|^2}{(1+|\log y|^2)}+ \int \frac{(\pa_y u)^2}{y^2(1+|\log y|^2)}+\int \frac{|u|^2}{y^4(1+|\log y|^2)}\\
 & - & C\left[ \int \frac{(\pa_y u)^2}{1+y^{3}}+ \int\frac{|u|^2}{1+y^{5}} \right].
\eea
Let us assume \fref{subceorovitybis} and conclude the proof of \fref{coerciveboundbis}. By contradiction, let $M>0$ fixed and consider a normalized sequence $u_n$  
\be
\label{normlaizationbis}
\int\frac{|u_n|^2}{y^4(1+|\log y|^2)}+\int\frac{|Au_n|^2}{y^2(1+|\log y|^2)}=1,
\ee
satisfying the orthogonality condition
\be\label{eq:orthogbis} 
(u_n,\Phi_M)=0
\ee and the smallness:
\be
\label{seeumporgpobis}
\int |Hu_n|^2\leq \frac1n.
  \ee
Note that the normalization condition implies 
$$\int\frac{|u_n|^2}{y^4(1+|\log y|^2)}+\int\frac{|\pa_yu_n|^2}{y^2(1+|\log y|^2)}\lesssim1$$ and thus from \fref{subceorovitybis}, the sequence $u_n$ is bounded in $H^2_{loc}$. Hence there exists $u_{\infty}\in H^2_{loc}$ such that up to a subsequence and for any smooth cut-off function $\zeta$ vanishing in a neighborhood of  $y=0$, the sequence $\zeta u_n$ is uniformly bounded in $H^2_{loc}$ and converges to $\zeta u_\infty$ in $H^1_{loc}$. Moreover, \fref{seeumporgpobis} implies $$Hu_{\infty}=0$$ and by lower semi continuity of the norm and \fref{normlaizationbis}: $$\int\frac{|u_\infty|^2}{y^4(1+|\log y|^2)}<+\infty$$ which implies from \fref{kernelh}: $$u=\alpha\Lambda Q\ \ \mbox{for some}\ \ \alpha\in \Bbb R.$$ We may moreover pass to the limit in  \fref{eq:orthogbis} from \fref{normlaizationbis} and the local compactness of Sobolev embeddings and thus $$(u_\infty,\Lambda Q)=0\ \ \mbox{from which} \ \ \alpha=0$$ where we used the nondegeneracy \fref{nondgeener}. Hence $u_\infty=0$. Now the subcoercivity lower bound \fref{subceorovitybis} together with \fref{normlaizationbis}, \fref{seeumporgpobis} and the $H^2_{loc}$ uniform bound imply the existence of $\e,c>0$ such that: $$\int_{\e\leq y\leq \frac1\e}\left[ \frac{|\pa_y u_\infty|^2}{1+y^3}+ \frac{|u_\infty|^2}{1+y^{5}}\right]\geq c>0$$ which contradicts the established identity $u_\infty=0$, and \fref{coerciveboundbis} is proved.\\ 

{\bf step 2} Proof of \fref{subceorovitybis}. Let us first apply Lemma \ref{coerchtilde} to $Au$ which satisfies \fref{assmuttpo} by assumption and estimate: 
\be
\label{firtgiefei}
\int (Hu)^2=(\tilde{H}Au,Au)\gtrsim \int|\pa_y(Au)|^2+\int \frac{|Au|^2}{y^2(1+y^2)}.
\ee
Near the origin, we now recall the logarithmic Hardy inequality: $$\int_{y\leq 1}\frac{|v|^2}{y^2(1+|\log y|^2)}\lesssim \int_{1\leq y\leq 2}|v|^2+\int_{y\leq 1}|\pa_y v|^2$$ and thus  using \fref{otherformula}:
$$\int\frac{|Au|^2}{y^2} =  \int\frac{1}{y^2}\left|\Lambda Q\pa_y\left(\frac{u}{\Lambda Q}\right)\right|^2\gtrsim\int_{y\leq 1}\left|\frac{u}{\Lambda Q}\right|^2\frac{dy}{y^2(1+|\log y|^2)}-\int_{y\leq 1}(|\pa_yu|^2+|u|^2)$$ which together with \fref{firtgiefei} yields 
$$\int|H u|^2 \gtrsim  \int_{y\leq 1}\left[\frac{(\pa_y u)^2}{y^2(1+|\log y|^2)}+\int \frac{|u|^2}{y^4(1+|\log y|^2)}\right]-C\int_{y\leq 1}(|\pa_yu|^2+|u|^2).$$
To control the second derivative, we rewrite near the origin: $$Hu=-\pa^2_yu+\frac{1}{y}\left(-\pa_y u+\frac{u}{y}\right)+\frac{V-1}{y^2}u=-\pa_y^2u+\frac{Au}{y}+\frac{(V-1)+(1-Z)}{y^2}u$$ and \fref{subceorovitybis} follows near the origin.\\
Away from the origin, let $\zeta(y)$ be a smooth cut-off function with support in $y\ge \frac12$ and equal to 1 for $y\ge 1$. We use the logarithmic Hardy inequality $$\int_{y\geq 1}\frac{|u|^2}{y^2(1+|\log y|^2)}\lesssim \int_{1\leq y\leq 2}|u|^2+\int|\pa_y u|^2$$ to conclude from \fref{firtgiefei}:
$$\int(Hu)^2\gtrsim \int\zeta\frac{|Au|^2}{y^2(1+|\log y|^2)}-C\int_{1\leq y\leq 2}(|u|^2+|\pa_yu|^2).$$ We now estimate from \fref{comportementz} :
\bee
\int\zeta\frac{|Au|^2}{y^2(1+|\log y|^2)}& = & \int\zeta\frac{|-\pa_yu-\frac{u}{y}|^2}{y^2(1+|\log y|^2)}-C\int\zeta\frac{|u|^2}{y^6(1+|\log y|^2)}\\
& \gtrsim & \int\frac{\zeta}{y^2(1+|\log y|^2)}\left[|\pa_yu|^2+\frac{|u|^2}{y^2}\right]-C\int\left[\frac{|u|^2}{y^5}+\frac{|\pa_yu|^2}{y^3}\right]
\eee
where we integrated by parts in the last step. The control of the second derivative follows from the explicit expression of $H$. This concludes the proof of \fref{subceorovitybis}.
\end{proof}

We now aim at generalizing the coercivity of the $\mathcal E_2$ energy of Lemma \ref{ceorcetwobis} to higher order energies. This first requires a generalization of the weighted estimate \fref{coerciveboundbis}.
 
 \begin{lemma}[Weighted coercitivity bound]
 \label{boundweight}
 Let $L\geq 1$, $0\leq k\leq L$ and $M\geq M(L)$ large enough. Then there exists $C(M)>0$ such that for all radially symmetric $u$ with 
 \be
 \label{estinitialbisbis}
 \int\frac{|u|^2}{y^4(1+|\log y|^2)(1+y^{4k+4})}+\int\frac{|Au|^2}{y^6(1+|\log y|^2)(1+y^{4k+4})}<+\infty
 \ee 
 and $$(u,\Phi_M)=0,$$ there holds: 
 \bea
 \label{coercivebound}
&& \int\frac{|Hu|^2}{y^4(1+|\log y|^2)(1+y^{4k})}\\
\nonumber & \geq & C(M)\left\{\int\frac{|u|^2}{y^4(1+|\log y|^2)(1+y^{4k+4})}+\int\frac{|Au|^2}{y^6(1+|\log y|^2)(1+y^{4k})}\right\}
  \eea
 \end{lemma}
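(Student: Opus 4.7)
My plan is to extend the two-step strategy used in the proof of Lemma \ref{ceorcetwobis} (the case $k=0$) to the weighted regime $k\geq 1$. Step one is to establish a subcoercivity inequality of the form
$$\int\frac{|Hu|^2}{y^4(1+|\log y|^2)(1+y^{4k})}\gtrsim \mbox{RHS of \fref{coercivebound}}-C\int_{\frac12\leq y\leq 2}\left(|u|^2+|\pa_y u|^2\right),$$
and step two is a compactness--contradiction argument that uses the kernel description \fref{kernelh} and the orthogonality $(u,\Phi_M)=0$ to force the limit to vanish, contradicting the subcoercivity local defect.

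For the subcoercivity, I set $v=Au$ so that $Hu=A^\ast v$ with $A^\ast=\pa_y+\frac{1+Z}{y}$. In the region $y\leq 1$, the expansion $1+Z=2+O(y^2)$ from \fref{comportementz} renders the $1/(1+y^{4k})$ weight inert, so the same weighted logarithmic Hardy inequality exploited in the proof of Lemma \ref{ceorcetwobis} yields
$$\int_{y\leq 1}\frac{|Hu|^2}{y^4(1+|\log y|^2)}\gtrsim \int_{y\leq 1}\frac{|v|^2}{y^6(1+|\log y|^2)}-C\int_{\frac12\leq y\leq 1}|v|^2,$$
and iterating with the identity $Au=-\Lambda Q\,\pa_y(u/\Lambda Q)$ from \fref{otherformula} produces the matching control of $|u|^2$. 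In the region $y\geq 1$, the asymptotics $1+Z=O(1/y^2)$ give $A^\ast v=\pa_y v+O(1/y^3)v$, and integration by parts against the weight $\chi(y)/[y^{4k+4}(1+|\log y|^2)]$, with $\chi$ a smooth cutoff supported in $\{y\geq 1\}$, combined with standard logarithmic Hardy yields
$$\int_{y\geq 1}\frac{|A^\ast v|^2}{y^{4k+4}(1+|\log y|^2)}\gtrsim \int_{y\geq 1}\frac{|v|^2}{y^{4k+6}(1+|\log y|^2)}-C\int_{1\leq y\leq 2}|v|^2,$$
and a second iteration on $u$ produces the $|u|^2/y^{4k+8}$ control. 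The two pieces glue across $y\sim 1$ to give the desired subcoercivity.

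For step two, I argue by contradiction: if \fref{coercivebound} fails, there is a sequence $(u_n)$ with $(u_n,\Phi_M)=0$, RHS of \fref{coercivebound} normalized to $1$, and LHS tending to $0$. The subcoercivity together with local elliptic regularity give uniform $H^2_{\mathrm{loc}}$ control, so up to a subsequence $u_n\to u_\infty$ strongly in $H^1_{\mathrm{loc}}$ with $Hu_\infty=0$ in $\mathcal D'$ and with $u_\infty$ inheriting the weighted integrability by lower semicontinuity. The formula \fref{kernelh} together with the growth $\G(y)\sim y/4$ from \fref{Gamma} (which fails the weighted integrability at infinity) and the regularity at the origin force $u_\infty=\alpha\Lambda Q$; passing the orthogonality to the limit and invoking the nondegeneracy \fref{nondgeener} forces $\alpha=0$, hence $u_\infty\equiv 0$. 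This contradicts the normalization, which via the subcoercivity requires a positive $L^2$ mass on $\{1/2\leq y\leq 2\}$ that the strong convergence $u_n\to 0$ rules out.

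The main obstacle I expect is the subcoercivity step: the RHS weights $1/y^{4k+8}$ on $|u|^2$ and $1/y^{4k+6}$ on $|Au|^2$ are simultaneously degenerate at $0$ and at $\infty$, and one must thread the logarithmic weight consistently through two iterations of Hardy (once for $v=Au$, once for $u$ via \fref{otherformula}) in each of the two regions. The asymptotics of $Z$ from \fref{comportementz} and the factorization $H=A^\ast A$ are exactly what enable the four-power gain in $y$ needed in each regime.
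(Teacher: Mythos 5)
Your two-step skeleton (a weighted subcoercivity estimate with a local defect, then a compactness--contradiction argument using $(u,\Phi_M)=0$, the kernel description \fref{kernelh} and the nondegeneracy \fref{nondgeener}) is exactly the paper's, and your compactness step matches the paper's almost verbatim. Where you genuinely diverge is in how the subcoercivity is produced. The paper does not factorize at infinity: it expands $\int\zeta\,|Hu|^2y^{-(4k+4)}(1+|\log y|)^{-2}$ directly, integrates by parts, and applies the sharp-constant logarithmic Hardy inequality twice, so that the argument hinges on the numerical comparison $\tfrac{(4k+6)^4}{16}>(4k+6)^2$ needed to beat the unfavorable cross term produced by the potential $V\to 1$; near the origin it instead uses the explicit representation $Au(y)=\frac{1}{y\Lambda Q}\int_0^y\tau\Lambda Q\,Hu\,d\tau$ with Cauchy--Schwarz and Fubini, then inverts $A$ via \fref{otherformula} keeping track of the free component $c(u)\Lambda Q$, which is precisely what forces the local defect $\int_{1\le y\le 2}|u|^2$. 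Your route---writing $Hu=A^*Au$ everywhere and running two first-order weighted Hardy inequalities---is viable and has the advantage of bypassing the sharp-constant bookkeeping at infinity, since the favorable sign structure is already encoded in the factorization; the price is that you must handle the kernels of the first-order operators: near the origin $A^*$ has kernel $\sim y^{-2}$, so your Hardy step there is legitimate only because the a priori finiteness \fref{estinitialbisbis} makes the boundary term at $y=0$ vanish along a sequence (this hypothesis does real work and should be invoked explicitly), and when you invert $A$ you recover $u$ only up to a multiple of $\Lambda Q$, which is admissible for every weight in play, so this component must be absorbed into your local defect exactly as the paper does through $c(u)$.

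One genuine, though non-fatal, slip: in the limit identification you exclude the $\Gamma$ branch of $\ker H$ on the grounds that ``$\Gamma(y)\sim y/4$ fails the weighted integrability at infinity''; it does not, since $\int^{+\infty}\frac{|\Gamma|^2}{y^4(1+|\log y|^2)(1+y^{4k+4})}\,y\,dy$ converges for every $k\ge 0$. What excludes $\Gamma$ is its behavior at the origin, $\Gamma\sim c/y$ with $c\neq 0$ from \fref{Gamma}, which makes the weighted integral divergent near $y=0$. The conclusion $u_\infty\in\mbox{Span}(\Lambda Q)$ therefore stands, but for the opposite end of the half-line, and you should also check that your annulus defect $\int_{1/2\le y\le 2}(|u|^2+|\pa_yu|^2)$ passes to the limit, which it does since elliptic regularity away from the origin gives uniform $H^2$ bounds there and hence strong $H^1$ convergence on the annulus.
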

 
 \begin{proof}[Proof of Lemma \ref{boundweight}]
 
 {\bf step 1} Subcoercivity lower bound. We claim the subcoercivity lower bound for any $u$ satisfying \fref{estinitialbisbis}:
 \bea
 \label{subceorovity}
 &&\int \frac{|H u|^2}{y^4(1+|\log y|)^2(1+y^{4k})}\\
\nonumber &  \gtrsim & \int \frac{|\pa^2_{y}u|^2}{y^4(1+|\log y|^2)(1+y^{4k})}+ \int \frac{(\pa_y u)^2}{y^2(1+|\log y|)^2(1+y^{4k+4})}\\
\nonumber & + & \int \frac{|u|^2}{y^4(1+|\log y|^2)(1+y^{4k+4})} -  C\left[ \int \frac{(\pa_y u)^2}{1+y^{4k+8}}+ \int\frac{|u|^2}{1+y^{4k+10}} \right].
\eea
\underline{{\it Control near the origin}}. Recall from the finitness of the norm \fref{estinitialbisbis} the formula \fref{formulaau}:
 $$Au(y)=\frac{1}{y\Lambda Q(y)}\int_0^y \tau\Lambda Q(\tau)Hu(\tau)d\tau.$$
 We then estimate from Cauchy Schwarz and Fubbini:
\bee
&&\int_{y\le 1}\frac{|Au|^2}{y^5(1+|\log y|^2)}dy  \lesssim \int_{0\le y\leq 1}\int_{0\le\tau\le y}\frac{y^5}{y^9(1+|\log y|^2)}|Hu(\tau)|^2dyd\tau\\
& \lesssim & \int_{0\le \tau\le 1}|Hu(\tau)|^2\left[\int_{\tau\leq y\leq 1}\frac{dy}{y^4(1+|\log y|^2)} \right]d\tau\lesssim  \int_{\tau\le 1}\frac{|Hu(\tau)|^2}{\tau^3(1+|\log \tau|^2)} d\tau
\eee
and thus: 
\be
\label{cnoceooehoe}
\int_{y\le 1}\frac{|Au|^2}{y^6(1+|\log y|^2)}\lesssim  \int_{y\le 1}\frac{|Hu|^2}{y^4(1+|\log y|^2)}.
\ee
We now invert $A$ and get from \fref{otherformula} the existence of $c(u)$ such that $$u(y)=c(u)\Lambda Q(y)-\Lambda Q(y)\int_0^y\frac{Au(\tau)}{\Lambda Q(\tau)}d\tau.$$ We estimate from Cauchy Schwarz and \fref{cnoceooehoe} for $1\leq y\leq 1$:
\bee
\left|\int_0^y\frac{Au(\tau)}{\Lambda Q(\tau)}d\tau\right|^2 & \lesssim & y^4(1+|\log y|^2)\int_{0}^y\frac{|Au|^2}{\tau^5(1+|\log \tau|^2)}d\tau\\
& \lesssim & y^3 \int_{y\le 1}\frac{|Hu|^2}{y^4(1+|\log y|^2)}
\eee
 from which $$|c(u)|^2\lesssim \int_{y\leq 1}|u|^2+\int_{y\le 1}\frac{|Hu|^2}{y^4(1+|\log y|^2)}$$ and 
 \be
 \label{contorou}
 \int_{y\le 1}\frac{|u|^2}{y^4(1+|\log y|^2)}\lesssim  \int_{y\le 1}\frac{|Hu|^2}{y^4(1+|\log y|^2)}+\int_{1\leq y\leq 2}|u|^2.
 \ee
The control of one derivative follows from \fref{cnoceooehoe}, \fref{contorou} and the definition of $A$:
\bee
\int_{y\le 1}\frac{|\pa_yu|^2}{y^2(1+|\log y|^2)} & \lesssim & \int_{y\le 1}\frac{|Au|^2}{y^2(1+|\log y|^2)}+\int_{y\le 1}\frac{|u|^2}{y^4(1+|\log y|^2)}\\
& \lesssim & \int_{y\le 1}\frac{|Hu|^2}{y^4(1+|\log y|^2)}+\int_{1\leq y\leq 2}|u|^2.
\eee To control the second derivative, we rewrite near the origin: $$Hu=-\pa^2_yu+\frac{1}{y}\left(-\pa_y u+\frac{u}{y}\right)+\frac{V-1}{y^2}u=-\pa_y^2u+\frac{Au}{y}+\frac{(V-1)+(1-Z)}{y^2}u$$ which implies using \fref{cnoceooehoe}, \fref{contorou} and \fref{comportementz}, \fref{comportementv}:
$$\int_{y\leq 1}\frac{|\pa^2_yu|^2}{y^4(1+|\log y|^2)}\lesssim \int_{y\le 1}\frac{|Hu|^2}{y^4(1+|\log y|^2)}+\int_{1\leq y\leq 2}|u|^2.
$$
This concludes the proof of \fref{subceorovity} near the origin.\\
 \underline{{\it Control away from the origin}}. Let $\zeta(y)$ be a smooth cut-off function with support in $y\ge \frac12$ and equal to 1 for $y\ge 1$. We compute:
\bea
\label{inoeiheioyeo}
\nonumber &&\int \zeta \frac{|H u|^2}{y^{4k+4}(1+|\log y|)^2} = \int \zeta \frac{|- \pa_y (y\pa_y u) + \frac Vy u|^2}{y^{4k+6}(1+|\log y|)^2}\\ 
\nonumber &= & \int \zeta \frac{|\pa_y (y\pa_y u)|^2}{y^{4k+6}(1+|\log y|)^2} -2  \int \zeta \frac{\pa_y (y\pa_y u)\cdot V u}{y^{4k+7}(1+|\log y|)^2}+ \int \zeta \frac{V^2 |u|^2}{y^{4k+8}(1+|\log y|)^2}\\ 
\nonumber  &= & \int \zeta \frac{|\pa_y (y\pa_y u)|^2}{y^{4k+6}(1+|\log y|)^2} +
 2  \int \zeta \frac{ V (\pa_y u)^2}{y^{4k+6}(1+|\log y|)^2}+ \int  \zeta \frac{V^2 |u|^2}{y^{4k+8}(1+|\log y|)^2}\\ 
 &-& 
 \int |u|^2 \Delta\left( \frac{\zeta V }{y^{4k+6}(1+|\log y|)^2}\right) 
\eea
We now use the two dimensional logarithmic Hardy inequality with best constant\footnote{which can be obtained by a simple integration by parts, see \cite{MRR}}: $\forall \gamma>0$,
\bea
\label{harfylog'}
&&\frac{\gamma^2}{4} \int_{y\geq 1} \frac{|v|^2}{y^{2+\gamma}(1+|\log y|)^2}\\
\nonumber & \leq &  C_\gamma\int_{1\leq y\leq 2} |v|^2+
 \int_{y\geq 1} \frac{|\pa_y v|^2}{y^\gamma(1+|\log y|)^2},
\eea
with $\gamma=4k+6$ and estimate:
\bee
 \int \zeta \frac{|\pa_y (y\pa_y u)|^2}{y^{4k+6}(1+|\log y|)^2}& \geq & \frac{(4k+6)^2}{4}\int_{y\geq 1}\frac{|\pa_y u|^2}{y^{4k+6}(1+|\log y|)^2}-C_k\int_{1\leq y\leq 2}|\pa_yu|^2\\
 & \geq & \frac{(4k+6)^4}{16}\int_{y\geq 1}\frac{|u|^2}{y^{4k+8}(1+|\log y|)^2}-C_k\int_{1\leq y\leq 2}\left[|\pa_yu|^2+|u|^2\right].
 \eee
We now observe that for $k\ge 0$ and $y\ge 1$:
$$
\pa_y^k V(y)= \pa_y^k (1)+ O(y^{-2-k})
$$
and compute $$\Delta\left(\frac{1}{y^{4k+6}}\right)=\frac{(4k+6)^2}{y^{4k+8}}.$$ Injecting these bounds into \fref{inoeiheioyeo} yields the lower bound: 
\bee
\int \zeta \frac{|H u|^2}{y^{4k+4}(1+|\log y|)^2} & \geq & \left[\frac{(4k+6)^4}{16}-(4k+6)^2\right]\int_{y\geq 1}\frac{|u|^2}{y^{4k+8}(1+|\log y|)^2}\\
& - & C_k\int\left[\frac{|\pa_yu|^2}{1+y^{4k+8}}+\frac{|u|^2}{1+y^{4k+10}}\right].
\eee
Note that we can always keep the control of the first two derivatives in these estimates, and the control \fref{subceorovity} follows away from the origin.\\

{\bf step 2} Proof of \fref{coercivebound}. By contradiction, let $M>0$ fixed and consider a normalized sequence $u_n$  
\be
\label{normlaization}
\int\frac{|u_n|^2}{y^4(1+|\log y|^2)(1+y^{4k+4})}+\int\frac{|Au_n|^2}{y^6(1+|\log y|^2)(1+y^{4k})}=1,
\ee
satisfying the orthogonality condition
\be\label{eq:orthog} 
(u_n,\Phi_M)=0
\ee 
and the smallness:
\be
\label{seeumporgpo}
\int\frac{|Hu_n|^2}{y^4(1+|\log y|^2)(1+y^{4k})}\leq \frac1n.
  \ee
Note that the normalization condition implies 
\be
\label{nvbeoonvoe}
\int\frac{|u_n|^2}{y^4(1+|\log y|^2)(1+y^{4k+4})}+\int\frac{|\pa_yu_n|^2}{y^2(1+|\log y|^2)(1+y^{4k+4})}\lesssim1
\ee and thus from \fref{subceorovity}, the sequence $u_n$ is bounded in $H^2_{loc}$. Hence there exists $u_{\infty}\in H^2_{loc}$ such that up to a subsequence and for any smooth cut-off function $\zeta$ vanishing in a neighborhood of  $y=0$, the sequence $\zeta u_n$ is uniformly bounded in $H^2_{loc}$ and converges to $\zeta u_\infty$ in $H^1_{loc}$. Moreover, \fref{seeumporgpo} implies $$Hu_{\infty}=0$$ and by lower semi continuity of the norm and \fref{normlaization}: $$\int\frac{|u_\infty|^2}{y^4(1+|\log y|^2)(1+y^{4k+4})}<+\infty$$ which implies from \fref{kernelh}: $$u=\alpha\Lambda Q\ \ \mbox{for some}\ \ \alpha\in \Bbb R.$$ We may moreover pass to the limit in  \fref{eq:orthog} from \fref{normlaization} and the local compactness embedding and thus $$(u_\infty,\Lambda Q)=0\ \ \mbox{from which} \ \ \alpha=0$$ where we used the nondegeneracy \fref{nondgeener}. Hence $u_\infty=0$.\\
Now from \fref{cnoceooehoe},  \fref{contorou}, \fref{seeumporgpo} and \fref{normlaization}: $$\int_{y\geq 1}\frac{|u_n|^2}{y^4(1+|\log y|^2)(1+y^{4k+4})}+\int_{y\geq 1}\frac{|\pa_yu_n|^2}{y^6(1+|\log y|^2)(1+y^{4k})}\gtrsim 1$$ and hence from \fref{subceorovity}, \fref{seeumporgpo}: $$\frac{|\pa_yu_n|^2}{1+y^{4k+8}}+\frac{|u_n|^2}{1+y^{4k+10}}\gtrsim 1$$ which from the local compactness of Soboelv embeddings and the a priori bound \fref{nvbeoonvoe} ensures: $$\int\frac{|\pa_yu_{\infty}|^2}{1+y^{4k+8}}+\int\frac{|u_{\infty}|^2}{1+y^{4k+10}}\gtrsim 1$$ which contradicts the established identity $u_{\infty}=0.$
\end{proof}

We are now position to prove the coercivity of the higher order $(\mathcal E_{2k+2})_{0\leq k\leq L}$ energies under suitable orthogonality conditions.
Given a radially symmetric function $\e$, we recall the definition of suitable derivatives: $$\e_{-1}=0, \ \ \e_0=\e, \ \ \e_{k+1}=\left\{\begin{array}{ll} A^*\e_k\ \ \mbox{for k odd}\\ A \e_k\ \ \mbox{for k even}
\end{array}\right., \ \ 0\leq k \leq 2L+1.$$ 

\begin{lemma}[Coercivity of $\mathcal E_{2k+2}$]
\label{propcorc}
Let $L\geq 1$, $0\leq k\leq L$ and $M\geq M(L)$ large enough. Then there exists $C(M)>0$ such that for all $\e$ with:
\bea
\label{cnojeooejbis}
&& \int|\e_{2k+2}|^2+\int\frac{|\e_{2k+1}|^2}{y^2(1+y^2)}\\
\nonumber & + & \Sigma_{p=0}^{k}\int\left[\frac{|\e_{2p-1}|^2}{y^6(1+|\log y|^2)(1+y^{4(k-p)})}+\frac{|\e_{2p}|^2}{y^4(1+|\log y|^2)(1+y^{4(k-p)})}\right]<+\infty
\eea satisfying 
\be
\label{orthoappendix}
 (\e,H^{p}\Phi_M)=0, \ \ 0\leq p\leq k,
\ee
 there holds:
 \bea
 \label{coerciviityley}
 &&\mathcal E_{2k+2}(\e)=\int(H^{k+1}\e)^2 \geq  C(M)\left\{\int\frac{|\e_{2k+1}|^2}{y^2(1+|\log y|^2)}\right.\\
\nonumber  & + &\left. \Sigma_{p=0}^{k}\int\left[\frac{|\e_{2p-1}|^2}{y^6(1+|\log y|^2)(1+y^{4(k-p)})}+\frac{|\e_{2p}|^2}{y^4(1+|\log y|^2)(1+y^{4(k-p)})}\right]\right\}.
 \eea
 \end{lemma}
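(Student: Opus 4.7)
I would argue by induction on $k$, with Lemma \ref{ceorcetwobis} serving as the base case and Lemma \ref{boundweight} supplying exactly the missing ingredient at each inductive step. The base case $k=0$ is immediate: the convention $\e_{-1}=0$ eliminates the $p=0$ contribution to the first sum on the right-hand side of \fref{coerciviityley}, so the only nontrivial terms to control are $\int |\e|^2/[y^4(1+|\log y|^2)]$ and $\int |A\e|^2/[y^2(1+|\log y|^2)]$, and under the single orthogonality $(\e,\Phi_M)=0$ these are bounded from above by $\int|H\e|^2$ directly by Lemma \ref{ceorcetwobis}.

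For the inductive step $k \to k+1$, let $\e$ satisfy \fref{orthoappendix} up to index $k+1$ together with the regularity \fref{cnojeooejbis} at level $k+1$. The first move is to apply the induction hypothesis to $u := H\e$: the orthogonality transfers because $(H\e, H^p\Phi_M) = (\e, H^{p+1}\Phi_M) = 0$ for $0 \le p \le k$, and the identification $u_j = \e_{j+2}$ for $j \ge 0$ (together with $u_{-1}=0$ by convention) transfers the regularity assumption. Substituting $u_j = \e_{j+2}$ in the hypothesis and shifting the summation index $p \mapsto p+1$, the resulting bound supplies every term required by \fref{coerciviityley} at level $k+1$ except two: the $p=0$ term $\int |\e|^2/[y^4(1+|\log y|^2)(1+y^{4(k+1)})]$ of the $\e_{2p}$ sum, and the $p=1$ term $\int |A\e|^2/[y^6(1+|\log y|^2)(1+y^{4k})]$ of the $\e_{2p-1}$ sum (the $p=0$ entry of that first sum vanishes because $\e_{-1}=0$).

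These two deficits are recovered simultaneously by invoking Lemma \ref{boundweight} at parameter $k$, applied to $u = \e$ itself: the orthogonality $(\e,\Phi_M)=0$ is part of \fref{orthoappendix}, and the finiteness condition \fref{estinitialbisbis} at parameter $k$ follows from the $p=0$ and $p=1$ cases of \fref{cnojeooejbis} at level $k+1$. The lemma yields
$$\int \frac{|H\e|^2}{y^4(1+|\log y|^2)(1+y^{4k})} \ge C(M)\left[\int \frac{|\e|^2}{y^4(1+|\log y|^2)(1+y^{4(k+1)})} + \int \frac{|A\e|^2}{y^6(1+|\log y|^2)(1+y^{4k})}\right].$$
The left-hand side here is exactly the weighted $|u_0|^2 = |H\e|^2$ quantity provided by the $p=0$ term of the $u_{2p}$ sum in the induction hypothesis applied to $u = H\e$, so it is already dominated by $C(M)^{-1}\int|H^{k+2}\e|^2$. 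Chaining the two bounds and combining with what the induction hypothesis directly produces completes the inductive step.

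The real content of the argument lies in Lemma \ref{boundweight}; the induction itself is bookkeeping, and the one point worth emphasizing is that the polynomial-weight gain from $(1+y^{4k})$ on $Hu$ to $(1+y^{4k+4})$ on $u$ in that lemma is precisely calibrated so that a single application at each step advances the weight on the $\e_0$ and $A\e$ terms by exactly one unit, matching the weights demanded by \fref{coerciviityley} at the next level. No further orthogonality beyond $(\e,\Phi_M)=0$ is used in the recovery step; the higher orthogonalities $(\e,H^p\Phi_M)=0$ for $1 \le p \le k+1$ are consumed only implicitly, through the transfer to $u = H\e$ in the induction.
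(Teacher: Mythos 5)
Your proof is correct and follows essentially the same route as the paper: induction on $k$ with Lemma \ref{ceorcetwobis} as the base case, the inductive hypothesis applied to $v=H\e$ (using $(H\e,H^p\Phi_M)=(\e,H^{p+1}\Phi_M)=0$ and $v_j=\e_{j+2}$), and Lemma \ref{boundweight} applied to $\e$ itself to recover the remaining weighted $|\e|^2$ and $|A\e|^2$ terms from the already-controlled quantity $\int |H\e|^2/[y^4(1+|\log y|^2)(1+y^{4k})]$. Your bookkeeping of exactly which two terms are missing after the inductive step is, if anything, slightly more explicit than the paper's.
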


\begin{proof}[Proof of Lemma \ref{coerch}]

We argue by induction on $k$. The case $k=0$ is Lemma \ref{ceorcetwobis}. We assume the claim for $k$ and prove it for $1\leq k+1\leq L$. Indeed, let $v=H\e$, then $v_{p}=\e_{p+2}$ and thus $v$ satisfies \fref{cnojeooejbis} and\footnote{from $k\leq L+1$}: $$\forall 0\leq p\leq k, \ \ (v,H^p\Phi_M)=(\e,H^{p+1}\Phi_M)=0.$$ We may thus apply the induction claim for $k$ to $v$ and estimate:
\bea
\label{cneoeofofee}
\nonumber &&\int(H^{k+2}\e)^2=\int(H^{k+1}v)^2\geq C(M)\left\{\int\frac{|\e_{2k+3}|^2}{y^2(1+|\log y|^2)}\right.\\
 \nonumber & + &\left. \Sigma_{p=0}^{k}\int\left[\frac{|\e_{2p+1}|^2}{y^6(1+|\log y|^2)(1+y^{4(k-p)})}+\frac{|\e_{2p+2}|^2}{y^4(1+|\log y|^2)(1+y^{4(k-p)})}\right]\right\}\\
& \geq & C(M)\left\{\int\frac{|\e_{2k+3}|^2}{y^2(1+|\log y|^2)}\right.\\
 \nonumber & + &\left. \Sigma_{p=1}^{k+1}\int\left[\frac{|\e_{2p-1}|^2}{y^6(1+|\log y|^2)(1+y^{4(k+1-p)})}+\frac{|\e_{2p}|^2}{y^4(1+|\log y|^2)(1+y^{4(k+1-p)})}\right]\right\}
 \eea
The orthogonality condition $(\e,\Phi_M)=0$ and \fref{cnojeooejbis} allow us to use Lemma \ref{boundweight} and to deduce from the weighted bound \fref{coercivebound} the control:
$$\int\frac{|\e_2|^2}{y^4(1+|\log y|^2)(1+y^{4k})}\gtrsim \int\frac{|\e|^2}{y^4(1+|\log y|^2)(1+y^{4k+4})}$$ which together with \fref{cneoeofofee} concludes the proof of Lemma \ref{coerch}.
\end{proof}


\section{Interpolation bounds}


We derive in this section interpolation bounds on $\e$ in the setting of the bootstrap Proposition \ref{bootstrap}, and which are a consequence of the coercivity Property of Lemma \ref{propcorc}.

\begin{lemma}[Interpolation bounds]
\label{lemmainterpolation}
{\em (i) Weighted Sobolev bounds for $\e_k$}: for $0\leq k\leq L$:
\be
\label{interpolationbound}
\Sigma_{i=0}^{2k+1}\int\frac{|\e_{i}|^2}{y^2(1+y^{4k-2i+2})(1+|\log y|^2)}+\int|\e_{2k+2}|^2\leq C(M) \mathcal E_{2k+2}.
\ee
{\em (ii) Development near the origin}: $\e$ admits a Taylor Lagrange like expansion 
\be
\label{Taylororigin}
\e=\Sigma_{i=1}^{L+1}c_{i}T_{L+1-i}+r_\e
 \ee
 with bounds:
 \be
 \label{boundci}
 |c_{i}|\lesssim C(M)\sqrt{\mathcal E_{2L+2}},
 \ee
 \be
 \label{boundrestebis}
|\pa_y^k r_{\e}|\lesssim y^{2L+1-k}|\log y|C(M)\sqrt{\mathcal E_{2L+2}}, \ \ 0\leq k\leq 2L+1, \ \ y\leq 1.
\ee
\noindent{\em (iii) Bounds near the origin for $\e_k$}: for $|y|\leq \frac 12$,
\be
\label{poutwisepair}
|\e_{2k}|\lesssim C(M)y|\log y|\sqrt{\mathcal E_{2L+2}}, \ \ 0\leq k\leq L,
\ee
\be
\label{poutwiseimpair}
|\e_{2k-1}|\lesssim C(M)y^2|\log y|\sqrt{\mathcal E_{2L+2}}, \ \ 1\leq k\leq L,
\ee
\be
\label{poutwiseimpairlast}
|\e_{2L+1}|\lesssim C(M) \sqrt{\mathcal E_{2L+2}}.
\ee
{\em (iv) Bounds near the origin for $\pa_y^k\e$}: for $|y|\leq \frac 12$,
\be
\label{poutwisepairbis}
|\pa_y^{2k}\e|\lesssim C(M)y|\log y|\sqrt{\mathcal E_{2L+2}}, \ \ 0\leq k\leq L,
\ee
\be
\label{poutwiseimpairbis}
|\pa_y^{2k-1}\e|\lesssim C(M)|\log y|\sqrt{\mathcal E_{2L+2}}, \ \ 1\leq k\leq L+1.
\ee
\noindent{\em (v) Lossy bound}:  
\be
\label{interpolationboundlossy}
\Sigma_{i=0}^{2k+1}\int\frac{1+|\log y|^C}{1+y^{4k-2i+4}}|\e_{i}|^2\lesssim |\log b_1|^C\left\{\begin{array}{ll}b_1^{(4k+2)\frac{L}{2L-1}}, \ \  0\leq k\leq L-1,\\ b_1^{2L+2}\ \ \mbox{for}\ \ k=L,\end{array}\right.
\ee
\be
\label{weight}
\Sigma_{i=0}^{2k+1}\int\frac{1+|\log y|^C}{1+y^{4k-2i+4}}|\pa_y^i\e|^2\lesssim |\log b_1|^C\left\{\begin{array}{ll}b_1^{(4k+2)\frac{L}{2L-1}}, \ \  0\leq k\leq L-1,\\ b_1^{2L+2}\ \ \mbox{for}\ \ k=L,\end{array}\right.
\ee
\noindent{\em (vi) Generalized lossy bound}: Let $(i,j)\in \Bbb N\times \Bbb N^*$ with $2\leq i+j\leq 2L+2$, then:
\be
\label{estimatelossy}
\int\frac{1+|\log y|^C}{1+y^{2j}}|\pa_y^i\e|^2\lesssim |\log b_1|^C\left\{ \begin{array}{lll} b_1^{(i+j-1)\frac{2L}{2L-1}} \ \ \mbox{for}\ 2\leq  i+j\leq 2L\\
 b_1^{2L+1}  \ \ \mbox{for}\ \  i+j= 2L+1\\
  b_1^{2L+2}  \ \ \mbox{for}\   i+j=2L+2.
  \end{array}\right.
\ee
Moreoever: 
\be
\label{inteextremale}
\int \frac{|\pa_y^i\e|^2}{1+|\log y|^2}\lesssim |\log b_1|^C\left\{\begin{array}{ll}b_1^{(i-1)\frac{2L}{2L-1}}, \ \  2\leq i\leq 2L+1,\\ b_1^{2L+2}\ \ \mbox{for}\ \ i=2L+2,\end{array}\right..
\ee
\noindent{\em (vii) Pointwise bound far away}: let $(i,j)\in \Bbb N\times \Bbb N$ with $1\leq i+j\leq 2L+1$, then:
\be
\label{weightbis}
\left\|\frac{\pa_y^i\e}{y^{j}}\right\|^2_{L^{\infty}(y\geq 1)}\lesssim |\log b_1|^C\left\{\begin{array}{lll}  b_1^{(i+j)\frac{2L}{2L-1}}\ \ \mbox{for}\ \ 1\leq i+j\leq 2L-1\\ b_1^{2L+1} \ \ \mbox{for}\ \ i+j= 2L, \\  b_1^{2L+2} \ \ \mbox{for}\ \ i+j= 2L+1\end{array}\right..
\ee
\end{lemma}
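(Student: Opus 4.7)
The strategy is to use the coercivity of $\mathcal E_{2k+2}$ established in Lemma \ref{propcorc} as the master estimate, and then to combine it with the bootstrap bounds \fref{init2h}, \fref{init3h}, \fref{init3hb} through standard interpolation. The orthogonality conditions \fref{ortho} on $\e$ ensure we may apply Lemma \ref{propcorc} for every $0\le k\le L$, which directly yields the first bound (i) by rewriting the weighted norm of $\e_{2k+1}$ via the identity $\int|\e_{2k+2}|^2=(\tilde H\e_{2k+1},\e_{2k+1})$ and Lemma \ref{coerchtilde}.

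For the Taylor expansion (ii), I would proceed by inverting the operators $A$ and $A^*$ iteratively, starting from the top. Since $\e_{2L+2}\in L^2$ from (i), the formula \fref{inveriosnu} (with the appropriate choice between $A$ and $A^*$ at each stage) expresses $\e_{2p}$ and $\e_{2p+1}$ modulo the respective kernels $\mbox{Span}(\Lambda Q)$ and $\mbox{Span}(1/(y\Lambda Q))$; the regularity information \fref{cnojeooej} from Lemma \ref{regularitycorot} selects the smooth determination and produces the explicit coefficients $c_i$ in front of the $T_{L+1-i}$. The size bound \fref{boundci} follows from Cauchy--Schwarz applied to the integral representations, controlled by $\sqrt{\mathcal E_{2L+2}}$, and the remainder bound \fref{boundrestebis} comes from tracking the power of $y$ gained at each inversion together with the weighted bounds on $\e_{2L+1}, \e_{2L+2}$ coming from (i). The pointwise bounds (iii) and (iv) are then direct corollaries: (iii) comes from applying $\mathcal A^k$ to \fref{Taylororigin} using the cancellations $A(y)=O(y^2)$ and the explicit decay \fref{comportementz}, while (iv) follows by re-expressing $\pa_y^k\e$ as a linear combination of $(\e_j)_{j\le k}$ divided by powers of $y$, using \fref{poutwisepair}, \fref{poutwiseimpair}.

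The lossy bounds (v) are the technical heart. From the bootstrap \fref{init3h}, $\mathcal E_{2k+2}\lesssim b_1^{(2k+1)\frac{2L}{2L-1}}|\log b_1|^K$ for $1\le k\le L-1$, and \fref{init3hb} gives the sharp bound at $k=L$, while \fref{init2h} controls $\int|\pa_y\e|^2+\int|\e/y|^2$ at the bottom end. Plugging these into the coercivity bound (i) yields \fref{interpolationboundlossy}, and \fref{weight} follows from \fref{interpolationboundlossy} upon writing $\pa_y^i\e$ as a linear combination of the $\e_j$, $j\le i$, divided by suitable powers of $y$ using \fref{comportementz} (this may cost a $|\log y|$ factor which is absorbed). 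The generalized version \fref{estimatelossy} is obtained by Cauchy--Schwarz interpolation between two consecutive weighted norms from \fref{weight}: if $2\le i+j\le 2L+2$, one writes the weight $(1+y^{2j})^{-1}$ as a geometric mean of two admissible weights and uses H\"older, which selects the exponent $(i+j-1)\frac{2L}{2L-1}$. The endpoint bound \fref{inteextremale} is the case $j=0$, handled separately using the logarithmic Hardy inequality \fref{harfylog'}.

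The pointwise far-away bound (vii) follows by the one-dimensional fundamental theorem of calculus: for $y\ge 1$, integrating $\pa_r(r^{-2j}|\pa_y^i\e|^2)$ from $y$ to $\infty$ (using that $\e$ decays at infinity in the relevant weighted space thanks to (v)) and Cauchy--Schwarz produces $|\pa_y^i\e/y^j|^2(y)\lesssim (\|\pa_y^i\e/y^{j+1/2}\|_{L^2}+\|\pa_y^{i+1}\e/y^{j-1/2}\|_{L^2})^2$, which is then bounded by \fref{estimatelossy} applied with parameters $(i,j+1)$ and $(i+1,j)$; tracking the three regimes in \fref{estimatelossy} produces exactly the three cases in \fref{weightbis}. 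The main obstacle will be bookkeeping: matching the loss/gain of powers of $y$, $b_1$ and $\log$ factors at each inversion of $A$ and at each interpolation step, and ensuring the logarithmic losses from \fref{init3h} are absorbed into the generic $|\log b_1|^C$ constants. No single step is deep, but the uniform treatment of all $(i,j)$ requires care.
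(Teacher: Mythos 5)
Your plan follows the paper's own route almost step by step: (i) is read off the coercivity \fref{coerciviityley}; (ii)--(iv) are obtained in the paper by exactly the iterative inversion of $A,A^*$ you describe, with the kernel elements $\Lambda Q$, $\frac{1}{y\Lambda Q}$, evaluation at an intermediate point $a\in(\frac12,2)$ to fix the constants, and the regularity input of Lemma \ref{regularitycorot}; (vi) is the same consecutive-weight Cauchy--Schwarz; and (vii) is verbatim the paper's estimate $\left\|\pa_y^i\e/y^{j}\right\|^2_{L^{\infty}(y\geq 1)}\lesssim \int_{y\geq 1}\frac{(1+|\log y|^2)|\pa_y^i\e|^2}{y^{2j+2}}+\int_{y\geq 1}\frac{|\pa_y^{i+1}\e|^2}{y^{2j}(1+|\log y|^2)}$ followed by \fref{estimatelossy}, \fref{inteextremale} with parameters $(i,j+1)$ and $(i+1,j)$.

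The one place where your plan, as written, would fail is (v). You cannot get \fref{interpolationboundlossy} by ``plugging the bootstrap bounds into (i)'': the coercive norm (i) carries $(1+|\log y|^2)^{-1}$ in the weight, while \fref{interpolationboundlossy}--\fref{weight} ask for $(1+|\log y|^C)$ in the numerator, so for large $y$ you are requesting strictly more than (i) controls, by an unbounded factor $|\log y|^{C+2}$ which is not absorbed by any $|\log b_1|^C$. The paper's fix is to first prove the intermediate estimate \fref{interpolationvnone} (pointwise bounds (iii)--(iv) near the origin, (i) together with the conversion \fref{eoneneeno} for $y\geq 1$), combine it with \fref{init2h} to get the rough global bound $\int\frac{|\e|^2}{y^2}+\Sigma_{k=1}^{2L+2}\int\frac{|\pa_y^k\e|^2}{1+|\log y|^2}\lesssim 1$, and then split the integral at $y= B_0^{100L}$: below, $|\log y|\lesssim |\log b_1|$ and the bootstrap bound on $\mathcal E_{2k+2}$ applies; above, the weight decays at least like $y^{-2}$ so the rough bound gives a negligible $B_0^{-10L}$ contribution. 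Some such splitting is indispensable. Two smaller corrections: near the origin you should obtain \fref{interpolationboundlossy}, \fref{weight} directly from your pointwise bounds (iii),(iv), not by converting between $\e_i$ and $\pa_y^i\e$ (that conversion costs inverse powers of $y$ there); and \fref{inteextremale} is not naturally a consequence of the logarithmic Hardy inequality \fref{harfylog'}, which requires a weight $y^{-2-\gamma}$ on the left-hand side -- it is simply the top term $i=2k+2$ of \fref{interpolationvnone} combined with the bootstrap bounds on $\mathcal E_{2k+2}$ (the odd cases then follow by the same Cauchy--Schwarz interpolation you use in (vi)).
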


\begin{proof} {\bf step1 } Proof of (i). The estimate \fref{interpolationbound} follows from \fref{coerciviityley} with $0\leq k\leq L$.\\

{\bf step 2}. Adapted Taylor expansion.\\
{\em Initialization}: Recall the boundary condition origin at the origin \fref{devayorigin} which implies $|\e_{2L+1}(y)|\leq C_{\e_{2L+1}}y^2$ as $y\to 0$.Together with \fref{otherformula} and the behavior $\Lambda Q\sim y$ near the origin, this implies: 
\be
\label{cneineoneon}
r_1=\e_{2L+1}(y)=\frac{1}{y\Lambda Q}\int_0^y\e_{2L+2}\Lambda Q xdx, 
\ee 
and this yields the pointwise bound for $y\leq 1$:
\bea
\label{boundrone}
\nonumber |r_1(y)|&\lesssim &\frac{1}{y^2}\left(\int_{y\leq 1}|\e_{2L+2}|^2xdx\right)^{\frac 12}\left(\int_{0}^yx^2xdx\right)^{\frac 12}\\
& \lesssim & C(M)\sqrt{\mathcal E_{2L+2}}.
\eea
We now remark that there exists $\frac 12<a<2$ such that $$|\e_{2L+1}(a)|^2\lesssim \int_{|y|\leq 1}|\e_{2L+1}|^2\lesssim C(M)\mathcal E_{2L+2}$$ from \fref{interpolationbound}. We then define $$r_2=-\Lambda Q\int_a^y\frac{r_{1}}{\Lambda Q}dx$$ and obtain from \fref{boundrone} the pointwise bound for $y\leq 1$: 
\be
\label{boundrtwo}
|r_2|\lesssim y|\log y|C(M)\sqrt{\mathcal E_{2L+2}}.
\ee
Now observe that by construction using \fref{otherformula}: 
\be
\label{relationsinti}
Ar_2=r_1=\e_{2L+1}, \ \ Hr_2=A^*\e_{2L+1}=\e_{2L+2}=H\e_{2L}
\ee
Now from \fref{cneoeofofee}, $$\int_{y\leq 1}\frac{|\e_{2L}|^2}{y^4(1+|\log y|^2)}ydy<+\infty$$ and hence $|\e_{2L}(y_n)|<+\infty$ on some sequence $y_n\to 0$, and from \fref{boundrtwo}, \fref{relationsinti}, the explicit knowledge of the kernel of $H$ and the singular behavior \fref{Gamma}, we conclude that there exists $c_2\in \Bbb R$ such that 
\be
\label{fnevnoneo}
 \e_{2L}=c_2\Lambda Q+r_{2}.
 \ee
 Moreover, there exists $\frac 12<a<2$ such that $$|\e_{2L}(a)|^2\lesssim \int_{|y|\leq 1}|\e_{2L}|^2\lesssim C(M)\mathcal E_{2L+2}$$ from \fref{interpolationbound}, and thus from \fref{boundrtwo}, \fref{fnevnoneo}: 
 \be
 \label{eusvbeibie}
 |c_2|\lesssim C(M)\sqrt{\mathcal E_{2L+2}}, \ \ |\e_{2L}|\lesssim y|\log y|C(M)\sqrt{\mathcal E_{2L+2}}.
 \ee
{\em Induction}. We now build by induction the sequence:
$$r_{2k+1}=\frac{1}{y\Lambda Q}\int_0^yr_{2k}\Lambda Q xdx, \ \ r_{2k+2}=-\Lambda Q\int_0^y\frac{r_{2k+1}}{\Lambda Q}dx, \ \  1\leq k\leq L.$$ We claim by induction that for all $1\leq k\le L+1$, $\e_{2L+2-2k}$ admits a Taylor expansion at the origin 
\be
\label{nonneoneoe}
\e_{2L+2-2k}=\Sigma_{i=1}^kc_{i,k}T_{k-i}+r_{2k}, \ \ 1\leq k\leq L+1
\ee
with the bounds for $|y|\leq 1$:
\be
\label{boundcibisbisbibs}
|c_{i,k}|\lesssim C(M)\sqrt{\matchal E_{2L+2}},
\ee
\be
\label{vnoevnoneov}
|\matchal A^ir_{2k}|\lesssim |\log y|y^{2k-1-i}C(M)\sqrt{\matchal E_{2L+2}}, \ \ 0\leq i\leq 2k-1.
\ee
This follows from \fref{fnevnoneo}, \fref{eusvbeibie}, \fref{boundrtwo}, \fref{relationsinti} for $k=1$. We now let $1\leq k\leq L$, assume the claim for $k$ and prove it for $k+1$.\\
By construction using \fref{otherformula}, 
\be
\label{nwoorg}
Ar_{2k+2}=r_{2k+1}, \ \ Hr_{2k+2}=r_{2k}
\ee
and thus $\mathcal A^{i}r_{2k}=r_{2k-i}.$ In particular, for $i\geq 2$, $\mathcal A^{i-2}r_{2k+2}=r_{2k-i}$ and therefore the bounds \fref{vnoevnoneov} for $k+1$ and $2\leq i\leq 2k+1$ follow from the induction claim. We now estimate by definition and induction for $|y|\leq 1$: 
$$
|Ar_{2k+2}|=|r_{2k+1}(y)|=\left|\frac{1}{y\Lambda Q}\int_0^yr_{2k}\Lambda Q xdx\right|\lesssim \frac{C(M)\sqrt{\matchal E_{2L+2}}}{y^2}y^{3+2k-1}
$$
$$|r_{2k+2}|=\left|\Lambda Q\int_0^y\frac{r_{2k+1}}{\Lambda Q}dx\right|\lesssim  y y^{2k}C(M)\sqrt{\matchal E_{2L+2}}
$$
and \fref{vnoevnoneov} is proved for $k=1$ and $i=0,1$. From the regularity at the origin \fref{devayorigin}, \fref{nwoorg}, the relation $$H\e_{2L+2-2(k+1)}=\e_{2L+2-2k}=\Sigma_{i=1}^kc_{i,k}T_{k-i}+r_{2k}$$ and the bound \fref{vnoevnoneov}, there exists $c_{2k+2}$ such that $$\e_{2L+2-2(k+1)}=\Sigma_{i=1}^kc_{i,k}T_{k+1-i}+c_{2k+2}\Lambda Q+r_{2k+2}.$$ We now observe that there exists $\frac 12<a<2$ such that $$|\e_{2L-2k}(a)|^2\lesssim \int_{|y|\leq 1}|\e_{2L-2k}|^2\lesssim C(M)\mathcal E_{2L+2}$$ from \fref{interpolationbound}, and thus using \fref{vnoevnoneov}: $$|c_{2k+2}|\lesssim C(M)\sqrt{\mathcal E_{2L+2}}.$$ This completes the induction claim.\\

{\bf step 3}. Proof of (ii), (iii), (iv). We obtain from \fref{nonneoneoe}, \fref{boundci} with $k=L+1$ the Taylor expansion $$\e=\Sigma_{i=1}^{L+1}c_{i,k}T_{k-i}+r_\e, \ \ r_\e=r_{2L+2}, \ \ |c_{i,k}|\lesssim C(M)\sqrt{\matchal E_{2L+2}},$$ with from \fref{vnoevnoneov}
$$|\matchal A^ir_{\e}|\lesssim |\log y|y^{2L+1-i}C(M)\sqrt{\matchal E_{2L+2}}, \ \ 0\leq i\leq 2L+1.$$ A brute force computation using the expansions \fref{comportementz}, \fref{comportementv} near the origin ensure that for any function $f$:
\be
 \label{expansion}
 \pa_y^kf=\Sigma_{i=0}^kP_{i,k}\mathcal A^if, \ \ |P_{i,k}|\lesssim \frac{1}{y^{k-i}},
 \ee  
and we therefore estimate for $0\leq k\leq 2L+1$:
$$|\pa_y^kr_{\e}|\lesssim C(M)\sqrt{\matchal E_{2L+2}}\Sigma_{i=0}^k \frac{|\log y|y^{2L+1-i}}{y^{k-i}}\lesssim y^{2L+1-k}|\log y| C(M)\sqrt{\matchal E_{2L+2}}.$$ This concludes the proof of (ii). The estimates of (iii), (iv) now directly follow from (ii) using the Taylor expansion of $T_i$ at the origin given by Lemma \ref{lemmaradiation}, and \fref{boundrone} for \fref{poutwiseimpairlast}.\\

{\bf step 4} Proof of (v). We first claim: for $0\leq k\leq L$,
\be
\label{interpolationvnone}
\Sigma_{i=0}^{2k+2}\int\frac{|\pa_y^i\e|^2}{(1+|\log y|^2)(1+y^{4k-2i+4})}\lesssim C(M)(\mathcal E_{2k+2}+\mathcal E_{2L+2}).
\ee
Observe that this implies \fref{inteextremale} by taking $i=2k+2$.\\
Indeed we estimate from \fref{poutwisepairbis}, \fref{poutwiseimpairbis}:
\be
\label{estneineo}
\Sigma_{i=0}^{2k+1}\int_{y\leq 1}\frac{1+|\log y|^C}{1+y^{4k-2i+4}}|\pa_y^i\e|^2\lesssim C(M)\mathcal E_{2L+2}.
\ee
For $y\geq 1$, we recall from the brute force computation \fref{expansion} 
\be
\label{eoneneeno}
|\pa_y^k\e|\lesssim \Sigma_{i=0}^k\frac{|\e_i|}{y^{k-i}}
\ee
and thus using \fref{interpolationbound}: for $0\leq k\leq L$,
\bee
&&\Sigma_{i=0}^{2k+2}\int_{y\geq 1} \frac{|\pa_y^i\e|^2}{(1+|\log y|^2)(1+y^{4k-2i+4})}\\
& \lesssim & \Sigma_{i=0}^{2k+2} \Sigma_{j=0}^{i}\int_{y\geq 1}\frac{|\e_j|^2}{(1+|\log y|^2)(1+y^{4k-2i+4+2i-2j})}\\
& \lesssim & \Sigma_{j=0}^{2k+2}\int\frac{|\e_j|^2}{(1+|\log y|^2)(1+y^{4k+4-2j})}\lesssim C(M)\mathcal E_{2k+2},
\eee
and \fref{interpolationvnone} is proved. In particular, this yields together with the energy bound \fref{init2h} the rough Sobolev bound: $$ \int\frac{|\e|^2}{y^2}+\Sigma_{k=1}^{2L+2}\int \frac{|\pa_y^k\e|^2}{1+|\log y|^2}\lesssim 1.$$ We therefore estimate using \fref{interpolationvnone} again:
\bea
\label{cnkneoneov}
\nonumber &&\Sigma_{i=0}^{2k+1}\int \frac{1+|\log y|^C}{1+y^{4k-2i+4}}|\pa_y^i\e|^2\\
\nonumber & \lesssim & \Sigma_{i=0}^{2k+1}\left[\int_{y\leq B_0^{100L}} \frac{1+|\log y|^C}{1+y^{4k-2i+4}}|\pa_y^i\e|^2+\int_{y\ge B_0^{100L}}\frac{1+|\log y|^C}{y^2}|\pa_y^i\e|^2\right]\\
& \lesssim & |\log b_1|^C\matchal E_{2k+2}+\frac{1}{B_0^{10L}}
\eea
and \fref{weight} follows. The estimate \fref{interpolationboundlossy} now follows from \fref{poutwisepair}, \fref{poutwiseimpair}, \fref{poutwiseimpairlast} for $y\leq 1$ with also \fref{comprsion},  
and \fref{weight} for $y\geq 1$.\\

{\bf step 5} Proof of (vi). Let $i\geq 0$, $j\geq 1$ with $2\leq i+j\leq 2L+2$.\\
{\it case $i+j$ even}: we have $i+j=2(k+1)$, $0\leq k\leq L$. For $k\leq L-1$, we estimate from \fref{weight} and $0\leq i=2k+2-j\leq 2k+1$:
$$\int \frac{1+|\log y|^C}{1+y^{2j}}|\pa_y^i\e|^2=\int\frac{1+|\log y|^C}{1+y^{4k+4-2i}}|\pa_y^i\e|^2\lesssim b_1^{(4k+2)\frac{L}{2L-1}}|\log b_1|^C\lesssim b_1^{(i+j-1)\frac{2L}{2L-1}}|\log b_1|^C.$$
For $k=L$, we have from \fref{weight}: $$\int \frac{1+|\log y|^C}{1+y^{2j}}|\pa_y^i\e|^2=\int\frac{1+|\log y|^C}{1+y^{4k+4-2i}}|\pa_y^i\e|^2\lesssim b_1^{2L+2}|\log b_1|^C.$$

{\it case $i+j$ odd}: we have $i+j=2k+1$, $1\leq k\leq L$. Assume $k\leq L-1$. If $j\geq 2$, then $i\leq 2k+1-j\leq 2(k-1)+1$ and thus from \fref{weight}:
\bee
&&\int \frac{1+|\log y|^C}{1+y^{2j}}|\pa_y^i\e|^2=\int\frac{1+|\log y|^C}{1+y^{4k+2-2i}}|\pa_y^i\e|^2\\
&  \lesssim & \left(\int\frac{1+|\log y|^C}{1+y^{4k+4-2i}}|\pa_y^i\e|^2\right)^{\frac 12}\left(\int\frac{1+|\log y|^C}{1+y^{4(k-1)+4-2i}}|\pa_y^i\e|^2\right)^{\frac 12}\\
& \lesssim &|\log b_1|^Cb_1^{\frac{L}{2(2L-1)}(4k+2+4(k-1)+2)}=b_1^{(i+j-1)\frac{2L}{2L-1}}|\log b_1|^C.
\eee
For the extremal case $j=1$, $i=2k$, $1\leq k \leq L-1$, we estimate from \fref{interpolationboundlossy}, \fref{interpolationvnone}:
\bee
\int \frac{1+|\log y|^C}{1+y^2}|\pa_y^{2k}\e|^2&\lesssim &\left(\int \frac{1+|\log y|^C}{1+y^4}|\pa_y^{2k}\e|^2\right)^{\frac 12}\left(\int \frac{|\pa_y^{2k}\e|^2}{1+|\log y|^2}\right)^{\frac 12}\\
& \lesssim & |\log b_1|^Cb_1^{\frac{L}{2(2L-1)}(4k+2+4(k-1)+2)}=b_1^{(i+j-1)\frac{2L}{2L-1}}|\log b_1|^C
\eee
For $k=L$, then for $j\geq 2$, $i\leq 2k+1-j\leq 2(k-1)+1$ and thus from \fref{weight}:
\bee
&&\int \frac{1+|\log y|^C}{1+y^{2j}}|\pa_y^i\e|^2=\int\frac{1+|\log y|^C}{1+y^{4k+2-2i}}|\pa_y^i\e|^2\\
&  \lesssim & \left(\int\frac{1+|\log y|^C}{1+y^{4k+4-2i}}|\pa_y^i\e|^2\right)^{\frac 12}\left(\int\frac{1+|\log y|^C}{1+y^{4(k-1)+4-2i}}|\pa_y^i\e|^2\right)^{\frac 12}\\
& \lesssim &|\log b_1|^Cb_1^{\frac{1}{2}(2L+2+(4(k-1)+2)\frac{L}{2L-1})}=b^{2L+1}_1|\log b_1|^C,
\eee
and for $j=1$, $i=2L$ from \fref{interpolationboundlossy}, \fref{inteextremale}:
\bee
\int \frac{1+|\log y|^C}{1+y^2}|\pa_y^{2L}\e|^2&\lesssim &\left(\int \frac{1+|\log y|^C}{1+y^4}|\pa_y^{2L}\e|^2\right)^{\frac 12}\left(\int \frac{|\pa_y^{2L}\e|^2}{1+|\log y|^2}\right)^{\frac 12}\\
& \lesssim & |\log b_1|^Cb_1^{\frac{1}{2}(2L+2+(4(L-1)+2)\frac{L}{2L-1})}=b^{2L+1}_1|\log b_1|^C
\eee

{\bf step 6} Proof of (vii). We estimate from Cauchy Schwarz: $$\left\|\frac{\e}{y}\right\|_{L^{\infty}(y\geq 1)}^2\lesssim \int_{y\geq 1}|\e\pa_y\e|dy\lesssim \int\frac{(1+|\log y|^2)|\e|^2}{y^2}+\int \frac{|\pa_y\e|^2}{1+|\log y|^2}.$$ Let then $i,j\geq 0$ with $1\leq i+j\leq 2L+1$, then $2\leq i+j+1\leq 2L$ and we conclude from \fref{estimatelossy}, \fref{inteextremale}:
\bee
\left\|\frac{\pa_y^i\e}{y^j}\right\|_{L^{\infty}(y\geq 1)}^2& \lesssim& \int_{y\geq 1}\frac{(1+|\log y|^2)|\pa_y^{i}\e|^2}{y^{2j+2}}+\int_{y\geq 1}\frac{|\pa_y^{i+1}\e|^2}{y^{2j}(1+|\log y|^2)}\\
& \lesssim & |\log b_1|^C\left\{\begin{array}{lll}  b_1^{(i+j)\frac{2L}{2L-1}}\ \ \mbox{for}\ \ 2\leq i+j+1\leq 2L\\ b_1^{2L+1} \ \ \mbox{for}\ \ i+j+1= 2L+1 \\  b_1^{2L+2} \ \ \mbox{for}\ \ i+j+1= 2L+2\end{array}\right..
\eee
\end{proof}


\section{Leibniz rule for $H^k$}


Given a smooth function $\Phi$, we prove the following Leibniz rule:

\begin{lemma}[Leibniz rule for $H^k$]
\label{leibnizrule}
Let $k\geq1$, then: 
\bea
\label{ebeboboneo}
&&\mathcal A^{2k-1}(\Phi\e)= \Sigma_{i=0}^{k-1}\Phi_{2i,2k-1}\e_{2i}+\Sigma_{i=1}^{k}\Phi_{2i-1,2k-1}\e_{2i-1},\\
\nonumber && \mathcal A^{2k}(\Phi\e)= \Sigma_{i=0}^{k}\Phi_{2i,2k}\e_{2i}+\Sigma_{i=1}^{k}\Phi_{2i-1,2k}\e_{2i-1}
\eea
where $\Phi_{i,k}$ is computed through the recurrence relation:
\bea
\label{intialisa}
&&\Phi_{0,1}=-\pa_y\Phi, \ \ \Phi_{1,1}=\Phi\\
\nonumber && \Phi_{0,2}=-\pa_{yy}\Phi-\frac{1+2Z}{y}\pa_y\Phi,\ \ \Phi_{1,2}=2\pa_y\Phi,\ \ \Phi_{2,2}=\Phi
 \eea
\bea
\label{recurrence}
&& \left\{\begin{array}{llll} \Phi_{2k+2,2k+2}=\Phi_{2k+1,2k+1}\\ \Phi_{2i,2k+2}=\Phi_{2i-1,2k+1}+\pa_y\Phi_{2i,2k+1}+\frac{1+2Z}{y}\Phi_{2i,2k+1}\, \ \ 1\leq i\leq k\\ \Phi_{0,2k+2}=\pa_y\Phi_{0,2k+1}+\frac{1+2Z}{y}\Phi_{0,2k+1}\\ \Phi_{2i-1,2k+2}=-\Phi_{2i-2,2k+1}+\pa_y\Phi_{2i-1,2k+1}, \ \ 1\leq i\leq k+1 \end{array} \right.\\
\label{recurrencebis}&&\left\{\begin{array}{llll} \Phi_{2k+1,2k+1}=\Phi_{2k,2k}\\ \Phi_{2i-1,2k+1}=\Phi_{2i-2,2k}+\frac{1+2Z}{y}\Phi_{2i-1,2k}-\pa_y\Phi_{2i-1,2k}, \ \  1\leq i\leq k\\
\Phi_{2i,2k+1}=-\pa_y\Phi_{2i,2k}-\Phi_{2i-1,2k}, \ \ 1\leq i\leq k\\ \Phi_{0,2k+1}=-\pa_y\Phi_{0,2k} \end{array} \right.
\eea
\end{lemma}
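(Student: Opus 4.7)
The proof proceeds by induction on $k$, using only three elementary identities. First, the pointwise Leibniz rules
\begin{equation*}
A(\Phi f) = \Phi \, Af - \partial_y\Phi \cdot f, \qquad A^*(\Phi f) = \Phi\, A^*f + \partial_y\Phi \cdot f,
\end{equation*}
which follow immediately from $A=-\partial_y+Z/y$ and $A^*=\partial_y+(1+Z)/y$. Second, the relation $A+A^*=(1+2Z)/y$, which after applying it to $\e_{2i}$ (respectively $\e_{2i-1}$) and using $\e_{2i+1}=A\e_{2i}$, $\e_{2i}=A^*\e_{2i-1}$ yields the swapping formulas
\begin{equation*}
A^*\e_{2i} = \tfrac{1+2Z}{y}\e_{2i} - \e_{2i+1}, \qquad A\e_{2i-1} = \tfrac{1+2Z}{y}\e_{2i-1} - \e_{2i}.
\end{equation*}
These are the only non-obvious ingredients: they convert the "wrong" ladder operator applied to an $\e_j$ back into the adapted basis. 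The base case $k=1$ is checked directly from these formulas and matches \eqref{intialisa}.

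For the inductive step one distinguishes whether one applies $A$ (odd step) or $A^*$ (even step). To go from $\mathcal A^{2k}$ to $\mathcal A^{2k+1}$, I apply $A$ termwise to the induction hypothesis $\mathcal A^{2k}(\Phi\e)=\sum_{i=0}^k\Phi_{2i,2k}\e_{2i}+\sum_{i=1}^k\Phi_{2i-1,2k}\e_{2i-1}$. Each $A(\Phi_{2i,2k}\e_{2i})$ produces $\Phi_{2i,2k}\e_{2i+1}-\partial_y\Phi_{2i,2k}\e_{2i}$ (directly in the basis), while each $A(\Phi_{2i-1,2k}\e_{2i-1})$ requires the swapping identity and produces $(\tfrac{1+2Z}{y}\Phi_{2i-1,2k}-\partial_y\Phi_{2i-1,2k})\e_{2i-1}-\Phi_{2i-1,2k}\e_{2i}$. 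Collecting coefficients of $\e_{2i-1}$ and $\e_{2i}$ separately yields exactly the four rules of \eqref{recurrencebis}. The step from $\mathcal A^{2k+1}$ to $\mathcal A^{2k+2}$ is identical, now with $A^*$ replacing $A$: the only term needing the swap is $A^*(\Phi_{2i,2k+1}\e_{2i})$, while $A^*(\Phi_{2i-1,2k+1}\e_{2i-1})$ lies in the basis because $\e_{2i}=A^*\e_{2i-1}$. Matching coefficients gives \eqref{recurrence}.

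There is no serious obstacle; the whole content is bookkeeping, the main care being the index shift that accompanies each swap (an $A$ applied at odd index $2i-1$ both shifts to $\e_{2i}$ and produces a $(1+2Z)/y$ contribution at the original index $2i-1$). The two chains of recurrences in \eqref{recurrence}--\eqref{recurrencebis} encode exactly these two effects, and the decomposition \eqref{ebeboboneo} into even-indexed and odd-indexed $\e_j$ is preserved at each step because $A$ and $A^*$ swap the parity of the ladder index.
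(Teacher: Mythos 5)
Your proof is correct and is essentially the paper's own argument: the paper performs the same induction, simply substituting the operator identity $A=-A^*+\tfrac{1+2Z}{y}$ (resp. $A^*=-A+\tfrac{1+2Z}{y}$) before applying the product rule, which is the same rearrangement as your "swap" identities $A\e_{2i-1}=\tfrac{1+2Z}{y}\e_{2i-1}-\e_{2i}$ and $A^*\e_{2i}=\tfrac{1+2Z}{y}\e_{2i}-\e_{2i+1}$. The termwise contributions and the resulting recurrences \fref{recurrence}--\fref{recurrencebis} coincide exactly with those in the paper.
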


\begin{proof}
We compute:
$$A(\Phi\e)=\Phi\e_1-(\pa_y\Phi)\e,$$ 
\bee
 H(\Phi\e)& =& A^*A\e=\Phi\e_2+\pa_y\Phi\e_1-\left(-A+\frac{1+2Z}{y}\right)(\pa_y\Phi\e)\\
 & = & \Phi\e_2+2\pa_y\Phi\e_1+\left[-\pa_{yy}\Phi-\frac{1+2Z}{y}\pa_y\Phi\right]\e.
\eee
\bee
&&\mathcal A^{2k+1}(\Phi\e)=\Sigma_{i=0}^{k}A\left[\Phi_{2i,2k}\e_{2i}\right]+\Sigma_{i=1}^{k}\left(-A^*+\frac{1+2Z}{y}\right)\Phi_{2i-1,2k}\e_{2i-1}\\
& = & \Sigma_{i=0}^{k}\left\{\Phi_{2i,2k}\e_{2i+1}-\pa_y\Phi_{2i,2k}\e_{2i}\right\}\\
& + & \Sigma_{i=1}^k\left\{-\Phi_{2i-1,2k}\e_{2i}+\left[\frac{1+2Z}{y}\Phi_{2i-1,2k}-\pa_y\Phi_{2i-1,2k}\right]\e_{2i-1}\right\}\\
& = & -\pa_y\Phi_{0,2k}\e+\Sigma_{i=1}^k(-\pa_y\Phi_{2i,2k}-\Phi_{2i-1,2k})\e_{2i}\\
& + & \Sigma_{i=1}^k\left\{\Phi_{2i-2,2k}+\frac{1+2Z}{y}\Phi_{2i-1,2k}-\pa_y\Phi_{2i-1,2k}\right\}\e_{2i-1}+\Phi_{2k,2k}\e_{2k+1},
\eee
this is \fref{recurrencebis},
\bee
\mathcal A^{2k+2}(\Phi\e)& = &\Sigma_{i=0}^k\left[-A+\frac{1+2Z}{y}\right]\left\{\Phi_{2i,2k+1}\e_{2i}\right\} +\Sigma_{1=i}^{k+1}A^*\left(\Phi_{2i-1,2k+1}\e_{2i-1}\right)\\
& = & \Sigma_{i=0}^k\left\{-\Phi_{2i,2k+1}\e_{2i+1}+\left[\pa_y\Phi_{2i,2k+1}+\frac{1+2Z}{y}\Phi_{2i,2k+1}\right]\e_{2i}\right\}\\
& + & \Sigma_{i=1}^{k+1}\left\{\Phi_{2i-1,2k+1}\e_{2i}+\pa_y\Phi_{2i-1,2k+1}\e_{2i-1}\right\}\\
& = & \left[\pa_y\Phi_{0,2k+1}+\frac{1+2Z}{y}\Phi_{0,2k+1}\right]\e+\Phi_{2k+1,2k+1}\e_{2k+2}\\
& +& \Sigma_{i=1}^{k}\left[\Phi_{2i-1,2k+1}+\pa_y\Phi_{2i,2k+1}+\frac{1+2Z}{y}\Phi_{2i,2k+1}\right]\e_{2i}\\
& + & \Sigma_{i=1}^{k+1}\left[-\Phi_{2i-2,2k+1}+\pa_y\Phi_{2i-1,2k+1}\right]\e_{2i-1},
\eee
this is \fref{recurrence}. 
\end{proof}


\section{Proof of \fref{commutator}}
\label{proofcommutator}


A simple induction argument ensures the formula: $$[\pa_t,H_\l^L]w=\Sigma_{k=0}^{L-1}H_{\l}^k[\pa_t,H_\l]H_{\l}^{L-(k+1)}w.$$ We therefore renormalize and compute explicitely: 
\be
\label{formuacommut}
[\pa_t,H_\l^L]w=\frac{1}{\l^{2L+2}}\Sigma_{k=0}^{L-1}H^k\left(-\lsl \frac{\Lambda V}{y^2}H^{L-(k+1)}\e\right).
\ee
We now apply the Leibniz rule Lemma \ref {leibnizrule} with $\Phi=\frac{\Lambda V}{y^2}$. In view of the expansion \fref{comportementv} and the recurrence formula \fref{recurrence}, we have an expansion at the origin 
to all orders for even $k\geq 2$:
$$
\left\{\begin{array}{ll}
\Phi_{2i,2k}(y)=\Sigma_{p=0}^Nc_{i,k,p}y^{2p}+O(y^{2N+2}), \ \ 0\leq i\leq k\\
 \Phi_{2i+1,2k}(y)=\Sigma_{p=0}^Nc_{i,k,p}y^{2p+1}+O(y^{2N+3}), \ \ 1\leq i\leq k-1
 \end{array}\right .
 $$
 and for odd $k\geq 1$:
 $$
  \left\{\begin{array}{ll}
\Phi_{2i-1,2k+1}(y)=\Sigma_{p=0}^Nc_{i,k,p}y^{2p}+O(y^{2N+2}), \ \ 1\leq i\leq k+1\\
 \Phi_{2i,2k+1}(y)=\Sigma_{p=0}^Nc_{i,k,p}y^{2p+1}+O(y^{2N+3}), \ \ 1\leq i\leq k-1
 \end{array}\right . 
$$
and a bound for $y\geq 1$:
$$|\Phi_{i,k}|\lesssim \frac{1}{1+y^{4+(2k-i)}}, \ \ 0\leq i\leq 2k,
$$ and we therefore estimate from \fref{ebeboboneo}:
\be
\label{finalbound}
\forall k\geq 0, \ \ \left|H^k\left(\frac{\Lambda V}{y^2}\e\right)\right|\leq \Sigma_{i=0}^{2k}c_{i,k}\frac{|\e_i|}{1+y^{4+(2k-i)}}.
\ee
Similarily:
\bee
\left|AH^k\left(\frac{\Lambda V}{y^2}\e\right)\right|& \lesssim &  \Sigma_{i=0}^{2k}c_{i,k}\frac{1}{1+y^{4+(2k-i)}}\left[|\pa_y\e_i|+\frac{|\e_i|}{y}\right]\\
& \lesssim & \Sigma_{i=0}^{2k+1}c_{i,k}\frac{|\e_i|}{y(1+y^{4+(2k-i)})}.
\eee
We now inject \fref{finalbound} into \fref{formuacommut} and obtain using \fref{parameters} the pointwise bound on the commutator:
\bee
\left|[\pa_t,H_\l^L]w\right| &\lesssim & \frac{|b_1|}{\l^{2L+2}}\Sigma_{k=0}^{L-1}\Sigma_{i=0}^{2k}c_{i,k}\frac{|\e_{2(L-k-1)+i}|}{1+y^{4+(2k-i)}}\\
& \lesssim & \frac{|b_1|}{\l^{2L+2}}\Sigma_{m=0}^{2L-2}\frac{|\e_{2L-2-m}|}{1+y^{4+m}}=\frac{|b_1|}{\l^{2L+2}}\Sigma_{m=0}^{2L-2}\frac{|\e_{m}|}{1+y^{2+2L-m}}.
\eee
Hence after a change of variables in the integral and using \fref{interpolationbound}: 
\bee
\int \frac{|[\pa_t,H_\l^L]w|^2}{\l^2(1+y^2)}\lesssim \frac{|b_1|^2}{\l^{4L+4}}\Sigma_{m=0}^{2L-2}\int\frac{\e^2_{m}}{(1+y^2)(1+y^{4+4L-2m})}\lesssim \frac{C(M)b_1^2}{\l^{4L+4}}\mathcal E_{2L+2},
\eee
and similarily:
\bee
\int |A_\l[\pa_t,H_\l^L]w|^2\lesssim \frac{|b_1|^2}{\l^{4L+4}}\Sigma_{m=0}^{2L-1}\int\frac{\e^2_{m}}{y^2(1+y^{4+4L-2m})}\lesssim \frac{C(M)b_1^2}{\l^{4L+4}}\mathcal E_{2L+2},
\eee
this is \fref{commutator}.


\section{Proof of \fref{monoenoiencleappouetpoute}}
\label{appendenergy}


We claim the following Lyapounov monotonicity functional for the $\mathcal E_{2k+2}$ energy. 

\begin{proposition}[Lyapounov monotonicity for $\mathcal E_{2k+2}$]
\label{AEI2bis}
Let $0\le k\leq L-1$, then there holds:
\bea
\label{monoenoiencleapp}
&& \frac{d}{dt} \left\{\frac{1}{\lambda^{4k+2}}\left[\mathcal E_{2k+2}+O\left(b_1^{\frac 12}b_1^{(4k+2)\frac{2L}{2L-1}}\right)\right]\right\}\\
\nonumber & \lesssim & \frac{|\log b_1|^{C}}{\l^{4k+4}}\left[b_1^{2k+3}+b_1^{1+\delta+(2k+1)\frac{2L}{2L-1}}+\sqrt{b_1^{2k+4}\matchal E_{2k+2}}\right]
 \eea
for some universal constants $C,\delta>0$ independent of $M$ and of the bootstrap constant $K$ in \fref{init2h}, \fref{init3h}.
\end{proposition}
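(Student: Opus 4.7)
The strategy follows verbatim the one of Proposition \ref{AEI2}, with $L$ replaced by $k$ throughout the energy identity computation, but with two essential simplifications: we allow ourselves logarithmic losses in the estimates of the source terms, and we do not need to extract a sharp $|\log b_1|^{-2}$ gain. In particular, the final bound has three structurally distinct contributions on its right-hand side, one from $\tilde{\Psi}_b$, one from $\widetilde{Mod}$, and one from the nonlinear and small linear terms.

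The first step is to reproduce the modified energy identity, step 2 of Proposition \ref{AEI2}. We define the suitable derivatives $w_j$ by alternating $A_\lambda,A^*_\lambda$ and run the identity
\bee
\frac{d}{dt} \mathcal E_{2k+2} = \frac{d}{dt}\int \tilde H_\lambda w_{2k+1} w_{2k+1}
\eee
and introduce the boundary correction $2\int \frac{b_1(\Lambda Z)_\lambda}{\lambda^2 r} w_{2k+1}w_{2k}$ to absorb the sign-indefinite term via $\int \frac{b_1(\Lambda \tilde V)_\lambda}{\lambda^2 r^2}w_{2k+1}^2$. The commutator $[\partial_t,H^k_\lambda]w$ is controlled exactly as in \fref{commutator} with $L$ replaced by $k$, at the cost of $C(M)b_1^2\mathcal E_{2k+2}/\lambda^{4k+4}$, which is absorbed by the dissipation $-\int(\tilde H_\lambda w_{2k+1})^2$. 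All lower-order quadratic terms are controlled by Lemma \ref{lemmainterpolation} applied to $\mathcal E_{2k+2}$. This yields the preliminary estimate, analogous to \fref{neoheohohe},
\bee
\frac{1}{2}\frac{d}{dt}\left\{\frac{\mathcal E_{2k+2}+O(b_1^{1/2}b_1^{(4k+2)\frac{2L}{2L-1}})}{\lambda^{4k+2}}\right\} \leq \frac{C}{\lambda^{4k+4}}\bigl[\|H^{k+1}\mathcal F_0\|_{L^2}\sqrt{\mathcal E_{2k+2}} + I_w + I_\ast\bigr],
\eee
where $I_w$ and $I_\ast$ encode the weighted couplings of $H^k\mathcal F, AH^k\mathcal F$ with $w_{2k+1},w_{2k}$ as in \fref{onetwo}, \fref{onethree}. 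The boundary term $b_1^{1/2}b_1^{(4k+2)\frac{2L}{2L-1}}/\lambda^{4k+2}$ is controlled using the pointwise bound \fref{interpolationboundlossy} with exponent $2k+1$.

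The second step is to estimate the source terms with the same four-part decomposition $\mathcal F = -\tilde\Psi_b - \widetilde{Mod} + L(\e) - N(\e)$. For $\tilde\Psi_b$ we invoke directly \fref{controleh4erreurtilde} at level $k+1\leq L$, obtaining $\|H^{k+1}\tilde\Psi_b\|_{L^2}\lesssim b_1^{k+2}|\log b_1|^C$ and hence the first driving term
\bee
\|H^{k+1}\tilde\Psi_b\|_{L^2}\sqrt{\mathcal E_{2k+2}}\lesssim \sqrt{b_1^{2k+4}\mathcal E_{2k+2}}|\log b_1|^C,
\eee
which, combined with weighted variants estimated via the same admissibility bookkeeping as in steps 5--6 of the proof of Proposition \ref{AEI2}, produces the $\sqrt{b_1^{2k+4}\mathcal E_{2k+2}}$ contribution. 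For $\widetilde{Mod}$, the modulation bound \fref{parameters} gives $|D(t)|\lesssim b_1^{L+3/2}$, and the admissibility and cancellation properties of $T_i,S_j$ together with Lemma \ref{lemmaestimate} at level $k+1\leq L$ yield $\int|H^{k+1}\widetilde{Mod}|^2\lesssim b_1^{2k+4}|\log b_1|^C$ plus lower-order contributions of the same order, producing the $b_1^{2k+3}$-type contribution after dividing by $\sqrt{\mathcal E_{2k+2}}$ and combining with the Cauchy--Schwarz weights.

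The third and most delicate step is the control of $L(\e)$ and $N(\e)$, which is where we must track the Sobolev exponents carefully and extract a strictly positive gain $\delta>0$. Near the origin, the Taylor-like expansions \fref{Taylororigin}--\fref{boundrestebis} together with the pointwise bounds \fref{controlrogi}, \fref{coneonienoeg} give the contribution $\lesssim C(M)\mathcal E_{2L+2}$, which from \fref{init3hb} is of size $b_1^{2L+2}/|\log b_1|^2$ and is harmless at the target exponent thanks to $2L+2>2k+3$ for $k\leq L-1$. For $y\geq 1$, the same splitting as in steps 7--8 of Proposition \ref{AEI2} applies, but now with the outer derivative order $k$ (in place of $L$), so that the Hardy and pointwise bounds \fref{weight}, \fref{estimatelossy}, \fref{weightbis} are invoked with slack. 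For $N(\e)$, the product bookkeeping
\bee
\int|AH^k N(\e)|^2 \lesssim \Sigma_{i+j+p+q = 2k+3}\|\zeta_i/y^{j-1}\|_{L^\infty(y\geq 1)}^2 \int\frac{|\partial_y^p\zeta|^2}{y^{2q-2}},
\eee
with the exponents from \fref{defakkk} reshuffled for $k<L$, returns an exponent strictly greater than $2k+4$ (with loss $(2L-2k-2)/(2L-1)>0$), producing the claimed $b_1^{1+\delta+(2k+1)\frac{2L}{2L-1}}$ term with $\delta = (2L-2k-2)/(2L-1)$. The small linear term $L(\e)$ is handled by the same Faa di Bruno estimate \fref{bmpbrpoitwisebound} adapted to level $k$, giving $\int|AH^k L(\e)|^2\lesssim b_1^{2}|\log b_1|^C \mathcal E_{2k+2}$ plus terms absorbable into the $\sqrt{b_1^{2k+4}\mathcal E_{2k+2}}$ contribution.

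The hard point is the algebraic verification at step three: one needs, uniformly for $0\leq k\leq L-1$, that the worst product weight in the nonlinear estimate exceeds $b_1^{2k+3}$ by a strictly positive power of $b_1$. This is the analogue of the exponent computation \fref{boudntoneere}--\fref{boudntoneerebis}, but performed now with outer differentiation order $k$ rather than $L$; the gain $\delta>0$ comes precisely from the difference between $(4k+2)\tfrac{2L}{2L-1}$ (the natural Sobolev scaling of $\mathcal E_{2k+2}$) and $2k+3$ (the target rate), which is bounded below by $(2L-2k-2)/(2L-1)>0$ throughout the admissible range $k\leq L-1$. Once this is established, the collection of bounds yields \fref{monoenoiencleapp} with constants independent of $M$ and of the bootstrap constant $K$.
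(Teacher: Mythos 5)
Your overall route is exactly the paper's: rerun the modified energy identity of Proposition \ref{AEI2} with $L$ replaced by $k$, control the commutator as in \fref{commutatork}, keep the decomposition $\mathcal F=\mathcal F_0+\mathcal F_1$ with $\mathcal F_0=-\tilde\Psi_b-\widetilde{Mod}$, $\mathcal F_1=L(\varepsilon)-N(\varepsilon)$, and allow $|\log b_1|^C$ losses everywhere; your treatment of $\tilde\Psi_b$, of the quadratic terms and of the boundary correction matches the paper's steps. Two minor inaccuracies in passing: the bound $|D(t)|\lesssim b_1^{L+\frac32}$ you quote from \fref{parameters} does not cover the $b_L$-equation (for that component one must use \fref{parameterspresicely} together with \fref{init3hb}, giving only $|D(t)|\lesssim C(K)b_1^{L+1}/|\log b_1|$, which still suffices since $b_1^{2L+2}\cdot b_1^{-2(L-k-1)}=b_1^{2k+4}$); and the "natural Sobolev scaling" of $\mathcal E_{2k+2}$ from \fref{init3h} is $b_1^{(2k+1)\frac{2L}{2L-1}}=b_1^{(4k+2)\frac{L}{2L-1}}$, not $b_1^{(4k+2)\frac{2L}{2L-1}}$.

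The genuine gap is in the exponent bookkeeping at the step you yourself flag as the hard point. You claim the nonlinear estimate "returns an exponent strictly greater than $2k+4$" with gain $(2L-2k-2)/(2L-1)$ and you set $\delta=(2L-2k-2)/(2L-1)$. Both claims fail: for $k=0$, $L=2$ the product bookkeeping gives total exponent $(2k+2)\frac{2L}{2L-1}=\frac 83<2k+4$, and at the endpoint $k=L-1$ your $\delta$ equals $0$, so your argument as written does not produce a strictly positive $\delta$ on the whole range $0\le k\le L-1$, which is what the statement asserts and what is used in section \ref{sectionfour} when integrating $\sigma^{-1-\delta}$. The correct accounting is the one behind \fref{boudntoneereapp}--\fref{boudntoneerebisapp}: since $k\le L-1$, every index pair stays in the subcritical range of \fref{zetpovctlinfty}, \fref{zetpovctlinftybis} ($I_1+J_1\le 2L-1$, $I_2+J_2\le 2L$), so each product term carries the pure power $b_1^{(I_1+J_1+I_2+J_2-1)\frac{2L}{2L-1}}=b_1^{(2k+2)\frac{2L}{2L-1}}$ up to $|\log b_1|^{C(K)}$, and one concludes from the identity $(2k+2)\frac{2L}{2L-1}=1+\frac{1}{2L-1}+(2k+1)\frac{2L}{2L-1}$ that any fixed $0<\delta<\frac{1}{2L-1}$ works, uniformly in $k$ (the margin $\frac1{2L-1}$ also absorbs the $K$-dependent logarithms, which is why $\delta$ and $C$ are independent of $K$). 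The gain per derivative is $\frac{2L}{2L-1}-1=\frac1{2L-1}$, not $(2L-2k-2)/(2L-1)$, and it does not degenerate as $k\to L-1$. With this correction your proof closes and coincides with the paper's.
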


\begin{proof}[Proof of Proposition \ref{AEI2bis}] {\bf step 1} Modified energy identity. We follow verbatim the algebra of Propositon \ref{monoenoiencle} with $L\to k$ and obtain the modified energy identity:
\bea
\label{modifeideenrgyapp}
\nonumber &&\frac{1}{2}\frac{d}{dt}\left\{ \mathcal E_{2k+2}+2\int \frac{b_1(\Lambda Z)_\l}{\l^2r}w_{2k+1}w_{2k}\right\} =  - \int (\tilde{H}_\l w_{2k+1})^2\\
\nonumber & - & \left(\lsl+b_1\right)\int \frac{(\Lambda \tilde{V})_\l}{2\l^2r^2} w_{2k+1}^2+\int \frac{d}{dt}\left(\frac{b_1(\Lambda Z)_\l}{\l^2r}\right)w_{2k+1}w_{2k}\\
\nonumber & + & \int \tilde{H}_\l w_{2k+1}\left[\frac{\partial_{t} Z_{\lambda}}{r} w_{2k} +A_{\lambda}\left( [\pa_t,H_\l^k]w\right)+A_\l H^k_{\lambda}\left( \frac{1}{\lambda^2}\mathcal F_{\lambda}\right)\right]\\
\nonumber & + & \int \frac{b_1(\Lambda Z)_\l}{\l^2r}w_{2k}\left[-\tilde{H}_\l w_{2k+1}+\frac{\partial_{t} Z_{\lambda}}{r} w_{2k} + A_{\lambda}\left( [\pa_t,H_\l^k]w\right)+A_\l H^k_{\lambda}\left( \frac{1}{\lambda^2}\mathcal F_{\lambda}\right)\right]\\
& + & \int\frac{b_1(\Lambda Z)_\l}{\l^2r}w_{2L+1}\left[ [\pa_t,H_\l^k]w + H^k_{\lambda} \left(\frac 1{\lambda^2} \mathcal F_{\lambda}\right)\right]
\eea
We now estimate all terms in the RHS of \fref{modifeideenrgyapp}.\\

{\bf step 3} Lower order quadratic terms. We treat the lower order quadratic terms in \fref{modifeideenrgyapp} using dissipation. The bound 
\be
\label{commutatork}
\int\frac{\left([\pa_t,H_\l^k]w\right)^2}{\l^2(1+y^2)}+\int\left|A_{\lambda}\left( [\pa_t,H_\l^k]w\right)\right|^2\lesssim C(M)\frac{b_1^2}{\l^{4k+4}}\matchal E_{2k+2}
\ee
follows from \fref{commutator} with $L\to k$. We estimate from \fref{nveknvneo}, the rough bound \fref{rougboundpope} and Lemma \ref{lemmainterpolation}:
\bee
&&\int\left|\tilde{H}_\l w_{2k+1}\left[\frac{\partial_{t} Z_{\lambda}}{r} w_{2k} + \int A_{\lambda}\left( [\pa_t,H_\l^k]w\right)\right]\right|+\int|\tilde{H}_\l w_{2k+1}|\left|\frac{b(\Lambda Z)_\l}{\l^2r}w_{2k}\right|\\
& \leq &\frac{1}{2}\int|\tilde{H}_\l w_{2k+1}|^2+\frac{b_1^2}{\l^{4k+4}}\left[\int \frac{\varepsilon_{2k}^{2}}{1+y^6}+C(M)\matchal E_{2k+2}\right]\\
& \leq &\frac{1}{2}\int|\tilde{H}_\l w_{2k+1}|^2+\frac{b_1}{\l^{4k+4}}C(M)b_1\matchal E_{2k+2}.
\eee
All other quadratic terms are lower order by a factor $b_1$ using again \fref{rougboundpope}, \fref{commutator}, \fref{parameters} and Lemma \ref{lemmainterpolation}:
\bee
&&\left|\lsl+b_1\right|\int \left|\frac{(\Lambda \tilde{V})_\l}{2\l^2r^2} w_{2k+1}^2\right|+\int\left|\frac{b_1(\Lambda Z)_\l}{\l^2r}w_{2k}\left[\frac{\partial_{t} Z_{\lambda}}{r} w_{2k} + A_{\lambda}\left( [\pa_t,H_\l^k]w\right)\right]\right|\\
& + & \int\left|\frac{b_1(\Lambda Z)_\l}{\l^2r}w_{2k+1}[\pa_t,H^L_\l]w\right|+\left|\int \frac{d}{dt}\left(\frac{b_1(\Lambda Z)_\l}{\l^2r}\right)w_{2k+1}w_{2k}\right|\\
& \lesssim & \frac{b_1^2}{\l^{4k+4}}\left[\int\frac{\e_{2k+1}^2}{1+y^4}+\int \frac{\varepsilon_{2k}^2}{1+y^6}+C(M)\mathcal E_{2k+2}\right]\lesssim \frac{b_1}{\l^{4k+4}}C(M)b_1\matchal E_{2k+2}.
\eee 
We similarily estimate the boundary term in time using \fref{interpolationboundlossy}: $$\left|\int \frac{b_1(\Lambda Z)_\l}{\l^2r}w_{2k+1}w_{2k}\right|\lesssim \frac{b_1}{\l^{4k+2}}\left[\int\frac{\e_{2k+1}^2}{1+y^2}+\int\frac{\e_{2k}^2}{1+y^4}\right]\lesssim \frac{b_1}{\l^{4k+2}}|\log b_1|^Cb_1^{(4k+2)\frac{2L}{2L-1}}.$$
We inject these estimates into \fref{modifeideenrgy} to derive the preliminary bound:
\bea
\label{neoheohoheapp}
  &&\frac{1}{2}\frac{d}{dt}\left\{ \frac{1}{\l^{4k+2}}\left[\mathcal E_{2k+2}+O\left(b_1^{\frac 12}b_1^{(4k+2)\frac{2L}{2L-1}}\right)\right]\right\}\leq -\frac12\int(\tilde{H}_\l w_{2k+1})^2\\
\nonumber  & + & \int \tilde{H}_\l w_{2k+1}A_\lambda H^k_\l\left( \frac{1}{\lambda^2}\mathcal F_{\lambda}\right)+\int H^k_{\lambda} \left(\frac 1{\lambda^2} \mathcal F_{\lambda}\right)\left[\frac{b_1(\Lambda Z)_\l}{\l^2r}w_{2k+1}+A^*_{\l}\left( \frac{b_1(\Lambda Z)_\l}{\l^2r}w_{2k}\right)\right]\\
 \nonumber & + &\frac{b_1}{\l^{4k+4}}b_1^{\delta}\mathcal E_{2k+2}
\eea
with constants independent of $M$ for $|b|<b^*(M)$ small enough. We now estimate all terms in the RHS of \fref{neoheohoheapp}.\\

{\bf step 4} Further use of dissipation. Recall the decomposition \fref{fprcingdecomp}. 
The first term in the RHS of \fref{neoheohoheapp} is estimated after an integration by parts:
\bea
\label{oneoneapp}
\nonumber&& \left| \int \tilde{H}_\l w_{2k+1}A_\lambda H^k_\l\left( \frac{1}{\lambda^2}\mathcal F_{\lambda}\right)\right|\\
\nonumber & \leq & \frac{C}{\l^{4k+4}}\|A^*\e_{2k+1}\|_{L^2}\|H^{k+1}\mathcal F_0\|_{L^2}+\frac14\int|\tilde{H}_\l w_{2k+1}|^2+\frac{C}{\l^{4k+4}}\int|AH^k\mathcal F_1|^2\\
& \leq & \frac{C}{\l^{4k+4}}\left[\|H^{k+1}\mathcal F_0\|_{L^2}\sqrt{\mathcal E_{2k+2}}+\|AH^k\mathcal F_1\|_{L^2}^2\right]+\frac14\int|\tilde{H}_\l w_{2k+1}|^2
\eea
for some universal constant $C>0$ independent of $M$. The last two terms in \fref{neoheohoheapp} can be estimated in brute force from Cauchy Schwarz:
\bea
\label{onetwoapp}
\nonumber \left|\int H^k_{\lambda} \left(\frac 1{\lambda^2} \mathcal F_{\lambda}\right)\frac{b_1(\Lambda Z)_\l}{\l^2r}w_{2k+1}\right| & \lesssim&  \frac{b_1}{\l^{4k+4}}\left(\int\frac{1+|\log y|^2}{1+y^4}|H^k\mathcal F|^2\right)^{\frac12}\left(\int\frac{\e_{2k+1}^2}{y^2(1+|\log y|^2)}\right)^{\frac12}\\
& \lesssim & \frac{b_1}{\l^{4k+4}}\sqrt{\mathcal E_{2k+2}}\left(\int\frac{1+|\log y|^2}{1+y^4}|H^k\mathcal F|^2\right)^{\frac12}
\eea
where constants are independent of $M$ thanks to the estimate \fref{coercwthree} for $\e_{2k+1}$. Similarily:
\bea
\label{onethreeapp}
 &&\left|\int H^k_{\lambda} \left(\frac 1{\lambda^2} \mathcal F_{\lambda}\right)A^*_{\l}\left( \frac{b_1(\Lambda Z)_\l}{\l^2r}w_{2k}\right)\right|\\
\nonumber &  \lesssim &  \frac{b_1}{\l^{4k+4}}\left(\int\frac{1+|\log y|^2}{1+y^2}|AH^k\mathcal F|^2\right)^{\frac12}\left(\int\frac{\e_{2k}^2}{(1+y^4)(1+|\log y|^2)}\right)^{\frac12}\\
\nonumber & \lesssim & \frac{b_1}{\l^{4k+4}}C(M)\sqrt{\mathcal E_{2k+2}}\left(\int\frac{1+|\log y|^2}{1+y^2}|AH^k\mathcal F_0|^2+\int|AH^k\mathcal F_1|^2\right)^{\frac12}
\eea
We now claim the bounds:
\be
\label{crucialboundthreeapp}
\int\frac{1+|\log y|^2}{1+y^4}|H^k\mathcal F|^2\leq b_1^{2k+2}|\log b_1|^C
\ee
\be
\label{weigheivbiovheoapp}
\int\frac{1+|\log y|^2}{1+y^2}|AH^k\mathcal F_0|^2 \leq b_1^{2k+2}|\log b_1|^C,
\ee
\be
\label{cnofooeeoapp}
\int|H^{k+1}\mathcal F_0|^2\leq b_1^{2k+4}|\log b_1|^C,
\ee
\be
\label{crucialboundtwoapp}
\int |AH^k\mathcal F_1|^2\leq b_1^{2k+3}|\log b_1|^C+b_1^{1+\delta+(2k+1)\frac{2L}{2L-1}}
\ee
for some universal constants $\delta,C>0$ independent of $M$ and of the bootstrap constant $K$ in \fref{init2h}, \fref{init3h}. Injecting these bounds together with \fref{oneoneapp}, \fref{onetwoapp}, \fref{onethreeapp} into \fref{neoheohoheapp} concludes the proof of \fref{monoenoiencleapp}. We now turn to the proof of \fref{crucialboundthreeapp}, \fref{weigheivbiovheoapp}, \fref{cnofooeeoapp}, \fref{crucialboundtwoapp}.\\

{\bf step 5} $\Psit_b$ terms. We estimate from \fref{controleh4erreurtilde}:
$$\int\frac{1+|\log y|^2}{1+y^4}|H^k\Psit_b|^2\lesssim \int |H^k\Psit_b|^2\lesssim b_1^{2k+2}|\log b_1|^C,$$ 
$$\int \frac{1+|\log y|^2}{1+y^2}|AH^k\Psit_b|^2\lesssim \int |AH^k\Psit_b|^2=\int H^k\Psit_bH^{k+1}\Psit_b\lesssim b_1^{2k+3} |\log b_1|^C,$$
$$\int |H^{k+1}\Psit_b|^2\lesssim b_1^{2(k+1)+2}|\log b_1|^C$$
and \fref{crucialboundthreeapp}, \fref{weigheivbiovheoapp}, \fref{cnofooeeoapp} is proved for $\Psit_b$.\\

{\bf step 6} $\widetilde{Mod(t)}$ terms. Recall \fref{defmodtuntilde}:
\bee
\widetilde{Mod}(t)& = & -\left(\lsl+b_1\right)\Lambda \qbt\\
\nonumber & + &   \Sigma_{i=1}^{L}\left[(b_i)_s+(2i-1+c_{b_1})b_1b_i-b_{i+1}\right]\left[\tT_i+\chi_{B_1}\Sigma_{j=i+1}^{L+2}\frac{\partial S_j}{\partial b_i}\right],
\eee
and the notation \fref{defut}. We will need only the rough bound for $b_1$ admissible functions \fref{roughbound}.\\

{\it Proof of \fref{cnofooeeoapp} for $\widetilde{Mod}$}: We estimate from \fref{roughbound} for $y\leq 2B_1$:
\bee
\nonumber |H^{k+1}S_i|+|H^{k+1}\Lambda S_i|+|H^{k+1}b_i\Lambda \tt_i|&\lesssim &b_1^i(1+y)^{2i-1-(2k+2)}\lesssim  b_1b_1^{i-1}(1+y)^{2i-2k-3}\\
& \lesssim &\frac{b_1|\log b_1|^C}{1+y^{2k+1}}
\eee
and thus using $H\Lambda Q=0$: $$\int|H^{k+1} \Lambda \qbt|^2\lesssim  \int_{y\leq 2B_1}\frac{b^2_1|\log b_1|^C}{1+y^{4k+2}}\lesssim b_1^2|\log b_1|^C.
$$ We also have the rough bound for $1\leq i\leq L$, $i+1\leq j\leq L_2$, $y\le 2B_1$:
\be
\label{reougbound}
|\tT_i|+|\chi_{B_1}\Sigma_{j=i+1}^{L+2}\frac{\partial S_j}{\partial b_i}|\lesssim |\log b_1|^C\left[y^{2i-1}+y^{2j-1}b_1^{j-i}|\log b_1|^C\right]\lesssim |\log b_1|^Cy^{2i-1}
\ee
and similarly for suitable derivatives, and hence the bound:
\bee
&&\Sigma_{i=1}^L\int\left|H^{k+1}\left[\tT_i+\chi_{B_1}\Sigma_{j=i+1}^{L+2}\frac{\partial S_j}{\partial b_i}\right]\right|^2\\
& \lesssim &|\log b_1|^C \int_{y\leq 2B_1}|y^{2L-1-(2k+2)}|^2\lesssim |\log b_1|^CB_1^{4(L-k)-4}\lesssim \frac{|\log b_1|^C}{b_1^{2(L-k)-2}}.
\eee
We therefore obtain from Lemma \ref{modulationequations} the control:
\bee
\int |H^{k+1}\widetilde{Mod(t)}|^2&\lesssim &C(K)|\log b_1|^Cb_1^{2L+2}\left[b_1^2+\frac{1}{b_1^{2(L-k)-2}}\right]\lesssim C(K)b_1^{2k+4}|\log b_1|^C\\
& \lesssim & |\log b_1|^Cb_1^{2k+4}
\eee
for $b_1<b_1^*(M)$ small enough.\\
{\it Proof of \fref{crucialboundthreeapp}, \fref{weigheivbiovheoapp}}: We estimate:
\bee
 |H^{k}S_i|+|H^{k}\Lambda S_i|+|H^{k}b_i\Lambda \tt_i|&\lesssim &b_1^i(1+y)^{2i-1-2k}\lesssim  \frac{|\log b_1|^C}{1+y^{2k+1}}
 \eee
 and thus
 \bee
&&\int \frac{1+|\log y|^2}{1+y^4}|H^k\Lambda \qbt|^2+\int \frac{1+|\log y|^2}{1+y^2}|AH^k\Lambda \qbt|^2\lesssim |\log b_1|^C\int \frac{1}{1+y^{4k+2}}\lesssim |\log b_1|^C.
\eee
We then estimate from \fref{reougbound}:
\bee
&&\Sigma_{i=1}^L\int\frac{1+|\log y|^2}{1+y^4}\left|H^{k}\left[\tT_i+\chi_{B_1}\Sigma_{j=i+1}^{L+2}\frac{\partial S_j}{\partial b_i}\right]\right|^2\\
& + & \Sigma_{i=1}^L\int\frac{1+|\log y|^2}{1+y^2}\left|AH^{k}\left[\tT_i+\chi_{B_1}\Sigma_{j=i+1}^{L+2}\frac{\partial S_j}{\partial b_i}\right]\right|^2\\
& \lesssim & |\log b_1|^C\int _{y\leq 2B_1}|y^{2L-1-2k-2}|^2\lesssim \frac{|\log b_1|^C}{b_1^{2(L-k)-2}}
\eee
and hence the bound using Lemma \ref{modulationequations}:
\bee
&&\int\frac{1+|\log y|^2}{1+y^4}|H^k\widetilde{Mod}|^2+\int\frac{1+|\log y|^2}{1+y^2}|AH^k\widetilde{Mod}|^2\\
& \lesssim &|\log b_1|^C C(K)b_1^{2L+2}\left[1+\frac{1}{b_1^{2(L-k)-2}}\right]\lesssim b_1^{2k+2}.
\eee

{\bf step 7} Nonlinear term $N(\e)$. {\it Control near the origin $y\leq 1$}: The control near the origin follows directly from \fref{controlrogi}.\\
 {\it Control for $y\geq 1$}:  We detail the proof of the most delicate bound \fref{crucialboundtwoapp}, the proof of \fref{crucialboundthreeapp}, \fref{weigheivbiovheoapp}  follows similar lines and is left to the reader.\\
 Recall the notations \fref{cnekonronroenoer} and the bounds \fref{zetpovctlinfty}, \fref{zetpovctlinftybis}, \fref{energyboundbis} on $\zeta$. We then have the bounds \fref{pointwisebound}, \fref{defakkk}, \fref{novioe} on $N_1(\e)$ which yield:
\bee
|AH^kN(\e)|&\lesssim& \Sigma_{p=0}^{2k+1}\frac{|\pa_y^pN(\e)|}{y^{2k+1-p}}\lesssim \Sigma_{p=0}^{2k+1}\frac{1}{y^{2k+1-p}}\Sigma_{i=0}^p|\pa_y^i\zeta^2||\pa_y^{p-i}N_1(\e)|\\
& \lesssim &  \Sigma_{p=0}^{2k+1}\frac{|\pa_y^k\zeta^2|}{y^{2k+1-p}}+\Sigma_{p=1}^{2k+1}\frac{1}{y^{2k+1-p}}\Sigma_{i=0}^{p-1}|\pa_y^i\zeta^2| |\log b_1|^C\left[\frac{1}{y^{p-i+1}}+b_1^{\frac{a_{p-i}}{2}}\right]\\
& \lesssim &  \Sigma_{p=0}^{2k+1}\frac{|\pa_y^p\zeta^2|}{y^{2k+1-p}}+|\log b_1|^C\Sigma_{i=0}^{2k}\frac{|\pa_y^i\zeta^2|}{y^{2k+2-i}} +|\log b_1|^C\Sigma_{p=1}^{2k+1}\Sigma_{i=0}^{p-1} b_1^{\frac{a_{p-i}}{2}}\frac{|\pa_y^i\zeta^2|}{y^{2k+1-p}}\\
& \lesssim & |\log b_1|^C\left[  \Sigma_{p=0}^{2k+1}\frac{|\pa_y^p\zeta^2|}{y^{2k+1-p}}+\Sigma_{p=1}^{2k+1}\Sigma_{i=0}^{p-1} b_1^{\frac{a_{p-i}}{2}}\frac{|\pa_y^i\zeta^2|}{y^{2k+1-p}}\right]
\eee
and hence:
\bee
\int_{y\geq 1}|AH^kN(\e)|^2&\lesssim & |\log b_1|^C \Sigma_{p=0}^{2k+1}\Sigma_{i=0}^p\int_{y\geq 1}\frac{|\pa_y^i\zeta|^2|\pa_y^{p-i}\zeta|^2}{y^{4L+2-2p}}\\
& + &  |\log b_1|^C \Sigma_{p=1}^{2k+1}\Sigma_{i=0}^{p-1} \Sigma_{j=0}^ib_1^{a_{p-i}}\int_{y\geq 1}\frac{|\pa_y^j\zeta|^2|\pa_y^{i-j}\zeta|^2}{y^{4L+2-2p}}.
\eee
We now claim the bounds 
\be
\label{boudntoneereapp}
\Sigma_{p=0}^{2k+1}\Sigma_{i=0}^p\int_{y\geq 1}\frac{|\pa_y^i\zeta|^2|\pa_y^{p-i}\zeta|^2}{y^{4k+2-2p}}\leq b_1b_1^{\delta}b_1^{(2k+1)\frac{2L}{2L-1}}
\ee
\be
\label{boudntoneerebisapp}
  |\log b_1|^C \Sigma_{p=1}^{2k+1}\Sigma_{i=0}^{p-1} \Sigma_{j=0}^ib_1^{a_{p-i}}\int_{y\geq 1}\frac{|\pa_y^j\zeta|^2|\pa_y^{i-j}\zeta|^2}{y^{4k+2-2p}}\leq b_1b_1^{\delta}b_1^{(2k+1)\frac{2L}{2L-1}}
 \ee
for some $\delta>0$, and this concludes the proof of \fref{crucialboundtwoapp} for $N(\e)$.\\
\noindent{\it Proof of \fref{boudntoneere}}: Let $0\leq k\leq L-1$, $0\leq p\leq 2k+1$, $0\leq i\leq p$. Let $I_1=p-i$, $I_2=i$, then we can pick $J_2\in \Bbb N^*$ such that $$ \max\{1;2-i\}\leq J_2\leq \min\{2k+3-p;2k+2-i\}$$ and define $$J_1=2k+3-p-J_2.$$ Then from direct inspection,
$$
(I_1,J_1,I_2,J_2)\in \Bbb N^3\times \Bbb N^*, \ \ \left\{\begin{array}{ll}1\leq I_1+J_1\leq 2k+1\leq 2L-1, \ \ 2\leq I_2+J_2\leq 2k+2\leq 2L,\\ I_1+I_2+J_1+J_2=2k+3.\end{array}\right. .
$$
Hence from \fref{zetpovctlinfty}, \fref{zetpovctlinftybis}:
\bee
\int_{y\geq 1}\frac{|\pa_y^i\zeta|^2|\pa_y^{p-i}\zeta|^2}{y^{4k+2-2p}}&\lesssim& \left\|\frac{\pa_y^{I_1}\zeta}{y^{J_1-1}}\right\|_{L^{\infty(y\geq 1)}}^2\int_{y\geq 1}\frac{|\pa_y^{I_2}\zeta|^2}{y^{2J_2-2}}\\
& \lesssim & |\log b_1|^{C(K)}b_1^{(I_1+J_1+I_2+J_2-1)\frac{2L}{2L-1}}=|\log b_1|^{C(K)}b_1^{(2k+2)\frac{2L}{2L-1}}\\
& \leq & b_1b_1^{\delta}b_1^{(2k+1)\frac{2L}{2L-1}}.
\eee
{\it Proof of \fref{boudntoneerebis}}: Let $0\leq k\leq L-1$, $1\leq p\leq 2k+1$, $0\leq j\leq i\leq p-1$. For $p=2k+1$ and $0\leq i=j\leq 2k$, we use the energy bound \fref{energyboundbis} to estimate:
\bee
b_1^{a_{p-i}}\int_{y\geq 1}\frac{|\pa_y^j\zeta|^2|\pa_y^{i-j}\zeta|^2}{y^{4k+2-2p}} & = & b_1^{a_{2k+1-i}}\|\zeta\|^2_{L^{\infty}(y\geq 1)}\int_{y\ge 1} |\pa_y^i\zeta|^2\\
& \lesssim & b_1^{\frac{2L}{2L-1}((2k+1-i)+1+i)}|\log b_1|^{C(K)}\leq b_1b_1^{\delta}b_1^{(2k+1)\frac{2L}{2L-1}}.
\eee
This exceptional case being treated, we let $I_1=j$, $I_2=i-j$ and pick $J_2\in \Bbb N^*$ with $$\max\{1;2-(i-j);2-(p-j)\}\leq J_2\leq \min\{2k+3-p;2k+2-(p-j);2k+2-(i-j)\}.$$ Let $$J_1=2k+3-p-J_2,$$ then from direct check: 
$$
(I_1,J_1,I_2,J_2)\in \Bbb N^3\times \Bbb N^*, \ \ \left\{\begin{array}{ll}1\leq I_1+J_1\leq 2k+1, \ \ 2\leq I_2+J_2\leq 2k+2,\\ I_1+I_2+J_1+J_2=2k+3-(p-i).\end{array}\right. .
$$
and thus:
\bee
b_1^{a_{p-i}}\int_{y\geq 1}\frac{|\pa_y^j\zeta|^2|\pa_y^{i-j}\zeta|^2}{y^{4k+2-2p}}&\lesssim &b_1^{a_{p-i}}\left\|\frac{\pa_y^{I_1}\zeta}{y^{J_1-1}}\right\|^2_{L^{\infty}(y\ge1)}\int_{y\geq 1}\frac{|\pa_y^{I_2}\zeta|^2}{y^{2J_2-2}}\\
 & \lesssim & |\log b_1|^{C(K)}b_1^{(p-i+I_1+J_1+I_2+J_2-1)\frac{2L}{2L-1}}=|\log b_1|^{C(K)}b_1^{(2k+2)\frac{2L}{2L-1}}\\
& \leq & b_1b_1^{\delta}b_1^{(2k+1)\frac{2L}{2L-1}}.\eee

{\bf step 8} Small linear term $L(\e)$. We recall the decomposition \fref{cnoheiohoe}.\\
{\it Control for $y\leq 1$}: The  control near the origin directly follows from \fref{coneonienoeg}.\\
{\it Control for $y\geq 1$}: We give the detailed proof of \fref{crucialboundtwoapp} and leave \fref{crucialboundthreeapp} to the reader. We recall the bound \fref{nknnveo}
$$|\pa_y^k L(\e)|\lesssim \Sigma_{i=0}^k\frac{b_1|\log b_1|^C|\pa_y^i\e|}{y^{k-i+1}}$$ which implies:
\bee
|AH^kL(\e)|&\lesssim &\Sigma_{p=0}^{2k+1}\frac{|\pa_y^pL(\e)|}{y^{2k+1-p}}\lesssim \Sigma_{p=0}^{2k+1}\frac{1}{y^{2k+1-p}}\Sigma_{i=0}^{p}\frac{b_1|\log b_1|^C|\pa_y^i\e|}{y^{p-i+1}}\\
& \lesssim & b_1|\log b_1|^C\Sigma_{i=0}^{2k+1} \frac{|\pa_y^i\e|}{y^{2k+2-i}}.
\eee
We therefore conclude from \fref{weight}:
\bee
\int_{y\geq 1} |AH^kL(\e)|^2&\lesssim& b_1^2|\log b_1|^C\Sigma_{i=0}^{2k+1}\int_{y\geq 1}\frac{|\pa_y^i\e|^2}{y^{4k+4-2i}}\lesssim |\log b_1|^{C(K)}b_1^{2+(2k+1)\frac{2L}{2L-1}}\\
& \leq & b_1^{1+\delta+(2k+1)\frac{2L}{2L-1}}
\eee
 and \fref{crucialboundtwo} is proved.\\ 
This concludes the proof of \fref{crucialboundthreeapp}, \fref{weigheivbiovheoapp}, \fref{cnofooeeoapp}, \fref{crucialboundtwoapp} and thus of Proposition \ref{AEI2}.

\end{proof}

\end{appendix}

\frenchspacing
\bibliographystyle{plain}

\end{document}